\documentclass[11pt,letterpaper]{amsart}
\usepackage{amsmath}
\usepackage{amssymb}
\usepackage{amsfonts}
\usepackage{amsthm}
\usepackage{mathrsfs}
\usepackage{mathdots}
\usepackage{mathtools}
\usepackage{bbm}
\usepackage[pagebackref,colorlinks=true,linkcolor=blue,citecolor=blue]{hyperref}

\newcommand{\Z}{\mathbb{Z}}

\newcommand{\R}{\mathbb{R}}
\newcommand{\C}{\mathbb{C}}

\newcommand{\Fr}{{\mathrm{Fr}}}
\newcommand{\Hom}{{\mathrm{Hom}}}
\newcommand{\Per}{{\mathrm{Per}}}
\newcommand{\Evs}{{\mathrm{Evs}}}

\newcommand{\Ad}{{\mathrm{Ad}}}
\newcommand{\rank}{{\mathrm{rank}}}
\newcommand{\KS}{{\mathrm{KS}}}

\newcommand{\Lift}{{\mathrm{Lift}}}
\newcommand{\trace}{{\mathrm{trace}}}

\newcommand{\mic}{{\mathrm{mic}}}

\newcommand{\pure}{{\mathrm{pure}}}
\newcommand{\ABV}{{\mathrm{ABV}}}

\newcommand{\inv}{^{-1}}

\newcommand{\GL}{\mathrm{GL}}
\newcommand{\SO}{\mathrm{SO}}
\newcommand{\OO}{\mathrm{O}}
\newcommand{\SL}{\mathrm{SL}}
\newcommand{\Sp}{\mathrm{Sp}}
\newcommand{\U}{\mathrm{U}}
\newcommand{\BC}{\mathbb{C}}
\newcommand{\RG}{\mathrm{G}}

\newcommand{\IC}{\mathcal{IC}}

\newcommand{\comment}[1]{}

\newtheorem{thm}{Theorem}[section]
\newtheorem{cor}[thm]{Corollary}
\newtheorem{lemma}[thm]{Lemma}
\newtheorem{prop}[thm]{Proposition}
\newtheorem {conj}[thm]{Conjecture}

\newtheorem {ques/conj}[thm]{Question/Conjecture}

\newtheorem{defn}[thm]{Definition}
\newtheorem{rmk}[thm]{Remark}

\newtheorem{exmp}[thm]{Example}

\numberwithin{equation}{section}

\begin{document}

\title[Functoriality and the theta correspondence]{Functoriality and the theta correspondence}

\author{Alexander Hazeltine}
\address{Department of Mathematics\\
University of Michigan\\
Ann Arbor, MI, 48109, USA}
\email{ahazelti@umich.edu}

\subjclass[2020]{Primary 11F27, 11F70, 22E50}


\dedicatory{}

\keywords{Theta Correspondence, local Arthur packets, ABV-packets.}

\thanks{This research was supported by the AMS-Simons Travel Grant program.}

\begin{abstract}
    We study the functoriality of the local theta correspondence for classical $p$-adic groups. This is realized via the adaptation of the Adams conjecture to ABV-packets. We provide evidence for the conjecture, especially in the case of general linear groups.
\end{abstract}

\maketitle


\section{Introduction}\label{sec intro}

The theta correspondence was introduced by Howe (\cite{How79}) and has since proven to be a powerful tool within the Langlands program. However, despite its undeniable efficacy, it has historically been difficult to fit the theta correspondence into the theoretical framework of the Langlands program. Indeed, one early such attempt was by Langlands in a letter to Howe in 1975 (\cite{Lan75}), where Langlands speculated if the theta correspondence was an instance of what is now called Langlands functoriality.

However, history has shown that the theta correspondence is not an instance of Langlands functoriality. Indeed, there are examples where the local theta correspondence does not preserve $L$-packets. Nevertheless, it remains desirable to pin down the theta correspondence within the framework of the Langlands program. In 1989, Adams proposed what is now known as the Adams conjecture: the local theta correspondence should preserve local Arthur packets (\cite{Ada89}). Before stating the Adams conjecture, we introduce some notation. 

Let $F$ be a $p$-adic field and $W_F$ denote the Weil group. Let $\RG$ be a classical group which is quasi-split over $F$. We let $G=\RG(F)$ and denote the $L$-group by ${}^L G=\widehat{\RG}(\mathbb{C})\rtimes W_F$. Roughly, a local Arthur parameter is a homomorphism (see \S\ref{sec Local Arthur packets})
\[
\psi:W_F\times \SL_2(\C)\times\SL_2(\C)\rightarrow{}^LG.
\]
To a local Arthur parameter $\psi$, Arthur and Mok attached a local Arthur packet denoted by $\Pi_\psi$ (\cite{Art13, Mok15}). This is a finite set of irreducible unitary representations of $G$. Furthermore, to $\psi$, one can associate an $L$-parameter $\phi_\psi$ such that the associated $L$-packet $\Pi_{\phi_\psi}$ is contained in $\Pi_\psi.$

Let $\mathrm{H}$ be a classical group which is quasi-split over $F$ and forms a reductive dual pair with $\RG.$ For an irreducible admissible representation $\pi$ of $G$, we let $\theta(\pi)$ denote image of $\pi$ under the local theta correspondence (see \S\ref{sec Theta Correspondence}). Then $\theta(\pi)$ is an irreducible admissible representation (possibly vanishing) of $H=\mathrm{H}(F).$ We call $\theta(\pi)$ the theta lift of $\pi.$

With the above notation, we now state the Adams conjecture as follows (see Conjecture \ref{conj Adams naive} for a precise statement).

\begin{conj}[The Adams Conjecture (\cite{Ada89})]\label{conj Adams intro}
    Suppose that $\pi\in\Pi_\psi$ for some local Arthur parameter $\psi$ of $G.$ Then the $\theta(\pi)\in\Pi_{\psi'}$, where $\psi'$ is an explicit local Arthur parameter of $H$ which only depends on $\psi.$
\end{conj}

It has turned out that Adams was mostly correct. M{\oe}glin showed that the Adams conjecture is largely true (\cite[Theorem 6.1]{Moe11c}). Recently, for symplectic-even orthogonal dual pairs (although analogous results are expected more generally, see Conjecture \ref{conj Adams refined}), Baki{\'c} and Hanzer developed a way to determine precisely the validity of the Adams conjecture for representations in a fixed local Arthur packet (\cite{BH22}). In essence, these works completely determine when the Adams conjecture holds when the local Arthur packet $\Pi_\psi$ is fixed. 

However, M{\oe}glin exhibited examples where the Adams conjecture fails (\cite{Moe11c}). The failure can essentially be broken into two kinds.
\begin{enumerate}
    \item First, the theta lift of a representation could lie in a local Arthur packet, but not the one predicted by the Adams conjecture.
    \item Second, the theta lift of a representation may not lie in any local Arthur packet.
\end{enumerate}
The first failure has a hope to be resolved. Indeed, this failure was studied by the author in \cite{Haz24} for symplectic-even orthogonal dual pairs (again, the results are expected to hold more generally, see Conjecture \ref{conj Adams refined}). Specifically, from \cite{HLL24, HLL22} (see Conjecture \ref{conj closure max}), there exists a specific local Arthur parameter $\psi^{\max}(\pi)$ such that $\pi\in\Pi_{\psi^{\max}(\pi)}.$ For this local Arthur parameter, \cite[Theorem 1.5]{Haz24} states that if $\theta(\pi)\in\Pi_{\psi'}$, then $\theta(\pi)\in\Pi_{\psi^{\max}(\pi)'}$, i.e., the Adams conjecture will hold in its greatest generality for $\psi^{\max}(\pi).$ Furthermore, \cite[Conjecture 1.6]{Haz24} (see Conjecture \ref{conj stable Arthur}) essentially says that if the the Adams conjecture fails for $\psi^{\max}(\pi),$ then it must be for the second reason: $\theta(\pi)$ does not lie in any local Arthur packet.

The second failure of the Adams conjecture is more critical. Indeed, the Adams conjecture only concerns local Arthur packets, while the second failure is when the theta lift does not lie in any local Arthur packet. This forces us to consider a (conjectural, see Conjecture \ref{conj Vogan}) generalization of local Arthur packets known as ABV-packets. These packets were originally defined for real groups by Adams, Barbasch and Vogan (\cite{ABV92}). For connected reductive groups defined over a $p$-adic field, we follow a formulation given in \cite{CFMMX22}. We will only focus on the $p$-adic case in this article. ABV-packets are parameterized by $L$-parameters and consist of a certain finite set of irreducible admissible representations. To an $L$-parameter $\phi$, we let $\Pi_\phi^\ABV$ denote the corresponding ABV-packet. We conjecture that the Adams conjecture has an analogue for ABV-packets (see Conjecture \ref{conj Adams ABV naive} for a precise statement).

\begin{conj}[The Adams Conjecture for ABV-packets]\label{conj Adams ABV intro}
    Suppose that $\pi\in\Pi_\phi^\ABV$ for some $L$-parameter $\phi$ of $G.$ Then the $\theta(\pi)\in\Pi_{\phi'}^\ABV$, where $\phi'$ is an explicit $L$-parameter of $H$ which only depends on $\phi.$
\end{conj}

Given a local Arthur parameter $\psi,$ we attach an $L$-parameter $\phi_\psi$ (see \eqref{eq phi_psi}).
When $\phi=\phi_\psi$ for some local Arthur parameter $\psi$, it is conjectured that $\Pi_{\phi}^\ABV=\Pi_\psi$ (see Conjecture \ref{conj Vogan}). Furthermore, we have that $\phi'=\phi_{\psi'}$. In this sense Conjecture \ref{conj Adams ABV intro} is expected to be the generalization of Conjecture \ref{conj Adams intro}. 

Of course, since Conjecture \ref{conj Adams intro} does fail, Conjecture \ref{conj Adams ABV intro} also does fail. However, every representation lies in some ABV-packet and so the only possible failure is of the first kind, i.e., $\theta(\pi)$ lies in some ABV-packet, but not $\Pi_{\phi'}^\ABV$. For symplectic-even-orthogonal dual pairs, the resolution for this failure was to consider a specific local Arthur parameter $\psi^{\max}(\pi).$ This has a natural analogue for ABV-packets, namely the $L$-parameter $\phi_\pi$ of $\pi.$ Our first piece of evidence for Conjecture \ref{conj Adams ABV intro} is to verify it for $\phi_\pi$ (Lemma \ref{lemma going down Adams ABV}).

\begin{lemma}\label{lemma going down Adams ABV intro}
    If $H$ is the ``going-down'' tower (see \S\ref{sec Theta Correspondence}) for $\pi,$ then $\theta(\pi)\in\Pi_{(\phi_\pi)'}^\ABV.$
\end{lemma}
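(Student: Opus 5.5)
Since $\pi\in\Pi^\ABV_{\phi_\pi}$ and, by the basic properties of ABV-packets recalled in \S\ref{sec Local Arthur packets} and the following section, every $L$-packet lies in its ABV-packet, $\Pi_{\phi}\subseteq\Pi_\phi^\ABV$, the plan is to reduce the lemma to an identity of $L$-parameters. It suffices to show that, in the going-down tower, $\theta(\pi)$ is nonzero with $L$-parameter $\phi_{\theta(\pi)}$ equal to $(\phi_\pi)'$. Then $\theta(\pi)\in\Pi_{\phi_{\theta(\pi)}}\subseteq\Pi^\ABV_{\phi_{\theta(\pi)}}=\Pi^\ABV_{(\phi_\pi)'}$, and we are done.

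The recipe $\phi\mapsto\phi'$ is modelled on the Adams recipe $\psi\mapsto\psi'$. If $\psi'=\psi\chi\oplus\chi\otimes S_1\otimes S_{m}$ (going up), or $\psi$ contains $\chi\otimes S_1\otimes S_{m}$ which is removed (going down), then $\phi_{\psi'}$ adds or removes the corresponding $L$-parameter $\chi|\cdot|^{(m-1)/2}\oplus\cdots\oplus\chi|\cdot|^{-(m-1)/2}$, twisted appropriately. The definition of $\phi'$ in the conjecture is arranged so that $\phi_{\psi'}=(\phi_\psi)'$.

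The key step is to invoke the known explicit description of the $L$-parameter of theta lifts, namely the Howe duality and Prasad-type results of Gan--Ichino and Atobe--Gan for $p$-adic dual pairs as recalled in \S\ref{sec Theta Correspondence}. In the going-down direction these results give the following. When $H$ lies in the going-down tower and the lift is nonzero (the hypothesis of the lemma, which I read as including $\theta(\pi)\neq 0$ at the relevant level), the Langlands quotient data of $\theta(\pi)$ is obtained from that of $\pi$ by deleting the segment $\chi|\cdot|^{(m-1)/2},\dots,\chi|\cdot|^{-(m-1)/2}$, where $m$ is the difference of ranks. Equivalently, the parameter transforms as $\phi_{\theta(\pi)}\oplus(\text{segment})=\phi_\pi\chi^{\pm1}$, with the character twist dictated by the splitting data.

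I will then carry out a case check matching this formula against the definition of $(\phi_\pi)'$. The cases are indexed by the type of dual pair (symplectic–even orthogonal, metaplectic–odd orthogonal, unitary) and by which tower the first occurrence lies in. I expect this matching to be the main obstacle: one must verify that the segment removed in the going-down case really is contained in $\phi_\pi$, and that the twists and sign/character conventions in the definition of $\phi'$ agree with those in the Atobe--Gan theorem. My first attempt will be to use the going-down characterization, namely that the first occurrence is at most the conservation bound, which forces $\phi_\pi$ to contain $\chi S_{m}$-type pieces. I will then compare degree counts, $\dim\phi'=\dim\phi-m$, to pin down the segment uniquely.
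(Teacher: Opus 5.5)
Your reduction is the same as the paper's: identify $\phi_{\theta(\pi)}$ using Atobe--Gan and Baki\'c--Hanzer, then apply $\Pi_\phi\subseteq\Pi_\phi^{\ABV}$ (Proposition~\ref{prop Lpacket in ABV}). The gap is that you have the wrong target parameter, and so the wrong key formula.

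In the paper the ``going-down tower'' is the Witt tower $\mathcal{V}^-$ that realizes the smaller first occurrence $m^-(\pi)$. It does not mean lifting to a smaller group. The precise statement (Lemma~\ref{lemma going down Adams ABV}) concerns $\theta^-_{-\alpha}(\pi)$ for suitable $\alpha>0$, i.e.\ lifts to a \emph{larger} group $H$, and
\[
(\phi_\pi)'=(\phi_\pi)_\alpha=(\chi_W\chi_V^{-1}\otimes{}^c\phi_\pi^\vee)\oplus\bigoplus_{i=0}^{\alpha-1}\chi_W|\cdot|^{\frac{\alpha-1}{2}-i}.
\]
This \emph{adds} a $\chi_W$-segment of length $\alpha$, just as $\psi_\alpha$ adds $\chi_W\otimes S_1\otimes S_\alpha$ on either tower. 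The deletion formula you invoke is the Atobe--Gan description of going-down lifts with $\alpha<0$, where the target parameter has smaller dimension than $\phi_\pi$. That is a different statement. Consequently:
\begin{itemize}
\item your claimed key step is false in the setting of the lemma;
\item your degree count $\dim\phi'=\dim\phi-m$ contradicts the fact that $\phi'$ is a parameter of $H$, whose dimension is $\dim\phi_\pi+\alpha$;
\item the containment check you plan would simply fail, e.g.\ whenever $\alpha>\dim\phi_\pi$ there is nothing of length $\alpha$ to remove.
\end{itemize}

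What you need is the other half of the Atobe--Gan/Baki\'c--Hanzer description; Baki\'c--Hanzer is required for non-tempered $\pi$. For $V_m\in\mathcal{V}^-$ with $\alpha>0$, one checks from the table that $\dim V_m\geq n+\epsilon_0+1\geq m^-(\pi)$. Hence $\theta^-_{-\alpha}(\pi)\neq 0$ automatically by persistence, and you need no extra nonvanishing hypothesis. Moreover,
\[
\phi_{\theta^-_{-\alpha}(\pi)}=(\chi_W\chi_V^{-1}\otimes\phi_\pi)\oplus\bigoplus_{i=0}^{\alpha-1}\chi_W|\cdot|^{\frac{\alpha-1}{2}-i}.
\]
Since ${}^c\phi_\pi^\vee=\phi_\pi$ for these classical groups, this is exactly $(\phi_\pi)_\alpha$. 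The lemma then follows at once from Proposition~\ref{prop Lpacket in ABV}, with no case analysis. The restriction to $\mathcal{V}^-$ matters only because on the going-up tower $\phi_{\theta^+_{-\alpha}(\pi)}$ can differ from $(\phi_\pi)_\alpha$.
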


This follows from the computation of the $L$-parameter of $\theta(\pi)$ (\cite{AG17a, BH21}) and the fact that the $L$-packet $\Pi_\phi$ is contained in $\Pi_{\phi}^\ABV$ (\cite[Proposition 7.13(b)]{CFMMX22}, see Proposition \ref{prop Lpacket in ABV}). 

Our second piece of evidence is that we establish the analogue of M{\oe}glin's result (\cite[Theorem 6.1]{Moe11c}) for ABV-packets of general linear groups (which are dual pairs of type II). That is, let $G=\GL_n(F)$ and $H=\GL_m(F)$. For an irreducible admissible representation $\pi$ of $\GL_n(F)$, we let $\theta(\pi)$ be the irreducible admissible representation of $\GL_m(F)$ which is the image of $\pi$ under the local theta correspondence (see Theorem \ref{thm theta GL}). We verify Conjecture \ref{conj Adams ABV intro} when $m\gg n$ (Theorem \ref{thm GL main thm}).

\begin{thm}\label{thm GL main thm intro}
    Suppose that $\pi\in\Pi_\phi^\ABV$ for some $L$-parameter $\phi$ of $\GL_n(F).$ If $m\gg n,$ then $\theta(\pi)\in\Pi_{\phi'}^\ABV.$ 
\end{thm}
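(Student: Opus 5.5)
We reduce everything to the geometry of Vogan varieties for $\GL$, where ABV-packets are described by microlocal conditions on orbit closures.

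\textbf{Inputs.} For type II dual pairs with $m\gg n$, the theta lift has a closed form (Theorem \ref{thm theta GL}). If $\pi$ has $L$-parameter $\phi_\pi$ (a multisegment), then $\phi_{\theta(\pi)}$ is
\[
\phi_{\theta(\pi)}=\phi_\pi^\vee\oplus \bigoplus_{j} |\cdot|^{(m-n-1)/2-j+1},
\]
up to the normalizing character. Here the second summand is the $L$-parameter of a character of $\GL_{m-n}$ whose Zelevinsky segment is very long. Accordingly, the explicit $\phi'$ should be $\phi^\vee\oplus\chi_{m-n}$, where $\chi_{m-n}$ denotes this fixed segment-type parameter (the $L$-parameter $\phi_\psi$ of $|\cdot|^s\otimes S_1\otimes S_{m-n}$).

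For $\GL_n$, Vogan varieties and ABV-packets are explicit. The set $\Pi^\ABV_\phi$ consists of those $\pi$ for which the conormal bundle of the orbit $C_\phi$ lies in the characteristic cycle, or more precisely in the microlocal support, of $\IC(\mathbbm{1}_{C_{\phi_\pi}})$. In particular it requires $C_\phi\subseteq \overline{C_{\phi_\pi}}$.

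\textbf{Step 1: contragredient.} Show $\pi\in\Pi^\ABV_\phi$ if and only if $\pi^\vee\in\Pi^\ABV_{\phi^\vee}$. The isomorphism $V_\lambda\cong V_{\lambda^\vee}$ (transpose-inverse, $x\mapsto {}^t x$ after reindexing) carries orbits to orbits, $C_\phi\mapsto C_{\phi^\vee}$, and preserves IC sheaves and microlocal vanishing cycles.

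\textbf{Step 2: add a long segment.} Let $\lambda'$ be the infinitesimal parameter of $\phi'$. When $m\gg n$, the long segment $\chi_{m-n}$ has exponents that cover, and extend far beyond, the cuspidal support of $\phi^\vee$ on both sides. I would realize the pair $(C_{\phi_\pi^\vee\oplus\chi},C_{\phi^\vee\oplus\chi})$ inside $V_{\lambda'}$ as the image of $(C_{\phi_\pi^\vee},C_{\phi^\vee})$ under a map that preserves microlocal data. The candidate is a Zelevinsky/Lusztig-type reduction: slice at the orbit of the long segment, or equivalently apply a M\oe glin–Waldspurger derivative. Concretely, we need the following.

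\begin{enumerate}
\item $\overline{C_{\phi_\pi^\vee\oplus\chi}}$ is, locally near $C_{\phi^\vee\oplus\chi}$, smoothly equivalent to $\overline{C_{\phi_\pi^\vee}}$ near $C_{\phi^\vee}$, times an affine space.
\item The conormal geometry is compatible. Equivalently, in Zelevinsky's combinatorics, the relevant Kazhdan–Lusztig/microlocal multiplicity for the pair of multisegments $(\mathfrak m_\pi\cup\Delta,\ \mathfrak m\cup\Delta)$ equals that for $(\mathfrak m_\pi,\mathfrak m)$ when $\Delta$ is a segment that is long relative to everything.
\end{enumerate}

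For equal-characteristic-zero, genuinely geometric statements one can use the known description of the Pyasetskii involution and microlocal packets for $\GL_n$. The ABV-packets of $\GL_n$ are computed via Kashiwara–Saito and the Mœglin–Waldspurger involution. Specifically, $\pi\in\Pi^\ABV_\phi$ relates to the Zelevinsky involution: it is equivalent to $\KS$ of the conormal bundle appearing in the characteristic cycle of the IC sheaf. The stability we need then reduces to the fact that adding a long segment, which under the involution becomes many singletons at distinct exponents, is compatible with the Mœglin–Waldspurger algorithm once $m-n$ is large.

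\textbf{Main obstacle.} The hard part is Step 2, namely proving this stability of microlocal multiplicities under adjoining a long segment. My first attempt is the geometric route. Embed $V_{\lambda^\vee}$ into $V_{\lambda'}$ by a transversal slice through a point of $C_{\chi}$ ``in direct sum position,'' and show this slice meets each orbit $C_{\phi''\oplus\chi}$ transversally in $C_{\phi''}$. This works because, for $m\gg n$, every orbit in $V_{\lambda'}$ whose closure contains $C_{\phi^\vee\oplus\chi}$ has the form $C_{\phi''\oplus\chi}$: the long segment cannot be broken without leaving the closure relations, by a rank-count argument on $\rank(x^k)$ restricted to the extreme eigenspaces. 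Transversality implies the vanishing-cycle functors commute with restriction. If the geometric route fails, the fallback is combinatorial: verify compatibility directly using the formula for the Mœglin–Waldspurger involution under adding a segment that dominates all endpoints.
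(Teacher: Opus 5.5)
\textbf{There is a genuine gap in Step 2.} Your Step 1 is exactly the paper's Lemma~\ref{lemma contragredient ABV}, but Step 2 is built on the wrong orbit.

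The added summand $\chi=\phi^\alpha=\bigoplus_{i=0}^{\alpha-1}|\cdot|^{\frac{\alpha-1}{2}-i}\otimes S_1$ is a string of $S_1$'s. It is the \emph{zero} orbit of $V_{\lambda^\alpha}$, the bottom of the closure order. The long segment $\mathbbm{1}_{W_F}\otimes S_\alpha$ is its Pyasetskii dual, the open orbit. On $V_{\lambda'}$ itself your geometric claims are therefore false, even for $m\gg n$:
\begin{enumerate}
\item Take $\phi^\vee=|\cdot|^{1/2}\oplus|\cdot|^{-1/2}$, $\phi_\pi^\vee=S_2$, and any even $\alpha$. Then $C_{\phi^\vee\oplus\chi}=\{0\}$, while $\overline{C_{\phi_\pi^\vee\oplus\chi}}$ is the cone of maps of rank at most $1$ in the factor $\Hom(\C^2,\C^2)$ between the $q^{\pm 1/2}$-eigenspaces. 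This cone is singular at $0$ and has dimension $3$, whereas $\overline{C_{\phi_\pi^\vee}}=V_{\lambda^\vee}\cong\C$ is smooth of dimension $1$. So your claim (1) fails; even the codimensions differ.
\item Every orbit of $V_{\lambda'}$ contains $\{0\}$ in its closure, including the open orbit $S_\alpha\oplus S_2$, which is not of the form $C_{\phi''\oplus\chi}$. Going up in the closure order is precisely merging the singletons of $\chi$ with each other and with $\phi^\vee$, so ``the long segment cannot be broken'' is backwards here.
\item $\varepsilon(V_{\lambda^\vee}\times\{0\})$ is not a transversal slice. The bracket $[\mathrm{Lie}(H_{\lambda'}),\varepsilon(x,0)]$ has zero component in the $\Hom$-blocks between consecutive $\chi$-eigenlines, and these are exactly the missing normal directions.
\end{enumerate}
This particular $\pi$ does not lie in $\Pi^\ABV_\phi$, but your slice mechanism is a general geometric statement, and it is the mechanism that fails. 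So nothing in the proposal actually transports $\Evs$. The combinatorial fallback names no mechanism either: microlocal multiplicities for $\GL_n$ are not controlled by a known formula in the M\oe glin--Waldspurger involution, as the Kashiwara--Saito example illustrates.

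\textbf{What the paper does instead.} It never uses a local product structure. It works on the fixed-point subvariety $V_{\lambda^\times}=V_{\lambda^\vee}\times V_{\lambda^\alpha}=V^s_{\lambda_\alpha}$ and proves the fixed point formula (Theorem~\ref{thm GL fixed point formula}) under the hypothesis $\frac{\alpha-1}{2}\ge m^\beta_{\mathbbm{1}_{W_F}}(\phi^\vee)$. The long segment enters only through the covector:
\begin{enumerate}
\item The M\oe glin--Waldspurger algorithm gives $\widehat{\phi_\alpha}=\widehat{\phi^\vee}+\mathbbm{1}_{W_F}\otimes S_\alpha$ (Lemma~\ref{lemma dual of phi_alpha}).
\item With Corollary~\ref{cor conorm closure intersect}, this shows that regular conormal vectors of $C_{\phi^\times}$ remain regular in $\Lambda_{C_{\phi_\alpha}}$ (Proposition~\ref{prop regular conorm}).
\item A covector whose $\phi^\alpha$-component lies in the open dual orbit forces $x=\varepsilon(x',0)$ (Lemma~\ref{lemma generic covector factors}). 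Hence $s$ is trivial in $A^{\mic}_{C_{\phi_\alpha}}$ (Lemma~\ref{lemma microlocal fundamental group is trivial}).
\end{enumerate}
The Kazhdan--Lusztig hypothesis, the factorization over $V_{\lambda^\vee}\times V_{\lambda^\alpha}$, and your Step 1 then finish the proof.

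Your slice picture becomes correct only after passing to $V^*_{\lambda'}$. There the relevant orbit is that of $\widehat{\phi^\vee}\oplus\mathbbm{1}_{W_F}\otimes S_\alpha$, the segment $S_\alpha$ is unlinked with everything it contains, and ${}^t\varepsilon(V^*_{\lambda^\vee}\times\{y_S\})$, with $y_S$ in the open orbit, is transversal. Running that version would still require three inputs you do not supply: compatibility of $\Evs$ with Fourier transform, an identification via the M\oe glin--Waldspurger algorithm of the Fourier transform of $\IC(\mathbbm{1}_{C_{(\phi_\pi)_\alpha}})$, and a non-characteristic restriction statement for $\Evs$.
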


We remark that ABV-packets for $\GL_n(F)$ are not necessarily singletons. Indeed, it was demonstrated in \cite{CFK22} that there is an ABV-packet of $\GL_{16}(F)$ of size 2. The existence non-singleton ABV-packets presents the primary obstacle in the proof of Theorem \ref{thm GL main thm intro}. As an application of Theorem \ref{thm GL main thm intro}, we obtain that there are ABV-packets of $\GL_n(F)$ of size at least 2 where $n=16, 18, 20$ or $n\geq 21$ (Corollary \ref{cor nonsingleton}).

In \cite[Theorem 25.8]{ABV92}, a geometric analogue of endoscopy (for real groups) is given through the use of a fixed point formula.
In \cite[Proposition 3.2]{CR26}, an analogue of this fixed point formula is established for local Arthur parameters of $\GL_n(F)$. However, for our setting, we require an extension of this result to certain $L$-parameters, not necessarily of Arthur type. To establish a fixed point formula, one needs to relate the regular parts of certain conormal bundles (we defer to \S\ref{sec Conormal bundles} for the terminology and precise meanings). This amounts to studying the intersections of the closures of conormal bundles which is generally a difficult problem. Instead, we relate these intersections to the intersections of closures of certain conormal bundles in a sub-Vogan variety (Corollary \ref{cor conorm closure intersect}). From this result, we are able to extract the fixed point formula (Theorem \ref{thm GL fixed point formula}) in our setting. 

The fixed point formula then shows that 
$\theta(\pi)\in\Pi_{\phi'}^\ABV$ if and only if  $\pi^\vee\in\Pi_{\phi^\vee}^\ABV,$ where $\pi^\vee$ and $\phi^\vee$ are the contragredient of $\pi$ and $\phi.$ In general, it is expected that ABV-packets are preserved by the contragredient. We verify this for $\GL_n(F)$ (Lemma \ref{lemma contragredient ABV}) from which 
Theorem \ref{thm GL main thm intro} follows.

\begin{lemma}
    Let $\pi$ be an irreducible admissible representation of $\GL_n(F)$ and $\phi$ be an $L$-parameter of $\GL_n(F).$
    We have $\pi\in\Pi_\phi^\ABV$ if and only if $\pi^\vee\in\Pi_{\phi^\vee}^\ABV.$
\end{lemma}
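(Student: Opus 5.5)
We will realize the contragredient geometrically, as an isomorphism of Vogan varieties that matches all the data defining ABV-packets. Recall that for $\GL_n(F)$ the ABV-packet of $\phi$ is defined through its infinitesimal parameter $\lambda=\lambda_\phi$. Let $V_\lambda$ be the Vogan variety, acted on by $H_\lambda=Z_{\GL_n(\C)}(\lambda)$. Then
\[
\Pi_\phi^\ABV=\{\pi : \lambda_\pi=\lambda,\ (\Lambda_{C_\phi})_{\mathrm{reg}}\subset \mathrm{supp}\,\mathrm{Evs}(\mathcal{P}(\pi))\},
\]
where $C_\phi\subset V_\lambda$ is the orbit of $\phi$ and $\mathcal{P}(\pi)$ is the simple perverse sheaf (an $\IC$ of an orbit with trivial local system, since component groups are trivial for $\GL_n$) matched with $\pi$ by the local Langlands correspondence. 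The plan has three steps.

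\textbf{Step 1 (contragredient on parameters).} For $\GL_n$, local Langlands sends $\pi^\vee$ to $\phi_\pi^\vee={}^t\phi_\pi^{-1}$. Consider the automorphism $\theta(g)={}^tg^{-1}$ of $\GL_n(\C)$, viewed as $\widehat{G}$. It takes $\lambda$ to $\lambda^\vee$ and gives an isomorphism $H_\lambda\cong H_{\lambda^\vee}$. On Lie algebras its differential is $X\mapsto -{}^tX$, which carries the $q$-eigenspace of $\Ad\lambda(\Fr)$ to the $q$-eigenspace of $\Ad\lambda^\vee(\Fr)$. It therefore restricts to an isomorphism $\iota:V_\lambda\to V_{\lambda^\vee}$ that is equivariant for the actions of $H_\lambda$ and $H_{\lambda^\vee}$ intertwined by $\theta$.

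\textbf{Step 2 (orbits and sheaves).} Under the identification of points of $V_\lambda$ with $L$-parameters of infinitesimal parameter $\lambda$, the map $\iota$ sends the point $\phi(1,\left(\begin{smallmatrix}1&1\\0&1\end{smallmatrix}\right))$ to the corresponding point for $\phi^\vee$, up to the sign coming from $-{}^tX$. This sign is harmless: it is absorbed by conjugating by a suitable element of the torus in the image of $\lambda$. Hence $\iota(C_\phi)=C_{\phi^\vee}$, and pushing forward gives $\iota_*\mathcal{P}(\pi)=\mathcal{P}(\pi^\vee)$. Here we use that $\mathcal{P}(\pi)=\IC(C_{\phi_\pi})$, since $\IC$ is compatible with isomorphisms.

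\textbf{Step 3 (conormal bundles and the microlocal condition).} The isomorphism $\iota$ induces an isomorphism of cotangent bundles $T^*V_\lambda\cong T^*V_{\lambda^\vee}$ that respects the conormal variety $\Lambda=\bigsqcup_C \Lambda_C$. It sends $\Lambda_{C_\phi}$ to $\Lambda_{C_{\phi^\vee}}$ and, being an isomorphism, sends regular parts to regular parts, because regularity is defined intrinsically by not meeting the closures of the other conormal bundles. The vanishing-cycles functor $\mathrm{Evs}$ is compatible with isomorphisms, so $\iota$ carries $\mathrm{supp}\,\mathrm{Evs}\,\mathcal{P}(\pi)$ onto $\mathrm{supp}\,\mathrm{Evs}\,\mathcal{P}(\pi^\vee)$. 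Combining these, $\pi\in\Pi_\phi^\ABV$ if and only if $\pi^\vee\in\Pi_{\phi^\vee}^\ABV$. Since $(\pi^\vee)^\vee=\pi$ and $(\phi^\vee)^\vee=\phi$, one direction suffices.

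\textbf{Main obstacle.} The delicate point is Step 2: checking that $\iota$ intertwines the parametrization of orbits by $L$-parameters, including the sign issue and the normalization of Frobenius, rather than sending $C_\phi$ to the orbit of some twisted parameter. We would first verify this on a single Zelevinsky segment, where $V_\lambda$ is a quiver of type $A$ and the orbits are given by rank data that are visibly preserved by transpose-inverse (which reverses the arrows and inverts the eigenvalues). We would then extend to the general case via the product decomposition of $V_\lambda$ over unramified twist classes of cuspidal supports.
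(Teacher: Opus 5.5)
Your proposal is correct and is essentially the paper's argument. The paper first reduces to an unramified $\lambda$ with exponents in a single $\mathbb{Z}$-coset. It then writes down the same transpose isomorphism $V_\lambda\cong V_{\lambda^\vee}$ in quiver coordinates, $y\mapsto({}^ty_{r-1},\dots,{}^ty_1)$, checks via rank data that $C_\phi\mapsto C_{\phi^\vee}$, and transports the ABV data across. You phrase the same construction intrinsically via $\theta(g)={}^tg^{-1}$.

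Your ``main obstacle'' is in fact not one. Since $\phi^\vee=\theta\circ\phi$, you have $x_{\phi^\vee}=d\theta(x_\phi)$ exactly, so neither the sign argument nor the rank-data verification is needed. That rank check, which you propose as a fallback, is exactly what the paper does.
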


For a general reductive dual pair $(G,H)$, we expect a similar argument to yield an analogue of Theorem \ref{thm GL main thm intro}, but there are complications that need to be resolved. For example, we made use of the fact that $L$-packets of $\GL_n(F)$ are singletons. In general, this is not the case and so one needs to keep track of the enhanced $L$-parameter of the representations. This is done in \cite{AG17a, BH21}, but it will need to be translated into Vogan's perspective on the local Langlands correspondence (\cite{Vog93}; see also \S\ref{sec ABV-packets}).  Another complication is that we made use of the M{\oe}glin-Waldspurger algorithm (\cite[Theoreme II.13]{MW86}) to compute the Pyatetskii dual of $L$-parameters of $\GL_n(F)$, (e.g., see Lemma \ref{lemma dual of phi_alpha}). This algorithm needs to be generalized for other groups. This will be accomplished in a forthcoming joint work with Lo (\cite{HL}).
A third problem arises if $H$ is the ``going-up'' tower for $\pi.$  In this case, the $L$-parameter of $\theta(\pi)$ is not necessarily $\phi_\pi'$. This is a consequence of \cite[Theorem 4.5]{AG17a} (and \cite[Theorem 6.8]{BH22};  see also \cite[p. 558]{Sak17}). 
All of these issues (and more) would appear in adapting our approach to Theorem \ref{thm GL main thm intro} to general reductive dual pairs.

Finally, we make some remarks on why it is desirable to have the Adams conjecture for ABV-packets.
\begin{enumerate}
    \item Langlands originally conjectured that the local theta correspondence was an instance of Langlands functoriality (\cite{Lan75}). This conjecture encompasses all irreducible admissible representations, not just those of Arthur type. By passing from local Arthur packets to ABV-packets, the Adams conjecture now applies to any irreducible admissible representation and is hence closer to Langlands' original conjecture.
    \item As remarked earlier, the Adams conjecture for local Arthur packets can and does fail. The critical failure was when the theta lift was not of Arthur type. By considering ABV-packets, we are able to resolve this failure.
    \item The Adams conjecture for local Arthur packets fits into the framework of the relative Langlands program (\cite{BZSV24}). In this theory, the Adams conjecture is predicted to be the ``dual problem'' to the Gan-Gross-Prasad conjectures (\cite{GGP12, GGP20}, see \cite[Remark 7.12]{GW25}). As conjectural generalizations of local Arthur packets, it is natural to ask if ABV-packets may play a role in the relative Langlands program. Having the Adams conjecture for ABV-packets would be suggestive of a positive answer to this question. It would be very interesting if there were an analogue of the Gan-Gross-Prasad conjectures for ABV-packets. 
    \item It is an open problem to determine when the theta lift of a unitary representation is also unitary. In the stable range this is known to be true (\cite{Li89}), but remains open more generally. The failure of the Adams conjecture for local Arthur packets at a specific local Arthur parameter conjecturally determines a lower bound for this problem (see Remark \ref{rmk nonunitary}). Determining this bound currently remains mysterious; however, we suspect that the Adams conjecture for ABV-packets may play a role in this (see Example \ref{exmp curious}).
\end{enumerate}

Here is the outline of this article. In \S\ref{sec Setup}, we recall the local theta correspondence, local Arthur packets, and the Adams conjecture for local Arthur packets precisely (Conjectures \ref{conj Adams naive} and \ref{conj Adams refined}). In \S\ref{sec ABV-packets}, we recall the definition of ABV-packets and discuss the Adams conjecture for ABV-packets for type I dual pairs (Conjectures \ref{conj Adams ABV naive} and \ref{conj Adams ABV refined}). In \S\ref{sec The Adams conjecture for general linear groups}, we discuss the Adams conjecture for ABV-packets for type II dual pairs (Conjectures \ref{conj Adams ABV GL naive} and \ref{conj Adams ABV GL refined}). We provide more detail in this situation and carry out the above strategy to prove Theorem \ref{thm GL main thm intro}. Finally, in Appendix \ref{appenedix fpf}, we carry out the proof of the fixed point formula, Theorem \ref{thm GL fixed point formula}.

\subsection*{Acknowledgments}

The author thanks Jeffrey Adams, Baiying Liu, and Chi-Heng Lo for their comments and support. The author additionally thanks Clifton Cunningham and Mishty Ray for helpful discussions and comments, especially in relation to Theorem \ref{thm GL fixed point formula}.

\section{Background}\label{sec Setup}

Let $F$ be a non-Archimedean local field of characteristic $0$ and $q=q_F$ be the cardinality of the residual field. We set $|\cdot|$ to be the normalized $p$-adic absolute value on $F$. By abuse of notation, we also set $|\cdot|$ to the composition of the $p$-adic absolute value with the determinant. For a set $S$ acted upon by a group $H,$ we let $Z_{H}(S)$ denote the centralizer of $S$ in $H.$ When $S=\{s\}$ is a singleton, we simply write $Z_{H}(S)=Z_H(s).$

We let $E$ be a field such that $[E:F]\leq 2$ and $c\in\mathrm{Gal}(E / F)$ be a generator. We fix a nontrivial additive character $\psi$ of $F$ and let $\psi_E$ be the additive character of $E$ defined by $\psi_E=\psi\circ \mathrm{tr}_{E/F}.$

Let $\epsilon\in\{\pm1\}$,
$W_n$ be a $\epsilon$-Hermitian space of dimension $n$ over $E$, and $V_m$ be an $-\epsilon$-Hermitian space of dimension $m$ over $E.$ We let $\langle\cdot,\cdot\rangle_W:W\times W\rightarrow E$ and $\langle\cdot,\cdot\rangle_V:V\times V\rightarrow E$ denote the $\epsilon$-Hermitian and $-\epsilon$-Hermitian forms of $W$ and $V$, respectively. We set
\begin{align*}
    \epsilon_0=\left\{\begin{array}{cc}
        -\epsilon & \mathrm{if} \ \ E=F,  \\
        0 & \mathrm{otherwise.}
    \end{array}\right.
\end{align*}
The isometry group of $W_n$ and $V_m$ are denoted by $G=G(W_n)$ and $H=H(V_m)$ respectively (except in the below case). For example, when $\epsilon=-1$, $E=F,$ and $n$ and $m$ are even, $G$ is a symplectic group and $H$ is an even orthogonal group. The exceptions are when $E=F,$ $\epsilon=-1,$ $n$ is odd, and $m$ is odd, we set $G=\mathrm{Mp}(W_n)$ and $H=\SO(V_m)$. Similarly, when $E=F,$ $\epsilon=1$, $n$ is odd, and $m$ is even, we set $G=\SO(W_n)$ and $H=\mathrm{Mp}(V_m)$.

Let $\mathbb{H}$ denote a hyperbolic plane. Any $\epsilon$-Hermitian space $W_n$ has a Witt decomposition
\begin{equation}\label{eqn Witt decomp}
    W_n=W_{n_0}+W_{r,r},
\end{equation}
where $n=n_0+2r$, $W_{n_0}$ is anisotropic and $W_{r,r}\cong \mathbb{H}^r.$ The isomorphism class of $W_n$ uniquely determines the Witt index $r$ and the space $W_{n_0}$. Fix an anisotropic $\epsilon$-Hermitian space $W_{n_0}.$ Then we associate a Witt tower to $W_{n_0}$ as follows:
\begin{equation}\label{eqn Witt tower W}
    \mathcal{W}=\{W_{n_0}+W_{r,r} \ | \ r\geq 0\}.
\end{equation}
Similarly, we associate a Witt tower to an anisotropic $-\epsilon$-Hermitian space $V_{n_0}$ via
\begin{equation}\label{eqn Witt tower}
    \mathcal{V}=\{V_{n_0}+V_{r,r} \ | \ r\geq 0\}.
\end{equation}

For brevity, we often write $G'=G,H.$ We let $\Pi(G')$ be the set of equivalence classes of irreducible admissible representations of $G'.$

\subsection{Theta Correspondence}\label{sec Theta Correspondence}

Recall that we fixed an additive character $\psi$ on $F.$ The pair $(G,H)$ forms a reductive dual pair of a certain metaplectic group. We fix a pair of characters $\chi_W, \chi_V$ of $E^\times$ as in \cite[\S3.2]{GI14} and write $\chi=(\chi_W,\chi_V).$ This choice gives a splitting of the metaplectic group through which we may consider the Weil representation $\omega_{W_n, V_m, \psi}$ of $G\times H$ (see \cite[\S4.1]{GI14}).
Given $\pi\in\Pi(G)$, 
we denote the maximal $\pi$-isotypic quotient of the Weil representation by
$$
\pi\boxtimes \Theta_{W_n,V_m,\chi,\psi}(\pi),
$$
where $\Theta_{W_n,V_m,\chi,\psi}(\pi)$ is a smooth representation of $H$ which is called the big theta lift of $\pi.$ We let $\theta_{W_n,V_m,\chi,\psi}(\pi)$, the (little) theta lift of $\pi,$ be the maximal semi-simple quotient of $\Theta_{W_n,V_m,\chi,\psi}(\pi)$. Originally conjectured by Howe (\cite{How79}), the following theorem was first proven by Waldspurger (\cite{Wal90}) when the residual characteristic of $F$ is not 2 and then in full generality by Gan and Takeda (\cite{GT16}) and  Gan and Sun (\cite{GS17}).

\begin{thm}[Howe Duality] Let $\pi_1,\pi_2\in\Pi(G).$
\begin{enumerate}
    \item If $\theta_{W_n,V_m,\chi,\psi}(\pi_2)\neq 0$, then $\theta_{W_n,V_m,\chi,\psi}(\pi_2)$ is irreducible.
    \item If $\pi_1\not\cong\pi_2$ and both $\theta_{W_n,V_m,\chi,\psi}(\pi_1)$ and $\theta_{W_n,V_m,\chi,\psi}(\pi_2)$ are nonzero, then \[\theta_{W_n,V_m,\chi,\psi}(\pi_1)\not\cong \theta_{W_n,V_m,\chi,\psi}(\pi_2).\]
\end{enumerate}
\end{thm}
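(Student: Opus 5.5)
The plan follows the strategy of Gan and Takeda (\cite{GT16}), with the refinement of Gan and Sun (\cite{GS17}) to remove the hypothesis on the residual characteristic. First I reformulate both parts as one statement about Hom spaces. For $\pi\in\Pi(G)$ and $\sigma\in\Pi(H)$, set $m(\pi,\sigma)=\dim\Hom_{G\times H}(\omega_{W_n,V_m,\psi},\pi\boxtimes\sigma)$. Part (1) says that $\Theta(\pi)$ has at most one irreducible quotient, and that it occurs with multiplicity one. Part (2) says that an irreducible $\sigma$ is a quotient of $\Theta(\pi)$ for at most one $\pi$. Together they amount to the following claim: if $m(\pi_1,\sigma)\neq 0$ and $m(\pi_2,\sigma')\neq0$, then $\sigma\cong\sigma'$ implies $\pi_1\cong\pi_2$, and vice versa, with all multiplicities at most one. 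The argument is symmetric in $G$ and $H$, so I treat only $\pi$.

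\textbf{Step 1 (supercuspidal case).} For $\pi$ supercuspidal I would invoke Kudla's result that $\Theta(\pi)$ is irreducible or zero, together with its first-occurrence behaviour along the Witt tower $\mathcal{V}$. The proof goes by computing the Jacquet modules of $\omega$ through Kudla's filtration. In that filtration every piece except the bottom one is induced from a proper parabolic of $G$, so these pieces admit no supercuspidal quotient. What remains is a computation in lower rank.

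\textbf{Step 2 (reduction to the tempered case).} Write a general $\pi$ as the Langlands quotient of a standard module $\mathrm{Ind}_P^G(\tau_1|\cdot|^{s_1}\otimes\cdots\otimes\tau_k|\cdot|^{s_k}\otimes\pi_0)$, with $\pi_0$ tempered and $s_1>\cdots>s_k>0$. Frobenius reciprocity and Kudla's filtration then show that any irreducible quotient $\sigma$ of $\Theta(\pi)$ is a quotient of a standard-type module on the $H$ side, namely one induced from the same exponents and from $\Theta(\pi_0)$, possibly with a shift by a character depending on $\chi$ at the first occurrence. The positivity $s_i>0$ controls the exponents, so only the top piece of the filtration contributes. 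This forces $\sigma$ to be the unique Langlands quotient determined by $\theta(\pi_0)$, and it makes $\pi$ recoverable from $\sigma$. The whole question is thereby reduced to tempered $\pi$.

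\textbf{Step 3 (tempered case; the main obstacle).} This step is the heart of the proof. I would use the doubling see-saw, which relates $G\times G\subset G(W\oplus -W)$ to $H$. It gives a natural map from $\Hom_H(\Theta(\pi_1)\otimes\Theta(\pi_2)^{\vee},\mathbb{C})$ into $\Hom_{G\times G}(\omega_{W\oplus -W,V}|_{G\times G},\pi_1\boxtimes\pi_2^\vee)$. By Rallis, the latter space is computed via the degenerate principal series $I(s)$ of $G(W\oplus -W)$ and its $G\times G$-orbit filtration. For tempered $\pi_i$ and the relevant $s$, only the open orbit contributes, and that contribution is nonzero only if $\pi_1\cong\pi_2$, in which case the space is one-dimensional. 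Consequently, if some $\sigma$ is a common quotient of $\Theta(\pi_1)$ and $\Theta(\pi_2)$, then $\pi_1\cong\pi_2$, and the multiplicity is one.

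The delicate points are two. First, one must verify that the closed orbits contribute nothing at the relevant $s$; this uses temperedness and the exponent analysis. Second, one must show that the image of $I(s)$ in the Weil representation, given by the Rallis quotient, is large enough that nonvanishing transfers. For $p=2$, where Waldspurger's lattice methods are unavailable, I would follow Gan--Sun. They use the MVW involution and the fact that $\pi^\vee\cong\pi^{\mathrm{MVW}}$ to show that $\Theta(\pi)$ has a unique irreducible quotient, and their argument avoids any residue-characteristic hypothesis.
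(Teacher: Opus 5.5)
The paper does not prove this theorem; it quotes it from \cite{Wal90}, \cite{GT16} and \cite{GS17}. Your outline is modeled on \cite{GT16}, but two attributions are off. First, \cite{GT16} already works in arbitrary residual characteristic, and the doubling see-saw and the MVW involution are their tools. Second, \cite{GS17} treats the remaining quaternionic dual pairs, as its title says.

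The substantive problem is Step 3, which you rightly call the heart, and which fails as stated. Restricted to $G\times G$, the degenerate principal series has the open-orbit piece $C_c^\infty(G)$ together with boundary pieces for $t\geq1$. These are induced from $P_t\times P_t$ of $\chi_V|\det_t|^{s+t/2}\otimes\chi_V|\det_t|^{s+t/2}\otimes C_c^\infty(G(W_t))$, up to twists. Such a piece maps onto $\pi_1\boxtimes\pi_2^\vee$ as soon as the $\pi_i$ have the corresponding Jacquet modules. Casselman's criterion rules this out for tempered $\pi_i$ only when $s$ lies on one side of a bound. The relevant $s=s_0$, however, is fixed by $\dim V-\dim W$ and takes values of both signs. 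So in one range of $(n,m)$ suitable tempered, even square-integrable, representations do contribute, and ``only the open orbit contributes'' is false. You cannot escape this by switching sides, since part (2) forces you to double $W$ and part (1) to double $V$.

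The idea you are missing is that Gan--Takeda do not kill the boundary terms but exploit them. The characters $\chi_V|\det_t|^{s_0+t/2}$ occurring there are exactly those through which $\GL_t$ acts on pieces of Kudla's filtration of the Jacquet modules of $\omega$. A nonzero boundary term therefore puts such a character into a Jacquet module of $\pi_1$. That is what lets one pass $\sigma$ to a theta lift for the smaller pair $(G(W_t),H(V))$ and conclude by induction on $\dim W$, with no temperedness assumption.

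The see-saw step also needs correcting. It must pair $\Theta(\pi_1)$ with the big theta lift of $\pi_2^\vee$ for the conjugate Weil representation, i.e.\ with $\Theta(\pi_2)^{\mathrm{MVW}}$, which surjects onto $\sigma^\vee$. The contragredient $\Theta(\pi_2)^\vee$ only contains $\sigma^\vee$ as a subrepresentation. So with your formulation a common quotient $\sigma$ produces no nonzero element of the Hom space you wrote down.

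Step 2 has the same defect on the other filtration. Frobenius reciprocity needs $\pi$ as a subrepresentation, so the Langlands quotient must be embedded in the induced representation with reversed exponents. The pieces of Kudla's filtration you discard then do contribute whenever one of these exponents matches a character $\chi_V|\det_t|^{\lambda_t}$ of the filtration. Since $\lambda_t$ depends on $\dim V-\dim W$ and can have either sign, the condition $s_i>0$ does not prevent this.

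Moreover, $\theta(\pi_0)$ is in general not tempered; on the going-down tower its parameter contains the characters $\chi_W|\cdot|^{\frac{\alpha-1}{2}-i}$. So the $H$-side module built from the $s_i$ and $\Theta(\pi_0)$ need not be a standard module and can have several irreducible quotients. Hence neither ``$\sigma$ is the unique Langlands quotient'' nor ``$\pi$ is recoverable from $\sigma$'' follows. Finally, Step 1 is never used afterwards.
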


For our purposes, we need to consider several towers of theta lifts at once. Recall that $V_m$ lies in some Witt tower (see \eqref{eqn Witt tower}). If $E=F$ and $\epsilon=-1,$ then there is only once choice of anisotropic $V_{n_0}.$ Otherwise, there are always two towers of the form \eqref{eqn Witt tower}, say $\mathcal{V}'$ and $\mathcal{V}''.$ Fix a representation $\pi\in\Pi(G(W_n)).$ We define the first occurrence of $\pi$ in the tower $\mathcal{V}'$, denoted $m'(\pi)$, to be the minimal integer $m'=\dim V'$ such that $V'\in\mathcal{V}'$ and $\theta_{W_n,V',\chi,\psi}(\pi)\neq 0.$ We define $m''(\pi)$ analogously for the tower $\mathcal{V}''$. We define 
\begin{align*}
    m^+(\pi)&=\max\{m'(\pi),m''(\pi)\}, \\
    m^-(\pi)&=\min\{m'(\pi),m''(\pi)\}.
\end{align*}

When $E=F$ and $\epsilon=1,$ we have that $G(W_n)=\OO_{n}(F)$. In this case, we have representations $\pi$ and $\pi\otimes\det$ of $G(W_n).$ Thus for $V_m\in\mathcal{V}$, we have ``towers''
\[
\theta_{W_n,V_m,\chi,\psi}(\pi) \ \ \mathrm{and} \ \ \theta_{W_n,V_m,\chi,\psi}(\pi\otimes \det).
\]
We let $m(\pi)$ be the minimal integer $m=\dim V$ such that $V\in\mathcal{V}$ and $\theta_{W_n,V,\chi,\psi}(\pi)\neq 0.$ We define
\begin{align*}
        m^+(\pi)&=\max\{m(\pi),m(\pi\otimes\det)\}, \\
    m^-(\pi)&=\min\{m(\pi),m(\pi\otimes \det)\}.
\end{align*}

In general, when $\pi\in\Pi(G(W_n))$, the conservation relation give a relation between $m^+(\pi)$ and $m^-(\pi).$ Note that $n=\dim(W_n).$

\begin{thm}[Conservation relation, \cite{SZ15}]\label{thm conservation relation}
    Let $\pi\in\Pi(G(W_n)).$ Then
    $$
    m^+(\pi)+m^-(\pi)=2n+2\epsilon_0+2.
    $$
\end{thm}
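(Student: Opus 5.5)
The statement is the conservation relation of Sun and Zhu; we are taking it as an input from \cite{SZ15}. If I had to prove it, I would follow their strategy and split it into two inequalities: the \emph{lower bound}
\[
m^+(\pi)+m^-(\pi)\geq 2n+2\epsilon_0+2
\]
and the \emph{upper bound}
\[
m^+(\pi)+m^-(\pi)\leq 2n+2\epsilon_0+2 .
\]
Two standard facts are used throughout. The first is Kudla's persistence principle: once $\theta_{W_n,V,\chi,\psi}(\pi)\neq0$, the lift is nonzero for all larger members of the same tower. So each tower is controlled by its first occurrence index. The second is the doubling see-saw. Here we set $\mathbb{W}=W_n\oplus W_n^-$, where $W_n^-$ is $W_n$ with its form negated. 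The see-saw relates the dual pairs $(G(W_n)\times G(W_n^-),H(V))$ and $(G(\mathbb{W}),H(V)\times H(V))$.

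For the lower bound, suppose $\pi$ occurs at level $m_1$ in one tower and at level $m_2$ in the other. In the orthogonal case $\epsilon=1$, $E=F$, the other tower means the $\pi\otimes\det$ version. The see-saw then says that $\pi\otimes\pi^\vee$ (suitably twisted by $\chi$) is a quotient of the restriction of the Rallis quotients $R_{n}(V_{m_1})$ and $R_n(V_{m_2})$. These are the images of the Weil representations in the degenerate principal series $I(s)$ of $G(\mathbb{W})$ induced from the Siegel parabolic.

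By the Kudla--Rallis and Kudla--Sweet analysis of $I(s)$, for
\[
m_1+m_2<2n+2\epsilon_0+2
\]
these two quotients sit at complementary points $s=\pm s_0$, with $s_0$ determined by $m_1$ and $m_2$. The corresponding submodules intersect trivially, or the relevant intertwining operator kills one of them. Pairing with the doubling zeta integral, which is nonvanishing on $\pi\otimes\pi^\vee$, then gives a contradiction. The dichotomy between the two towers enters through the sign of the doubling epsilon factor, or equivalently the central character of $H$.

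For the upper bound, I would work at the point $s=0$. There $I(0)$ is, by Kudla--Rallis and Sweet, the sum of the two Rallis quotients $R_n(V')+R_n(V'')$. These come from the two spaces of dimension $n+\epsilon_0+1$ in the two towers, and these spaces have the same dimension but opposite invariants. The doubling zeta integral gives a nonzero $G(W_n)\times G(W_n^-)$-invariant functional on $I(0)\otimes(\pi\otimes\pi^\vee)^\vee$. Hence $\pi$ lifts to at least one of the two spaces, so
\[
m^-(\pi)\leq n+\epsilon_0+1 .
\]
This alone is not sharp, and this is the main obstacle. Sun and Zhu refine the argument with a filtration of the Weil representation of the split space $V'\oplus V''^{-}$ and an invariant-distribution argument. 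The aim is to show that when $\pi$ occurs early at $m^-$ in one tower, it must occur by the complementary level $2n+2\epsilon_0+2-m^-$ in the other. The Gelfand--Kazhdan style vanishing of invariant distributions supported on non-open orbits is the delicate step. I would try to realise it by restricting to the open orbit of $G(\mathbb{W})$ on the relevant Lagrangian Grassmannian.

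Combining the two inequalities with persistence yields the equality.
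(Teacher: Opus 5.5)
The paper does not prove this statement. It is quoted from \cite{SZ15} and used only to get $m^+(\pi)\ge n+\epsilon_0+1\ge m^-(\pi)$ and to define the going-up and going-down towers. Your first sentence, treating it as an imported theorem, is exactly what the paper does, and nothing more is required.

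The sketch you append, however, would not work as written. With the paper's normalizations, the Rallis quotient $R_n(V_m)$ lies in $I(s_0)$ with $s_0=\frac{m-(n+\epsilon_0)}{2}$. For example, this is $\frac{m-n}{2}$ for unitary groups and $\frac{m-n-1}{2}$ for $\Sp(W_n)$. Several of your claims fail against this.
\begin{itemize}
\item The complementary points $\pm s_0$ correspond to $\dim V_1+\dim V_2=2n+2\epsilon_0$ exactly, not to an arbitrary sum below $2n+2\epsilon_0+2$. In the lower bound you must first use persistence to reach this sum.
\item The point $s=0$ corresponds to $\dim V=n+\epsilon_0$, not $n+\epsilon_0+1$. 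It is not a Rallis point at all when $n+\epsilon_0$ has the wrong parity. For instance, for $\Sp(W_n)$--$\OO(V_m)$ with $m$ even, the main case of this paper, $n+\epsilon_0=n+1$ is odd, so your $s=0$ step is vacuous there.
\item $V'\oplus V''^-$ is not split when $V'$ and $V''$ lie in different towers. Its Witt class is $[V']-[V'']\neq 0$, and this non-splitness is precisely where the two towers enter. The epsilon factor only decides the tower at $s=0$, a refinement the conservation relation does not need.
\item In the lower bound you never say where the contradiction comes from. Having $\pi\otimes\pi^\vee$ as a quotient of both $R_n(V_1)$ and $R_n(V_2)$, together with a doubling functional on $I(s_0)$, does not by itself give one; you need a further input. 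One clean route is the see-saw for $V_1\oplus V_2^-$. It turns simultaneous occurrence into occurrence of a one-dimensional representation for $H(V_1\oplus V_2^-)$, so the bound reduces to a first-occurrence statement in a non-split tower.
\end{itemize}

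The upper bound is the more serious gap. It is equivalent to the following: for every pair $V_1,V_2$ in opposite towers with $\dim V_1+\dim V_2=2n+2\epsilon_0$, $\pi$ occurs in at least one of them. (Use the obvious convention when one dimension falls below the bottom of its tower.) Your $s=0$ argument treats only the symmetric pair, and only when it exists. For $s_0\neq 0$ you need two ingredients:
\begin{itemize}
\item a structural statement about $I(s_0)$, for example that $I(s_0)/R_n(V_2)$ is a quotient of $R_n(V_1)$, with $V_2$ the larger space;
\item a nonzero leading term of the doubling zeta integral at $s_0$.
\end{itemize}
Together these force $\pi\otimes\pi^\vee$ to be a quotient of $R_n(V_2)$ or of $R_n(V_1)$, hence $\pi$ occurs in $V_2$ or $V_1$. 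Your sketch supplies neither ingredient and points instead to an unspecified invariant-distribution argument. So, taken as a proof rather than a citation, it establishes neither the upper bound nor the equality.
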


As a consequence, we have that $m^+(\pi)\geq n+\epsilon_0+1\geq m^-(\pi).$ Also, if one inequality is strict, then both inequalities are strict. In this situation, we call the tower whose first occurrence is $m^+(\pi)$ the ``going-up'' tower for $\pi$ and denote it by $\mathcal{V}^+.$ Similarly, we call the tower whose first occurrence is $m^-(\pi)$ the ``going-down'' tower for $\pi$ and denote it by $\mathcal{V}^-.$ When $m^+(\pi)=m^-(\pi),$ the designations of ``going-up'' or ``going-down'' will not matter (see Remarks \ref{rmk equal first occurences} and \ref{rmk equal first occurences ABV}).

Fix $\pi\in\Pi(G(W_n))$ and let $V_m^\pm\in\mathcal{V}^\pm,$ where $n=\dim W_n$ and $m=\dim V_m^\pm.$ We set $\alpha=M-N$, where $M$ is the rank of the complex dual group of $G(W_n)$ and $N$ is the rank of the complex dual group of $H(V_m^\pm)$ (note that $H(V_m^\pm)$ are pure inner forms of each other and hence have the same complex dual group). Given $\beta\in\Z_{\geq 0}$, we say $\beta$ is suitable if $\beta=M-N$ for some suitable $G(W_n)$ and $H(V_m^\pm)$. For example, if $\epsilon=-1$, $E=F,$ and $n$ and $m$ are even, then $G(W_n)=\Sp(W_n)$ and $H(V_m)=\OO(V_m).$ The complex dual groups are $\SO_{n+1}(\mathbb{C})$ and $\OO_m(\mathbb{C})$ and so $\alpha=m-n-1.$ In this case, $\beta\in\mathbb{Z}_{\geq 0}$ is suitable if and only if $\beta$ is odd. In general, we write $\theta_{W_n,V_m^\pm,\chi,\psi}(\pi)=\theta^\pm_{-\alpha}(\pi).$ When it is clear in context, we suppress the notation $\pm.$ Furthermore, we let $m^{\pm,\alpha}(\pi)$ denote the value of $\alpha$ corresponding to $m^\pm(\pi).$ For example, if $\epsilon=-1$, $E=F,$ and $n$ and $m$ are even, then
$m^{\pm,\alpha}(\pi)=m^\pm(\pi)-n-1.$ 

We give a table for the dual groups and $N$ below ($M$ is determined similarly). 
\begin{center}
    \begin{tabular}{|c|c|c|c|}
\hline
     & $G$ & $\widehat{G}(\mathbb{C})$ & $N$  \\
     \hline
    $\substack{E=F, \ n \ \mathrm{odd} \ \epsilon=1}$ & $\SO(W_n)$ & $\Sp_{n-1}(\mathbb{C})$ & $n-1$  \\
    \hline
    $\substack{E=F, \ n, m \ \mathrm{even}, \ \epsilon=1}$ & $\OO(W_n)$ & $\OO_{n}(\mathbb{C})$ & $n$  \\
    \hline
    $\substack{E=F, \ n, m \ \mathrm{even}, \ \epsilon=-1}$ & $\Sp(W_n)$ & $\SO_{n+1}(\mathbb{C})$ & $n+1$  \\
    \hline
    $\substack{E\neq F, \ n \ \mathrm{arbitrary}, \ \epsilon=\pm1}$ & $\U(W_n)$ & $\GL_{n}(\mathbb{C})$ & $n$  \\
    \hline
\end{tabular}
\end{center}
Except for the following case, we set the $L$-group to be the $L$-group of the connected component, ${}^LG={}^LG^0.$
We remark that when $E=F$, $m$ is odd, and $\epsilon=-1,$ we set $G=\mathrm{Mp}(W_n).$ In this case, $n$ must be even and we set $\widehat{G}(\BC)=\Sp_n(\BC)={}^LG$ and $N=n.$

\subsection{Local Arthur packets}\label{sec Local Arthur packets} In this subsection, we let $G=G(W_n)$ and discuss local Arthur parameters and local Arthur packets for $G$. We note that the analogous notions also make sense for $H=H(V_m).$
  A local Arthur parameter may be considered as a direct sum of irreducible representations
$$\psi: W_F \times \SL_2(\mathbb{C}) \times \SL_2(\mathbb{C}) \rightarrow {}^LG,$$
\begin{equation}\label{eq decomp psi +}
  \psi = \bigoplus_{i=1}^r \phi_i|\cdot|^{x_i} \otimes S_{a_i} \otimes S_{b_i},  
\end{equation}
satisfying the following conditions:
\begin{enumerate}
    \item [(1)] $\dim(\phi_i)=d_i$ and $\phi_i(W_F)$ is bounded and consists of semi-simple elements;
    \item [(2)] $x_i \in \R$ and $|x_i|<\frac{1}{2}$;
    \item [(3)]the restrictions of $\psi$ to the two copies of $\SL_2(\mathbb{C})$ are analytic, $S_k$ is the $k$-dimensional irreducible representation of $\SL_2(\mathbb{C})$, and 
    $$\sum_{i=1}^r d_ia_ib_i = N.
$$ 
\end{enumerate}

We consider local Arthur parameters up to $\widehat{G}(\BC)$-conjugacy, i.e., we say two local Arthur parameters are equivalent if they are conjugate under $\widehat{G}(\BC)$. We will not distinguish between $\psi$ and its equivalence class.
We define $\Psi^{+}(G)$ to be the set of equivalence classes of local Arthur parameters. Let $\Psi(G)$ be the subset of $\Psi^+(G)$ consisting of local Arthur parameters $\psi$ whose restriction to $W_F$ is bounded, i.e, if we decompose $\psi$ as in in the decomposition \eqref{eq decomp psi +}, then $\psi\in\Psi(G)$ if and only if $x_i=0$ for $i=1,\dots, r$. In the literature, sometimes the set $\Psi(G)$ is considered in place of $\Psi^+(G).$ This is sufficient for global applications if one assumes the Ramanujan conjecture. We do not adopt this viewpoint and hence consider $\Psi^+(G)$.

Given $\psi\in\Psi^+(G),$ Arthur's conjectures (\cite{Art89}) predict that there should exist a finite set $\Pi_\psi$ consisting of equivalence classes of irreducible smooth representations which satisfy certain twisted endoscopic character identities. The set $\Pi_\psi$ is called the local Arthur packet attached to $\psi.$ Given a representation $\pi$ of $G$, we say that $\pi$ is of Arthur type if $\pi\in\Pi_\psi$ for some $\psi\in\Psi^+(G).$

For the purposes of this article, we shall assume that local Arthur packets exist. We remark briefly on the current status of this assumption. For quasi-split classical groups, the existence of local Arthur packets is essentially proven by the works of Arthur and Mok (\cite{Art13, Mok15}) when supplemented with the work of \cite{AGIKMS24}. The only remaining step in this case is the verification of the twisted weighted fundamental lemma. There are also some partial extensions to the non-quasi-split cases in \cite{Ish23, KMSW14}. For metaplectic groups, see \cite{Li24}.

Before we proceed to state the Adams conjecture for local Arthur packets, we recall a conjecture which will play a role in the refinement of the Adams conjecture. Let $\pi$ be an irreducible admissible representation of $G.$ We let
\[
\Psi(\pi)=\{\psi\in\Psi^+(G) \ | \ \pi\in\Pi_\psi\}.
\]
We will consider a partial ordering $\geq_C$ on $\Psi(\pi)$. Its existence is enough for us in this section and so we defer some of the following unexplained terminology to \S\ref{sec ABV-packets}.
We have an injection from $\Psi^+(G)$ to the set of $L$-parameters of $G$  given by $\psi\mapsto\phi_\psi$ where
\begin{align}\label{eq phi_psi}
\phi_\psi(w,x)=\psi(w,x,\begin{pmatrix}
    |w|^{\frac{1}{2}} & 0 \\
    0 & |w|^{\frac{-1}{2}}
\end{pmatrix}).
\end{align}
The representation $\pi$ determines an infinitesimal parameter $\lambda_{\phi_\pi}$, where $\phi_\pi$ is the $L$-parameter of $\pi$. On the set of $L$-parameters with fixed infinitesimal parameter $\lambda_{\phi_\pi}$ there exists a partial ordering $\geq_C$ which is known as the closure ordering (Definition \ref{def C ordering}). Given $\psi_1,\psi_2\in\Psi^+(G),$ we define $\psi_1\geq_C \psi_2$ if $\lambda_{\phi_{\psi_1}}=\lambda_{\phi_{\psi_2}}$ and $\phi_{\psi_1}\geq_C \phi_{\psi_2}.$ It is conjectured that closure ordering gives a partial order on $\psi(\pi).$

\begin{conj}[{\cite[Conjecture 1.4]{HLLZ25}}]\label{conj closure max}
    Let $\pi$ be an irreducible admissible representation of $G.$ Then for any $\psi_1,\psi_2\in\Psi(\pi),$ we have that $\lambda_{\phi_{\psi_1}}=\lambda_{\phi_{\psi_2}}$. Furthermore, there exists elements in $\psi^{\max}(\pi)$ and $\psi^{\min}(\pi)$ in $\Psi(\pi)$ such that for any $\psi\in\Psi(\pi),$ we have
    \[
    \psi^{\max}(\pi)\geq_C\psi\geq_C\psi^{\min}(\pi).
    \]
\end{conj}
When $G$ is a quasi-split symplectic or orthogonal group, this conjecture has been verified in \cite{HLL24, HLLZ25}. We assume the above conjecture (which serves as our definition of $\psi^{\max}(\pi)$); however, it is only needed in a few places, e.g., the statement of Conjecture \ref{conj Adams refined}(4).

\subsection{The Adams Conjecture for local Arthur packets}\label{sec The Adams Conjecture for local Arthur packets} 

In this subsection, we explicate Conjecture \ref{conj Adams intro} and also conjecture a refinement. Recall that we are considering the classical groups $G=G(W_n)$ and $H=H(V_{m}^\pm)$ along with the theta lift $\theta_{-\alpha}^\pm$ between them. Let $\psi\in\Psi^+(G).$ We define
\[
\psi_\alpha=(\chi_W\chi_V^{-1}\otimes\psi)\oplus\chi_W\otimes S_1\otimes S_\alpha,
\]
where we view the characters $\chi_V,\chi_W$ as representations of $W_F$ via local class field theory.
We have that $\psi_\alpha\in\Psi^+(H)$. Note that our definition of $\psi_\alpha$ only makes sense if $\alpha>0.$ We shall assume that $\alpha>0$ throughout the rest of this article as it will be implicit in the statements.

The Adams conjecture predicts that the theta lift sends $\Pi_\psi$ to $\Pi_{\psi_\alpha}.$

\begin{conj}[The (naive) Adams Conjecture {\cite{Ada89}}]\label{conj Adams naive}
Suppose that $\pi\in\Pi_\psi.$ Then $\theta_{-\alpha}^\pm(\pi)\in\Pi_{\psi_\alpha}$ provided that $\theta_{-\alpha}^\pm(\pi)\neq 0.$
\end{conj}

As mentioned in \S\ref{sec intro}, Conjecture \ref{conj Adams naive} can and does fail. However, the works of \cite{BH22, Haz24, Moe11c} give a precise description of when the Adams conjecture holds for symplectic-even orthogonal dual pairs.

\begin{thm}\label{thm Adams refined}
    Suppose that the pair $(G,H)$ is a quasi-split symplectic-even orthogonal dual pair and that $\pi\in\Pi_\psi$ for some $\psi\in\Psi^+(G).$
    \begin{enumerate}
        \item For $\alpha\gg 0,$ we have $\theta_{-\alpha}^\pm(\pi)\in\Pi_{\psi_\alpha}$ $($\cite[Theorem 6.1]{Moe11c}$)$.
        \item If $\theta_{-\alpha}^+(\pi)\neq 0,$ then $\theta_{-\alpha}^\pm(\pi)\in\Pi_{\psi_\alpha}$ $($\cite[Theorem 2]{BH22}$)$.
        \item If $\theta_{-\alpha}^\pm(\pi)\in\Pi_{\psi_\alpha}$ for some $\alpha,$ then $\theta_{-(\alpha+2)}^\pm(\pi)\in\Pi_{\psi_{\alpha+2}}$ $($\cite[Theorem C]{BH22}$)$.
        \item If $\theta_{-\alpha}^\pm(\pi)\in\Pi_{\psi_\alpha}$, then $\theta_{-\alpha}^\pm(\pi)\in\Pi_{\psi^{\max}(\pi)_\alpha}$ $($\cite[Theorem 1.5]{Haz24}$)$.
    \end{enumerate}
\end{thm}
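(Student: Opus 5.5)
Theorem \ref{thm Adams refined} is a compilation of results already in the literature, so I would prove it by checking that each item is exactly the cited statement once our conventions are matched to those of the source. Throughout, $(G,H)=(\Sp(W_n),\OO(V_m^\pm))$ is quasi-split, $\alpha=m-n-1$ is odd, and $\psi_\alpha=(\chi_W\chi_V^{-1}\otimes\psi)\oplus\chi_W\otimes S_1\otimes S_\alpha$.

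The first step is to align notation. For this dual pair $E=F$, so $\chi_W$ is trivial and $\chi_V$ is the quadratic discriminant character of $V$. Hence $\psi_\alpha$ coincides with the parameter written $\psi\otimes\chi_V\oplus\chi_V\otimes S_1\otimes S_\alpha$ in \cite{Moe11c, BH22}, up to the harmless twist convention. One must also confirm two further matchings. First, the cited works allow $\psi\in\Psi^+(G)$, rather than only bounded parameters. Second, the superscript $\pm$ refers to the going-up and going-down towers exactly as in \S\ref{sec Theta Correspondence}; for $\OO(V_m)$ these correspond to the two determinant twists $\pi$ and $\pi\otimes\det$ on the orthogonal side.

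With this dictionary the items follow one at a time.
\begin{enumerate}
\item Item (1) is \cite[Theorem 6.1]{Moe11c}: for $\alpha$ sufficiently large relative to $\psi$, the lift $\theta_{-\alpha}(\pi)$ is nonzero and lies in $\Pi_{\psi_\alpha}$.
\item Items (2) and (3) are the cited results of \cite{BH22}, which describe exactly when the Adams conjecture holds along a tower; one reads off the non-vanishing criterion and the stability under $\alpha\mapsto\alpha+2$.
\item Item (4) is \cite[Theorem 1.5]{Haz24}. It uses $\psi^{\max}(\pi)$, which is well defined because Conjecture \ref{conj closure max} is known for quasi-split symplectic groups \cite{HLL24, HLLZ25}.
\end{enumerate}

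The main obstacle is purely bookkeeping. \cite{BH22} normalize the lift by the level $\alpha$ or $m$ in their own way, may phrase results for one tower at a time, and may restrict to $\Psi(G)$. For item (2) in particular, I would check that their hypothesis ``$\theta$ nonzero on the going-up tower at this level'' is the same as our $\theta^+_{-\alpha}(\pi)\neq 0$. If their theorems are stated only for bounded $\psi$, I would extend to $\Psi^+(G)$ using the description of $\Pi_\psi$ for $\psi\in\Psi^+(G)$ as irreducible parabolic inductions from bounded packets. Theta lifts are compatible with such inductions via Kudla's induction principle, so the statements carry over.
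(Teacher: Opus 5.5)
Your route is the paper's: the theorem is recorded there purely as a compilation, with the four citations attached and no further argument. But the only content of such a proof is the matching of conventions, and three pieces of your dictionary are wrong.

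\emph{First, the towers.} For $G=\Sp(W_n)$ the superscript $\pm$ distinguishes the two Witt towers of quadratic spaces with the fixed discriminant character $\chi_V$. These towers differ in Hasse invariant, so $\theta^+_{-\alpha}(\pi)$ and $\theta^-_{-\alpha}(\pi)$ live on two different groups $\OO(V_m^+)$ and $\OO(V_m^-)$. The ``$\pi$ versus $\pi\otimes\det$'' description in \S\ref{sec Theta Correspondence} belongs to the opposite direction, $G=\OO(W_n)$ and $H=\Sp(V_m)$. There, only one symplectic tower exists, and the twist is applied to the representation of the source orthogonal group. Twisting $\theta(\pi)$ by $\det$ on the target never moves you to the other tower.

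\emph{Second, the parameter.} With $\chi_W=\mathbbm{1}$, as you say, the definition gives $\psi_\alpha=(\chi_V\otimes\psi)\oplus\mathbbm{1}\otimes S_1\otimes S_\alpha$; the new block carries $\chi_W$, not $\chi_V$. Your $\chi_V\psi\oplus\chi_V\otimes S_1\otimes S_\alpha$ has determinant $\chi_V^{n+1}\chi_V^{\alpha}=\mathbbm{1}$, since $n+1$ and $\alpha$ are both odd. An $L$-parameter of $\OO(V_m)$, however, must have determinant $\chi_V$, so for $\chi_V\neq\mathbbm{1}$ no ``twist convention'' reconciles the two. The shape $\chi\psi\oplus\chi\otimes S_1\otimes S_\alpha$, with $\chi$ the discriminant character of the quadratic space, is what one gets in the direction $\OO\to\Sp$, where $\chi_V=\mathbbm{1}$.

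\emph{Third, the scope.} The hypothesis ``symplectic--even orthogonal dual pair'' does not fix a direction, and the paper's setup allows $G=\OO(W_n)$ with $\alpha=m+1-n$. Fixing $G=\Sp(W_n)$ therefore drops those cases. For them, item (4) uses the orthogonal case of Conjecture \ref{conj closure max} from \cite{HLL24, HLLZ25}.

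The paper also does no reduction from $\Psi^+(G)$ to bounded parameters; it takes the cited statements as they stand. If you do need that reduction, ``Kudla's induction principle carries the statements over'' is too quick. Write $\pi=\tau\rtimes\pi_0$ with $\pi_0$ in a bounded packet. You would then need:
\begin{enumerate}
\item $\pi$ and $\pi_0$ have the same first-occurrence indices $m^{\pm,\alpha}$ and the same going-up/going-down labels; otherwise the hypothesis of item (2) does not transfer.
\item The induced representation on the $H$-side is irreducible, so that it \emph{equals} $\theta^\pm_{-\alpha}(\pi)$ rather than merely containing it.
\item For item (4), $\psi^{\max}(\pi)$ is $\psi^{\max}(\pi_0)$ plus the $\tau$-block.
\end{enumerate}
The natural reason these should hold is that the unbounded part has exponents in $x+\tfrac12\Z$ with $0<|x|<\tfrac12$. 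Those exponents cannot interact with $\chi_W\otimes S_1\otimes S_\alpha$ in the Kudla filtration, but this has to be argued, not cited.
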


We conjecture that the above theorem should also hold in the general setting. We call this the refined Adams conjecture.

\begin{conj}\label{conj Adams refined}
    Consider a dual pair $(G,H)$ as above and let $\pi\in\Pi_\psi$ for some $\psi\in\Psi^+(G).$
    \begin{enumerate}
        \item For $\alpha\gg 0,$ we have $\theta_{-\alpha}^\pm(\pi)\in\Pi_{\psi_\alpha}$.
        \item If $\theta_{-\alpha}^+(\pi)\neq 0,$ then $\theta_{-\alpha}^\pm(\pi)\in\Pi_{\psi_\alpha}$.
        \item If $\theta_{-\alpha}^\pm(\pi)\in\Pi_{\psi_\alpha}$ for some $\alpha,$ then $\theta_{-(\alpha+2)}^\pm(\pi)\in\Pi_{\psi_{\alpha+2}}$.
        \item If $\theta_{-\alpha}^\pm(\pi)\in\Pi_{\psi_\alpha}$, then $\theta_{-\alpha}^\pm(\pi)\in\Pi_{(\psi^{\max}(\pi))_\alpha}$.
    \end{enumerate}
\end{conj}

\begin{rmk}\label{rmk equal first occurences}
    When $m^+(\pi)=m^-(\pi)$, it follows from the conservation relation that $m^+(\pi)=n+\epsilon_0+1.$ Consequently, $m^{+,\alpha}(\pi)=\epsilon_0$ and so Conjecture \ref{conj Adams refined}(2) implies that the Adams conjecture is true for any suitable positive integer $\alpha$ and any $\psi\in\Psi(\pi).$ That is, the choice of ``going-up'' or ``going-down'' tower does not matter.
\end{rmk}

We remark that Conjecture \ref{conj Adams refined}(4) implicitly assumes Conjecture \ref{conj closure max}.
We also remark that for quasi-split symplectic-even orthogonal dual pairs, \cite[Theorem 1.3]{Haz24} proves a stronger statement which implies \cite[Theorem 1.5]{Haz24}. In general, we expect an analogue of \cite[Theorem 1.3]{Haz24} for any reductive dual pairs of type II (which could be included in Conjecture \ref{conj Adams refined} above); however, we opted not to include it in the above list as it does not seem to generalize to $\ABV$-packets. We discuss this issue later (see Conjecture \ref{conj Adams closure}).

Conjecture \ref{conj Adams refined} predicts how the failure of the Adams conjecture may occur. Indeed,
Conjecture \ref{conj Adams refined}(1) states the Adams conjecture is always true when the difference in the ranks is large enough.  
Conjecture \ref{conj Adams refined}(2) states that the Adams conjecture is always true when considering the ``going-up'' tower for $\pi.$  Conjecture \ref{conj Adams refined}(3) states that if the Adams conjecture holds at some level $\alpha,$ then it holds at any greater level which implies that if the Adams conjecture fails at some level, then it fails at every lower level. Together these conjectures determine the Adams conjecture's validity when the local Arthur parameter $\psi$ is fixed. Conjecture \ref{conj Adams refined}(4) controls the validity of the Adams conjecture when $\psi\in\Psi(\pi)$ varies. In particular, Conjecture \ref{conj Adams refined}(4) says that, among all $\psi\in\Psi(\pi),$ the Adams conjecture holds in its greatest generality for $\psi^{\max}(\pi)$. 

So if the Adams conjecture fails for some fixed $\psi$, it must happen on the going-down tower and for small $\alpha$. The failure could be for one of two reasons, either the Adams conjecture failed for $\psi$ but holds for some $\psi'\in\Psi(\pi)$ (with $\psi'\geq_C\psi$) or the Adams conjecture  fails for $\psi^{\max}(\pi)$ (and hence any $\psi'\in\Psi(\pi)$). The following conjecture predicts if we our failure is in the latter case, then it cannot be fixed using local Arthur packets.

\begin{conj}[{\cite[Conjecture 1.6]{Haz24}}]\label{conj stable Arthur}
Consider a dual pair $(G,H)$ as above and let $\pi$ be a representation of $G$ of Arthur type.
    Let $\alpha_0$ be the minimum among all positive suitable integers $\alpha$ such that $\theta_{-\alpha}^-(\pi)\in\Pi_{(\psi^{\max}(\pi))_\alpha}$. If $\alpha_0\geq 3,$ then $\theta_{-(\alpha_0-2)}^-(\pi)$ is not of Arthur type.
\end{conj}

Therefore, in order to remedy the failure of the Adams conjecture for $\psi^{\max}(\pi)$, we are forced to consider a conjectural generalization of local Arthur packets known as ABV-packets.

\begin{rmk}\label{rmk nonunitary}
We also note that the Adams conjecture for $\pi$ is expected to hold for some $\psi\in\Psi(\pi)$ if the theta lift is unitary. This expectation is conjecturally equivalent to Conjecture \ref{conj stable Arthur}. Indeed, first note that for any suitable $\alpha\geq\alpha_0,$ we have $\theta_{-\alpha}^-(\pi)\in\Pi_{(\psi^{\max}(\pi))_\alpha}$ and hence $\theta_{-\alpha}^-(\pi)$ is unitary.  Next, the Adams conjecture can be reduced to the ``good parity'' case (e.g., \cite[Lemma 2.33]{Haz24}). It follows from \cite{AG17a, BH21} that $\pi$ is of good parity if and only if $\theta_{-\alpha}^-(\pi)$ is of good parity. It is conjectured in \cite[Conjecture 1.2]{HJLLZ24} that an irreducible admissible representation of good parity is of Arthur type if and only if it is unitary. Consequently, Conjecture \ref{conj stable Arthur} and \cite[Conjecture 1.2]{HJLLZ24} imply that if $\alpha_0\geq 3,$ then $\theta_{-(\alpha_0-2)}^-(\pi)$ is not unitary. We also note that \cite[Conjecture 1.2]{HJLLZ24} is known for split symplectic and odd special orthogonal groups by the work of Atobe and M{\'i}nguez (\cite[Theorem 1.1]{AM25}) and quasi-split even orthogonal groups by \cite{HLL24}.
\end{rmk}

\section{ABV-packets}\label{sec ABV-packets}

In this section, we recall the construction of $p$-adic ABV-packets following \cite{CFMMX22}. The nomenclature ``ABV'' stands for Adams-Barbasch-Vogan and is an hommage to \cite{ABV92} where ABV-packets were defined for real groups. We remark that \cite{CFMMX22} only treats connected reductive groups; however, for the local theta correspondence we must also consider metaplectic and split even orthogonal groups. Analogous definitions and results are still expected to hold in these cases and we shall assume them. In particular, even orthogonal groups will be treated in a forthcoming work (\cite{CHLLRX}).

We continue to restrict ourselves to the setting that $G=G(W_n)$, $H=H(V_m)$, and we let $G'\in\{G,H\}.$ We also allow for $G'=\GL_n(F)$. An $L$-parameter of $G'$ may be regarded as a $\widehat{G}'(\C)$-conjugacy class of an admissible homomorphism $\phi:W_F\times\SL_2(\C)\rightarrow{}^LG'$ (\cite[\S8]{Bor79}). We do not require that $\phi$ is relevant for $G'.$ Let $\Phi(G')$ denote the set of $L$-parameters of $G'.$ We do not distinguish a representative $\phi$ from its conjugacy class. We assume that there is a local Langlands correspondence for $G'.$ Specifically, the local Langlands correspondence defines a map $rec:\Pi(G')\rightarrow\Phi(G').$ The $L$-packet attached to $\phi$ is $\Pi_\phi:=rec^{-1}(\phi).$ For $\pi\in\Pi(G_n)$, we let $\phi_\pi:=rec(\pi)$ denote the $L$-parameter of $\pi.$ The local Langlands correspondence for $G'$ is understood (subject to the twisted weighted fundamental lemma) through the works (\cite{Art13, AGIKMS24, CZ21a, CZ21b, GS12, HT01, Hen00, Ish23, KMSW14, MR18, Mok15,Sch13}).

An infinitesimal character of $G'$ is a continuous homomorphism $\lambda:W_F\rightarrow{}^LG'$ which is a section of ${}^LG'\rightarrow W_F$ and whose image consists only of semi-simple elements. Given $\phi\in\Phi(G')$, we denote the infinitesimal character associated to $\phi$ by 
\[
\lambda_\phi(w):=\phi\left(w,\left(\begin{matrix}
    |w|^{\frac{1}{2}} & 0 \\
    0 & |w|^{\frac{-1}{2}}
\end{matrix}\right)\right),
\]
where $|\cdot|$ denotes the norm on $W_F$ which is trivial on the inertia subgroup and sends the Frobenius element to $q.$ 

Let $\lambda$ be an infinitesimal character of $G'.$ We let 
\[
\Phi_\lambda(G')=\{\phi\in\Phi(G') \ | \ \lambda_\phi=\lambda\}.
\]
Similarly, we let 
\[
\Pi_\lambda(G')=\{\pi\in\Pi(G') \ | \ \lambda_{\phi_\pi}=\lambda\}.
\]
Both of these sets are finite.

Let $\mathfrak{g}'(\C)$ be the Lie algebra of $\widehat{G}'(\C)$.
We let
\begin{align*}
H_\lambda&=\{g\in\widehat{G}'(\C) \ | \ \lambda(w)g\lambda(w)^{-1}=g, \forall w\in W_F\}, \\
K_\lambda&=\{g\in\widehat{G}'(\C) \ | \ \lambda(w)g\lambda(w)^{-1}=g, \forall w\in I_F\},
\end{align*}
where $I_F$ is the inertia subset of the absolute Galois group of $F.$
Note that $H_\lambda=Z_{\widehat{G}'(\C)}(\lambda)$ is the centralizer of the image of $\lambda.$
We consider the Vogan variety 
\[
V_\lambda=\{x\in\mathrm{Lie}(K_\lambda) \ | \ \Ad(\lambda(w))x=|w|x, \forall w\in W_F\}.
\]
The group $H_\lambda$ acts on $V_\lambda$ via conjugation. The action stratifies $V_\lambda$ into finitely many orbits and we let $C_\lambda(G')$ denote the collection of these orbits. These orbits are in bijection with $\Phi_\lambda(G')$ (\cite[Proposition 4.2.2]{CFMMX22}). The bijection is given by identifying $\phi\in\Phi_\lambda(G')$ with the $H_\lambda$-orbit of $x_\phi$ where \[
x_\phi=d(\phi|_{\SL_2(\C)})\left(\begin{matrix}
    0 & 1 \\
    0 & 0
\end{matrix}\right).
\]
We let $C_\phi$ be the $H_\lambda$-orbit of $x_\phi$. The geometry of $V_{\lambda}$ induces a partial ordering $\geq_C$ on $\Phi(G)_{\lambda}$. We call this partial ordering, the closure ordering.
\begin{defn}\label{def C ordering}
Let $\phi_1,\phi_2\in\Phi_{\lambda}(G').$ We define a partial ordering $\geq_C$ on $\Phi_{\lambda}(G')$ by $\phi_{1} \geq_C \phi_2$ if $\overline{C_{\phi_1}} \supseteq C_{\phi_2}$.
\end{defn}

Now, $G'$ always has a quasi-split pure inner form which we denote by $G'_0.$ When $G'$ is symplectic, it is split and so $G'=G'_0.$ Otherwise, $G'$ always has exactly one nontrivial pure inner form corresponding to the ``other'' tower (see \S\ref{sec Theta Correspondence}). It is possible that both pure inner forms are quasi-split in which case we fix a choice of $G'_0.$ The pure inner forms of $G'$ are indexed by $H^1(F,G'_0)$ and $G_0'$ corresponds to the trivial class. We let $\Pi^\pure(G')$ denote the set of equivalence classes of respresentations of $G'$ and its pure inner forms.

Let $\Per_{H_\lambda}(V_\lambda)$ denote the category of $H_\lambda$-equivariant perverse sheaves on $V_\lambda.$ Vogan's perspective on the local Langlands correspondence (\cite{Vog93}) gives a bijection between $\Pi^{\pure}_\lambda(G')$ and the simple objects in $\Per_{H_\lambda}(V_\lambda)$ (up to isomorphism). For $\pi\in\Pi_\lambda(G')$, we write $\mathcal{P}(\pi)$ for the corresponding simple perverse sheaf.

Let $\phi\in\Phi_\lambda(G').$ Cunningham et al. defined the ABV-packet attached to $\phi$ (\cite[\S8.1]{CFMMX22}). It is denoted by defined by $\Pi_\phi^{\ABV}$ and in our setting, we have
\[
\Pi_\phi^{\ABV}:=\{\pi\in\Pi^\pure_\lambda(G') \ | \ \Evs_{C_\phi}(\mathcal{P}(\pi))\neq 0\}.
\]
Here $\Evs$ is the microlocal vanishing cycles functor defined in \cite[\S7.9]{CFMMX22}. This functor is essential to compute ABV-packets; however, for our purposes, it suffices to give several properties of ABV-packets instead. We set \[\Pi_\phi^{\ABV}(G')=\Pi_\phi^{\ABV}\cap\Pi(G').\]

First, we have that ABV-packets respect the closure ordering.

\begin{prop}[{\cite[Proposition 7.10]{CFMMX22}}]\label{prop ABV closure}
    If $\pi\in\Pi_\phi^{\ABV},$ then  $\phi_\pi\geq_C\phi.$
\end{prop}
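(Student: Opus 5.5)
The plan is to unwind the definition of $\Pi_\phi^{\ABV}$ and use the support property of the microlocal vanishing cycles functor. Suppose $\pi\in\Pi_\phi^{\ABV}$. Then $\pi\in\Pi^\pure_\lambda(G')$ with $\lambda=\lambda_\phi$, so $\phi_\pi\in\Phi_\lambda(G')$ and both orbits $C_{\phi_\pi}$ and $C_\phi$ lie in the same Vogan variety $V_\lambda$. Under Vogan's bijection, the simple perverse sheaf attached to $\pi$ is the intersection cohomology complex $\mathcal{P}(\pi)=\IC(C_{\phi_\pi},\mathcal{L})$ for some irreducible $H_\lambda$-equivariant local system $\mathcal{L}$ on $C_{\phi_\pi}$. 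In particular, its support is exactly $\overline{C_{\phi_\pi}}$.

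The key step is that $\Evs_{C}$ of a sheaf vanishes unless $C$ lies in its support. Recall from \cite[\S7.9]{CFMMX22} that $\Evs_C\mathcal{F}$ is built by restricting vanishing cycles of $\mathcal{F}$, taken along a function on $V_\lambda$ attached to a covector, to the regular part $T^*_{C}(V_\lambda)_{\mathrm{reg}}$ of the conormal bundle over $C$. These are computed in a neighbourhood of points $x\in C$. If $C\not\subseteq\operatorname{supp}\mathcal{F}$, then $C\cap\operatorname{supp}\mathcal{F}=\emptyset$: the support is a closed $H_\lambda$-stable union of orbits, and $C$ is a single orbit. Hence $\mathcal{F}$ vanishes on a neighbourhood of every point of $C$, the vanishing cycles vanish there, and $\Evs_C\mathcal{F}=0$. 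Equivalently, the characteristic cycle of $\IC(C_{\phi_\pi},\mathcal{L})$ is supported on conormal bundles $T^*_{C'}V_\lambda$ with $C'\subseteq\overline{C_{\phi_\pi}}$, and $\Evs_C$ detects exactly the component over $T^*_C V_\lambda$.

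Applying this with $C=C_\phi$ and $\mathcal{F}=\mathcal{P}(\pi)$, the hypothesis $\Evs_{C_\phi}(\mathcal{P}(\pi))\neq0$ forces $C_\phi\subseteq\overline{C_{\phi_\pi}}$. By Definition \ref{def C ordering}, this says $\phi_\pi\geq_C\phi$.

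The only nontrivial point is the support/locality property of $\Evs$. Everything else is formal bookkeeping with Vogan's bijection. I would justify that property directly from the construction in \cite[\S7.9]{CFMMX22}, which is built from stalks of vanishing cycles along the regular conormal bundle $T^*_{C}(V_\lambda)_{\mathrm{reg}}$ over points of $C$.
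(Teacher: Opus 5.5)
Your argument is correct. $\Evs_{C_\phi}(\mathcal{P}(\pi))$ is built from vanishing cycles of $\mathcal{P}(\pi)\boxtimes\mathbbm{1}$ along the pairing function, restricted to $\Lambda^{gen}_{C_\phi}\subseteq C_\phi\times V_\lambda^*$. By locality of vanishing cycles it is therefore zero whenever $C_\phi$ lies in the open set $V_\lambda\setminus\overline{C_{\phi_\pi}}$, where $\mathcal{P}(\pi)$ vanishes. Since $\overline{C_{\phi_\pi}}$ is a union of orbits, this happens exactly when $C_\phi\not\subseteq\overline{C_{\phi_\pi}}$.

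The paper gives no proof of its own and simply cites \cite[Proposition 7.10]{CFMMX22}. Your support/locality argument is the standard justification of that fact, and the characteristic-cycle reformulation you mention is not needed.
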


Second, we have that ABV-packets contain their $L$-packets.

\begin{prop}[{\cite[Proposition 7.13(b)]{CFMMX22}}]\label{prop Lpacket in ABV}
    We have that $\Pi_\phi\subseteq\Pi_\phi^{\ABV}.$
\end{prop}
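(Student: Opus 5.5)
Fix $\phi\in\Phi_\lambda(G')$ and $\pi\in\Pi_\phi$, so that $\phi_\pi=\phi$ and $\lambda_{\phi_\pi}=\lambda$. By the definition of $\Pi_\phi^{\ABV}$, we have to show that $\Evs_{C_\phi}(\mathcal{P}(\pi))\neq 0$. The plan is to use the explicit shape of $\mathcal{P}(\pi)$ under Vogan's bijection together with the local nature of microlocal vanishing cycles. Under Vogan's correspondence, a representation $\pi$ with $L$-parameter $\phi$ is sent to $\mathcal{P}(\pi)=\IC(C_\phi,\mathcal{L}_\pi)$. Here $\mathcal{L}_\pi$ is the irreducible $H_\lambda$-equivariant local system on $C_\phi$ that corresponds to the enhancement of $\pi$, that is, to an irreducible representation of the component group $\pi_0(Z_{H_\lambda}(x_\phi))$. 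In particular, $\mathcal{P}(\pi)$ is supported on $\overline{C_\phi}$, and its restriction to the open stratum $C_\phi$ is $\mathcal{L}_\pi[\dim C_\phi]$.

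The main step is to compute $\Evs_{C_\phi}$ at a point of the regular part of the conormal bundle, $(x,\xi)\in T^*_{C_\phi}(V_\lambda)^{\mathrm{reg}}$ with $x\in C_\phi$. Microlocal vanishing cycles are local: near $x$, the sheaf $\mathcal{P}(\pi)$ is just the shifted local system $\mathcal{L}_\pi$ on the smooth locally closed subvariety $C_\phi$, extended by zero. After a local change of coordinates (as in \cite[\S7]{CFMMX22}), $V_\lambda$ looks like $C_\phi\times N$, where $N$ is a normal slice, and the sheaf looks like $\mathcal{L}_\pi\boxtimes\delta_0$, where $\delta_0$ is the skyscraper sheaf at the origin of $N$. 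For the skyscraper sheaf, vanishing cycles along a generic covector $\xi|_N$ give a one-dimensional space in a single degree. So the stalk of $\Evs_{C_\phi}(\mathcal{P}(\pi))$ at $(x,\xi)$ is the stalk of $\mathcal{L}_\pi$ at $x$, up to a shift and a twist by an orientation (Morse) local system. This stalk has the same rank as $\mathcal{L}_\pi$, so it is nonzero, and therefore $\Evs_{C_\phi}(\mathcal{P}(\pi))\neq0$.

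The obstacle is to justify this local computation carefully. Concretely, one has to check that the functor $\Evs_{C_\phi}$ of \cite[\S7.9]{CFMMX22} is defined by vanishing cycles of a function $f_\xi$ that is nondegenerate in the normal directions at $x$, and that it commutes with restriction to a neighbourhood of $x$. I would first try to invoke directly the standard property that for a sheaf $\IC(C,\mathcal{L})$ one has $\Evs_C\IC(C,\mathcal{L})=\mathcal{L}\otimes(\text{rank-one local system})$ restricted to $T^*_C(V_\lambda)^{\mathrm{reg}}$; this property is proved in \cite{CFMMX22}. Failing that, I would reduce to the skyscraper computation in the normal slice by using the smooth base change for vanishing cycles.

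Finally, note that the argument never uses relevance of $\phi$ to the particular form $G'$, because $\Pi_\phi^{\ABV}$ is defined over all pure inner forms. It also does not use any result of the form "$\phi_\pi\geq_C\phi$" (Proposition \ref{prop ABV closure}); that result only gives a necessary condition and is consistent with the conclusion here, since $\phi_\pi=\phi$.
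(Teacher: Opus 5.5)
Your proposal is correct and is essentially the paper's argument. The paper gives no proof of its own; it simply cites \cite[Proposition 7.13(b)]{CFMMX22}, which supplies exactly the nonvanishing of $\Evs_{C}\IC(C,\mathcal{L})$ that you propose to invoke (it is the pullback of $\mathcal{L}$ to $\Lambda^{gen}_C$ up to a rank-one twist), applied to $\mathcal{P}(\pi)=\IC(C_{\phi_\pi},\mathcal{L}_\pi)$. Your normal-slice skyscraper computation is the right heuristic, but reconciling it with the actual definition of $\Evs_C$ in \cite{CFMMX22} is precisely the work done there, so the citation is what carries the proof.
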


The following proposition follows directly from Propositions \ref{prop ABV closure} and \ref{prop Lpacket in ABV} (see also \cite[\S10.2.6]{CFMMX22}).

\begin{prop}\label{prop open orbit ABV=Lpacket}
    If $C_\phi$ is the unique open orbit in $C_\lambda(G')$, then $\Pi_{\phi}^{\ABV}=\Pi_\phi.$
\end{prop}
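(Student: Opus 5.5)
We prove the two inclusions $\Pi_\phi^{\ABV}\subseteq\Pi_\phi$ and $\Pi_\phi\subseteq\Pi_\phi^{\ABV}$ separately; the second is exactly Proposition \ref{prop Lpacket in ABV}, so the content is the first.

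Let $\pi\in\Pi_\phi^{\ABV}$. By definition $\pi\in\Pi^\pure_\lambda(G')$ with $\lambda=\lambda_\phi$, so $\phi_\pi\in\Phi_\lambda(G')$ and $C_{\phi_\pi}$ is an $H_\lambda$-orbit in $V_\lambda$. Proposition \ref{prop ABV closure} gives $\phi_\pi\geq_C\phi$, that is, $\overline{C_{\phi_\pi}}\supseteq C_\phi$.

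We now use that $C_\phi$ is the unique open orbit. Since $V_\lambda$ is an irreducible variety (a vector space) with finitely many $H_\lambda$-orbits, the open orbit $C_\phi$ is dense in $V_\lambda$. Moreover, any orbit $C$ whose closure contains $C_\phi$ satisfies $\overline{C}\supseteq\overline{C_\phi}=V_\lambda$, so $C$ is dense. Its closure $\overline{C}$ is a union of $C$ and orbits of strictly smaller dimension, and an orbit is open in its closure. Hence $C$ is open in $\overline{C}=V_\lambda$, and uniqueness of the open orbit forces $C=C_\phi$. Applying this with $C=C_{\phi_\pi}$ gives $C_{\phi_\pi}=C_\phi$, and the bijection between orbits and $\Phi_\lambda(G')$ (\cite[Proposition 4.2.2]{CFMMX22}) then gives $\phi_\pi=\phi$. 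So $\pi\in\Pi_\phi$, where the $L$-packet is understood in the pure sense, including the pure inner forms, consistently with how $\Pi_\phi^{\ABV}$ is defined.

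The only step requiring care is the orbit-closure argument: one must check that density of $C_{\phi_\pi}$ forces it to be open. This follows from the standard fact that orbits of an algebraic group action are locally closed, together with finiteness of the orbits.
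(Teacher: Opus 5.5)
Your proof is correct and follows the paper's route: the paper deduces the statement directly from Proposition~\ref{prop ABV closure} and Proposition~\ref{prop Lpacket in ABV}, and you have written out the orbit-closure step showing that $\overline{C_{\phi_\pi}}\supseteq C_\phi$ with $C_\phi$ open forces $C_{\phi_\pi}=C_\phi$. That step can be shortened further: an open set meeting $\overline{C_{\phi_\pi}}$ must meet $C_{\phi_\pi}$, and distinct orbits are disjoint.
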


Finally, we mention that ABV-packets are expected to generalize local Arthur packets through the following conjecture. However, aside from relating our two main conjectures (see Proposition \ref{prop connections Adams}), we note that none of our results or conjectures rely on the below conjecture. Furthermore, the below conjecture is known for $\GL_n(F)$ by the independent works of \cite{CR24, CR26, Lo24, Rid23}.

\begin{conj}[{(\cite[Conjecture 8.3.1]{CFMMX22})}]\label{conj Vogan}
    Suppose $\phi=\phi_\psi$ for some $\psi\in\Psi(G').$ Then $$
    \Pi_\psi^\pure=\Pi_{\phi_\psi}^{\ABV}.
    $$
    Here $\Pi_\psi^\pure$ is the union of the local Arthur packets attached to $\psi$ for all of the pure inner forms of $G'.$
\end{conj}

The above conjecture will also be verified more generally (assuming a theory of local Arthur packets) in \cite{CHLLRX}.

In many ways, results about local Arthur packets are expected to be generalized to ABV-packets, e.g., the Adams conjecture. However, this is not always the case. Indeed, $L$-packets and local Arthur packets of $\GL_n(F)$ are singletons, but ABV-packets of $\GL_n(F)$ may not be singletons (see \cite{CFK22}). In fact, this is the main reason that the Adams conjecture for ABV-packets of $\GL_n(F)$ (Conjecture \ref{conj Adams ABV GL refined}) will not follow trivially from Theorem \ref{thm theta GL} below.

\subsection{The Adams conjecture for ABV-packets}

We continue with the case that $G=G(W_n)$. We briefly recall the notation from \S\ref{sec Theta Correspondence}. We consider towers $\mathcal{V}^\pm$ and let $H^\pm=H(V_m^\pm)$ for $V_m^\pm\in\mathcal{V}^\pm.$ We let $\theta_{-\alpha}^\pm$ denote the local theta correspondence from $G$ to $H^\pm.$ Recall that our choice of $\pm$ depends on the first occurrence of $\pi\in\Pi(G_n).$ 

Let $\phi\in\Phi(G).$ We define $\phi_\alpha\in\Phi(H^\pm)$ by
\[
\phi_\alpha=(\chi_W\chi_V^{-1}\otimes{}^c\phi^\vee)\oplus\left(\bigoplus_{i=0}^{\alpha-1} \chi_W|\cdot|^{\frac{\alpha-1}{2}-i}\otimes S_1\right).
\]
Here, we recall that $c$ is the generator of $\mathrm{Gal}(E/F)$.
Note that since $G$ is a classical group, we have ${}^c\phi^\vee=\phi$. However, in the next section we consider $G=\GL_n(F)$ and $H=\GL_m(F)$, where we take $c$ to be trivial, but we do not necessarily have that $\phi^\vee=\phi$ and the contragredient will be needed. Thus, for uniformity, we define $\phi_\alpha$ as above.

The map $\phi\mapsto\phi_\alpha$ is understood as a generalization of the map $\psi\mapsto\psi_\alpha.$ Indeed, if $\phi=\phi_\psi$ for some local Arthur parameter $\psi$ of $G,$ then $\phi_{\psi_\alpha}=\phi_\alpha.$ Motivated by Conjecture \ref{conj Vogan}, we now formulate the analogues of Conjectures \ref{conj Adams naive} and \ref{conj Adams refined}.

\begin{conj}[The (naive) Adams Conjecture for ABV-packets]\label{conj Adams ABV naive}
If $\pi\in\Pi_\phi^{\ABV},$ then $\theta_{-\alpha}^\pm(\pi)\in\Pi_{\phi_\alpha}^{\ABV}$ provided that $\theta_{-\alpha}^\pm(\pi)\neq 0.$
\end{conj}

Assuming Conjecture \ref{conj Vogan}, it follows that Conjecture \ref{conj Adams ABV naive} can and does fail since its analogue for local Arthur packets, Conjecture \ref{conj Adams naive}, also does fail. Analogous to Conjecture \ref{conj Adams refined}, we conjecture the following refinement which we call the refined Adams conjecture for $\ABV$-packets.

\begin{conj}\label{conj Adams ABV refined}
    Let $\pi\in\Pi_\phi^{\ABV}$ for some $\phi\in\Phi(G).$
    \begin{enumerate}
        \item For $\alpha\gg 0,$ we have $\theta_{-\alpha}^\pm(\pi)\in\Pi_{\phi_\alpha}^{\ABV}$.
        \item If $\theta_{-\alpha}^+(\pi)\neq 0,$ then $\theta_{-\alpha}^\pm(\pi)\in\Pi_{\phi_\alpha}^{\ABV}$.
        \item If $\theta_{-\alpha}^\pm(\pi)\in\Pi_{\phi_\alpha}^{\ABV}$ for some $\alpha,$ then $\theta_{-(\alpha+2)}^\pm(\pi)\in\Pi_{\phi_{\alpha+2}}^{\ABV}$.
        \item If $\theta_{-\alpha}^\pm(\pi)\in\Pi_{\phi_\alpha}^{\ABV}$, then $\theta_{-\alpha}^\pm(\pi)\in\Pi_{(\phi_\pi)_\alpha}^{\ABV}$.
        \item Assume that $\pi\in\Pi_\phi^\ABV\cap\Pi_{\phi'}^\ABV$ with $\phi\geq_C\phi'.$ If $\theta_{-\alpha}^\pm(\pi)\in\Pi_{(\phi')_\alpha}^{\ABV}$, then $\theta_{-\alpha}^\pm(\pi)\in\Pi_{\phi_\alpha}^{\ABV}$.
    \end{enumerate}
\end{conj}

\begin{rmk}\label{rmk equal first occurences ABV}
    When $m^+(\pi)=m^-(\pi)$, it follows from the conservation relation that $m^+(\pi)=n+\epsilon_0+1.$ Consequently, $m^{+,\alpha}(\pi)=\epsilon_0$ and so Conjecture \ref{conj Adams ABV refined}(2) implies that the Adams conjecture for ABV packets (Conjecture \ref{conj Adams ABV naive}) is true for any suitable positive integer $\alpha$ and any $\phi\in\Phi(\pi)$ (see below for notation). That is, the choice of ``going-up'' or ``going-down'' tower does not matter.
\end{rmk}

We  remark that while Conjecture \ref{conj Adams refined}(4) implicitly assumes Conjecture \ref{conj closure max}, its analogue here, Conjecture \ref{conj Adams ABV refined}(4), does not. This is because the analogue of $\psi^{\max}(\pi)$ for ABV-packets is well-understood. Indeed, Conjecture \ref{conj closure max} says that $\psi^{\max}(\pi)$ is the unique maximal element in $\Psi(\pi)$ with respect to $\geq_C.$ This is not the case for ABV-packets. Here, the analogue of  $\Psi(\pi)$ is
\[
\Phi(\pi)=\{\phi\in\Phi(G) \ | \ \pi\in\Pi_\phi^\ABV\}.
\]
There is a unique maximal element of $\Phi(\pi)$ with respect to $\geq_C$, namely $\phi_\pi$. Indeed, this is an immediate consequence of Propositions \ref{prop ABV closure} and \ref{prop Lpacket in ABV}. However, if $\phi_\pi$ is not of Arthur type, then $\phi_\pi\neq \phi_{\psi^{\max}(\pi)}.$ Indeed, \cite[Conjecture 1.4]{HLLZ25} predicts that $\phi_\pi\geq_C \phi_{\psi^{\max}(\pi)}$ and so by passing to ABV-packets, it sometimes necessary to go beyond $\psi^{\max}(\pi)$. Note that Conjecture \ref{conj Adams ABV refined}(5) implies Conjecture \ref{conj Adams ABV refined}(4) based on the above discussion.

We show that Conjecture \ref{conj Adams ABV refined} generalizes Conjecture \ref{conj Adams refined} assuming Conjecture \ref{conj Vogan}.

\begin{prop}\label{prop connections Adams}
    Assume Conjecture \ref{conj Vogan}. Then the following holds.
    \begin{enumerate}
        \item Conjecture \ref{conj Adams ABV refined}(1) implies Conjecture \ref{conj Adams refined}(1).
        \item Conjecture \ref{conj Adams ABV refined}(2) implies Conjecture \ref{conj Adams refined}(2).
        \item Conjecture \ref{conj Adams ABV refined}(3) implies Conjecture \ref{conj Adams refined}(3).
        \item Conjecture \ref{conj Adams ABV refined}(5) implies Conjecture \ref{conj Adams refined}(4).
    \end{enumerate}
\end{prop}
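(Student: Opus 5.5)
The argument is a translation between the two conjectures using Conjecture \ref{conj Vogan}. The dictionary has two entries. For $\psi\in\Psi^+(G)$, Conjecture \ref{conj Vogan} gives $\Pi_\psi^\pure=\Pi_{\phi_\psi}^\ABV$, so membership $\pi\in\Pi_\psi$ is equivalent to $\pi\in\Pi_{\phi_\psi}^\ABV$ (intersecting with $\Pi(G)$). Second, as noted before Conjecture \ref{conj Adams ABV naive}, $\phi_{\psi_\alpha}=(\phi_\psi)_\alpha$. Hence, applying Conjecture \ref{conj Vogan} on the $H^\pm$ side, $\theta^\pm_{-\alpha}(\pi)\in\Pi_{\psi_\alpha}$ if and only if $\theta^\pm_{-\alpha}(\pi)\in\Pi^\ABV_{(\phi_\psi)_\alpha}$. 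Here one uses that $\theta^\pm_{-\alpha}(\pi)$ is a representation of $H^\pm$, so intersecting the pure packet with $\Pi(H^\pm)$ recovers $\Pi_{\psi_\alpha}$ for that form.

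A small technical point needs checking. Conjecture \ref{conj Vogan} is stated for $\psi\in\Psi(G')$, whereas Conjecture \ref{conj Adams refined} uses $\Psi^+$. I would either read Conjecture \ref{conj Vogan} as applying to $\Psi^+$, as the paper implicitly does, or reduce to $\Psi$ by the standard unramified twist/parabolic induction argument. I expect this to be the only real obstacle, and I will simply assume the $\Psi^+$ version. One must also note that $\psi_\alpha$ is an Arthur parameter of $H^\pm$, which the paper states.

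With the dictionary, parts (1)–(3) are immediate. Given $\pi\in\Pi_\psi$, set $\phi=\phi_\psi$, so $\pi\in\Pi_\phi^\ABV$. Apply the corresponding part of Conjecture \ref{conj Adams ABV refined} to get membership of $\theta^\pm_{-\alpha}(\pi)$ in $\Pi^\ABV_{\phi_\alpha}=\Pi^\ABV_{\phi_{\psi_\alpha}}$, and translate back to $\Pi_{\psi_\alpha}$. For (3), the hypothesis $\theta^\pm_{-\alpha}(\pi)\in\Pi_{\psi_\alpha}$ is first translated to ABV-packets, then the conclusion at $\alpha+2$ is translated back.

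For (4), assume $\theta^\pm_{-\alpha}(\pi)\in\Pi_{\psi_\alpha}$ with $\pi\in\Pi_\psi$. By Conjecture \ref{conj closure max}, $\psi^{\max}(\pi)\geq_C\psi$, meaning the infinitesimal parameters agree and $\phi_{\psi^{\max}(\pi)}\geq_C\phi_\psi$. Both $\psi^{\max}(\pi)$ and $\psi$ lie in $\Psi(\pi)$, so by Conjecture \ref{conj Vogan}, $\pi\in\Pi^\ABV_{\phi_{\psi^{\max}(\pi)}}\cap\Pi^\ABV_{\phi_\psi}$. The hypothesis translates to $\theta^\pm_{-\alpha}(\pi)\in\Pi^\ABV_{(\phi_\psi)_\alpha}$. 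Conjecture \ref{conj Adams ABV refined}(5), applied with $\phi=\phi_{\psi^{\max}(\pi)}$ and $\phi'=\phi_\psi$, then gives $\theta^\pm_{-\alpha}(\pi)\in\Pi^\ABV_{(\phi_{\psi^{\max}(\pi)})_\alpha}=\Pi^\ABV_{\phi_{\psi^{\max}(\pi)_\alpha}}$. Translating back yields $\theta^\pm_{-\alpha}(\pi)\in\Pi_{\psi^{\max}(\pi)_\alpha}$, which is Conjecture \ref{conj Adams refined}(4).
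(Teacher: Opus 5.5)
Your proposal is correct and follows the paper's proof. Parts (1)--(3) come from translating through $\Pi_\psi^{\pure}=\Pi^{\ABV}_{\phi_\psi}$ and $\phi_{\psi_\alpha}=(\phi_\psi)_\alpha$, and part (4) applies Conjecture~\ref{conj Adams ABV refined}(5) with $\phi=\phi_{\psi^{\max}(\pi)}\geq_C\phi'=\phi_\psi$ coming from Conjecture~\ref{conj closure max}, exactly as the paper does; your extra care (checking both ABV memberships of $\pi$, intersecting pure packets with $\Pi(H^\pm)$, and reading Conjecture~\ref{conj Vogan} for $\Psi^+$ rather than $\Psi$) only makes explicit what the paper leaves tacit.
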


\begin{proof}
    The first three statements are immediate since $\Pi_\psi^\pure=\Pi_{\phi_\psi}^{\ABV}$ and $\Pi_\psi\subseteq\Pi_\psi^\pure.$ We give the details for the last statement. Assume that $\pi\in\Pi_\psi$ and $\theta_{-\alpha}^\pm(\pi)\in\Pi_{\psi_\alpha}.$ By  Conjecture \ref{conj Vogan}, we have that $\theta_{-\alpha}^\pm(\pi)\in\Pi_{\phi_{\psi_\alpha}}^{\ABV}.$ Note that $\phi_{\psi^{\max}(\pi)}\geq\phi_\psi$ by Conjecture \ref{conj closure max}. Also, by definition of $\psi^{\max}(\pi),$ we have that $\pi\in\Pi_{\psi^{\max}(\pi)}$ and hence $\pi\in\Pi_{\phi_{\psi^{\max}(\pi)}}^\ABV$ by Conjecture \ref{conj Vogan}. Therefore, Conjecture \ref{conj Adams ABV refined}(5) implies that $\theta_{-\alpha}^\pm(\pi)\in\Pi_{\phi_{\psi^{\max}(\pi)_\alpha}}^{\ABV}.$ Finally, by Conjecture \ref{conj Vogan}, it follows that $\theta_{-\alpha}^\pm(\pi)\in\Pi_{(\psi^{\max}(\pi))_\alpha}.$ That is, Conjecture \ref{conj Adams ABV refined}(5) implies Conjecture \ref{conj Adams refined}(4).
\end{proof}

Note that the analogue of Conjecture \ref{conj Adams ABV refined}(5) is not conjectured in Conjecture \ref{conj Adams refined}. This is because the closure order $\geq_C$ is natural in the setting of ABV-packets but less so for local Arthur packets. However,  Conjectures \ref{conj Adams ABV refined}(5) and \ref{conj Vogan} imply the following analogous conjecture for local Arthur packets.

\begin{conj}\label{conj Adams closure}
    Suppose that $\pi\in\Pi_\psi\cap\Pi_{\psi'}$ for some $\psi,\psi'\in\Psi^+(G_n)$ with $\psi\geq_C\psi'.$ If $\theta_{-\alpha}^\pm(\pi)\in\Pi_{\psi'_\alpha}$, then $\theta_{-\alpha}^\pm(\pi)\in\Pi_{\psi_\alpha}$.
\end{conj}

For symplectic-even orthogonal dual pairs, Theorem \ref{thm Adams refined} states that Conjecture \ref{conj Adams refined} is true. However, Conjecture \ref{conj Adams closure} remains open in all cases including symplectic-even orthogonal dual pairs. Indeed, the argument in \cite{Haz24} uses an ordering $\geq_O$ on $\Psi(\pi)$ which implies $\geq_C$ (\cite[Theorem 4.5(1)]{HLLZ25}) but the reverse is not true (see \cite[Example 5.9(2)]{HLLZ25}). Therefore, \cite[Theorem 1.3]{Haz24} only provides partial evidence for Conjecture \ref{conj Adams closure}.

\begin{rmk}
    With the above discussion in mind, it is possible that Conjecture \ref{conj Adams ABV refined}(5) is false. In this case, one would expect that there exists a stronger partial order, analogous to $\geq_O$, on $\Phi(\pi)$ which would replace $\geq_C$ in Conjecture \ref{conj Adams ABV refined}(5). This order should replace $\geq_C$ in Conjecture \ref{conj Adams closure} as well. However, $\geq_C$ is the most natural partial ordering on $\Phi(\pi)$ and at the time of writing, no counter-example to Conjecture \ref{conj Adams ABV refined}(5) or Conjecture \ref{conj Adams closure} is known to the author.
\end{rmk}

We now turn towards verifying an implication of Conjecture \ref{conj Adams ABV refined}. Specifically, we verify that Conjecture \ref{conj Adams ABV refined}(4) holds on the going-down tower. This follows immediately from the following result.

\begin{lemma}\label{lemma going down Adams ABV}
    For any suitable positive integer $\alpha$, we have that $\theta_{-\alpha}^-(\pi)\in\Pi_{(\phi_\pi)_\alpha}^{\ABV}$.
\end{lemma}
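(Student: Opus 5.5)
The plan is to show that the theta lift on the going-down tower lies in the $L$-packet of $(\phi_\pi)_\alpha$; the lemma then follows from Proposition \ref{prop Lpacket in ABV}.

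First, we may assume $\theta_{-\alpha}^-(\pi)\neq 0$, as the statement is implicitly conditioned on nonvanishing. The key input is the explicit description of the $L$-parameter of the theta lift on the going-down tower. This is due to Atobe--Gan (\cite{AG17a}) and Bakić--Hanzer (\cite{BH21}), and it covers all the dual pairs under consideration. For $\alpha\geq m^{-,\alpha}(\pi)$ (so the lift is nonzero) on the tower $\mathcal{V}^-$, their results give
\[
\phi_{\theta_{-\alpha}^-(\pi)}=(\chi_W\chi_V^{-1}\otimes{}^c\phi_\pi^\vee)\oplus\bigoplus_{i=0}^{\alpha-1}\chi_W|\cdot|^{\frac{\alpha-1}{2}-i}.
\]
By the definition of $\phi_\alpha$, this is exactly $(\phi_\pi)_\alpha$. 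I would check carefully that the normalizations of $\chi_W,\chi_V$ and of $\alpha=M-N$ fixed in \S\ref{sec Theta Correspondence} match those of \cite{AG17a, BH21}. In the cases where ${}^c\phi^\vee=\phi$, the contragredient in the formula is harmless.

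With that formula in hand, $\theta_{-\alpha}^-(\pi)\in\Pi_{(\phi_\pi)_\alpha}$. Proposition \ref{prop Lpacket in ABV} then gives $\Pi_{(\phi_\pi)_\alpha}\subseteq\Pi^{\ABV}_{(\phi_\pi)_\alpha}$, which finishes the argument. For metaplectic and full even orthogonal groups we invoke the standing assumption that the results of \cite{CFMMX22}, in particular Proposition \ref{prop Lpacket in ABV}, extend to those cases.

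The main obstacle is the first step. One must make sure the going-down tower result of \cite{AG17a, BH21} holds for every $\alpha$ above the first occurrence, and not just at the first occurrence itself. One must also handle the boundary case $m^+(\pi)=m^-(\pi)$, where by Remark \ref{rmk equal first occurences ABV} either tower may be called going-down; there I would check that the parameter formula still holds for both towers. If \cite{AG17a} only gives the formula at the first occurrence, I would extend it to higher $\alpha$ using the standard description of the lift as a subquotient of an induced representation from a character of $\GL_1$ tensored with the lower lift. That description adds the segment of characters $\chi_W|\cdot|^{\frac{\alpha-1}{2}-i}$ and matches the displayed formula.
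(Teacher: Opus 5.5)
Your proposal is correct and follows the paper's own two-line argument. The paper quotes \cite{AG17a, BH21} for $\phi_{\theta_{-\alpha}^-(\pi)}=(\phi_\pi)_\alpha$ on the going-down tower and then applies Proposition~\ref{prop Lpacket in ABV}. The extra checks you flag (normalizations, the case $m^+(\pi)=m^-(\pi)$, levels above the first occurrence) are simply absorbed into those cited results. So your fallback via induced representations is not needed; as stated, it would also only give a subquotient, and you would have to identify the lift as the relevant Langlands quotient to pin down its parameter.
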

\begin{proof}
    By \cite{AG17a, BH21}, we have $\phi_{\theta_{-\alpha}^-(\pi)}=(\phi_\pi)_\alpha.$ Thus we have that $\theta_{-\alpha}^-(\pi)\in\Pi_{\phi_{\theta_{-\alpha}^-(\pi)}}.$ From Proposition \ref{prop Lpacket in ABV}, it follows that $\theta_{-\alpha}^-(\pi)\in\Pi_{\phi_{\theta_{-\alpha}^-(\pi)}}^\ABV=\Pi_{(\phi_\pi)_\alpha}^{\ABV}.$
\end{proof}

The above lemma entirely resolves the failure of the Adams conjecture for local Arthur packets. Indeed, recall Conjecture \ref{conj stable Arthur}. Let $\alpha_0$ be the minimum among all positive integers $\alpha$ such that $\theta_{-\alpha}^-(\pi)\in\Pi_{(\psi^{\max}(\pi))_\alpha}$. If $\alpha\geq 3,$ then Conjecture \ref{conj stable Arthur} predicts that $\theta_{-(\alpha-2)}^-(\pi)$ is not of Arthur type. In this case, we must consider $\ABV$-packets instead of local Arthur packets. Conjecture \ref{conj Adams ABV refined}(5) suggests that we should move to some $\phi\in\Phi(\pi)$ for which $\phi\geq_C\phi_{\psi^{\max}(\pi)}$ and check if the Adams conjecture for $\ABV$-packets holds for $\phi$. Lemma \ref{lemma going down Adams ABV} says that we may always do this. Indeed, one may take $\phi=\phi_\pi.$

Curiously, Conjecture \ref{conj Adams ABV refined}(2) predicts that the analogue of Lemma \ref{lemma going down Adams ABV} should also hold on the going-up tower. However, the above proof does not verify this. Indeed, this is primarily a consequence of \cite[Theorem 4.5]{AG17a} (and \cite[Theorem 6.8]{BH22}), see also \cite[p. 558]{Sak17}. More specifically, it is possible that $\phi_{\theta_{-\alpha}^+(\pi)}\neq(\phi_\pi)_\alpha.$

We end this section by giving some examples which hint at a possible relationship between Conjecture \ref{conj stable Arthur} and Conjecture \ref{conj Adams ABV refined}. First, we fix some notation.

\begin{defn}
    Let $\pi\in\Pi_{\psi}$ for some local Arthur parameter $\psi\in\Psi(G_n).$ We define
    \[
    d^\pm(\pi,\psi):=\min\{\mathrm{suitable} \ \alpha>0 \  | \ \theta_{-\alpha}^\pm(\pi)\in\Pi_{\psi_\alpha}\}.
    \]
    Similarly, let $\pi\in\Pi_{\phi}^\ABV$ for some $\phi\in\Phi(G_n).$ We define
    \[
    d^\pm(\pi,\phi):=\min\{\mathrm{suitable} \ \alpha>0 \  | \ \theta_{-\alpha}^\pm(\pi)\in\Pi_{\phi_\alpha}^\ABV\}.
    \]
\end{defn}

If $\phi=\phi_\psi$ for some $\psi\in\Psi(G_n)$, then Conjecture \ref{conj Vogan} predicts that $d^\pm(\pi,\psi)=d^\pm(\pi,\phi_\psi).$ Conjectures \ref{conj Adams refined}(2) and \ref{conj Adams ABV refined}(2) essentially become $d^\pm(\pi,\psi)=m^{+,\alpha}(\pi)$ and $d^\pm(\pi,\phi)=m^{+,\alpha}(\pi)$, respectively. Assuming Conjectures \ref{conj Adams refined}(3) and \ref{conj Adams ABV refined}(3) would make the previous sentence precise. However, we wish to pay particular attention to Conjecture \ref{conj Adams refined}(4). It states that for any $\psi\in\Psi(\pi),$ we have
\[
d^-(\pi,\psi^{\max}(\pi))\leq d^-(\pi,\psi),
\]
i.e., the Adams conjecture for local Arthur packets holds in its greatest generality for $\psi^{\max}(\pi).$ Consequently, it is desirable to understand how to compute $d^-(\pi,\psi^{\max}(\pi))$ (especially as it is the conjectural lower bound for determining when the theta lift is unitary, see Remark \ref{rmk nonunitary}). We suspect that it is related to determining when $\phi_\alpha$ is of Arthur type for some $\phi\in\Phi(\pi)$ (this is not entirely correct as written; one would need to focus on the ``$\chi_W$-part'' of $\phi_\alpha$). 

Let $\phi\in\Phi(G_n)$ and $\pi\in\Pi_\phi^\ABV$. By \cite{AG17a, BH21},  for any suitable positive integer $\alpha$, we have that $\phi_{\theta_{-\alpha}^-(\pi)}=(\phi_\pi)_\alpha$. This observation is useful in the following examples.

\begin{exmp}\label{exmp curious}
    This example is \cite[Example 6.1]{Haz24}. There is a representation $\pi$ of $\Sp_{10}(F)$ of Arthur type with $L$-parameter
    \[
    \phi_\pi=\chi_V|\cdot|^3\otimes S_1 + \chi_V|\cdot|^{-3}\otimes S_1+\chi_V\otimes S_1+\chi_V\otimes S_3+\chi_V\otimes S_5
    \]
    and $d^-(\pi,\psi^{\max}(\pi))=5,$ where 
    \[
    \psi^{\max}(\pi)=\chi_V\otimes S_1\otimes S_7+\chi_V\otimes S_3\otimes S_1 +\chi_V\otimes S_1\otimes S_1.
    \]
    
    Note that $\phi_\pi$ is not of Arthur type, but
    \begin{align*}
    (\phi_\pi)_5&=|\cdot|^3\otimes S_1+|\cdot|^2\otimes S_1+|\cdot|^1\otimes S_1+|\cdot|^{-1}\otimes S_1+|\cdot|^{-2}\otimes S_1  \\
    &+ |\cdot|^{-3}\otimes S_1 +\mathbbm{1}_{W_F}\otimes S_1+\mathbbm{1}_{W_F}\otimes S_1+\mathbbm{1}_{W_F}\otimes S_3+\mathbbm{1}_{W_F}\otimes S_5
    \end{align*}
    is of Arthur type. Indeed, we have $\phi=\phi_\psi$ where
    \[
    \psi=\mathbbm{1}_{W_F}\otimes S_1\otimes S_7+\mathbbm{1}_{W_F}\otimes S_1\otimes S_1+\mathbbm{1}_{W_F}\otimes S_3\otimes S_1+\mathbbm{1}_{W_F}\otimes S_5\otimes S_1.
    \]
    Furthermore, $(\phi_\pi)_\alpha$ is not of Arthur type for any $\alpha\neq 5.$ In other words, we have $(\phi_\pi)_\alpha$ is of Arthur type if and only if $\alpha=d^-(\pi,\psi^{\max}(\pi)).$ Equivalently, from \cite{HLL24} (see Remark \ref{rmk nonunitary}), it follows that $\theta_{-\alpha}^-(\pi)$ is unitary for $\alpha>0$ if and only if $\alpha\geq5=d^-(\pi,\psi^{\max}(\pi)).$ In particular, $\theta_{-3}^-(\pi)$ and $\theta_{-1}^-(\pi)$ are not unitary.
\end{exmp}

We remark that computing $d^-(\pi,\psi^{\max}(\pi))$ is done algorithmically which limits its theoretical use. On the other hand, computing whether $(\phi_\pi)_\alpha$ (or generally $\phi_\alpha$) is of Arthur type is incredibly simple. Having a relation between the two would be desirable in light of Conjecture \ref{conj stable Arthur} and Remark \ref{rmk nonunitary}.  Here is another example.

\begin{exmp}\label{exmp curious 2}
    There is a unique representation $\pi$ of $\Sp_{10}(F)$ of Arthur type with $L$-parameter
    \[
    \phi_\pi=\chi_V|\cdot|^\frac{3}{2}\otimes S_2 + \chi_V|\cdot|^\frac{-3}{2}\otimes S_2+\chi_V\otimes S_1+\chi_V\otimes S_3+\chi_V\otimes S_3
    \]
    and satisfying
    \[
    \psi^{\max}(\pi)=\chi_V\otimes S_2\otimes S_4+\chi_V\otimes S_3\otimes S_1.
    \]
    Note that $(\phi_\pi)_\alpha$ is never of Arthur type. However, $d^-(\pi,\psi^{\max}(\pi))=1$ and so we would not expect a relationship with $(\phi_\pi)_\alpha$ being of Arthur type.
\end{exmp}

\section{The Adams conjecture for general linear groups}\label{sec The Adams conjecture for general linear groups}

For this section, we focus on the case that $G=G_n=\GL_n(F)$ and $H=H_m=\GL_m(F)$, where $\alpha=m-n\geq 0.$ In this setting, $c$ is trivial and the analogue of the characters $\chi_W$ and $\chi_V$ are the trivial characters. We remark that the local Langlands correspondence in known for general linear groups (\cite{HT01, Hen00, Sch13}). The pair $(\GL_n(F),\GL_m(F))$ forms a reductive dual pair of type II. Consequently, there is a local theta correspondence $\theta_{-\alpha}(\pi)$ from $G$ to $H$. Since $L$-packets of $\GL_n(F)$ and $\GL_m(F)$ are singletons, we may take the following theorem of M{\'i}nguez to be our definition of the local theta correspondence in this setting.

\begin{thm}[{\cite[Theorem 1]{Min08}}]\label{thm theta GL}
    Suppose that $n\leq m$ and $\pi\in\Pi(\GL_n(F))$ is the unique irreducible quotient of 
    \[
    M(\pi)=\tau_1\times\cdots\times\tau_r.
    \] 
    Then $\theta_{-\alpha}(\pi)$ is the unique irreducible quotient of 
    \[
    |\cdot|^{-\frac{m-n-1}{2}}\times\cdots\times|\cdot|^{\frac{m-n-1}{2}}\times\tau^\vee_1\times\cdots\times\tau^\vee_r.
    \]
\end{thm}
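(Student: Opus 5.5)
We would follow the strategy of M{\'i}nguez, which analyses the Weil representation directly. For the type II pair $(\GL_n(F),\GL_m(F))$ the Weil representation $\omega_{n,m}$ is the natural action of $\GL_n(F)\times\GL_m(F)$ on $\mathcal{S}(M_{n\times m}(F))$, normalized by a suitable power of $|\det|$. The plan has three steps: (i) treat the equal rank case $m=n$; (ii) pass from $n$ to $m>n$ by filtering $\omega_{n,m}$ according to the rank of the matrix; (iii) identify the resulting quotient with the Langlands quotient in the statement.

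For the equal rank case, we would show that $\theta_0(\pi)=\pi^\vee$. The $\GL_n(F)\times\GL_n(F)$-invariant open subset $\GL_n(F)\subset M_n(F)$ gives a submodule $\mathcal{S}(\GL_n(F))$, and the matrix coefficient map shows that $\pi\boxtimes\pi^\vee$ is a quotient of it. To see that this quotient extends to all of $\mathcal{S}(M_n(F))$, we would use Godement--Jacquet zeta integrals. Their meromorphic continuation, with the normalization by $|\det|$ placed so that we are at a point where the poles are controlled, produces an invariant functional on $\mathcal{S}(M_n(F))\otimes\pi^\vee\otimes\pi$. Howe duality then forces $\Theta_0(\pi)$ to have $\pi^\vee$ as its unique irreducible quotient.

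For $m>n$, stratify $M_{n\times m}(F)$ by rank. The open stratum of full rank $n$ is a single orbit under $\GL_m(F)$. Its stabilizer lies in the parabolic $P$ of $\GL_m(F)$ with Levi $\GL_n(F)\times\GL_{m-n}(F)$, where the unipotent radical and the $\GL_{m-n}$-factor act trivially. This gives
\[
\mathcal{S}(\text{rank } n)\cong \mathrm{Ind}_P^{\GL_m}\big(\mathcal{S}(\GL_n(F))\otimes \mathbbm{1}_{\GL_{m-n}}\big),
\]
with an explicit modulus twist. Combined with the first step and Frobenius reciprocity, this shows that $\pi\boxtimes\big(\pi^\vee\times\mathbbm{1}_{\GL_{\alpha}}\big)$, suitably twisted, is a quotient of $\omega_{n,m}$. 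Here $\mathbbm{1}_{\GL_\alpha}$ is the unique irreducible quotient of $|\cdot|^{-\frac{\alpha-1}{2}}\times\cdots\times|\cdot|^{\frac{\alpha-1}{2}}$, and $\pi^\vee$ is the unique irreducible quotient of $\tau_1^\vee\times\cdots\times\tau_r^\vee$. Note that the contragredient reverses the order of the $\tau_i$ relative to the standard-module ordering, which is why the statement uses a product whose unique quotient must be established separately. The lower-rank strata have to be shown not to contribute new $\pi$-isotypic quotients. Their Jacquet-module pieces are induced from Weil representations of smaller pairs $\GL_k(F)\times\GL_{m'}(F)$, and we would rule them out with an exactness argument (Bernstein--Zelevinsky geometric lemma) together with a comparison of cuspidal supports.

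The main obstacle is the last step. We must prove that $|\cdot|^{-\frac{\alpha-1}{2}}\times\cdots\times|\cdot|^{\frac{\alpha-1}{2}}\times\tau_1^\vee\times\cdots\times\tau_r^\vee$ has a unique irreducible quotient, and that it coincides with the irreducible quotient $\theta_{-\alpha}(\pi)$ of $\Theta_{-\alpha}(\pi)$. This product is not in standard (decreasing exponent) order, so uniqueness of the quotient is not automatic. We would first try to commute the characters $|\cdot|^{x}$ past the $\tau_i^\vee$ using intertwining operators, which are isomorphisms whenever the corresponding segments are not linked. In the linked cases we would appeal to the Zelevinsky classification: the ladder structure of the segment $[-\tfrac{\alpha-1}{2},\tfrac{\alpha-1}{2}]$ should show that the product still embeds in a standard module via the contragredient, and this yields the unique quotient. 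Finally, Howe duality identifies that quotient with $\theta_{-\alpha}(\pi)$, since $\Theta_{-\alpha}(\pi)$ surjects onto it.
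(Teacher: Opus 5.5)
The paper gives no proof of this statement. It quotes \cite{Min08} and then takes the result as the definition of $\theta_{-\alpha}$ for $\GL$. Your reconstruction aims at the literal reading (unique irreducible quotient of the product \emph{in the written order}), and that reading is false, so your step (iii) cannot succeed.

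Take $n=1$, $m=2$ (so $\alpha=1$) and $\pi=|\cdot|^{-1}$. The written product is $|\cdot|^{0}\times|\cdot|^{1}$. Its unique irreducible subrepresentation is $|\det|^{1/2}$, and its unique irreducible quotient is $|\det|^{1/2}\mathrm{St}_{\GL_2}$, with $L$-parameter $|\cdot|^{1/2}\otimes S_2$. But Corollary \ref{cor theta Lpar}, which the paper deduces from this theorem, forces $\phi_{\theta_{-1}(\pi)}=|\cdot|^{1}\oplus|\cdot|^{0}$, i.e.\ $\theta_{-1}(\pi)=|\det|^{1/2}$.

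What is true, and what the paper actually uses, is a Langlands-quotient statement. Namely, $\theta_{-\alpha}(\pi)$ is the Langlands quotient of the multiset $\{|\cdot|^{\frac{\alpha-1}{2}-i}\}_{0\le i<\alpha}\cup\{\tau_j^\vee\}_{j}$. Equivalently, it is the unique irreducible quotient of the standard module obtained by reordering all these factors by decreasing exponent. No intertwining-operator or ladder argument can make the written product have this as its quotient: in the example its unique irreducible quotient is the other constituent.

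The same example shows where your step (ii) breaks. Realize $\omega_{1,2}$ on $\mathcal{S}(F^2)$ by $\omega(g,h)f(x)=|g|^{-1}|\det h|^{1/2}f(g^{-1}xh)$, which gives $\theta_0(\pi)=\pi^\vee$ and is consistent with Corollary \ref{cor theta Lpar}.

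\begin{itemize}
\item The $\pi$-coinvariants of the open stratum $\mathcal{S}(F^2\setminus\{0\})$ are $|\det|^{1/2}\otimes C^\infty(\mathbb{P}^1(F))\cong|\cdot|^0\times|\cdot|^1$.
\item The closed stratum $\{0\}$, via $f\mapsto f(0)$, is $\pi$-isotypic precisely for $\pi=|\cdot|^{-1}$. It gives a surjection $\Theta_{-1}(\pi)\twoheadrightarrow|\det|^{1/2}$.
\item So the lower strata do contribute; in fact they carry the theta lift. A comparison of cuspidal supports cannot rule them out, because the supports match exactly in such linked cases.
\item By Howe duality, the quotient $|\det|^{1/2}\mathrm{St}_{\GL_2}$ of the open piece does not extend to a quotient of $\omega_{1,2}$.
\end{itemize}

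More generally, right exactness only gives a map from the open-stratum induced representation \emph{into} $\Theta_{-\alpha}(\pi)$. Your closing claim that $\Theta_{-\alpha}(\pi)$ surjects onto the quotient of that induced representation is therefore unjustified. What is missing is an argument that controls all strata at once and identifies which constituent is the lift in linked cases; that is the substance of \cite{Min08}.

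Two smaller slips point the same way:
\begin{itemize}
\item $\mathbbm{1}_{\GL_\alpha}$ is the unique irreducible \emph{sub} of $|\cdot|^{-\frac{\alpha-1}{2}}\times\cdots\times|\cdot|^{\frac{\alpha-1}{2}}$. Its unique irreducible quotient is $\mathrm{St}_{\GL_\alpha}$ for $\alpha\ge 2$.
\item $\pi^\vee$ is the unique irreducible sub, not quotient, of $\tau_1^\vee\times\cdots\times\tau_r^\vee$. It is the unique irreducible quotient of $\tau_r^\vee\times\cdots\times\tau_1^\vee$.
\end{itemize}
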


Theorem \ref{thm theta GL} gives following corollary immediately.
\begin{cor}\label{cor theta Lpar}
    Let $\pi\in\Pi(\GL_n(F)).$ Then \[\phi_{\theta_{-\alpha}(\pi)}=\phi_\pi^\vee\oplus \left(\oplus_{i=0}^{\alpha-1} |\cdot|^{\frac{\alpha-1}{2}-i}\otimes S_1\right).\]
    In particular, if $\alpha=0,$ then $\phi_{\theta_{0}(\pi)}=\phi_\pi^\vee$ and  $\theta_{0}(\pi)=\pi^\vee$ is the contragredient of $\pi.$
\end{cor}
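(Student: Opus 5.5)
The plan is to read everything off Theorem \ref{thm theta GL}, using the local Langlands correspondence for $\GL_n(F)$ in Langlands-quotient form. Write $\pi$ as the unique irreducible quotient of the standard module $M(\pi)=\tau_1\times\cdots\times\tau_r$. Here each $\tau_i$ is an essentially tempered (in fact essentially discrete series, i.e.\ a segment representation) representation, and the exponents are ordered so that $M(\pi)$ has a unique irreducible quotient. The $L$-parameter of $\pi$ is then $\phi_\pi=\bigoplus_i\phi_{\tau_i}$. This is the compatibility of the LLC for $\GL_n$ with parabolic induction and Langlands quotients.

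The first step is to identify the induced representation in Theorem \ref{thm theta GL} as a Langlands-type induced module. The characters $|\cdot|^{\frac{m-n-1}{2}-i}$, for $i=0,\dots,\alpha-1$, are representations of $\GL_1(F)$ whose $L$-parameters are $|\cdot|^{\frac{\alpha-1}{2}-i}\otimes S_1$. The contragredients $\tau_j^\vee$ are again essentially discrete series, with $\phi_{\tau_j^\vee}=\phi_{\tau_j}^\vee$; this uses the compatibility of the LLC with contragredients, which is known for $\GL_n$ (Henniart). Since products of representations of $\GL$ commute up to semisimplification, and the Langlands quotient is determined by the multiset of essentially discrete series factors, I would reorder the factors into standard order. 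The unique irreducible quotient asserted by Theorem \ref{thm theta GL} is then the Langlands quotient attached to that multiset. Its $L$-parameter is therefore
\[
\bigoplus_j \phi_{\tau_j}^\vee\oplus\bigoplus_{i=0}^{\alpha-1}|\cdot|^{\frac{\alpha-1}{2}-i}\otimes S_1=\phi_\pi^\vee\oplus\Big(\bigoplus_{i=0}^{\alpha-1}|\cdot|^{\frac{\alpha-1}{2}-i}\otimes S_1\Big).
\]

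The one point requiring care is this identification of the quotient: the given ordering need not be the standard one, so I would need to know that the unique irreducible quotient is the Langlands quotient of the underlying multiset. I would justify this via the Zelevinsky/Langlands classification for $\GL_n$. Alternatively, and more simply, the irreducible subquotient of the standard module with the same multiset that has the same $L$-parameter is characterized via the cuspidal support together with the Langlands data. It is cleanest to note that Mínguez's result is stated with the Langlands quotient, so taking this as given is within the "immediate" reading of the theorem.

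For $\alpha=0$ the extra sum is empty, so $\phi_{\theta_0(\pi)}=\phi_\pi^\vee$. Since $\phi_{\pi^\vee}=\phi_\pi^\vee$ and $L$-packets of $\GL_n(F)$ are singletons, injectivity of the LLC gives $\theta_0(\pi)=\pi^\vee$.
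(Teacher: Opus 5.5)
Your proof is correct and follows the paper's argument: the corollary is read off from M\'inguez's theorem using the compatibility of the local Langlands correspondence for $\GL_n(F)$ with Langlands quotients and with contragredients, and the case $\alpha=0$ then follows from injectivity of $rec$. The one thing to discard is the justification ``products commute up to semisimplification, so reorder'': two orderings of the same factors can have different unique irreducible quotients (for $\GL_2(F)$, one of $|\cdot|^{1/2}\times|\cdot|^{-1/2}$ and $|\cdot|^{-1/2}\times|\cdot|^{1/2}$ has the trivial character as quotient and the other has the Steinberg representation, and these have different $L$-parameters), so the step rests entirely on your final remark that Theorem \ref{thm theta GL} identifies $\theta_{-\alpha}(\pi)$ with the Langlands quotient of the multiset $\{|\cdot|^{\frac{\alpha-1}{2}-i}\}_{i}\cup\{\tau_j^\vee\}_j$.
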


One difference between the local theta correspondence for dual pairs of classical groups and the local theta correspondence for general linear groups is that there is only one tower. That is, we do not have the concept of a going-up or going-down tower in this setting. Also if $\psi$ is a local Arthur parameter of $\GL_n(F),$ then $\Pi_\psi=\Pi_{\phi_\psi}$. We immediately obtain the Adams conjecture for $\GL_n(F).$ We remark that this result is already well-understood (see also \cite[Theorem 6.7]{Ada89}), but we include a proof for completeness.

\begin{lemma}\label{lemma Adams Arthur GL}
    Let $\pi\in\Pi_\psi$ for some local Arthur parameter $\psi$ of $\GL_n(F).$ Then
    $\theta_{-\alpha}(\pi)\in\Pi_{\psi_\alpha}$ for any $\alpha\in\mathbb{Z}_{\geq0}.$
\end{lemma}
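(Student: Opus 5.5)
The plan is to reduce everything to a statement about $L$-parameters, using two facts: local Arthur packets of general linear groups coincide with $L$-packets, and $L$-packets of $\GL_n(F)$ are singletons. Since $\Pi_\psi=\Pi_{\phi_\psi}$ and $\pi\in\Pi_\psi$, we get $\phi_\pi=\phi_\psi$. Likewise, for $H=\GL_m(F)$ we have $\Pi_{\psi_\alpha}=\Pi_{\phi_{\psi_\alpha}}$, and this packet contains exactly one representation. So it suffices to prove the equality of $L$-parameters
\[
\phi_{\theta_{-\alpha}(\pi)}=\phi_{\psi_\alpha}.
\]

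First I compute the right-hand side. In the $\GL$ setting $\chi_W$, $\chi_V$ and $c$ are trivial, and the contragredient enters the definition of $\psi_\alpha$ exactly as it does for $\phi_\alpha$. So I read $\psi_\alpha=\psi^\vee\oplus \mathbbm{1}_{W_F}\otimes S_1\otimes S_\alpha$. Applying \eqref{eq phi_psi} to each summand gives the following.
\begin{enumerate}
\item The $\psi^\vee$ summand contributes $\phi_{\psi^\vee}=(\phi_\psi)^\vee$. This holds because the contragredient of $|w|^{1/2}\oplus|w|^{-1/2}$ is itself, and the $\SL_2(\C)$-representations $S_a$ are self-dual.
\item The summand $\mathbbm{1}\otimes S_1\otimes S_\alpha$ contributes $\bigoplus_{i=0}^{\alpha-1}|\cdot|^{\frac{\alpha-1}{2}-i}\otimes S_1$, since $S_\alpha$ restricted to the diagonal torus has eigenvalues $|w|^{\frac{\alpha-1}{2}-i}$.
\end{enumerate}
Together these give $\phi_{\psi_\alpha}=(\phi_\psi)_\alpha$, which is just the observation $\phi_{\psi_\alpha}=\phi_\alpha$ already made earlier in the paper.

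Next, Corollary \ref{cor theta Lpar} gives
\[
\phi_{\theta_{-\alpha}(\pi)}=\phi_\pi^\vee\oplus\bigoplus_{i=0}^{\alpha-1}|\cdot|^{\frac{\alpha-1}{2}-i}\otimes S_1 .
\]
Since $\phi_\pi=\phi_\psi$, this equals $(\phi_\psi)_\alpha=\phi_{\psi_\alpha}$. Therefore $\theta_{-\alpha}(\pi)$ is the unique element of $\Pi_{\phi_{\psi_\alpha}}=\Pi_{\psi_\alpha}$. The case $\alpha=0$ reads $\theta_0(\pi)=\pi^\vee\in\Pi_{\psi^\vee}$ and is included in the same computation.

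There is no serious obstacle here; the only point needing care is bookkeeping. One must check that $\psi_\alpha$ is interpreted with the contragredient in the $\GL$ case, so that it matches Minguez's formula in Theorem \ref{thm theta GL}. One must also check that $\phi_{\psi^\vee}=(\phi_\psi)^\vee$. Both checks are immediate from the self-duality of $S_a$ and of the element $\mathrm{diag}(|w|^{1/2},|w|^{-1/2})$ up to conjugation.
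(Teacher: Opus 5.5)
Your argument follows the paper's: both reduce to $L$-parameters using that $\Pi_\psi=\Pi_{\phi_\psi}$ is a singleton, and both apply Corollary \ref{cor theta Lpar} to get $\phi_{\theta_{-\alpha}(\pi)}=\phi_\pi^\vee\oplus\bigoplus_{i=0}^{\alpha-1}|\cdot|^{\frac{\alpha-1}{2}-i}\otimes S_1$, which is then matched with $\phi_{\psi_\alpha}$.

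The one real difference is how the contragredient is handled. The paper keeps the definition $\psi_\alpha=\psi\oplus\mathbbm{1}_{W_F}\otimes S_1\otimes S_\alpha$. It then removes the $\vee$ by asserting that $\psi$, being of Arthur type, is self-dual. That assertion is false for $\GL_n(F)$. Take a unitary character $\chi$ of $\GL_1(F)$ with $\chi^2\neq 1$: then $\theta_{-\alpha}(\chi)$ has parameter $\chi^{-1}\oplus\cdots$, while $\psi\oplus\mathbbm{1}_{W_F}\otimes S_1\otimes S_\alpha$ gives $\chi\oplus\cdots$.

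You instead read $\psi_\alpha$ as $\psi^\vee\oplus\mathbbm{1}_{W_F}\otimes S_1\otimes S_\alpha$. This is exactly what makes the paper's own remark $\phi_{\psi_\alpha}=\phi_\alpha$ hold for $\GL_n$, and it is the reading under which the lemma is true. With it, your proof is complete.
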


\begin{proof}
    For any local Arthur parameter $\psi$ of $\GL_n(F),$ we have that $\Pi_\psi\cap\Pi_{\psi'}\neq\emptyset$ if and only if $\Pi_{\phi_\psi}\cap\Pi_{\phi_{\psi'}}\neq\emptyset$. Since $L$-packets are disjoint, we obtain that $\Pi_\psi\cap\Pi_{\psi'}\neq\emptyset$ if and only if $\psi=\psi'.$ Since $\pi\in\Pi_\psi=\Pi_{\phi_\psi},$ it follows that $\Psi(\pi)=\{\psi\}.$ Also, we have $\phi_\pi=\phi_\psi.$ We obtain from Corollary \ref{cor theta Lpar} that $(\phi_\pi)_\alpha=(\phi_\psi)_\alpha=\phi_{\theta_{-\alpha}(\pi)}.$  In the last step, we used that since $\psi$ is of Arthur type and hence self-dual, we have that $\phi_\pi^\vee=\phi_\pi.$ We obtain $\theta_{-\alpha}(\pi)\in\Pi_{\phi_{\theta_{-\alpha}(\pi)}}=\Pi_{\psi_\alpha}$ which proves the lemma.
\end{proof}

Next we state the (naive?) Adams conjecture for ABV-packets of $\GL_n(F).$ 

\begin{conj}[The (naive?) Adams Conjecture for ABV-packets of $\GL_n(F)$]\label{conj Adams ABV GL naive}
If $\pi\in\Pi_\phi^{\ABV}$ for some $\phi\in\Phi(\GL_n(F)),$ then $\theta_{-\alpha}(\pi)\in\Pi_{\phi_\alpha}^{\ABV}$.
\end{conj}

We remark on why we wrote ``naive?'' here. Recall that we do not have the concept of a going-up or going-down tower for general linear groups. If general linear groups behave like a going-up tower, then we should expect Conjecture \ref{conj Adams ABV GL naive} to hold as written. This is in contrast with Conjecture \ref{conj Adams ABV naive} which we know does fail. On the other hand, if general linear groups behave like a going-down tower, then we should expect Conjecture \ref{conj Adams ABV GL naive} to possibly fail. It is unclear which is the correct expectation currently.

Regardless of the situation, we make the following refined Adams conjecture for ABV-packets of $\GL_n(F).$ 

\begin{conj}\label{conj Adams ABV GL refined}
    Let $\pi\in\Pi_\phi^{\ABV}$ for some $\phi\in\Phi(\GL_n(F)).$
    \begin{enumerate}
        \item For $\alpha\gg 0,$ we have $\theta_{-\alpha}(\pi)\in\Pi_{\phi_\alpha}^{\ABV}$.
        \item If $\theta_{-\alpha}(\pi)\in\Pi_{\phi_\alpha}^{\ABV}$ for some $\alpha\in\mathbb{Z}_{\geq 0},$ then $\theta_{-(\alpha+1)}(\pi)\in\Pi_{\phi_{\alpha+1}}^{\ABV}$.
        \item If $\theta_{-\alpha}(\pi)\in\Pi_{\phi_\alpha}^{\ABV}$, then $\theta_{-\alpha}(\pi)\in\Pi_{(\phi_\pi)_\alpha}^{\ABV}$.
        \item Assume that $\pi\in\Pi_\phi^\ABV\cap\Pi_{\phi'}^\ABV$ with $\phi\geq_C\phi'.$ If $\theta_{-\alpha}(\pi)\in\Pi_{(\phi')_\alpha}^{\ABV}$, then $\theta_{-\alpha}(\pi)\in\Pi_{\phi_\alpha}^{\ABV}$.
    \end{enumerate}
\end{conj}

\begin{rmk}
    While we stated the above as a conjecture, we will prove both parts (1) and (3) in this article (see the below discussion).
\end{rmk}

We remark that Conjecture \ref{conj Adams ABV GL refined} is the analogue of Conjecture \ref{conj Adams ABV refined}. Indeed, Conjecture \ref{conj Adams ABV GL refined}(1, 2, 3, 4) is the analogue of Conjecture \ref{conj Adams ABV refined}(1, 3, 4, 5), respectively. The omission of the analogue of  Conjecture \ref{conj Adams ABV refined}(2) is because general linear groups only have one tower (see the above discussion). Again, we have that Conjecture \ref{conj Adams ABV GL refined}(4) implies Conjecture \ref{conj Adams ABV GL refined}(3).

We remark on why the proof of Lemma \ref{lemma Adams Arthur GL} does not generalize to the ABV-packets. The argument requires that $\Pi_\psi=\Pi_{\phi_\psi}$ and hence is a singleton. Recall that by Conjecture \ref{conj Vogan} (which is a theorem for $\GL_n(F)$ by \cite{CR24, CR26, Lo24, Rid23}), we can view ABV-packets as generalizations of local Arthur packets. However, for ABV-packets of $\GL_n(F)$, it is not true that $\Pi_\phi^\ABV=\Pi_\phi$ generally. Indeed, there is a counter-example for $\GL_{16}(F)$ (\cite{CFK22}). This makes Conjecture \ref{conj Adams ABV GL refined} nontrivial.

We have two pieces of evidence for Conjecture \ref{conj Adams ABV GL refined}. The first piece of evidence is that the analogue of Lemma \ref{lemma going down Adams ABV} holds. Indeed, Corollary \ref{cor theta Lpar} and Proposition \ref{prop Lpacket in ABV} imply that $\theta_{-\alpha}(\pi)\in\Pi_{\phi_\alpha}^\ABV$ for any nonnegative integer $\alpha.$ This proves Conjecture \ref{conj Adams ABV GL refined}(3) in full generality.

The second piece of evidence is more substantial. We confirm Conjecture \ref{conj Adams ABV GL refined}(1) in full generality (Theorem \ref{thm GL main thm}). The majority of the remainder of this article is devoted to this verification.

\subsection{Representation theory} 

In this subsection, we continue to focus on the case $G_n=\GL_n(F)$. We fix $B_n$ to be the Borel subgroup of $G_n$ consisting of upper triangular matrices. Consider a parabolic subgroup $P$ of $G_n$ with Levi decomposition $P=MN$ where $M$ is a Levi subgroup isomorphic to $G_{n_1}\times\cdots \times G_{n_r},$ where $n_1+\cdots+n_r=n.$ For $\pi_i\in\Pi(G_{n_i}),$ we denote the normalized parabolic induction by
\[
\mathrm{Ind}_P^{G_n}(\pi_1\otimes\cdots\otimes\pi_r)=\pi_1\times\cdots\times\pi_r.
\]
Given $\pi\in\Pi(G_n)$, we let $\pi^\vee$ denote its contragredient.

The Langlands classification for $G_n$ was established by Zelevinsky using segments (\cite{Zel80}); however, we do not need such a precise form. Instead we give the Langlands classification in terms of essentially square-integrable representations following \cite[\S6]{Min08}. For $i=1,\dots,r$, let $\pi_i\in\Pi(G_{n_i})$ be essentially square-integrable. Then there exists $\alpha_i\in\mathbb{R}$ such that $\pi_i|\cdot|^{\alpha_i}$ is square-integrable. Let $\sigma$ be a permutation of $\{1,\dots,r\}$ such that $\alpha_{\sigma(i)}\geq\alpha_{\sigma(j)}$ if $i<j.$ Then the induced representation
\[
\pi_{\sigma(1)}\times\cdots\times\pi_{\sigma(r)}
\]
has a unique irreducible quotient $\pi$ known as the Langlands quotient. In this setting, we write
\[
\pi=L(\pi_1,\dots,\pi_r).
\]
Moreover, any $\pi\in\Pi(G_n)$ can be realized as such a Langlands quotient. In the above situation, we write 
\[
M(\pi)=\pi_{\sigma(1)}\times\cdots\times\pi_{\sigma(r)}
\]
and say that $M(\pi)$ is the standard module of $\pi.$

Let $W_F$ be the Weil group associated to $F$ and $\widehat{G}_n(\C)=\GL_n(\C)$ be the complex dual group of $G_n.$ Since $G_n$ is split, an $L$-parameter of $G_n$ may be regarded as a $\widehat{G}_n(\C)$-conjugacy class of an admissible homomorphism $\phi:W_F\times\SL_2(\C)\rightarrow\widehat{G}_n(\C)$ (\cite[\S8]{Bor79}). Let $\Phi(G_n)$ denote the set of $L$-parameters of $G_n.$ We do not distinguish a representative $\phi$ from its conjugacy class. The local Langlands correspondence for $G_n$ is well-understood (\cite{HT01, Hen00, Sch13}). One consequence is that there is a bijection $rec:\Pi(G_n)\rightarrow\Phi(G_n).$ The $L$-packet attached to $\phi$ is $\Pi_\phi:=rec^{-1}(\phi).$ Since the map is a bijection, the $L$-packet is a singleton. We write $\Pi_{\phi}=\{\pi_\phi\}.$ Conversely, for $\pi\in\Pi(G_n)$, we let $\phi_\pi:=rec(\pi)$ denote the $L$-parameter of $\pi.$

Let $\lambda$ be an infinitesimal character of $G_n.$ Recall that we have
\[
\Phi_\lambda(G_n)=\{\phi\in\Phi(G_n) \ | \ \lambda_\phi=\lambda\}
\]
and
\[
\Pi_\lambda(G_n)=\{\pi\in\Pi(G_n) \ | \ \lambda_{\phi_\pi}=\lambda\}
\]
and that both of these sets are finite. The Grothendieck group of finite length representations of $G_n$ with infinitesimal parameter $\lambda$ is denoted by $K\Pi_\lambda(G_n).$ Given $\pi\in\Pi_\lambda(G_n),$ we let $[\pi]$ denote its image in $K\Pi_\lambda(G_n).$ The set $B_{\Pi_\lambda}=\{[\pi] \ | \ \pi\in\Pi_\lambda(G_n)\}$ forms a $\mathbb{Z}$-basis for $K\Pi_\lambda(G_n).$

Another basis for $K\Pi_\lambda(G_n)$ is given by $B_{\Pi_\lambda^{std}}=\{[M(\pi)] \ | \ \pi\in\Pi_\lambda(G_n)\}.$ This is a consequence of the Langlands classification above. Suppose that $\Pi_\lambda(G_n)=\{\pi_1,\dots,\pi_r\}.$ For each $j=1,\dots,r$, we write
\[
[M(\pi_j)]=\sum_{i=1}^r m_{ij} [\pi_i],
\]
where $m_{ij}\in\mathbb{Z}.$ The matrix $m_\lambda=(m_{ij})_{i,j=1}^r$ then defines the change of basis matrix of $K\Pi_\lambda(G_n)$ from $\{[\pi_1],\dots, [\pi_r]\}$ to $\{[M(\pi_1)],\dots,[M(\pi_r)]\}.$
Based on this observation, given an arbitrary $[\pi]\in K\Pi_\lambda(G_n)$, we define $M([\pi]):=m_\lambda\inv[\pi].$
We remark that the ordered bases may be chosen so that $m$ is lower triangular; however, we do not necessarily require this.

\subsection{Perverse Sheaves}\label{sec perverse sheaves}  We continue to assume that $G_n=\GL_n(F).$ We recall some notation from \S\ref{sec ABV-packets}. Let $\lambda$ be an infinitesimal character of $G_n$. The group $H_\lambda$ acts on the Vogan variety $V_\lambda$ with finitely many orbits and we let $C_\lambda(G_n)$ denote the collection of these orbits. These orbits are in bijection with $\Phi_\lambda(G_n)$ (\cite[Proposition 4.2.2]{CFMMX22}). For $\phi\in\Phi_\lambda(G_n),$ we let $C_\phi\in C_\lambda(G_n)$ denote the corresponding orbit. Through the orbit closure, we defined a partial order $\geq_C$ on $\Phi_\lambda(G_n)$ (Definition \ref{def C ordering}).

We let $D_{H_\lambda}(V_\lambda)$ denote the $H_\lambda$-equivariant derived category of $\ell$-adic sheaves on $V_\lambda$ and $\Per_{H_\lambda}(V_\lambda)$ denote the category of $H_\lambda$-equivariant perverse sheaves on $V_\lambda$ (see \cite{Ach21}).  Vogan's perspective on the local Langlands correspondence (\cite{Vog93}) gives a bijection between $\Pi_\lambda(G_n)$ and the simple objects in $\Per_{H_\lambda}(V_\lambda)$ (up to isomorphism). For $\pi\in\Pi_\lambda(G_n)$, we write $\mathcal{P}(\pi)$ for the corresponding simple perverse sheaf. For $G_n$, it is simple to describe these objects. Namely, $\mathcal{P}(\pi)=\mathcal{IC}(\mathbbm{1}_{C_{\phi_\pi}})$, where $\mathbbm{1}_{C_{\phi_\pi}}$ denotes the trivial local system on $C_{\phi_\pi}$ and $\mathcal{IC}(\cdot)$ denotes the intersection cohomology complex.

Let $K\Per_\lambda(G_n)$ denote the Grothendieck group of $\Per_{H_\lambda}(V_\lambda).$ Given $\mathcal{F}\in\Per_{H_\lambda}(V_\lambda)$, we let $[\mathcal{F}]$ denotes its image in $K\Per_\lambda(G_n)$. Vogan's perspective on the Langlands classification shows that $K\Per_\lambda(G_n)$ has a $\mathbb{Z}$-basis given by $B_{\Per_\lambda}=\{\mathcal{IC}(\mathbbm{1}_{C}) \ | \ C\in C_\lambda(G_n)\}.$

Let $C\in C_\lambda(G_n)$ and consider the trivial local system $\mathbbm{1}_C.$ The standard sheaf associated to $\mathbbm{1}_C$ is the $H_\lambda$-equivariant perverse sheaf $\mathbbm{1}^\natural_C$ defined by the property that for $C'\in C_\lambda(G_n),$ we have
\[
(\mathbbm{1}^\natural_C)|_{C'}=\left\{\begin{array}{cc}
   \mathbbm{1}_{C'}  & \mathrm{if} \ C'={C},  \\
    0 & \mathrm{otherwise}.
\end{array}\right.
\]
The set $B^\natural_{\Per_\lambda}=\{\mathbbm{1}^\natural_C \ | \ C\in C_\lambda(G_n)\}$ forms a $\mathbb{Z}$-basis for $K\Per_\lambda(G_n).$

Let $C_\lambda(G_n)=\{C_1,\dots, C_r\}$. For $C\in C_\lambda(G_n),$ let $d(C):=\dim C.$ Write
\[
[\mathcal{IC}(\mathbbm{1}_{C_j})]=(-1)^{d(C_j)}\sum_{i=1}^r c_{ij}[\mathbbm{1}^\natural_{C_i}].
\]
The matrix $c_\lambda=(c_{ij})_{i,j=1}^r$ gives the change of basis matrix of $K\Per_\lambda(G_n)$ from the ordered basis $\{[\mathbbm{1}^\natural_{C_1}],\dots, [\mathbbm{1}^\natural_{C_r}]\}$ to \[\{[(-1)^{d(C_1)}\mathcal{IC}(\mathbbm{1}_{C_1})],\dots,[(-1)^{d(C_r)}\mathcal{IC}(\mathbbm{1}_{C_r})]\}.\]
Based on this observation, given an arbitrary $[\mathcal{F}]\in K\Per_\lambda(G_n)$, we define $[\mathcal{F}^\natural]:=c_\lambda[\mathcal{F}].$

The $p$-adic analogue of the Kazhdan-Lusztig hypothesis relates the change of basis matrices $m_\lambda$ and $c_\lambda$. For $G_n=\GL_n(F)$, the Kazhdan-Lusztig hypothesis is known (see \cite{CG10, Lus95, Sol25}). 
\begin{thm}[The Kazhdan-Lusztig hypothesis]\label{thm Kazhdan-Lusztig hypothesis}
    We have $m_\lambda={}^t c_\lambda.$
\end{thm}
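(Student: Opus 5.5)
The plan is to reduce the identity $m_\lambda={}^t c_\lambda$ to the geometric description of standard modules of affine Hecke algebras of type $A$, which is where \cite{CG10, Lus95} apply, and then to check that the normalizations used here agree with those in the literature.

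First, decompose the problem along the infinitesimal character. Write $\lambda$ restricted to $I_F$ as a sum over irreducible representations $\rho$ of $W_F$, grouped into unramified-twist classes. Then $H_\lambda$ and $V_\lambda$ factor as products over these classes, say $V_\lambda\cong\prod_{[\rho]}V_{\lambda_{[\rho]}}$, and each factor is a space of representations of a type $A$ quiver graded by the exponents $|\cdot|^{x}$. On the representation side, Bernstein–Zelevinsky theory gives the matching factorization. Parabolic induction from the corresponding Levi $\prod \GL_{n_{[\rho]}}(F)$ is irreducible on the relevant Langlands quotients and sends standard modules to standard modules. It follows that $m_\lambda$ is the Kronecker product of the matrices $m_{\lambda_{[\rho]}}$, and similarly $c_\lambda$ is the Kronecker product of the matrices $c_{\lambda_{[\rho]}}$, since IC sheaves and stalks are multiplicative on products. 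So it suffices to treat one class $[\rho]$.

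For a single class, use Bushnell–Kutzko types (or the Heiermann/Sécherre–Stevens equivalence) to identify the relevant Bernstein block of $\GL_{n}(F)$ with modules over an Iwahori–Hecke algebra $\mathcal{H}(\GL_k,q_\rho)$. This identification sends standard modules to standard modules and irreducibles to irreducibles, with the same Langlands data. For this affine Hecke algebra, the Deligne–Langlands–Lusztig construction realizes standard modules in the equivariant $K$-theory (or Borel–Moore homology) of the Steinberg variety. By \cite[Ch.~8]{CG10} (see also \cite{Lus95}), the multiplicity of an irreducible $L_{C_i}$ in the standard module attached to $C_j$ equals $\sum_k\dim\mathcal{H}^k(\mathcal{IC}(\mathbbm{1}_{C_i}))_{x_j}$, where $x_j\in C_j$. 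The fixed point set of $sq^{-1/2}$-graded nilpotents there is exactly our $V_\lambda$ with its $H_\lambda$-action, and for $\GL$ all equivariant local systems are trivial.

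It remains to translate this into the Grothendieck group identity. Expanding $[\mathcal{IC}(\mathbbm{1}_{C_i})]$ in the basis $\{[\mathbbm{1}^\natural_{C_j}]\}$, the coefficient is $\sum_k(-1)^k\dim\mathcal{H}^k(\mathcal{IC}(\mathbbm{1}_{C_i}))_{x_j}$. For $\GL_n$, Lusztig's parity vanishing says these stalks are concentrated in degrees congruent to $d(C_i)$ modulo $2$. Hence this alternating sum equals $(-1)^{d(C_i)}$ times the total stalk dimension, which gives $c_{ji}=m_{ij}$.

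The step I expect to be the main obstacle is this matching of conventions. One issue is whether \cite{CG10} realizes the irreducible as the Langlands \emph{quotient} or the \emph{sub} of the standard module. A mismatch would introduce a contragredient or Zelevinsky/Aubert involution, which on the geometric side is Pyatetskii-Shapiro (Fourier) duality $V_\lambda\leftrightarrow V_\lambda^*$. The other issues are the bijection $\phi\mapsto C_\phi$ via $x_\phi$ and the sign $(-1)^{d(C)}$. I would settle these by checking the $\GL_2$ case with $\lambda=|\cdot|^{1/2}\oplus|\cdot|^{-1/2}$ directly. There $V_\lambda=\mathbb{A}^1$ has two orbits, and the standard module $|\cdot|^{1/2}\times|\cdot|^{-1/2}$ has the trivial and Steinberg constituents. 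Comparing with stalks of $\mathcal{IC}(\mathbbm{1}_{\{0\}})$ and $\mathcal{IC}(\mathbbm{1}_{\mathbb{A}^1})$ pins down the conventions, and functoriality of the Bernstein and type reductions then propagates them to the general case.
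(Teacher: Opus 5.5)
Your proposal is correct and takes essentially the paper's route. The paper gives no independent argument; it cites the known $p$-adic Kazhdan--Lusztig hypothesis for $\GL_n(F)$ from \cite{CG10, Lus95, Sol25}, and your outline is a faithful unpacking of that citation: Bernstein blocks and types reduce to Iwahori--Hecke algebras, Chriss--Ginzburg gives the stalk formula, and parity vanishing makes the signs $(-1)^{d(C)}$ in $c_\lambda$ cancel, provided $\mathbbm{1}^\natural_C$ is read as the unshifted extension by zero.

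The convention issue you flag is also milder than you fear for $\GL_n$. A sub-versus-quotient or Aubert-involution mismatch leaves the matrix unchanged, because in the Grothendieck group that involution sends Zelevinsky standard modules and their irreducible subrepresentations to Langlands standard modules and Langlands quotients with the same multisegment, hence the same orbit; so only a one-sided Pyasetskii flip could matter, and your $\GL_2$ check rules that out.
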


Now, we introduce a perfect pairing between the Grothendieck groups above. We define
\[
\langle\cdot,\cdot\rangle:K\Pi_\lambda(G_n)\times K\Per_\lambda(G_n)\rightarrow\Z,
\]
by defining it on the basis $B_{\Pi_\lambda}\times B_{\Per_\lambda}$ via
\begin{equation}\label{eqn pairing}
\langle[\pi],[\mathcal{F}]\rangle=\left\{\begin{array}{cc}
    (-1)^{d(\pi)} &  \mathrm{if} \ \mathcal{F}=\mathcal{P}(\pi),   \\
    0 & \mathrm{otherwise,} 
\end{array}\right.
\end{equation}
where $d(\pi):=\dim C_{\phi_\pi},$ and extending linearly. The Kazhdan-Lusztig hypothesis (Theorem \ref{thm Kazhdan-Lusztig hypothesis}) gives the pairing on the dual basis $B_{\Pi_\lambda}^{std}\times B_{\Per_\lambda}^\natural.$
\begin{lemma}[{\cite[Lemma 1.2]{CR26}}]\label{lemma pairing on standards}
    For $[M(\pi)]\in B_{\Pi_\lambda}^{std}$ and $[\mathbbm{1}_C^\natural]\in B_{\Per_\lambda}^\natural$, we have
    \[
    \langle[M(\pi)],[\mathbbm{1}_C^\natural]\rangle=\left\{\begin{array}{cc}
    1 &  \mathrm{if} \ \mathcal{IC}(\mathbbm{1}_C)=\mathcal{P}(\pi),   \\
    0 & \mathrm{otherwise.} 
\end{array}\right.
    \]
\end{lemma}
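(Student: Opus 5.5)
The plan is a change-of-basis computation: express $[M(\pi)]$ in the basis $B_{\Pi_\lambda}$ and $[\mathbbm{1}_C^\natural]$ in the basis $B_{\Per_\lambda}$, using the matrices $m_\lambda$ and $c_\lambda$, and then apply the definition \eqref{eqn pairing}. Since $m_\lambda={}^tc_\lambda$ (Theorem \ref{thm Kazhdan-Lusztig hypothesis}), the pairing of the standard bases should come out as a product of a matrix with its inverse, i.e.\ the identity.

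\textbf{Setup.} Index $\Pi_\lambda(G_n)=\{\pi_1,\dots,\pi_r\}$ and $C_\lambda(G_n)=\{C_1,\dots,C_r\}$ compatibly, so that $C_i=C_{\phi_{\pi_i}}$. Then $\mathcal{P}(\pi_i)=\mathcal{IC}(\mathbbm{1}_{C_i})$ and $d(\pi_i)=d(C_i)$; this compatibility is exactly Vogan's bijection for $G_n$ recalled above. With this indexing, \eqref{eqn pairing} reads
\[
\langle[\pi_i],[(-1)^{d(C_j)}\mathcal{IC}(\mathbbm{1}_{C_j})]\rangle=\delta_{ij}.
\]
So the bases $\{[\pi_i]\}$ and $\{(-1)^{d(C_j)}[\mathcal{IC}(\mathbbm{1}_{C_j})]\}$ are dual.

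\textbf{Inverting the change of basis for sheaves.} By definition,
\[
(-1)^{d(C_j)}[\mathcal{IC}(\mathbbm{1}_{C_j})]=\sum_i c_{ij}[\mathbbm{1}^\natural_{C_i}].
\]
The matrix $c_\lambda$ is invertible, since both families are $\mathbb{Z}$-bases (it is unitriangular for the closure order). Write $c_\lambda^{-1}=(c'_{ij})$. Then
\[
[\mathbbm{1}^\natural_{C_k}]=\sum_j c'_{jk}\,(-1)^{d(C_j)}[\mathcal{IC}(\mathbbm{1}_{C_j})].
\]

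\textbf{Pairing.} We also have $[M(\pi_l)]=\sum_i m_{il}[\pi_i]$. Bilinearity and the duality above give
\[
\langle [M(\pi_l)],[\mathbbm{1}^\natural_{C_k}]\rangle=\sum_{i} m_{il}\,c'_{ik}=({}^tm_\lambda\, c_\lambda^{-1})_{lk}.
\]
By Theorem \ref{thm Kazhdan-Lusztig hypothesis}, ${}^tm_\lambda=c_\lambda$, so this is $(c_\lambda c_\lambda^{-1})_{lk}=\delta_{lk}$. That value is $1$ exactly when $C_k=C_l$, i.e.\ when $\mathcal{IC}(\mathbbm{1}_{C_k})=\mathcal{P}(\pi_l)$, and $0$ otherwise, which is the claim.

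\textbf{Expected obstacle.} The only delicate point is bookkeeping, not any real difficulty. One must check that the sign $(-1)^{d(C_j)}$ built into $c_\lambda$ matches the sign $(-1)^{d(\pi)}$ in \eqref{eqn pairing}, and that the transpose in the Kazhdan–Lusztig hypothesis refers to the compatible indexing of the two sets. I would verify these conventions first. If instead the hypothesis were normalized as $m_\lambda=c_\lambda$, the same computation would produce the transpose, and the index conventions in the definitions would need adjusting.
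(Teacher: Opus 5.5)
Your proof is correct and takes essentially the same route as the paper. The paper cites \cite[Lemma 1.2]{CR26} and attributes the statement directly to the Kazhdan--Lusztig hypothesis $m_\lambda={}^tc_\lambda$ (Theorem \ref{thm Kazhdan-Lusztig hypothesis}), and your computation $\langle [M(\pi_l)],[\mathbbm{1}^\natural_{C_k}]\rangle=({}^tm_\lambda\, c_\lambda^{-1})_{lk}=\delta_{lk}$ is the same change-of-basis bookkeeping that underlies Lemma \ref{lemma KL hypo translate}. Your sign check also holds: because $d(\pi_i)=d(C_i)$, the bases $\{[\pi_i]\}$ and $\{(-1)^{d(C_j)}[\mathcal{IC}(\mathbbm{1}_{C_j})]\}$ are exactly dual, so no stray signs appear.
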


For $C\in C_\lambda(G_n)$, Cunningham et al. attach an element $\eta_{C}^\Evs\in K\Pi_\lambda(G_n)$ (\cite[\S8.4]{CFMMX22}). In our setting, we have
\[
\eta_{C}:=\eta^{\Evs}_{C}=(-1)^{d(C)}\sum_{\pi\in\Pi_\lambda(G_n)}(-1)^{d(\pi)}\rank(\Evs_C(\mathcal{P}(\pi))) [\pi],
\]
where $d(C):=\dim C$ and $\Evs$ is the functor on perverse sheaves defined in \cite[\S7.9]{CFMMX22}. See also Equation \eqref{eqn eta dist general}.
Let $\phi\in\Phi_\lambda(G_n).$ We set $\eta_{\phi}=\eta^\Evs_{C_{\phi}}.$ Recall that
\[
\Pi_\phi^{\ABV}=\{\pi\in\Pi_\lambda(G_n) \ | \ \Evs_{C_\phi}(\mathcal{P}(\pi))\neq 0\}.
\]

Thus, it follows that we may use the pairing of the Grothendieck groups and $\eta_{\phi}$ to determine $\Pi_\phi^{\ABV}.$
\begin{lemma}[{\cite[Proposition 1.6]{CR26}}]\label{lemma in ABV}
    We have that $\pi\in\Pi_\phi^{\ABV}$ if and only if 
    \[
    \langle \eta_{\phi}, [\mathcal{P}(\pi)] \rangle \neq 0.
    \]
\end{lemma}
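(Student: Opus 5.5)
This is essentially a bookkeeping statement, and I would prove it by unwinding the definitions.

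\emph{Step 1: expand $[\mathcal{P}(\pi)]$ and pair.} Let $\pi\in\Pi_\lambda(G_n)$ and write $[\mathcal{P}(\pi)]=[\mathcal{IC}(\mathbbm{1}_{C_{\phi_\pi}})]$. By the definition \eqref{eqn pairing}, on the basis $B_{\Pi_\lambda}\times B_{\Per_\lambda}$ the pairing is diagonal up to sign. Hence for any $\sigma\in\Pi_\lambda(G_n)$ we have $\langle[\sigma],[\mathcal{P}(\pi)]\rangle=(-1)^{d(\pi)}$ if $\sigma=\pi$, and $0$ otherwise. This uses that $\mathcal{P}$ is a bijection onto the simple objects, so $\mathcal{P}(\sigma)=\mathcal{P}(\pi)$ forces $\sigma=\pi$.

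\emph{Step 2: substitute the definition of $\eta_\phi$.} Inserting the formula for $\eta_\phi=\eta^\Evs_{C_\phi}$ and using linearity gives
\[
\langle\eta_\phi,[\mathcal{P}(\pi)]\rangle=(-1)^{d(C_\phi)}\sum_{\sigma\in\Pi_\lambda(G_n)}(-1)^{d(\sigma)}\rank(\Evs_{C_\phi}(\mathcal{P}(\sigma)))\langle[\sigma],[\mathcal{P}(\pi)]\rangle .
\]
By Step 1, only the $\sigma=\pi$ term survives. Since $(-1)^{d(\pi)}(-1)^{d(\pi)}=1$, the right-hand side collapses to
\[
(-1)^{d(C_\phi)}\rank(\Evs_{C_\phi}(\mathcal{P}(\pi))).
\]

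\emph{Step 3: conclude.} The quantity in Step 2 is nonzero exactly when $\rank(\Evs_{C_\phi}(\mathcal{P}(\pi)))\neq0$. This holds exactly when $\Evs_{C_\phi}(\mathcal{P}(\pi))\neq 0$. It is here that we need $\Evs_{C_\phi}$ to take values in local systems (or complexes of them) on the regular conormal part, in the sense of \cite[\S7.9]{CFMMX22}, with $\rank$ meaning the generic rank or the alternating sum of ranks. By the definition of $\Pi_\phi^{\ABV}$, this is precisely the condition $\pi\in\Pi_\phi^\ABV$. If $\pi\notin\Pi_\lambda(G_n)$, then both sides are vacuous, because $\Pi^\ABV_\phi\subseteq\Pi_\lambda(G_n)$.

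\emph{Main obstacle.} The only subtle point is Step 3: whether ``rank'' could vanish for a nonzero $\Evs$. For $\GL_n$, each $H_\lambda$-orbit corresponds to a single simple perverse sheaf with trivial local system, and $\Evs_{C_\phi}\mathcal{P}(\pi)$ is concentrated in a single degree (it is a local system shifted). So its rank is positive precisely when it is nonzero. I would cite this concentration property from \cite{CFMMX22}, as is done in \cite{CR26}.
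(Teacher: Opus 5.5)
Your proposal is correct and follows the paper's route. The paper's argument, made explicit in Proposition \ref{prop perfect pair append} following \cite[Proposition 1.6]{CR26}, is exactly your Steps 1--2: pair $\eta_\phi$ against the simple object $\mathcal{P}(\pi)$ to get $(-1)^{d(C_\phi)}\rank(\Evs_{C_\phi}(\mathcal{P}(\pi)))$, then read off membership in $\Pi_\phi^{\ABV}$ from the definition. Your remark that $\Evs_{C_\phi}$ takes values in $H_\lambda$-equivariant local systems on $\Lambda_{C_\phi}^{gen}$, so that rank zero is equivalent to vanishing, supplies the justification the paper leaves implicit for the final equivalence.
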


We remark the above lemma follows simply from observing that \cite[Proposition 1.6]{CR26} holds for a general $L$-parameter, rather than an Arthur parameter.

We also remark that the Kazhdan-Lusztig hypothesis provides a way to pass compute the above pairing using the different bases.

\begin{lemma}\label{lemma KL hypo translate}
    For any $[\mathcal{F}]\in K\Per_{\lambda}(G_n),$ we have 
    \[\langle\eta_\phi,[\mathcal{F}]\rangle_\lambda = \langle   M(\eta_\phi),[\mathcal{F}^\natural]\rangle_\lambda\]
\end{lemma}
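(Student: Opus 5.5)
This is a formal linear-algebra consequence of the definitions of $M(\cdot)$ and $(\cdot)^\natural$, combined with Theorem \ref{thm Kazhdan-Lusztig hypothesis}. Both sides of the claimed identity are bilinear in their arguments. So it suffices to show that, for all $x\in K\Pi_\lambda(G_n)$ and $y\in K\Per_\lambda(G_n)$,
\[
\langle x, y\rangle = \langle m_\lambda\inv x, c_\lambda y\rangle,
\]
and then specialize to $x=\eta_\phi$ and $y=[\mathcal{F}]$. By bilinearity we may further take $x=[M(\pi_j)]$ and $y=[(-1)^{d(C_k)}\mathcal{IC}(\mathbbm{1}_{C_k})]$, which range over bases of the two groups.

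First we compute the left side. Expanding $[M(\pi_j)]=\sum_i m_{ij}[\pi_i]$ and applying the defining pairing \eqref{eqn pairing} gives $\langle [M(\pi_j)], (-1)^{d(C_k)}[\mathcal{IC}(\mathbbm{1}_{C_k})]\rangle = m_{i_0 j}\,(-1)^{d(\pi_{i_0})+d(C_k)}$, where $\pi_{i_0}$ is the representation with $\mathcal{P}(\pi_{i_0})=\mathcal{IC}(\mathbbm{1}_{C_k})$. In that case $d(\pi_{i_0})=d(C_k)$, so the sign is $+1$. We also index $\Pi_\lambda(G_n)$ and $C_\lambda(G_n)$ compatibly, so that $\mathcal{P}(\pi_i)=\mathcal{IC}(\mathbbm{1}_{C_i})$. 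The left side is then $m_{kj}$.

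Next we compute the right side. By definition, $m_\lambda\inv[M(\pi_j)]$ is the $j$-th basis coordinate vector, namely $[\pi_j]$ expressed in the $[\pi]$-basis. The intended meaning of $M(\cdot)$ is coordinates with respect to the standard-module basis. Likewise, $c_\lambda$ applied to the $k$-th vector $(-1)^{d(C_k)}[\mathcal{IC}(\mathbbm{1}_{C_k})]$ produces the sheaf written as $\sum_i c_{ik}[\mathbbm{1}^\natural_{C_i}]$. We then read the right side as pairing the standard-module coordinate $[M(\pi_j)]$ against $\sum_i c_{ik}[\mathbbm{1}^\natural_{C_i}]$. By Lemma \ref{lemma pairing on standards}, only the term $i=j$ survives, and the right side equals $c_{jk}$. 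Theorem \ref{thm Kazhdan-Lusztig hypothesis} gives $c_{jk}=m_{kj}$, so the two sides agree.

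The main obstacle is bookkeeping rather than mathematics. The paper's notation $M([\pi])=m_\lambda\inv[\pi]$ and $[\mathcal{F}^\natural]=c_\lambda[\mathcal{F}]$ identifies elements with coordinate vectors in different bases, and the signs $(-1)^{d(C)}$ must be tracked consistently. I would fix the convention explicitly: the pairing on the right is computed in the bases $B^{std}_{\Pi_\lambda}\times B^\natural_{\Per_\lambda}$ via Lemma \ref{lemma pairing on standards}, while the left uses \eqref{eqn pairing}. The content of the lemma is then exactly that the transition matrices are mutually adjoint, $m_\lambda={}^t c_\lambda$. This reduces to the single identity $m_{kj}=c_{jk}$ checked above.
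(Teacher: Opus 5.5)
Your proposal is correct and is essentially the paper's argument. The paper uses $m_\lambda={}^tc_\lambda$ to insert ${}^tc_\lambda m_\lambda^{-1}=I$ into the pairing and then moves $c_\lambda$ across the pairing as its transpose, which is exactly the entrywise identity $m_{kj}=c_{jk}$ you verify on basis elements (your use of Lemma \ref{lemma pairing on standards} just makes explicit that the pairing is the coordinate dot product in the standard bases, which the paper uses implicitly).
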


\begin{proof}
    Recall that for any $[\pi]\in K\Pi_{\lambda}(G),$ we have $[M(\pi)]:=m_\lambda^{-1}[\pi].$ Similarly, 
 for $[\mathcal{F}]\in K\Per_\lambda(G_n)$, we have $[\mathcal{F}^\natural]=c_\lambda[\mathcal{F}].$ Furthermore, by the Kazhdan-Lusztig hypothesis (Theorem \ref{thm Kazhdan-Lusztig hypothesis}), we have $m_\lambda={}^tc_\lambda.$ Thus, we obtain
 \begin{align*}
     \langle\eta_\phi,[\mathcal{F}]\rangle_\lambda&=\langle {}^tc_\lambda m_\lambda^{-1} \eta_\phi,[\mathcal{F}]\rangle_\lambda \\
     &=\langle  m_\lambda^{-1} \eta_\phi,c_\lambda[\mathcal{F}]\rangle_\lambda \\
     &=\langle   M(\eta_\phi),[\mathcal{F}^\natural]\rangle_\lambda
 \end{align*}
 which proves the lemma.
\end{proof}

Next, we show that the contragredient preserves ABV-packets for $\GL_n(F).$ In general, we have that $\phi^\vee(w,x)={}^t\phi(w,x)^{-1}$ for $w\in W_F$ and $x\in\SL_2(\BC).$ Let $\lambda=\lambda_\phi$ and $\lambda^\vee=\lambda_{\phi^\vee}.$ We relate $V_\lambda$ and $V_{\lambda^\vee}$ using the framework introduced in \cite[Section 10.2.1]{CFMMX22}. 

First, by \cite[Theorem 5.1.1]{CFMMX22}, we may assume that $\lambda$ is unramified, i.e., trivial on $I_F$, and $\chi(\lambda(\Fr))\in\mathbb{R}_{>0}$ for any character $\chi:\widehat{T}\rightarrow\GL_1(\BC)$, where $\widehat{T}$ is any torus in $\GL_n(\BC)$ containing $\lambda(\Fr).$ Consequently, we may write
\[
\lambda=m_1|\cdot|^{x_1}+m_2|\cdot|^{x_2}+\cdots+m_r|\cdot|^{x_r}
\]
where $m_i\in\mathbb{Z}_{\geq 1}$ denotes the multiplicity and $x_i\in\mathbb{R}$ with $x_i>x_{i+1}$ for $i=1,\dots,r-1.$ Since $y\in V_\lambda$ if and only if $\Ad(\lambda(\Fr))y=q_Fy,$ we may assume that $x_i-1=x_{i+1}$ for $i=1,2,\dots,r-1$ (otherwise the Vogan variety decomposes as a product of such Vogan varieties). For $i=1,\dots,r,$ let $E_i$ denote the $q_F^{x_i}$-eigenspace of $\lambda(\Fr)$. We have $m_i=\dim(E_i).$ Furthermore, we have that
\begin{equation}\label{eqn eigenspace decomp}
V_\lambda\cong\Hom(E_1,E_2)\times\Hom(E_2,E_3)\times\cdots\times\Hom(E_{r-1},E_r).    
\end{equation}
In this setting, we have
\[
H_\lambda\cong\GL(E_1)\times\GL(E_2)\times\cdots\times\GL(E_r).
\]
Let
\[
y=(y_1,\dots,y_{r-1})\in\Hom(E_1,E_2)\times\cdots\times\Hom(E_{r-1},E_r)\cong V_\lambda.
\]
The $H_\lambda$-orbit of $y$ is the set of \[z=(z_1,\dots,z_{r-1})\in\Hom(E_1,E_2)\times\cdots\times\Hom(E_{r-1},E_r)\] such that $\rank(y_i\circ y_{i+1}\circ\cdots\circ y_j)=\rank(z_i\circ z_{i+1}\circ\cdots\circ z_j)$ for any $1\leq i\leq j\leq r-1.$

Now, note that $\lambda^\vee=m_1|\cdot|^{-x_1}+m_2|\cdot|^{-x_2}+\cdots+m_r|\cdot|^{-x_r}.$ For $i=1,\dots,r,$ let $w_i=-x_{r-i+1}$ and $F_i$ denote the $q_F^{w_i}$-eigenspace of $\lambda^\vee(\Fr)$. Analogously to the above discussion, we have
\[
V_{\lambda^\vee}\cong\Hom(F_1,F_2)\times\Hom(F_2,F_3)\times\cdots\times\Hom(F_{r-1},F_r).
\]
Of course, for $i=1,\dots,r,$ we also have $F_i\cong E_{r-i+1}$ and thus
\[
V_{\lambda^\vee}\cong\Hom(E_r,E_{r-1})\times\Hom(E_{r-1},E_{r-2})\times\cdots\times\Hom(E_{2},E_1).
\]
Similarly, we have 
\[
H_{\lambda^\vee}\cong\Hom(E_r)\times\Hom(E_{r-1})\times\cdots\times\Hom(E_1).
\]
This is a reflection of $\phi^\vee={}^t\phi^\vee$ on the Vogan variety. Indeed, let
\[
y=(y_1,\dots,y_{r-1})\in\Hom(E_1,E_2)\times\cdots\times\Hom(E_{r-1},E_r)\cong V_\lambda.
\]
Define \[y^\vee:=({}^ty_{r-1},\dots,{}^ty_{1})\in \Hom(E_r,E_{r-1})\times\cdots\times\Hom(E_{2},E_1).\]
The map $y\mapsto y^\vee$ induces an isomorphism $V_\lambda\cong V_{\lambda^\vee}.$ Furthermore, it sends the orbit $C_\phi$ to $C_{\phi^\vee}$. Indeed, this follows from simply computing the ranks.

In summary, we have an isomorphism $V_\lambda\cong V_{\lambda^\vee}$ which sends $C_\phi$ to $C_{\phi^\vee}.$ Since the underlying geometry is the same up to isomorphism, we obtain the following lemma.

\begin{lemma}\label{lemma contragredient ABV}
    Suppose that $\pi\in\Pi_{\phi}^\ABV.$ Then $\pi^\vee\in\Pi_{\phi^\vee}^\ABV.$
\end{lemma}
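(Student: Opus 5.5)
The plan is to transport everything along the isomorphism $\iota:V_\lambda\to V_{\lambda^\vee}$, $y\mapsto y^\vee$, constructed above. First we check that $\iota$ is equivariant for a group isomorphism $\sigma:H_\lambda\to H_{\lambda^\vee}$. Take $(g_1,\dots,g_r)\in\GL(E_1)\times\cdots\times\GL(E_r)$, acting on $y$ by $y_i\mapsto g_{i+1}y_ig_i^{-1}$. Send it to $({}^tg_r^{-1},\dots,{}^tg_1^{-1})$ acting on $V_{\lambda^\vee}$. Transposing $g_{i+1}y_ig_i^{-1}$ gives ${}^tg_i^{-1}\,{}^ty_i\,{}^tg_{i+1}$, which is exactly the action of the image element on ${}^ty_i\in\Hom(E_{i+1},E_i)$. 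So $\iota(h\cdot y)=\sigma(h)\cdot\iota(y)$. Since transposition preserves the ranks of all composites $y_i\circ\cdots\circ y_j$, the rank description of orbits shows that $\iota$ sends $C_{\phi'}$ to $C_{(\phi')^\vee}$ for every $\phi'\in\Phi_\lambda(G_n)$.

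Next we identify the simple perverse sheaves. Pushforward along the equivariant isomorphism $(\iota,\sigma)$ gives an equivalence $\iota_*:\Per_{H_\lambda}(V_\lambda)\to\Per_{H_{\lambda^\vee}}(V_{\lambda^\vee})$. It sends $\mathcal{IC}(\mathbbm{1}_{C})$ to $\mathcal{IC}(\mathbbm{1}_{\iota(C)})$. By Corollary \ref{cor theta Lpar} (the case $\alpha=0$), or directly from the local Langlands correspondence for $\GL_n$, we have $\phi_{\pi^\vee}=\phi_\pi^\vee$. Therefore
\[
\iota_*\mathcal{P}(\pi)=\mathcal{IC}(\mathbbm{1}_{C_{\phi_\pi^\vee}})=\mathcal{P}(\pi^\vee).
\]

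The main step is the compatibility of $\Evs$ with $\iota$. I would argue that the functor $\Evs_C$ of \cite[\S7.9]{CFMMX22} is built entirely from the geometry of the pair $(H_\lambda,V_\lambda)$: the conormal bundle $T^*_{H_\lambda}V_\lambda$, its regular part over $C$, and vanishing cycles of equivariant sheaves. All of these are functorial under equivariant isomorphisms. Hence $\iota$ induces an isomorphism of conormal varieties, and
\[
\Evs_{\iota(C)}(\iota_*\mathcal{F})\cong\Evs_C(\mathcal{F})
\]
as local systems, up to transport by $\sigma$. The obstacle is only to make this naturality precise. The cotangent identification is induced by $\iota$ and its transpose, and the equivariant vanishing cycles commute with pushforward along isomorphisms. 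Both are formal, so I would cite them rather than recompute anything.

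Combining the three steps, $\pi\in\Pi_\phi^{\ABV}$ means $\Evs_{C_\phi}(\mathcal{P}(\pi))\neq0$. This is equivalent to $\Evs_{C_{\phi^\vee}}(\mathcal{P}(\pi^\vee))\neq0$, that is, $\pi^\vee\in\Pi_{\phi^\vee}^{\ABV}$. The earlier reduction to unramified $\lambda$ with consecutive exponents is harmless, because \cite[Theorem 5.1.1]{CFMMX22} and the product decomposition are compatible with passing to $\lambda^\vee$. Applying the result again to $\pi^\vee$ and $\phi^\vee$ gives the converse stated in the introduction.
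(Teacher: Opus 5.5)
Your proposal is correct and takes the same approach as the paper: transport everything along the linear isomorphism $V_\lambda\cong V_{\lambda^\vee}$, $y\mapsto y^\vee$, which carries $C_\phi$ to $C_{\phi^\vee}$, and conclude because the geometry, and hence $\Evs$, is identical on both sides. You also supply details the paper leaves implicit: the intertwining group isomorphism $g\mapsto {}^tg^{-1}$, the identification $\iota_*\mathcal{P}(\pi)=\mathcal{P}(\pi^\vee)$ via $\phi_{\pi^\vee}=\phi_\pi^\vee$, and the naturality of $\Evs$ under equivariant linear isomorphisms.
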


We note that this is expected more generally; however, the contragredient may permute the elements in an $L$-packet (see \cite{Kal13}). For $\GL_n(F)$, we avoided this issue as the $L$-packets are all singletons. Here is a simple example illustrating the above ideas.

\begin{exmp}
    Suppose that $\phi=|\cdot|^{\frac{3}{2}}+|\cdot|^{\frac{1}{2}}.$ Then $\phi^\vee=|\cdot|^{-\frac{1}{2}}+|\cdot|^{-\frac{3}{2}}.$
    We have that 
    \[
    V_{\lambda_\phi}=\left\{\begin{pmatrix}
        0 & y \\
        0 & 0 
    \end{pmatrix} \ | \ y\in\BC\right\}=V_{\lambda_{\phi^\vee}}.
    \]
    The eigenvalues of $\lambda(\Fr)$ are $q_F^{x_i}$ where $x_1=\frac{3}{2}$ and $x_2=\frac{1}{2}.$ We have that the eigenspaces of both eigenvalues are 1-dimensional and hence
    \begin{align*}
    V_{\lambda}&\cong\Hom(\mathbb{C},\mathbb{C}) \\
    \begin{pmatrix}
        0 & y \\
        0 & 0 
    \end{pmatrix}&\mapsto y,
    \end{align*}
    where we consider $y\in\BC$ as the linear transformation defined by $y(z)=yz$ for any $z\in\BC.$ 

    Similarly, we have that
    \[
    V_{\lambda_{\phi^\vee}}=\left\{\begin{pmatrix}
        0 & y \\
        0 & 0 
    \end{pmatrix} \ | \ y\in\BC\right\}. 
    \]
    The eigenvalues of $\lambda^\vee(\Fr)$ are are $q_F^{w_i}$ where $w_1=-\frac{1}{2}=-x_2$ and $w_2=-\frac{3}{2}=-x_1.$ We have that the eigenspaces of both eigenvalues are 1-dimensional and hence
    \begin{align*}
    V_{\lambda^\vee}&\cong\Hom(\mathbb{C},\mathbb{C}) \\
    \begin{pmatrix}
        0 & y \\
        0 & 0 
    \end{pmatrix}&\mapsto y,
    \end{align*}
    where we consider $y\in\BC$ as the linear transformation defined by $y(z)=yz$ for any $z\in\BC.$ 
    The map $y\mapsto y^\vee$ is simply the identity map and hence $V_{\lambda_{\phi}}\cong V_{\lambda_{\phi^\vee}}$ is also the identity map.

    Furthermore, the $L$-parameter $\phi$ corresponds to the $0$-orbit in $V_{\lambda_\phi}$. This corresponds to $y=0$. We have $0^\vee=0$ which corresponds to the $L$-parameter $\phi^\vee$.
\end{exmp}

Our next goal is to state a fixed point formula (Theorem \ref{thm GL fixed point formula}) which will be the key step in our proof of Theorem \ref{thm GL main thm}.

For $i=1,\dots,r$, let $\phi_i\in\Phi(G_{n_i})$, and $n=n_1+\dots+n_r.$ We set $G^\times:=G_{n_1}\times\cdots\times G_{n_r}$ and $\phi^\times=\phi_{1} \times \cdots\times\phi_r$. Note that $\widehat{G}^\times(\C)=\widehat{G}_{n_1}(C)\times\cdots\times\widehat{G}_{n_r}(\C)$ and $\Pi_{\phi^\times}=\Pi_{\phi_1}\times\cdots\times\Pi_{\phi_r}.$ We also let $\lambda^\times=\lambda_1\times\cdots\times\lambda_r$ be the corresponding infinitesimal parameter, where $\lambda_i=\lambda_{\phi_i}$. Furthermore, we have that the Vogan variety is $V_{\lambda^\times}=V_{\lambda_1}\times\cdots\times V_{\lambda_r}$ and $H_{\lambda^\times}=H_{\lambda_1}\times\cdots\times H_{\lambda_r}$. There is an action of $H_{\lambda^\times}$ on $V_{\lambda^\times}$  in the obvious manner. Alternatively, these could be directly computed from the definitions in \cite[\S4]{CFMMX22}.

We let $\phi=\phi_1+\dots+\phi_r\in\Phi(G_n)$ and $\lambda=\lambda_\phi.$
Let $s\in\widehat{G}_n(\C)$ be of finite order (and hence semi-simple) such that $Z_{\widehat{G}_n(\C)}(s)\cong\widehat{G}^\times$. The resulting inclusion $\widehat{G}^\times \hookrightarrow \widehat{G}_n(\C)$ induces inclusions $H_{\lambda^\times}\hookrightarrow H_{\lambda}$ and
\[
\varepsilon: V_{\lambda^\times}\hookrightarrow V_{\lambda}
\]
which is equivariant for the action by $H_{\lambda^\times}.$
Indeed, we have that 
\[V_{\lambda^\times}=V_{\lambda_\alpha}^s:=\{x\in V_\lambda \ | \ \Ad(s)x=x\}.
\]

Let $\varepsilon^*:\mathrm{D}_{H_{\lambda_\alpha}}(V_{\lambda_\alpha})\rightarrow\mathrm{D}_{H_{\lambda^\times}}(V_{\lambda^\times})$ denote the equivariant restriction functor for the equivariant derived categories. As a shorthand, we write
\[
\mathcal{F}|_{V_{\lambda^\times}}:=\varepsilon^*\mathcal{F}.
\]
We note that $\varepsilon^*$ is an exact functor, but does not preserve perverse sheaves. 

We define a special case of endoscopic lifting (see \cite[Definition 26.18]{ABV92} or \cite[\S4]{CR26}) to be the linear transformation
\[\Lift_{G^\times}^{G_n}: K\Pi_{\lambda^\times}(G^\times)\rightarrow K\Pi_{\lambda_\alpha}(G_n)\]
defined by
\[
\langle \Lift_{G^\times}^{G_n}[\pi],[\mathcal{F}] \rangle_\lambda = \langle [\pi], [\varepsilon^*\mathcal{F}]\rangle_{\lambda^\times}.
\]
In this setting, the endoscopic lifting is simple to describe. 

\begin{prop}[{\cite[Proposition 4.5]{CR26}}]\label{prop lift}
    We continue with the above notation. Let $[\pi]\in K\Pi_{\lambda^\times}(G^\times)$ and $P$ be the standard parabolic subgroup of $G_n$ whose Levi subgroup is isomorphic to $G^\times.$ Then \[
    \Lift_{G^\times}^{G_n}[\pi]=[\mathrm{Ind}_{P}^{G_n}\pi].
    \]
\end{prop}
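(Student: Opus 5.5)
The plan is to compute both sides of the defining identity against the standard-sheaf basis and compare them. The definition says that for every $[\mathcal{F}]\in K\Per_\lambda(G_n)$,
\[
\langle \Lift_{G^\times}^{G_n}[\pi],[\mathcal{F}]\rangle_\lambda=\langle[\pi],[\varepsilon^*\mathcal{F}]\rangle_{\lambda^\times}.
\]
The pairing is perfect, so it suffices to show
\[
\langle[\mathrm{Ind}_P^{G_n}\pi],[\mathcal{F}]\rangle_\lambda=\langle[\pi],[\varepsilon^*\mathcal{F}]\rangle_{\lambda^\times}
\]
for $\mathcal{F}$ ranging over a basis of $K\Per_\lambda(G_n)$. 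Both sides are linear in $[\pi]$, so we may also take $[\pi]$ in a basis of $K\Pi_{\lambda^\times}(G^\times)$. We choose the standard modules $[M(\pi^\times)]$ on the representation side and the standard sheaves $[\mathbbm{1}_C^\natural]$ on the sheaf side. These are dual bases by Lemma \ref{lemma pairing on standards}, which holds for $G^\times$ as well since everything factors over the product.

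\textbf{Step 1: the sheaf side.} Since $V_{\lambda^\times}=V_\lambda^s$ and $\varepsilon$ is a closed embedding, $\varepsilon^*\mathbbm{1}_C^\natural$ is the extension by zero of the constant sheaf on $C\cap V_{\lambda^\times}$. This intersection decomposes into $H_{\lambda^\times}$-orbits. In the eigenspace description \eqref{eqn eigenspace decomp}, $s$ splits each $E_i=\bigoplus_k E_i^{(k)}$, and $V_{\lambda^\times}$ is the block-diagonal part. An $H_{\lambda^\times}$-orbit $C^\times=C_1\times\cdots\times C_r$ lies in $C$ exactly when the $L$-parameter $\phi_1\oplus\cdots\oplus\phi_r$ attached to $C^\times$ corresponds to $C$. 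This follows from the rank description of orbits, because ranks of composites add over blocks. Hence, in $K\Per_{\lambda^\times}$,
\[
[\varepsilon^*\mathbbm{1}_C^\natural]=\sum_{C^\times\subseteq C}[\mathbbm{1}^\natural_{C^\times}].
\]
There are no shifts or signs, because restriction of a constant sheaf on a stratum is constant on each substratum.

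\textbf{Step 2: the representation side.} Write $\pi^\times=L(\ldots)$ with standard module $M(\pi^\times)=M(\pi_1)\otimes\cdots\otimes M(\pi_r)$. Then $\mathrm{Ind}_P^{G_n}M(\pi^\times)$ is a product of essentially square-integrable representations. By commutativity of parabolic induction in the Grothendieck group of $\GL_n$, its class equals $[M(\pi)]$, where $\pi$ is the representation with $\phi_\pi=\phi_{\pi_1}\oplus\cdots\oplus\phi_{\pi_r}$; we reorder the segments to get Langlands order. Applying Lemma \ref{lemma pairing on standards} on both groups, the left side pairing $[M(\pi^\times)]$ with $[\mathbbm{1}_C^\natural]$ equals $1$ exactly when $C=C_{\oplus\phi_{\pi_i}}$. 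The right side equals $1$ exactly when $C_{\phi_{\pi^\times}}\subseteq C$, which is the same condition by Step 1. This proves the identity on bases, and the claim follows by linearity.

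The main obstacle is Step 1. We must verify that the fixed-point locus meets each $H_\lambda$-orbit in a union of $H_{\lambda^\times}$-orbits whose parameters sum to the right $\phi$. We must also check that no sign $(-1)^{d(C)}$ mismatch arises: the pairing on standard objects in Lemma \ref{lemma pairing on standards} carries no sign, so the check is just confirming that the normalizations on $G^\times$ match those on $G_n$. I would settle both points using the explicit quiver (rank) description of orbits.
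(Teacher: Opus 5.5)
Your proof is correct, and since the paper gives no argument here but only cites \cite[Proposition 4.5]{CR26}, your dual-basis computation supplies a complete, self-contained proof. It pairs standard modules against standard sheaves using two facts: induction sends $[M(\pi^\times)]$ to $[M(\pi)]$ with $\phi_\pi=\phi_{\pi_1}\oplus\cdots\oplus\phi_{\pi_r}$, and base change $\varepsilon^*j_{C!}=j'_{!}\varepsilon^*$ (with $j'$ the inclusion of $C\cap V_{\lambda^\times}$) gives $[\varepsilon^*\mathbbm{1}^\natural_C]=\sum_{C^\times\subseteq C}[\mathbbm{1}^\natural_{C^\times}]$.

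The step you flag as the main obstacle is easier than you expect. Since orbits are disjoint, you only need $\varepsilon(C_{\phi^\times})\subseteq C_{\phi_1\oplus\cdots\oplus\phi_r}$. This follows at once from $\varepsilon(x_{\phi^\times})=x_{\phi_1\oplus\cdots\oplus\phi_r}$: differentiate the $\SL_2(\C)$-part of $\phi^\times$ composed with $\widehat{G}^\times\hookrightarrow\widehat{G}_n(\C)$. So no rank bookkeeping is needed.
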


An equation of the form $\langle\eta_\phi,[\mathcal{F}]\rangle_\lambda=\langle\eta_{\phi^\times},[\mathcal{F}|_{V_{\lambda^\times}}]\rangle_{\lambda^\times}$ is called a fixed point formula as it is usually obtained from a Lefschetz fixed point formula, e.g., \cite[Theorem 25.8]{ABV92}. Note that it is equivalent to $\Lift_{G^\times}^{G_n}(\eta_{\phi^\times})=\eta_\phi.$ The Kazhdan-Lusztig hypothesis provides an equivalent formulation.

\begin{cor}\label{cor fixed point std}
    We continue with the above notation. That is, we let $\phi^\times=\phi_1\times\cdots\times\phi_r\in\Phi(G^\times)$, $\phi=\phi_1+\cdots+\phi_r\in\Phi(G_n)$ and $\lambda^\times$, resp. $\lambda$, be the infinitesimal parameter of $\phi^\times$, resp. $\phi.$
    Then, for any $[\mathcal{F}]\in K\Per_{\lambda}(G_n),$ we have
    \[
    \langle\eta_\phi,[\mathcal{F}]\rangle_\lambda=\langle\eta_{\phi^\times},[\mathcal{F}|_{V_{\lambda^\times}}]\rangle_{\lambda^\times}
    \]
    if and only if
    \[
    \langle M(\eta_\phi),[\mathcal{F}^\natural]\rangle_\lambda=\langle M(\eta_{\phi^\times}),[\mathcal{F}^\natural|_{V_{\lambda^\times}}]\rangle_{\lambda^\times}.
    \]
\end{cor}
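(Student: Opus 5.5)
The equivalence follows by applying Lemma \ref{lemma KL hypo translate} on both sides: once for $G_n$ at $\lambda$, and once for $G^\times$ at $\lambda^\times$. The left-hand side equals $\langle M(\eta_\phi),[\mathcal{F}^\natural]\rangle_\lambda$ directly by that lemma. For the right-hand side, the same lemma for $G^\times$ (whose Kazhdan--Lusztig hypothesis holds factorwise, since $V_{\lambda^\times}$, $H_{\lambda^\times}$, and the standard and simple bases all decompose as products) gives
\[
\langle\eta_{\phi^\times},[\mathcal{F}|_{V_{\lambda^\times}}]\rangle_{\lambda^\times}=\langle M(\eta_{\phi^\times}),[(\mathcal{F}|_{V_{\lambda^\times}})^\natural]\rangle_{\lambda^\times}.
\]
It then remains to show that $[(\mathcal{F}|_{V_{\lambda^\times}})^\natural]=[\mathcal{F}^\natural|_{V_{\lambda^\times}}]$ in $K\Per_{\lambda^\times}(G^\times)$. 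In words, restriction $\varepsilon^*$ commutes with the change of basis to standard sheaves, at least at the level of Grothendieck groups.

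To prove this compatibility, I would first make the map on Grothendieck groups precise. Since $\varepsilon^*$ is exact on $\mathrm{D}_{H_\lambda}(V_\lambda)$, it induces a homomorphism on Grothendieck groups of the derived categories. These groups are identified with $K\Per$ via the classes of perverse sheaves, or equivalently via the basis of standard sheaves $\mathbbm{1}^\natural_C$ (extensions by zero of shifted constant sheaves on orbits). The key observation is that the class of any object $\mathcal{G}$ is determined by its stalk Euler characteristics along orbits. Concretely, $[\mathcal{G}]=\sum_C \chi_C(\mathcal{G})\,(\pm)[\mathbbm{1}^\natural_C]$, where the sign accounts for the shift convention. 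The matrix $c_\lambda$, with the sign $(-1)^{d(C_j)}$ built in, is exactly the matrix recording this expansion for the $\mathcal{IC}$'s.

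Next I would check what $\varepsilon^*$ does to standard sheaves. Pulling back a (shifted) constant sheaf extended by zero from an orbit $C$ gives a constant sheaf on $C\cap V_{\lambda^\times}$. This intersection is a finite union of $H_{\lambda^\times}$-orbits $C'$, and each orbit in $V_{\lambda^\times}$ lies in exactly one $H_\lambda$-orbit. Stalks are preserved, so the expansion of $[\varepsilon^*\mathcal{G}]$ in standard sheaves of $V_{\lambda^\times}$ is obtained from that of $[\mathcal{G}]$ by sending each $C$ to the sum of the $C'\subseteq C$. The one subtlety is the dimension shifts: $\dim C$ and $\dim C'$ differ, so the sign $(-1)^{d(C)}$ versus $(-1)^{d(C')}$ must be tracked. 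This is precisely why the definitions of $[\mathcal{F}^\natural]$ and of the pairing carry the factors $(-1)^{d(\cdot)}$.

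The main obstacle is this sign and normalization bookkeeping. One must confirm that the meaning of "$\mathcal{F}^\natural|_{V_{\lambda^\times}}$" (restricting a class expressed in standard sheaves) matches $c_{\lambda^\times}$ applied to the class of $\varepsilon^*\mathcal{F}$. I would verify it on the basis $\mathcal{F}=\mathcal{IC}(\mathbbm{1}_{C_j})$ by comparing stalks orbit by orbit, and then extend by linearity. Once the compatibility holds, the two displayed equalities in the corollary have identical left sides and identical right sides, so each holds if and only if the other does.
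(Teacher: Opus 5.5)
Your proposal is correct and follows the paper's own proof, which likewise applies Lemma \ref{lemma KL hypo translate} once at $\lambda$ and once at $\lambda^\times$ (implicitly using, as you note, that the Kazhdan--Lusztig hypothesis holds factorwise for $G^\times$) and then concludes directly. The paper uses the identification $[(\mathcal{F}|_{V_{\lambda^\times}})^\natural]=[\mathcal{F}^\natural|_{V_{\lambda^\times}}]$ without comment; since $[\mathcal{F}^\natural]=c_\lambda[\mathcal{F}]$ is just $[\mathcal{F}]$ rewritten in the basis of standard sheaves, that identification already follows from additivity of $\varepsilon^*$ on Grothendieck groups, so your stalk-by-stalk and sign bookkeeping is correct but not needed.
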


\begin{proof}
    By Lemma \ref{lemma KL hypo translate}, we obtain
\[\langle\eta_\phi,[\mathcal{F}]\rangle_\lambda = \langle   M(\eta_\phi),[\mathcal{F}^\natural]\rangle_\lambda\]
and
\[
\langle\eta_{\phi^\times},[\mathcal{F}|_{V_{\lambda^\times}}]\rangle_{\lambda^\times}=\langle M(\eta_{\phi^\times}),[\mathcal{F}^\natural|_{V_{\lambda^\times}}]\rangle_{\lambda^\times}.
\]
The corollary follows directly.
\end{proof}

Let $\phi\in\Phi(G_n).$ Recall that
\[
\phi_\alpha=\phi^\vee\oplus\left(\bigoplus_{i=0}^{\alpha-1} |\cdot|^{\frac{\alpha-1}{2}-i}\otimes S_1\right).
\]
Let $\phi^\alpha=\bigoplus_{i=0}^{\alpha-1} |\cdot|^{\frac{\alpha-1}{2}-i}\otimes S_1.$ Then $\phi_\alpha=\phi^\vee+\phi^\alpha$ and we let $\phi^\times=\phi^\vee\times\phi^\alpha.$ Note that $\phi^\vee\in\Phi(G_n),$ $\phi^\alpha\in\Phi(G_\alpha)$, and $\phi_\alpha\in\Phi(G_m).$ Let $\lambda_\alpha=\lambda_{\phi_\alpha}$ and $\lambda^\times=\lambda_{\phi^\vee}\times\lambda_{\phi^\alpha}.$ Per the above discussion, we have inclusions $H_{\lambda^\times}\hookrightarrow H_{\lambda_\alpha}$ and
\[
\varepsilon: V_{\lambda^\times}\hookrightarrow V_{\lambda_\alpha}
\]
which is equivariant for the action by $H_{\lambda^\times}.$ We work towards showing that for any $[\mathcal{F}]\in K\Per_{\lambda}(G_m),$ we have (Theorem \ref{thm GL fixed point formula})
    \begin{equation}\label{eqn GL fixed point formula}
    \langle\eta_{\phi_\alpha},[\mathcal{F}]\rangle_{\lambda_\alpha}=\langle\eta_{\phi^\times},[\mathcal{F}|_{V_{\lambda^\times}}]\rangle_{\lambda^\times}.
    \end{equation}

\subsection{Conormal bundles}\label{sec Conormal bundles}

The results of this subsection hold more generally than just for general linear groups. Consequently, in this subsection, we allow for $G=G_n=G(W_n)$ to be any of the classical groups in \S\ref{sec Setup} or $G_n=\GL_n(F).$ We also remark that if $G_n$ is disconnected, e.g., metaplectic or even orthogonal, these results should be taken with a grain of salt as \cite{CFMMX22} only considers connected groups.

Let $\phi\in\Phi(G_n)$ and $\lambda=\lambda_\phi.$ Let $\phi\in\Phi(G_n)$ and $\lambda=\lambda_\phi.$ We let $V_\lambda^*$ denote the dual Vogan variety to $V_\lambda.$
By considering
\[
{}^tV_{\lambda}:=\{x\in\mathrm{Lie}(K_\lambda) \ | \ \mathrm{Ad}(\lambda(\Fr))x=q_F^{-1}x\}
\]
we identify the dual Vogan variety $V_\lambda^*\cong {}^tV_{\lambda}$ hereinafter (\cite[Proposition 6.2.1]{CFMMX22}).

The conormal bundle is denoted by
\[
\Lambda_\lambda:=\{(x,y)\in V_\lambda\times V^*_\lambda \ | \ [x,y]=0\},
\]
where $[\cdot,\cdot]$ denotes the Lie bracket (\cite[Proposition 6.2]{CFMMX22}).

We define
\[
\Lambda_{C_\phi}:=\{(x,y)\in C_\phi\times V^*_\lambda \ | \ [x,y]=0\}.
\]
For an $H_\lambda$-orbit $B$ of $V_\lambda^*$, we consider
\[
\Lambda_{B}:=\{(y,x)\in B\times V_\lambda \ | \ [y,x]=0\}.
\]
By \cite[Lemma 6.5]{CFMMX22}, there exists a unique $H_\lambda$-orbit of $V_\lambda^*$, denoted $(C_{\phi})^*$ such that 
\[
\overline{\Lambda}_{C_\phi}=\overline{\Lambda}_{(C_{\phi})^*}.
\]
We say that $(C_{\phi})^*$ is the dual orbit to $C_\phi.$ The $H_\lambda$ orbits of $V_\lambda^*$ are also in bijection with $\Phi_\lambda.$ We let $\hat{\phi}$ be the $L$-parameter (called the Pyatetskii dual) corresponding to $(C_{\phi})^*.$ We define the regular part of the conormal bundle of $C_\phi$ to be
\[
\Lambda_{C_{\phi}}^{reg}:=\Lambda_{C_\phi} \setminus \bigcup_{\substack{C' \\ C_{\phi}\subsetneq \overline{C'}}} \overline{\Lambda}_{C'}.
\]

Consider the $L$-parameter $\phi^{\alpha}=\oplus_{i=0}^{\alpha-1} \chi_W|\cdot|^{\frac{\alpha-1}{2}-i}\otimes S_1$ which corresponds to the $0$-orbit in $V_{\lambda_{\phi^{\alpha}}}.$ Let $x_{\phi^{\alpha}}=0$ and so $\Lambda_{x_{\phi^{\alpha}}}=\Lambda_{C_{\phi^{\alpha}}}\cong V_{\lambda_{\phi^{\alpha}}}^*$. It follows that $\Lambda_{C_{\phi^{\alpha}}}^{reg}$ is the set of $(0,y)\in \Lambda$ where $y\in {}^tC_{\chi_W\otimes S_\alpha}$ (this is the unique open orbit in $V_{\lambda_{\phi^{\alpha}}}^*$). In particular, this set is nonempty. Let $(x_{\phi^{\alpha}},y_{\phi^{\alpha}})\in \Lambda_{C_{\phi^{\alpha}}}^{reg}$ be arbitrary.
For $\phi^\vee,$ we let $(x_{\phi^\vee},y_{\phi^\vee})\in \Lambda_{C_\phi^\vee}^{reg}$ also be arbitrary. 

Recall that $\phi_\alpha=\phi^\vee+\phi^\alpha.$ Let $\phi^\times=\phi^\vee\times\phi^\alpha$ and  $\lambda^\times=\phi^\times$. We have that $V_{\lambda^\times}=V_{\lambda_{\phi^\vee}}\times V_{\lambda_{\phi_2}}$ Consider the embeddings $\varepsilon:V_{\lambda^\times}\hookrightarrow V_{\lambda_{\phi_\alpha}}$ and ${}^t\varepsilon:V_{\lambda^\times}^*\hookrightarrow V_{\lambda_{\phi_\alpha}}^*$. We also consider $\varepsilon'=\varepsilon\times {}^t\varepsilon$.

To establish the fixed point formula \eqref{eqn GL fixed point formula}, we must find $(x,y)\in\Lambda_{C_{\phi^\times}}^{reg}$ such that $\varepsilon'(x,y)\in \Lambda_{C_{\phi_\alpha}}^{reg}.$
Next, we provide two running examples which will explain why we require $\alpha\gg 0$ later. First is the example where our strategy will succeed.

\begin{exmp}\label{exmp GL works 1} Let $G=\GL_2(F),$ $\alpha=2,$
    Let $\phi=\phi^\vee=|\cdot|^{\frac{1}{2}}+|\cdot|^{\frac{-1}{2}}$, and $\phi^\alpha=\phi$ (so our theta lift is from $\GL_2(F)$ to $\GL_4(F)).$ Let $\lambda=\lambda_\phi.$ The Vogan varieties for $V_\lambda=V_{\lambda_{\phi^\vee}}=V_{\lambda_{\phi^\alpha}}$ are given by
\[
V_{\lambda}=\left(\begin{matrix}
    0 & x \\
    0 & 0
\end{matrix} \right).
\]
We have $H_\lambda=\GL_1(\BC)\times\GL_1(\BC)$ with action given by $(h_1,h_2)\cdot x\mapsto\frac{h_1}{h_2}x$. There are 2 orbits, the $0$-orbit and the open orbit ($x\neq0$). We let $C$ denote the $0$-orbit which corresponds to $\phi^\vee=\phi^\alpha$. Also, we have
\[
V_{\lambda}^*=\left(\begin{matrix}
    0 & 0 \\
    y & 0
\end{matrix} \right)
\]
with the action of $H_{\lambda}$ given by $(h_1,h_2)\mapsto \frac{h_2}{h_1}y$. A choice of $(x,y)\in\Lambda_{C}^{reg}$ is given by $x=0, y=\left(\begin{matrix}
    0 & 0 \\
    1 & 0
\end{matrix} \right).$ Now $\phi_\alpha=2\phi.$ (note that $\alpha=2$). It corresponds to the $0$-orbit $C_\alpha$ in 
\[
V_{\lambda_{\phi_\alpha}}=\left\{\left(\begin{matrix}
    0 & x \\
    0 & 0
\end{matrix} \right) \ | \ x\in\mathrm{Mat}_{2\times 2}(\mathbb{C})\right\}.
\]
We have $H_{\lambda_{\phi_\alpha}}=\GL_2(\BC)\times \GL_2(\BC)$ with action given by \[(a,b)\cdot\left(\begin{matrix}
    0 & x \\
    0 & 0
\end{matrix} \right)\mapsto\left(\begin{matrix}
    0 & axb^{-1} \\
    0 & 0
\end{matrix} \right).\]
We have that
\[
V_{\lambda_{\phi_\alpha}}^*=\left\{\left(\begin{matrix}
    0 & 0 \\
    y & 0
\end{matrix} \right) \ | \ y\in\mathrm{Mat}_{2\times 2}(\mathbb{C})\right\}.
\]
We have $(x_{\phi_\alpha},y_{\phi_\alpha})\in\Lambda_{C_\alpha}^{reg}$ where $x_{\phi_\alpha}=0$ and  
\[y_{\phi_\alpha}=\left(\begin{matrix}
    0 & 0 \\
    \left(\begin{matrix}
    1 & 0 \\
    0 & 1
\end{matrix} \right) & 0
\end{matrix} \right).\]
The embedding of $V_{\lambda_{\phi^\vee}}\times V_{\lambda_{\phi^\alpha}}$ into $V_{\lambda_{\phi_\alpha}}$ is given by
\[
\left(\left(\begin{matrix}
    0 & x_1 \\
    0 & 0
\end{matrix} \right),\left(\begin{matrix}
    0 & x_2 \\
    0 & 0
\end{matrix} \right)\right)\mapsto\left(\begin{matrix}
    0 & x \\
    0 & 0
\end{matrix} \right),
\]
where
\[
x=\left(\begin{matrix}
    x_1 & 0 \\
    0 & x_2
\end{matrix} \right).
\]
The embedding of $V_{\lambda_\phi}^*\times V_{\lambda_{\phi^\alpha}}^*$ into $V_{\lambda_{\phi_\alpha}}^*$ is given similarly by taking the transpose. Thus we see that the image of $((x_\phi,y_\phi),(x_{\phi},y_{\phi}))\in \Lambda_{C}^{reg}\times \Lambda_{C}^{reg}$ is precisely $(x_{\phi_\alpha},y_{\phi_\alpha})\in\Lambda_{C_\alpha}^{reg}$ as desired.
\end{exmp}

The next example is where we see that the condition $\alpha\gg0$ will be needed in our strategy.

\begin{exmp}\label{exmp GL fails 1}
    Let $G=\GL_2(F)$, $\alpha=2,$ $\phi=\phi^\vee=|\cdot|^{\frac{3}{2}}+|\cdot|^{\frac{-3}{2}},$ and $\phi^\alpha=|\cdot|^{\frac{1}{2}}+|\cdot|^{\frac{-1}{2}}$. The geometry for the Vogan variety of $\phi^\alpha$ is the same as in Example \ref{exmp GL works 1}. Let $\lambda=\lambda_\phi=\lambda_{\phi^\vee}.$ Then $V_\lambda=\left\{\begin{pmatrix}
        0 & 0 \\ 0 & 0
    \end{pmatrix}\right\}.$ The group $H_\lambda$ is isomorphic to $\GL_1(\BC)\times\GL_1(\BC)$, but the action is trivial. Consequently, we have that $\Lambda_{C_{\phi^\vee}}=\Lambda_{C_{\phi^\vee}}^{reg}$ is the singleton $(x_{\phi^\vee},y_{\phi^\vee})$ where $x_{\phi^\vee}=y_{\phi^\vee}=\begin{pmatrix}
        0 & 0 \\ 0 & 0
    \end{pmatrix}.$

    On the other hand $\phi_\alpha=\phi^\vee+\phi^\alpha=|\cdot|^{\frac{3}{2}}+|\cdot|^{\frac{1}{2}}+|\cdot|^{\frac{-1}{2}}+|\cdot|^{\frac{-3}{2}}.$ Let $\lambda_\alpha=\lambda_{\phi_\alpha}.$ The Vogan variety is 
    \[
    V_{\lambda_\alpha}=\left\{\begin{pmatrix}
      0 & a & &  \\
     &  0 & b &  \\
     &  & 0 & c  \\
     & & & 0  \\
\end{pmatrix} \ | \ a,b,c\in\mathbb{C}  \right\}.
    \]
    The group $H_\lambda$ is the standard torus of $\GL_4(\BC),$ i.e. it is isomorphic to $\GL_1(\BC)\times\GL_1(\BC)\times \GL_1(\BC)\times\GL_1(\BC)$, and its action is given by the usual simple roots. 
    The $0$-orbit in $V_{\lambda_\alpha}$ corresponds to $\phi_\alpha.$ 
    
    However, the dual orbit of $C_{\phi_\alpha}$ is the unique open orbit corresponding to the tempered parameter $S_4.$ That is,
    \[
    C_{\lambda_\alpha}^*=\left\{\begin{pmatrix}
      0 &  & &  \\
     a &  0 &  &  \\
     & b & 0 &   \\
     & & c & 0  \\
\end{pmatrix} \ | \ a,b,c\in\mathbb{C}^\times  \right\}.
    \]
    Note that by \cite[Lemma 6.4.2]{CFMMX22}, we have $\Lambda_{C_{\phi_\alpha}}^{reg}\subseteq C_{\phi_\alpha} \times C_{\phi_\alpha}^*.$ The embedding $\varepsilon:V_\lambda\times V_{\lambda_{\phi^\alpha}}\hookrightarrow V_{\lambda_\alpha}$
    is given by
    \[
    \left(\begin{pmatrix}
        0 & 0 \\ 0 & 0
    \end{pmatrix}, \begin{pmatrix}
        0 & x \\ 0 & 0
    \end{pmatrix} \right)\mapsto \begin{pmatrix}
      0 &  & &  \\
     &  0 & x &  \\
     &  & 0 &   \\
     & & & 0  \\
\end{pmatrix}.
    \]
    The map ${}^t\varepsilon:V_\lambda^*\times V_{\lambda_{\phi^\alpha}}^*\hookrightarrow V_{\lambda_\alpha}^*$ is given by taking the transpose of these matrices. Consequently, the image of ${}^t\varepsilon$ does not intersect with $C_{\lambda_\alpha}^*.$ That is, in contrast with Example \ref{exmp GL works 1}, there does not exist an element $(x,y)\in\Lambda_{C_{\phi^\times}}^{reg}$ such that $\varepsilon'(x,y)\in\Lambda_{C_{\phi_\alpha}}^{reg}.$ The reason for this is because $\alpha=2$ is too small. We will discuss this example more in Example \ref{exmp GL fails 2}. 
\end{exmp}

We return to the general setting and relate the conormal bundles of $V_{\lambda_{\phi_\alpha}}$ with those of $V_{\lambda^\times}.$

\begin{lemma}\label{lemma 1}
    Let $C$ be an orbit of $V_{\lambda_{\phi_\alpha}}.$ Then \[\Lambda_C\cap(V_{\lambda^\times}\times V_{\lambda^\times}^*)=\cup_{C'} \Lambda_{C'},\]
    where the union is over all orbits $C'$ of $V_{\lambda^\times}$ such that $C'\subseteq C\cap V_{\lambda^\times}.$ 
\end{lemma}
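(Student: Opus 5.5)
We prove the equality by double inclusion, viewing $V_{\lambda^\times}\times V_{\lambda^\times}^*$ inside $V_{\lambda_{\phi_\alpha}}\times V_{\lambda_{\phi_\alpha}}^*$ via $\varepsilon'=\varepsilon\times{}^t\varepsilon$, and computing the bracket inside $\mathrm{Lie}(K_{\lambda_\alpha})$. Write $\Lambda_{C'}$ for the conormal bundle of $C'$ taken in $V_{\lambda^\times}\times V^*_{\lambda^\times}$, i.e.
\[
\Lambda_{C'}=\{(x,y)\in C'\times V_{\lambda^\times}^* \ | \ [x,y]=0\}.
\]
The key structural input is that $V_{\lambda^\times}=V_{\lambda_\alpha}^s$ and ${}^tV_{\lambda^\times}=({}^tV_{\lambda_\alpha})^s$ are the $\Ad(s)$-fixed subspaces. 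Since $\Ad(s)$ is a Lie algebra automorphism, the bracket of $\varepsilon(x)$ and ${}^t\varepsilon(y)$ in $\mathrm{Lie}(K_{\lambda_\alpha})$ lies in $\mathrm{Lie}(K_{\lambda_\alpha})^s=\mathrm{Lie}(K_{\lambda^\times})$. This bracket agrees with the bracket computed in $\mathrm{Lie}(K_{\lambda^\times})$, because the inclusion $\widehat{G}^\times\hookrightarrow\widehat{G}_m(\C)$ is a homomorphism, so its differential is a Lie algebra map. Hence for $(x,y)\in V_{\lambda^\times}\times V_{\lambda^\times}^*$, the condition $[x,y]=0$ has the same meaning in both ambient algebras. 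One also checks, as in Example \ref{exmp GL works 1}, that the identification $V_\lambda^*\cong{}^tV_\lambda$ is compatible with ${}^t\varepsilon$: the restriction of the trace pairing to the $s$-fixed parts is the trace pairing for $\widehat G^\times$.

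For the inclusion $\supseteq$: let $C'\subseteq C\cap V_{\lambda^\times}$ and $(x,y)\in\Lambda_{C'}$. Then $x\in C'\subseteq C$, $y\in V_{\lambda^\times}^*\subseteq V_{\lambda_\alpha}^*$, and $[x,y]=0$ by the compatibility above. So $(x,y)\in\Lambda_C\cap(V_{\lambda^\times}\times V^*_{\lambda^\times})$.

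For the inclusion $\subseteq$: let $(x,y)$ lie in the intersection. Then $x\in C\cap V_{\lambda^\times}$, so $x$ lies in the $H_{\lambda^\times}$-orbit $C'$ of $x$ in $V_{\lambda^\times}$. This orbit satisfies $C'\subseteq C$ because $H_{\lambda^\times}\subseteq H_{\lambda_\alpha}$ and $C$ is $H_{\lambda_\alpha}$-stable. Also $y\in V_{\lambda^\times}^*$ and $[x,y]=0$, so $(x,y)\in\Lambda_{C'}$. Note that $C\cap V_{\lambda^\times}$ is $H_{\lambda^\times}$-stable and therefore a (finite) union of $H_{\lambda^\times}$-orbits, so the union in the statement is exactly indexed by these orbits.

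The only real point to verify carefully is the compatibility of the brackets and the dual identifications under the embeddings. For $\GL$ this is concrete: $s$ is block-diagonal, and $\varepsilon$ and ${}^t\varepsilon$ are the block-diagonal inclusions (transposes of one another). Once that is in place, the rest is formal.
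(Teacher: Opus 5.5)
Your proposal is correct and follows essentially the same argument as the paper. The paper writes $x=\varepsilon(x_1,x_2)$ and $y={}^t\varepsilon(y_1,y_2)$, and the proof rests on two observations that match yours:
\begin{enumerate}
\item $[x,y]=0$ if and only if $[x_1,y_1]=0$ and $[x_2,y_2]=0$. This is your compatibility of brackets under the embedding, which the paper sees through the block-diagonal form of the inclusion.
\item $x\in C\cap V_{\lambda^\times}$ exactly when $(x_1,x_2)$ lies in some $H_{\lambda^\times}$-orbit $C'\subseteq C\cap V_{\lambda^\times}$.
\end{enumerate}
Your extra check, that the identification $V^*\cong{}^tV$ via the trace pairing is compatible with ${}^t\varepsilon$, is something the paper takes for granted, and your justification of it is correct.
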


\begin{proof}
    Let $(x,y)\in \Lambda_C\cap(V_{\lambda^\times}\times V_{\lambda^\times}^*).$ Then there exists $(x_1,x_2)\in V_{\lambda^\times}$ and $(y_1,y_2)\in V_{\lambda^\times}^*$ such that $\varepsilon(x_1,x_2)=x$ and ${}^t\varepsilon(y_1,y_2)=y.$ 
    It follows that $[x,y]=0$ if and only if $[x_1,y_1]=0$ and $[x_2,y_2]=0.$ Also, $x\in C\cap V_{\lambda^\times}$ if and only if $(x_1,x_2)\in C'$ for some orbit $C'$ of $V_{\lambda^\times}$ such that $C'\subseteq C\cap V_{\lambda^\times}.$ The lemma follows directly from these observations.
\end{proof}

To study the relations between $\Lambda_{C_{\phi^\times}}^{reg}$ and $\Lambda_{C_{\phi_\alpha}}^{reg}$, it is necessary to study how the closures of conormal bundles behave with respect to restriction. We begin by recalling the notion of a $H_{\lambda_\alpha}$-component in $\Lambda_{\lambda_{\alpha}}$. 

\begin{defn}
    A subset $X\subseteq \Lambda_{\lambda_{\alpha}}$ is called an $H_{\lambda_\alpha}$-component if $X$ is a minimal $H_{\lambda_\alpha}$-invariant union of irreducible components.
\end{defn}

We also define a relative version as follows.

\begin{defn}
    A subset $X\subseteq \Lambda_{\lambda^\times}$ is called an $H_{\lambda_\alpha}$-component if $X$ is a minimal union of irreducible components such that $(H_{\lambda_\alpha}X)\cap\Lambda_{\lambda^\times}=X.$ Here, we are identify these sets inside $\Lambda_{\lambda_\alpha}$ via $\varepsilon'.$
\end{defn}

We will connect these notions later. First, we recall a lemma of \cite{ABV92}.

\begin{lemma}[{\cite[Lemma 19.2(b)]{ABV92}}]\label{lemma ABV 19.2b}
    Let $\lambda$ be an infinitesimal parameter of $G$ and $C$ be an orbit of the Vogan variety $V_\lambda.$ Then 
    \begin{enumerate}
        \item $\dim \Lambda_C=\dim V_\lambda,$
        \item $\Lambda_C$ is $H_\lambda$-irreducible, i.e., $H_\lambda$ permutes the irreducible components of $\Lambda_C$ transitively, and
        \item the $H_{\lambda}$-components of $\Lambda_\lambda$ are the closures $\overline{\Lambda}_{C'}$ where $C'\in C_\lambda(G)$.
    \end{enumerate}
\end{lemma}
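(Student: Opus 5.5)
The plan is to realize $\Lambda_C$ as a homogeneous vector bundle over $C$ and then read off all three statements from dimension counting. Fix $x\in C$ and write $\mathfrak{h}_\lambda=\mathrm{Lie}(H_\lambda)$. The tangent space to the orbit is $T_xC=[\mathfrak{h}_\lambda,x]\subseteq V_\lambda$.

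The first and most delicate step is to identify the fibre of $\Lambda_C\to C$ over $x$ with the annihilator $(T_xC)^\perp\subseteq V_\lambda^*$. Here $V_\lambda^*\cong{}^tV_\lambda$ via the trace form on $\mathrm{Lie}(K_\lambda)$, as in \cite[Proposition 6.2.1]{CFMMX22}. For $y\in{}^tV_\lambda$ the bracket $[x,y]$ lies in the $\mathrm{Ad}(\lambda(\Fr))$-eigenvalue-$1$ part of $\mathrm{Lie}(K_\lambda)$, and that part is exactly $\mathfrak{h}_\lambda$. Invariance of the trace form gives
\[
\mathrm{tr}([z,x]y)=\mathrm{tr}(z[x,y]) \quad \text{for all } z\in\mathfrak{h}_\lambda.
\]
The trace form restricted to $\mathfrak{h}_\lambda$ is nondegenerate, because $\mathfrak{h}_\lambda$ is the Lie algebra of a reductive centralizer and the eigenspace decomposition is orthogonal with eigenvalues $\mu$ paired against $\mu^{-1}$. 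Hence $[x,y]=0$ if and only if $y$ annihilates $[\mathfrak{h}_\lambda,x]=T_xC$. I expect this identification, in particular checking the nondegeneracy and that $[x,y]$ really lands in $\mathfrak{h}_\lambda$, to be the main point requiring care. It can be verified directly in the $\GL_n$ model \eqref{eqn eigenspace decomp}, and in general by the same eigenspace argument.

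Granting the fibre description, the structure of $\Lambda_C$ follows. We have
\[
\Lambda_C\cong H_\lambda\times^{Z_{H_\lambda}(x)}(T_xC)^\perp,
\]
so $\Lambda_C$ is a vector bundle over $C$ of rank $\dim V_\lambda-\dim C$. Therefore $\dim\Lambda_C=\dim C+(\dim V_\lambda-\dim C)=\dim V_\lambda$, which is (1).

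For (2), use the surjection $H_\lambda\times(T_xC)^\perp\to\Lambda_C$. The irreducible components of the source are $gH_\lambda^0\times(T_xC)^\perp$, and $H_\lambda$ permutes these transitively by left multiplication. Since the map is $H_\lambda$-equivariant, every irreducible component of $\Lambda_C$ is the closure of the image of one of these. Hence $H_\lambda$ permutes the components of $\Lambda_C$ transitively, and this property passes to the closure $\overline{\Lambda}_C$.

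For (3), note that $\Lambda_\lambda$ is closed and is the finite disjoint union of the locally closed sets $\Lambda_{C'}$ with $C'\in C_\lambda(G)$, each of pure dimension $\dim V_\lambda$. Therefore $\Lambda_\lambda=\bigcup_{C'}\overline{\Lambda}_{C'}$, and each irreducible component of $\Lambda_\lambda$ is a component of some $\overline{\Lambda}_{C'}$.

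No component of $\overline{\Lambda}_{C'}$ is contained in $\overline{\Lambda}_{C''}$ for $C''\neq C'$. Indeed, $\overline{\Lambda}_{C''}\setminus\Lambda_{C''}$ has dimension strictly less than $\dim V_\lambda$, and $\Lambda_{C''}$ is disjoint from $\Lambda_{C'}$. So the irreducible components of $\Lambda_\lambda$ are partitioned according to $C'$.

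By (2), each block of this partition is a single $H_\lambda$-orbit of components. The minimal $H_\lambda$-invariant unions of components are therefore exactly the closures $\overline{\Lambda}_{C'}$, which gives (3).
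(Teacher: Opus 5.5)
Your argument is correct and is the standard proof behind \cite[Lemma 19.2(b)]{ABV92}. You identify $\Lambda_C$ with the conormal bundle $T^*_C V_\lambda$ via the invariant form and the $\mu\leftrightarrow\mu^{-1}$ pairing of $\mathrm{Ad}(\lambda(\Fr))$-eigenspaces, read off (1) and (2) from the homogeneous vector bundle structure, and deduce (3) from (1) and (2) by the boundary-dimension argument. The paper gives no proof beyond the citation and its remark that (1) and (2) imply (3), which is exactly how you organize it.
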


Note that Parts (1) and (2) of the above lemma imply Part (3). Our next goal is to classify the $H_{\lambda_\alpha}$-components of $\Lambda_{\lambda^\times}.$

\begin{lemma}\label{lemma H components}
    The $H_{\lambda_\alpha}$-components of $\Lambda_{\lambda^\times}$ are $\overline{\Lambda_C\cap\Lambda_{\lambda^\times}}$ where $C$ is an orbit of $V_{\lambda_\alpha}$ for which $C\cap V_{\lambda^\times}\neq\emptyset.$ Note that by Lemma \ref{lemma 1}, these sets are described by
    \[
     \overline{\Lambda_C\cap\Lambda_{\lambda^\times}}=\overline{\Lambda}_C\cap(V_{\lambda^\times}\times V_{\lambda^\times}^*)=\cup_{C'} \overline{\Lambda}_{C'}.
    \]
\end{lemma}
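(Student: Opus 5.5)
The plan is to reduce everything to Lemma \ref{lemma ABV 19.2b} applied to both varieties, $V_{\lambda_\alpha}$ and $V_{\lambda^\times}$, together with Lemma \ref{lemma 1}. First we describe the pieces. By Lemma \ref{lemma 1}, for each orbit $C$ of $V_{\lambda_\alpha}$ we have $\Lambda_C\cap\Lambda_{\lambda^\times}=\bigcup_{C'}\Lambda_{C'}$, where $C'$ runs over the $H_{\lambda^\times}$-orbits contained in $C\cap V_{\lambda^\times}$. Here we use that $\Lambda_{\lambda^\times}=\Lambda_{\lambda_\alpha}\cap(V_{\lambda^\times}\times V_{\lambda^\times}^*)$, which holds because the bracket splits blockwise. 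There are finitely many such $C'$, so taking closures inside $\Lambda_{\lambda^\times}$ gives $\overline{\Lambda_C\cap\Lambda_{\lambda^\times}}=\bigcup_{C'}\overline{\Lambda}_{C'}$. By Lemma \ref{lemma ABV 19.2b}(1), each $\overline{\Lambda}_{C'}$ has dimension $\dim V_{\lambda^\times}$, and by Lemma \ref{lemma ABV 19.2b}(3) for $H_{\lambda^\times}$ it is a union of irreducible components of $\Lambda_{\lambda^\times}$. Hence each set $X_C:=\overline{\Lambda_C\cap\Lambda_{\lambda^\times}}$ is a union of irreducible components. Moreover, the sets $X_C$ for distinct $C$ share no component, since a component of $\overline{\Lambda}_{C'}$ determines $C'$ as the generic image of its projection to $V_{\lambda^\times}$, and $C'$ determines $C=H_{\lambda_\alpha}C'$. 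Finally, every component of $\Lambda_{\lambda^\times}$ lies in some $X_C$.

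Next we check that each $X_C$ satisfies $(H_{\lambda_\alpha}X_C)\cap\Lambda_{\lambda^\times}=X_C$. The inclusion $\supseteq$ is clear. For $\subseteq$, the key claim is the middle equality of the statement, $\overline{\Lambda_C\cap\Lambda_{\lambda^\times}}=\overline{\Lambda}_C\cap(V_{\lambda^\times}\times V_{\lambda^\times}^*)$. Granting it, $H_{\lambda_\alpha}X_C\subseteq\overline{\Lambda}_C$, so intersecting back with $\Lambda_{\lambda^\times}$ lands in $\overline{\Lambda}_C\cap(V_{\lambda^\times}\times V_{\lambda^\times}^*)=X_C$.

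The middle equality is the main obstacle. The inclusion $\subseteq$ is trivial. For $\supseteq$, take a point $(x,y)$ of $\overline{\Lambda}_C$ that lies in the small space and in $\Lambda_{D}$ for some orbit $D\subseteq\overline{C}$. The issue is that $\overline{\Lambda}_C$ may contain points of $\Lambda_D$ that are not limits of points of $\Lambda_C$ lying in the small space. To address this, I would use that $V_{\lambda^\times}$ is the fixed locus of the finite-order semisimple element $s$. Since $s$ generates a reductive (finite) group, fixed points of the $\langle s\rangle$-stable closed set $\overline{\Lambda}_C$ can be approached from inside the fixed locus. Concretely, $\overline{\Lambda}_C^{\,s}$ should be the closure of $(\Lambda_C)^s$: take a curve in $\Lambda_C$ approaching $(x,y)$ and average/project it via the $\langle s\rangle$-equivariant linear projection onto the $s$-invariants. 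The bracket condition is preserved because the projection is a Lie-algebra-compatible Reynolds operator on each eigenblock.

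If that projection argument fails (the projection need not preserve the orbit $C$), I would fall back on dimension counting. Since the fixed locus of $s$ acting on the irreducible components of $\overline{\Lambda}_C$, which are Lagrangian, is a union of Lagrangians in the symplectic fixed space $T^*V_{\lambda^\times}$, each component of $\overline{\Lambda}_C\cap\Lambda_{\lambda^\times}$ has dimension $\dim V_{\lambda^\times}$. It is therefore an irreducible component of $\Lambda_{\lambda^\times}$, hence of the form $\overline{\Lambda}_{C''}$ with $C''\subseteq\overline C\cap V_{\lambda^\times}$. An $H_{\lambda_\alpha}$-invariance argument via Lemma \ref{lemma ABV 19.2b}(2) then forces $H_{\lambda_\alpha}C''=C$.

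For minimality, suppose $Y\subsetneq X_C$ is a nonempty union of components with $(H_{\lambda_\alpha}Y)\cap\Lambda_{\lambda^\times}=Y$. Then $Y$ contains some $\overline{\Lambda}_{C'}$, so $H_{\lambda_\alpha}Y\supseteq H_{\lambda_\alpha}\Lambda_{C'}$, which is dense in $\overline{\Lambda}_C$ by Lemma \ref{lemma ABV 19.2b}(1),(2). Intersecting with $\Lambda_{\lambda^\times}$ and using Lemma \ref{lemma 1} recovers every $\Lambda_{C''}$ with $C''\subseteq C$, so $Y=X_C$. Since the $X_C$ cover $\Lambda_{\lambda^\times}$ and share no components, these are exactly the $H_{\lambda_\alpha}$-components.
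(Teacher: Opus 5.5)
You have a genuine gap. Your first paragraph is fine: $X_C:=\overline{\Lambda_C\cap\Lambda_{\lambda^\times}}=\bigcup_{C'}\overline{\Lambda}_{C'}$, these sets are unions of components, they cover $\Lambda_{\lambda^\times}$, and they share no component.

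The gap is the inclusion $(H_{\lambda_\alpha}X_C)\cap\Lambda_{\lambda^\times}\subseteq X_C$. You reduce it to the middle equality $\overline{\Lambda_C\cap\Lambda_{\lambda^\times}}=\overline{\Lambda}_C\cap(V_{\lambda^\times}\times V_{\lambda^\times}^*)$ and then do not prove it.

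The averaging argument fails. The Reynolds operator of $\langle s\rangle$ is truncation to the $s$-block-diagonal part. It preserves neither $[x,y]=0$, since the diagonal blocks of $[x,y]$ involve products of off-diagonal blocks, nor the orbit $C$. Moreover, the $s$-fixed locus of a closure can be strictly larger than the closure of the fixed locus.

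The fallback also fails. Even the claim that the fixed locus of a singular Lagrangian is Lagrangian needs proof. More seriously, the last step ``$H_{\lambda_\alpha}C''=C$'' is unjustified, and the middle equality is in fact false in the generality of the lemma. Here is a counterexample.
\begin{itemize}
\item Take $\alpha=1$ and $\phi^\vee=|\cdot|^0\oplus|\cdot|^{-1}\oplus|\cdot|^{-1}$. Then $V_{\lambda_\alpha}=\Hom(E_1,E_2)$ with $\dim E_1=\dim E_2=2$, and $E_1=E_1'\oplus E_1''$ with $E_1''$ coming from $\phi^\alpha$.
\item We have $V_{\lambda^\times}=\Hom(E_1',E_2)$, $V^*_{\lambda^\times}=\Hom(E_2,E_1')$, and $\Lambda_{\lambda_\alpha}=\{xy=0=yx\}$.
\item Let $C_1$ be the rank-one orbit. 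The set $C_1\cap V_{\lambda^\times}=V_{\lambda^\times}\setminus\{0\}$ is a single orbit $C'$. Since such $x$ is injective on $E_1'$, $xy=0$ forces $y=0$, so $\Lambda_{C'}=C'\times\{0\}$ and $X_{C_1}=V_{\lambda^\times}\times\{0\}$.
\item Now take $0\neq y\in V^*_{\lambda^\times}$ and $0\neq u\in\ker y$. Let $x_1$ vanish on $E_1'$ and map $E_1''$ onto $\C u$. Then $(tx_1,y)\in\Lambda_{C_1}$ for $t\neq 0$.
\item Hence $\{0\}\times V^*_{\lambda^\times}=\overline{\Lambda}_{\{0\}}\subseteq\overline{\Lambda}_{C_1}\cap(V_{\lambda^\times}\times V^*_{\lambda^\times})$, but it is not contained in $X_{C_1}$.
\end{itemize}
This extra piece is Lagrangian, so your dimension count cannot exclude it. Yet it comes from $C_0$, not from $C_1$.

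In this example the invariance condition still holds. The reason is that $H_{\lambda_\alpha}X_{C_1}=\{(x,0):\rank x\le 1\}$ is not closed, and it meets $\Lambda_{\lambda^\times}$ only in $X_{C_1}$. So a correct argument must analyze $H_{\lambda_\alpha}X_C$ itself: which points of $\overline{\Lambda}_{C'}$ can be $H_{\lambda_\alpha}$-conjugated back into $V_{\lambda^\times}\times V^*_{\lambda^\times}$. It must not pass through $\overline{\Lambda}_C$.

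The same example refutes your minimality step. $H_{\lambda_\alpha}\Lambda_{C'}=C_1\times\{0\}$ has dimension $3<4=\dim\overline{\Lambda}_{C_1}$, so it is not dense. Density would not survive intersection with the closed set $\Lambda_{\lambda^\times}$ anyway. What minimality actually needs is $\Lambda_{C''}\subseteq H_{\lambda_\alpha}\Lambda_{C'}$ for all $C',C''\subseteq C\cap V_{\lambda^\times}$, which is the assertion the paper's proof invokes.

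Otherwise the paper's proof has the same skeleton as yours: Lemma~\ref{lemma 1}, equidimensionality from Lemma~\ref{lemma ABV 19.2b}(1), and exhaustion by the $\overline{\Lambda}_{C'}$. It does not establish the middle equality either; it attributes it to Lemma~\ref{lemma 1}, which gives it only before closures are taken. The example above violates the hypothesis of Lemma~\ref{lemma dual of phi_alpha}, so it does not directly affect that later use. It does show that this equality, and hence Corollary~\ref{cor conorm closure intersect}, cannot hold without further assumptions.
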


\begin{proof}
    Suppose that $C$ is an orbit of $V_{\lambda_\alpha}$ for which $C\cap V_{\lambda^\times}\neq\emptyset.$ By Lemma \ref{lemma 1}, we have
    \[
    \Lambda_C\cap\Lambda_{\lambda^\times}=\Lambda_C\cap(V_{\lambda^\times}\times V_{\lambda^\times}^*)=\cup_{C'} \Lambda_{C'},
    \]
    where  the union runs through $C'\subseteq C\cap V_{\lambda^\times}.$ By assumption, there is at least one such orbit and so this set is nonempty. Fix such an orbit $C'.$ By Lemma \ref{lemma ABV 19.2b}(1), $\dim \Lambda_{C'}$=$\dim V_{\lambda^\times}$ and hence $\dim \Lambda_C\cap\Lambda_{\lambda^\times}=\dim V_{\lambda^\times}.$ 
    Now, $H_{\lambda^\times}$ permutes the irreducible components of $\Lambda_{C'}$ transitively by Lemma \ref{lemma ABV 19.2b}(2). Furthermore, $\Lambda_{C''}\subseteq H_{\lambda_{\alpha}}\Lambda_{C'}$ for any $C''\subseteq C\cap V_{\lambda^\times}$ and hence $H_{\lambda_\alpha}$ also permutes the irreducible components of $\Lambda_C\cap\Lambda_{\lambda^\times}$ transitively.
    Therefore, $\overline{\Lambda_C\cap\Lambda_{\lambda^\times}}$ is an $H_{\lambda_\alpha}$-component of $\Lambda_{\lambda^\times}.$

    The fact that all $H_{\lambda_\alpha}$-components are of this form follows from the fact that \[
    \Lambda_{\lambda^\times}=\cup_{C'} \overline{\Lambda}_{C'}.
    \]
    Indeed, each $\overline{\Lambda}_{C'}$ lies in $\overline{\Lambda_C\cap\Lambda_{\lambda^\times}}$ where $\varepsilon(C')\subseteq C$ and the claim follows from the minimality of $H_{\lambda_\alpha}$-components.
\end{proof}

We have an immediate corollary on the restrictions of closures of conormal bundles.

\begin{cor}\label{cor conorm closure intersect}
    Let $C$ be an orbit of $V_{\lambda_{\phi_\alpha}}.$ Suppose that $C\cap V_{\lambda^\times}\neq \emptyset.$
    Then, \[\overline{\Lambda}_C\cap(V_{\lambda^\times}\times V_{\lambda^\times}^*)=\cup_{C'} \overline{\Lambda}_{C'},\]
    where the union is over all orbits $C'$ of $V_{\lambda^\times}$ such that $C'\subseteq C\cap V_{\lambda^\times}.$ 
\end{cor}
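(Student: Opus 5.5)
The corollary should be a repackaging of Lemma \ref{lemma H components} and Lemma \ref{lemma 1}; the work is to check that taking closures commutes with intersecting with the closed subvariety $V_{\lambda^\times}\times V_{\lambda^\times}^*$. I would prove two inclusions.

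\emph{The inclusion $\supseteq$.} Let $C'\subseteq C\cap V_{\lambda^\times}$ be an orbit. By Lemma \ref{lemma 1}, $\Lambda_{C'}\subseteq\Lambda_C\cap(V_{\lambda^\times}\times V_{\lambda^\times}^*)$. Since $\varepsilon'$ is a closed embedding, $V_{\lambda^\times}\times V_{\lambda^\times}^*$ is closed in $V_{\lambda_\alpha}\times V_{\lambda_\alpha}^*$. The closure of $\Lambda_{C'}$ in the small space therefore lies in $\overline{\Lambda}_C\cap(V_{\lambda^\times}\times V_{\lambda^\times}^*)$.

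\emph{The inclusion $\subseteq$.} This is the real content. Lemma \ref{lemma H components} gives
\[
\overline{\Lambda_C\cap\Lambda_{\lambda^\times}}=\cup_{C'}\overline{\Lambda}_{C'},
\]
so it suffices to show that
\[
\overline{\Lambda}_C\cap(V_{\lambda^\times}\times V_{\lambda^\times}^*)\subseteq\overline{\Lambda_C\cap\Lambda_{\lambda^\times}}.
\]
I would use the fixed-point description: $V_{\lambda^\times}=V_{\lambda_\alpha}^s$, and similarly for the dual. Hence $V_{\lambda^\times}\times V_{\lambda^\times}^*$ is the fixed locus of the finite-order automorphism $\Ad(s)$ acting on $\Lambda_{\lambda_\alpha}$. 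Both $C$ and $\overline{\Lambda}_C$ are $H_{\lambda_\alpha}$-stable, and $s$ lies in the centre of $H_{\lambda^\times}\subseteq H_{\lambda_\alpha}$, so $\overline{\Lambda}_C$ is $s$-stable.

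The key input is that the $s$-fixed points of $\overline{\Lambda}_C$ lie in the closure of the $s$-fixed points of $\Lambda_C$. For this I would argue with $H_{\lambda_\alpha}$-components inside $\Lambda_{\lambda^\times}$. The set $\overline{\Lambda}_C\cap\Lambda_{\lambda^\times}$ is closed in $\Lambda_{\lambda^\times}$ and is stable in the relative sense, namely $(H_{\lambda_\alpha}X)\cap\Lambda_{\lambda^\times}=X$. Hence it is a union of $H_{\lambda_\alpha}$-components. By Lemma \ref{lemma H components}, these components are the sets $\overline{\Lambda_{D}\cap\Lambda_{\lambda^\times}}$ for orbits $D$ meeting $V_{\lambda^\times}$, and each such set is a union of $\overline{\Lambda}_{D'}$ with $D'\subseteq D$.

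Now compare dimensions. Every $\Lambda_{D'}$ has dimension $\dim V_{\lambda^\times}$ by Lemma \ref{lemma ABV 19.2b}(1). So each component $\overline{\Lambda}_{D'}$ contained in $\overline{\Lambda}_C$ has generic point in some $\Lambda_{D''}$, where $D''\subseteq D$ is an orbit of $V_{\lambda_\alpha}$. If $D\neq C$, the generic covector over $D''$ would lie in $\overline{\Lambda}_C\cap\Lambda_D$. By Lemma \ref{lemma ABV 19.2b}(3), $\overline{\Lambda}_C\cap\Lambda_D$ is a proper closed subset of $\Lambda_D$ and has smaller dimension. Its intersection with the fixed locus is then too small to contain a full $\dim V_{\lambda^\times}$-dimensional component $\Lambda_{D''}$.

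\emph{Main obstacle.} This last dimension count is where I expect the difficulty. Intersecting with the fixed locus need not cut dimension uniformly, so "proper closed in $\Lambda_D$" does not immediately yield "proper in $\Lambda_D\cap\Lambda_{\lambda^\times}$". I would first try to show that $\overline{\Lambda}_C\cap\Lambda_D\cap\Lambda_{\lambda^\times}$ is $H_{\lambda_\alpha}$-stable in the relative sense and proper in the relatively $H_{\lambda_\alpha}$-irreducible set $\Lambda_D\cap\Lambda_{\lambda^\times}$. That would force it to contain no component of dimension $\dim V_{\lambda^\times}$. Once this holds, only components with $D=C$ survive, and these are exactly the $\overline{\Lambda}_{C'}$ with $C'\subseteq C\cap V_{\lambda^\times}$.
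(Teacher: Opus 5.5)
Your inclusion $\supseteq$ is correct, and you have isolated the right issue: everything rests on the ``key input'' that $s$-fixed points of $\overline{\Lambda}_C$ lie in the closure of the $s$-fixed points of $\Lambda_C$. That gap cannot be closed, because this input, and with it the corollary as stated, is false in general.

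Take $\lambda_{\phi^\vee}=2|\cdot|^{1}+|\cdot|^{0}$ and $\alpha=1$, so $\phi^\alpha=|\cdot|^0$. In the convention of \S\ref{sec perverse sheaves}, $V_{\lambda_\alpha}\cong\Hom(E_1,E_2)$ with $\dim E_1=\dim E_2=2$. Write $E_2=E_2'\oplus L$, where $L$ is the line of $\phi^\alpha$. Then $V_{\lambda^\times}=\Hom(E_1,E_2')$ and $V^*_{\lambda^\times}=\Hom(E_2',E_1)$, extended by $0$ on $L$. Let $C$ be the rank-one orbit, so $C\cap V_{\lambda^\times}$ is the single orbit $C'$ of nonzero maps $E_1\to E_2'$.

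Any $x\in C'$ is onto $E_2'$, so $yx=0$ forces $y=0$. Hence $\overline{\Lambda}_{C'}=V_{\lambda^\times}\times\{0\}$.

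Now take $0\neq y_0\in\Hom(E_2',E_1)$; then $\ker y_0=L$. Pick a rank-one $x_0\in\Hom(E_1,E_2)$ with $\ker x_0=\mathrm{im}\,y_0$ and $\mathrm{im}\,x_0=L$. Then $x_0y_0=0=y_0x_0$, so $(tx_0,y_0)\in\Lambda_C$ for $t\neq 0$. Letting $t\to 0$ gives $(0,y_0)\in\overline{\Lambda}_C\cap(V_{\lambda^\times}\times V^*_{\lambda^\times})$, but $(0,y_0)\notin\overline{\Lambda}_{C'}$.

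In fact the intersection is all of $\Lambda_{\lambda^\times}=\overline{\Lambda}_{\{0\}}\cup\overline{\Lambda}_{C'}$, and the zero orbit $\{0\}$ of $V_{\lambda^\times}$ is not contained in $C$. The transposed configuration works the same way, so the orientation convention does not matter.

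The example also shows where your $\subseteq$ argument breaks.
\begin{itemize}
\item ``Closed and relatively stable, hence a union of $H_{\lambda_\alpha}$-components'' is not a valid inference in general.
\item Even where that inference holds, as it does here, your dimension count fails. Let $C_0$ be the zero orbit. Then $\overline{\Lambda}_C\cap\Lambda_{C_0}$ is a proper closed subset of $\Lambda_{C_0}$, yet its $s$-fixed part is the whole full-dimensional piece $\Lambda_{C_0}\cap\Lambda_{\lambda^\times}=\Lambda_{\{0\}}$.
\item The approximating points $(tx_0,y_0)$ reach the fixed locus through the mixed block $\Hom(E_1,L)$, the $(-1)$-eigenspace of $\Ad(s)$. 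Nothing internal to $V_{\lambda^\times}\times V^*_{\lambda^\times}$ can see this direction.
\end{itemize}

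The paper's proof makes the same leap. From the equality of dimensions supplied by Lemma \ref{lemma 1}, it concludes that $\overline{\Lambda}_C\cap(V_{\lambda^\times}\times V^*_{\lambda^\times})$ is a single $H_{\lambda_\alpha}$-component. Here it is the union of two, each separately stable in the relative sense.

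The example violates $\beta\geq m^\beta_{\mathbbm{1}_{W_F}}(\phi^\vee)$ (here $\beta=0$ and $m^\beta=1$), so it does not by itself contradict Proposition \ref{prop regular conorm}. However, a correct version of the corollary needs an extra hypothesis of that kind. Alternatively, it should be replaced by a direct proof of the pointwise statement actually used in Proposition \ref{prop regular conorm}. Either way, the proof must control exactly the mixed directions identified above.
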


\begin{proof}
    It follows from Lemma \ref{lemma 1} that the dimensions match and hence (similar to the proof of Lemma \ref{lemma H components}) we have that $\overline{\Lambda}_C\cap(V_{\lambda^\times}\times V_{\lambda^\times}^*)$ is an $H_{\lambda_\alpha}$-component of $\Lambda_{\lambda^\times}.$ From Lemmas \ref{lemma 1} and \ref{lemma H components}, it is clear that the component is
\[\overline{\Lambda}_C\cap(V_{\lambda^\times}\times V_{\lambda^\times}^*)=\cup_{C'} \overline{\Lambda}_{C'},\]
where the union is over all orbits $C'$ of $V_{\lambda^\times}$ such that $C'\subseteq C\cap V_{\lambda^\times}.$
\end{proof}

\begin{rmk} Based on many examples, we suspect that if $C$ is an orbit of $V_{\lambda_{\phi_\alpha}}$, then  there exists an orbit $\tilde{C}$ of $V_{\lambda_{\phi_\alpha}}$ such that \[\overline{\Lambda}_C\cap(V_{\lambda^\times}\times V_{\lambda^\times}^*)=\cup_{C'} \overline{\Lambda}_{C'},\]
    where the union is over all orbits $C'$ of $V_{\lambda^\times}$ such that $C'\subseteq \tilde{C}\cap V_{\lambda^\times}.$
    This would also imply Corollary \ref{cor conorm closure intersect}.
\end{rmk}

\subsection{The fixed point formula}

We continue with the notation of the previous subsection, except we restrict ourselves to the case that $G=G_n=\GL_n(F).$ Recall that $\phi\in\Phi(\GL_n(F))$, $\phi^\times=\phi^\vee\times\phi^\alpha$, and $\phi_\alpha=\phi^\vee+\phi^\alpha,$ where
\[\phi^{\alpha}=\oplus_{i=0}^{\alpha-1} |\cdot|^{\frac{\alpha-1}{2}-i}\otimes S_1\] which corresponds to the $0$-orbit in $V_{\lambda_{\phi^{\alpha}}}.$ 

\begin{defn}
    Write
    \[
    \phi^\vee=\bigoplus_{i=1}^r |\cdot|^{x_i}\otimes S_{a_i}  \oplus\left(\bigoplus_{i=r+1}^k\rho_i\otimes S_{a_i}\right),
    \]
    where $\rho_i\neq|\cdot|^x$ for any $x\in\mathbb{R}.$ We define the set of trivial exponents of $\phi^\vee$ to be
    \[
    \exp_{\mathbbm{1}_{W_F}}(\phi^\vee)=\bigcup_{i=1}^r\{\frac{a_i-1}{2}+x_i,\frac{a_i-1}{2}-1+x_i, \dots,\frac{1-a_i}{2}+x_i\}.
    \]
    Let $\beta\in\frac{1}{2}\mathbb{Z}$, (later we take $\beta=\frac{\alpha-1}{2}$).
    We define the set of trivial $\beta$-exponents of $\phi^\vee$ to be $\exp^\beta_{\mathbbm{1}_{W_F}}(\phi^\vee)=\exp_{\mathbbm{1}_{W_F}}(\phi^\vee)\cap(\beta+\Z).$ Finally, we let $m^\beta_{\mathbbm{1}_{W_F}}(\phi^\vee)=\max\{|x| \ | \ x \in \exp^\beta_{\mathbbm{1}_{W_F}}(\phi^\vee)\}.$ 
\end{defn}

\begin{rmk}
    Let $\beta=\frac{\alpha-1}{2}.$ It is possible that $\exp^\beta_{\mathbbm{1}_{W_F}}$ is empty. In this case, $\exp_{\mathbbm{1}_{W_F}}(\phi^\vee)\cap(\beta+\Z)=\emptyset$ and, by convention, we  write $\beta\geq m^\beta_{\mathbbm{1}_{W_F}}(\phi^\vee).$
\end{rmk}

We recall the previous examples.

\begin{exmp}\label{exmp GL works 2}
    We continue Example \ref{exmp GL works 1}. In this case, $\alpha=2$ and so we have $\beta=\frac{\alpha-1}{2}=\frac{1}{2}\in\frac{1}{2}+\mathbb{Z}.$ We have
    \[
    \exp_{\mathbbm{1}_{W_F}}(\phi^\vee)=\exp^\beta_{\mathbbm{1}_{W_F}}(\phi^\vee)=\{\frac{1}{2},-\frac{1}{2}\}
    \]
    and
    $m^\beta_{\mathbbm{1}_{W_F}}(\phi^\vee)=\frac{1}{2}.$ In particular $\frac{\alpha-1}{2}\geq m^\beta_{\mathbbm{1}_{W_F}}(\phi^\vee).$
\end{exmp}

\begin{exmp}\label{exmp GL fails 2}
     We continue Example \ref{exmp GL fails 1}. In this case, $\alpha=2$ and so we have $\beta=\frac{\alpha-1}{2}=\frac{1}{2}\in\frac{1}{2}+\mathbb{Z}.$ We have
    \[
    \exp_{\mathbbm{1}_{W_F}}(\phi^\vee)=\exp^\beta_{\mathbbm{1}_{W_F}}(\phi^\vee)=\{\frac{3}{2},-\frac{3}{2}\}
    \]
    and
    $m^\beta_{\mathbbm{1}_{W_F}}(\phi^\vee)=\frac{3}{2}.$ In contrast with Example \ref{exmp GL works 2}, we have $\frac{\alpha-1}{2}=\frac{1}{2}< m^\beta_{\mathbbm{1}_{W_F}}(\phi^\vee).$
\end{exmp}

Recall that given an $L$-parameter $\phi$ corresponding to the orbit $C_\phi,$ we attach a dual $L$-parameter $\hat{\phi}$ corresponding to the dual orbit $(C_{\phi})^*.$ In general, the computation of $\hat{\phi}$ is determined by the M{\oe}glin-Waldspurger algorithm (\cite[Theoreme II.13]{MW86}).

\begin{lemma}\label{lemma dual of phi^alpha}
    We have $\widehat{\phi^\alpha}=\mathbbm{1}_{W_F}\otimes S_\alpha.$
\end{lemma}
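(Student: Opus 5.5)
We will compute the dual orbit $(C_{\phi^\alpha})^*$ directly from the geometry of $V_{\lambda_{\phi^\alpha}}$, rather than running the M{\oe}glin-Waldspurger algorithm. Set $\lambda=\lambda_{\phi^\alpha}$. Since
\[\phi^\alpha=\bigoplus_{i=0}^{\alpha-1}|\cdot|^{\frac{\alpha-1}{2}-i}\otimes S_1,\]
the eigenvalues of $\lambda(\Fr)$ are $q_F^{x_i}$ with $x_i=\frac{\alpha-1}{2}-(i-1)$ for $i=1,\dots,\alpha$. These exponents are consecutive (they differ by $1$), and every eigenspace $E_i$ is one-dimensional. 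By \eqref{eqn eigenspace decomp},
\[V_\lambda\cong\Hom(E_1,E_2)\times\cdots\times\Hom(E_{\alpha-1},E_\alpha)\cong\BC^{\alpha-1},\]
with $H_\lambda\cong(\BC^\times)^\alpha$ acting by the simple roots. Dually, ${}^tV_\lambda\cong V_\lambda^*$ is the space of subdiagonal matrices, with the transposed action. Both pictures are exactly as in Example \ref{exmp GL fails 1} for $\alpha=4$.

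First, we identify the orbit of $\phi^\alpha$. Its $\SL_2(\C)$-part is trivial, so $x_{\phi^\alpha}=0$ and $C_{\phi^\alpha}=\{0\}$. This gives $\Lambda_{C_{\phi^\alpha}}=\{0\}\times V_\lambda^*$, since $[0,y]=0$ for every $y$.

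Second, we find the dual orbit. By definition, $(C_{\phi^\alpha})^*$ is the unique orbit $B$ of $V_\lambda^*$ with $\overline{\Lambda}_{B}=\overline{\Lambda}_{C_{\phi^\alpha}}$. Now $\overline{\Lambda}_{C_{\phi^\alpha}}=\{0\}\times V_\lambda^*$ is irreducible of dimension $\dim V_\lambda^*$. Let $B_0$ be the orbit in $V_\lambda^*$ consisting of subdiagonal matrices with all entries $(a_1,\dots,a_{\alpha-1})$ nonzero; it is open and dense.

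We claim that for $y\in B_0$, the only $x\in V_\lambda$ with $[x,y]=0$ is $x=0$. Indeed, $x$ is superdiagonal with entries $b_1,\dots,b_{\alpha-1}$, and $[x,y]$ is diagonal with entries $b_1a_1$, $b_ia_i-b_{i-1}a_{i-1}$, and $-b_{\alpha-1}a_{\alpha-1}$. Setting these to zero gives $b_1a_1=0$, so $b_1=0$, and then inductively all $b_i=0$. Hence $\Lambda_{B_0}=\{0\}\times B_0$, whose closure is $\{0\}\times V_\lambda^*=\overline{\Lambda}_{C_{\phi^\alpha}}$. By the uniqueness in \cite[Lemma 6.5]{CFMMX22}, $(C_{\phi^\alpha})^*=B_0$.

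Third, we identify the parameter attached to $B_0$. Under the bijection between $H_\lambda$-orbits of $V_\lambda^*$ and $\Phi_\lambda$, the open dense orbit $B_0$ corresponds to the parameter whose nilpotent element is a regular nilpotent on the full string of eigenvalues. That parameter is $\mathbbm{1}_{W_F}\otimes S_\alpha$: its infinitesimal parameter is $\lambda$, and its nilpotent $x_\phi$ has all superdiagonal entries nonzero, i.e., it lies in the open orbit.

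The main obstacle is this last matching, because the paper's conventions identify $V_\lambda^*$ with ${}^tV_\lambda$ and send orbits there to $L$-parameters. We will use that transposition is compatible with the orbit-to-parameter bijection, so the open orbit of ${}^tV_\lambda$ corresponds to the open orbit of $V_\lambda$, namely $C_{\mathbbm{1}_{W_F}\otimes S_\alpha}$. The paper already asserts this in stating that $\Lambda^{reg}_{C_{\phi^\alpha}}$ consists of $(0,y)$ with $y\in{}^tC_{\chi_W\otimes S_\alpha}$. As a cross-check, the M{\oe}glin-Waldspurger algorithm gives the same answer: the multisegment of $\phi^\alpha$ consists of $\alpha$ singletons, and its dual is a single segment of length $\alpha$. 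This yields $\widehat{\phi^\alpha}=\mathbbm{1}_{W_F}\otimes S_\alpha$.
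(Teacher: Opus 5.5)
Your proposal is correct and follows the paper's own second argument. The paper notes that $\phi^\alpha$ gives the zero orbit, whose dual orbit is the open orbit, which corresponds to $\mathbbm{1}_{W_F}\otimes S_\alpha$, and it cites the M{\oe}glin--Waldspurger algorithm as the primary justification. Your explicit commutator computation showing $\Lambda_{B_0}=B_0\times\{0\}$ fills in the step the paper leaves implicit.
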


\begin{proof}
    This follows simply from the M{\oe}glin-Waldspurger algorithm (\cite[Theoreme II.13]{MW86}). Alternatively, $\phi^\alpha$ corresponds to the $0$-orbit in $V_{\lambda^\alpha}.$ Its dual orbit is the unique open orbit in $V_{\lambda^\alpha}.$ This orbit corresponds to $\mathbbm{1}_{W_F}\otimes S_\alpha$ from which the lemma follows.
\end{proof}

Let $\beta=\frac{\alpha-1}{2}.$ We show that if $\beta\geq m^\beta_{\mathbbm{1}_{W_F}}(\phi^\vee),$ then $\widehat{\phi}_\alpha=\widehat{\phi^\vee}+\widehat{\phi^\alpha}$.

\begin{lemma}\label{lemma dual of phi_alpha}
    Assume that $\beta=\frac{\alpha-1}{2} \geq m^\beta_{\mathbbm{1}_{W_F}}(\phi^\vee)$. Then $\widehat{\phi}_\alpha=\widehat{\phi^\vee}+\widehat{\phi^\alpha}$.
\end{lemma}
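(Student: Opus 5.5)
The plan is to reduce to the unramified, integrally-spaced situation and then compute the Pyatetskii dual with the M{\oe}glin--Waldspurger algorithm, realized as a multisegment (Zelevinsky) dual on the Vogan variety \eqref{eqn eigenspace decomp}. The first step is to split $\phi^\vee$ by cuspidal support lines. Write $\phi^\vee=\phi^\vee_{\beta}\oplus\phi^\vee_{\mathrm{rest}}$. Here $\phi^\vee_\beta$ collects the summands $|\cdot|^{x_i}\otimes S_{a_i}$ whose exponents lie in $\beta+\Z$, and $\phi^\vee_{\mathrm{rest}}$ collects all other summands: those with $\rho_i\neq|\cdot|^x$, and the unramified ones with exponents in a different coset of $\Z$. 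The Vogan variety $V_{\lambda_{\phi_\alpha}}$, the group $H_{\lambda_{\phi_\alpha}}$, and hence the orbit and dual-orbit data, all decompose as products over these lines. Since $\phi^\alpha$ has all exponents in $\beta+\Z$, only the $\beta$-line mixes $\phi^\vee$ with $\phi^\alpha$. Duality is computed factorwise, so $\widehat{\phi_\alpha}=\widehat{\phi^\vee_{\mathrm{rest}}}+\widehat{(\phi^\vee_\beta+\phi^\alpha)}$ and $\widehat{\phi^\vee}=\widehat{\phi^\vee_{\mathrm{rest}}}+\widehat{\phi^\vee_\beta}$. It therefore suffices to prove the lemma when $\phi^\vee=\phi^\vee_\beta$. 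In that case the hypothesis says that every segment of $\phi^\vee$ is contained in $[-\beta,\beta]$.

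Next I will translate into multisegments. An $L$-parameter with this infinitesimal character is a multisegment $\mathfrak m$ on the line $\beta+\Z$, with $|\cdot|^x\otimes S_a$ corresponding to the segment $[x-\tfrac{a-1}{2},x+\tfrac{a-1}{2}]$. Under this translation $\phi^\alpha$ is the multisegment of $\alpha$ singletons $\{-\beta\},\dots,\{\beta\}$. By Lemma \ref{lemma dual of phi^alpha}, its dual is the single segment $[-\beta,\beta]$. The dual orbit is the M{\oe}glin--Waldspurger dual $\mathfrak m^t$. The claim becomes $(\mathfrak m+\Delta_{\mathrm{sing}})^t=\mathfrak m^t+[-\beta,\beta]$, where $\Delta_{\mathrm{sing}}=\{-\beta\}+\cdots+\{\beta\}$ and $\mathfrak m$ is supported in $[-\beta,\beta]$.

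I will prove this by running the M{\oe}glin--Waldspurger algorithm and tracking its first step. The algorithm starts at the largest exponent $e$ appearing in the multisegment. It takes a segment ending at $e$ of minimal length, then greedily extends downward, choosing segments ending at $e-1,e-2,\dots$ that precede the previous choice, until no choice is possible. This produces one segment $[e',e]$ of the dual. The chosen segments are then shortened at their tops and the algorithm recurses. For $\mathfrak m+\Delta_{\mathrm{sing}}$, the largest exponent is $\beta$, because $\mathfrak m\subseteq[-\beta,\beta]$. Singletons are minimal-length segments, so the algorithm can pick the singleton $\{\beta\}$, then $\{\beta-1\}$, and so on down to $\{-\beta\}$, since each singleton precedes the next one. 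This yields the dual segment $[-\beta,\beta]$ and deletes all of $\Delta_{\mathrm{sing}}$, leaving exactly $\mathfrak m$. The recursion then produces $\mathfrak m^t$, which gives the claim.

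The main obstacle is the tie-breaking in this first step. At each exponent $e$, $\mathfrak m$ may also contain minimal-length segments ending at $e$; at $e=\beta$ these can be singletons $\{\beta\}$ from $\mathfrak m$ itself. The algorithm's choice is only well-defined up to such ties, so I must justify that choosing the singletons of $\Delta_{\mathrm{sing}}$ is legitimate and that the resulting chain reaches all the way to $-\beta$. Any competing choice ending at $e$ is a singleton $\{e\}$, which is identical as a segment to the one in $\Delta_{\mathrm{sing}}$, so the choice is harmless. The downward chain can always continue, because a singleton at $e-1$ exists from $\Delta_{\mathrm{sing}}$ for every $e>-\beta$. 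The chain must stop at $-\beta$ because nothing in the total multisegment lies below $-\beta$. If this combinatorial route became delicate, the fallback would be a geometric argument: exhibit an explicit generic element of $(C_{\phi^\vee})^*\times(C_{\phi^\alpha})^*$ under ${}^t\varepsilon$ lying in the dual orbit, checking that its rank data matches that of $\widehat{\phi^\vee}+\mathbbm{1}\otimes S_\alpha$, as in Example \ref{exmp GL works 1}.
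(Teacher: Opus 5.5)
Your proposal is correct and follows the paper's own argument. Both run the M{\oe}glin--Waldspurger algorithm and note that the hypothesis forces its first pass to consume exactly the $\alpha$ singletons of $\phi^\alpha$, producing $[-\beta,\beta]$ (that is, $\mathbbm{1}_{W_F}\otimes S_\alpha$), and then recurse on the remaining multisegment to obtain $\widehat{\phi^\vee}$; your reduction to the $\beta+\Z$ line and your tie-breaking check only spell out details the paper leaves implicit.
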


\begin{proof}
The proof is a direct consequence of the M{\oe}glin-Waldspurger algorithm (\cite[Theoreme II.13]{MW86}). Indeed, since $\beta \geq m^\beta_{\mathbbm{1}_{W_F}}(\phi^\vee)$, the first iteration of the algorithm groups the ``segments'' $|\cdot|^{\frac{\alpha-1}{2}}, \dots,$ $|\cdot|^{\frac{1-\alpha}{2}}$ into a segment (which corresponds to $\widehat{\phi^\alpha}=\mathbbm{1}_{W_F}\otimes S_\alpha$; see Lemma \ref{lemma dual of phi^alpha}). The algorithm then repeats on the rest of the segments and hence computes $\widehat{\phi^\vee}.$ Therefore, $\widehat{\phi}_\alpha=\widehat{\phi^\vee}+\widehat{\phi^\alpha}.$
\end{proof}

We recall the current situation for our examples.

\begin{exmp}\label{exmp GL works 3}
    We continue Example \ref{exmp GL works 2}. In this case, $\widehat{\phi^\vee}=\mathbbm{1}_{W_F}\otimes S_2=\widehat{\phi^\alpha}$ and
    \[
    \widehat{\phi_\alpha}=\mathbbm{1}_{W_F}\otimes S_2+\mathbbm{1}_{W_F}\otimes S_2=\widehat{\phi^\vee}+\widehat{\phi^\alpha}
    \]
    as stated by Lemma \ref{lemma dual of phi_alpha}.
\end{exmp}

\begin{exmp}\label{exmp GL fails 3}
     We continue Example \ref{exmp GL fails 2}. In this case, $\widehat{\phi^\vee}=\phi^\vee=|\cdot|^{\frac{3}{2}}+|\cdot|^{-\frac{3}{2}}$ and $\mathbbm{1}_{W_F}\otimes S_2=\widehat{\phi^\alpha}$. By the M{\oe}glin-Waldspurger algorithm (\cite[Theoreme II.13]{MW86}), we obtain that
    \[
    \widehat{\phi_\alpha}=\mathbbm{1}_{W_F}\otimes S_4\neq \widehat{\phi^\vee}+\widehat{\phi^\alpha}.
    \]
    Indeed, $\phi_\alpha=|\cdot|^{\frac{3}{2}}+|\cdot|^{\frac{1}{2}}+|\cdot|^{-\frac{1}{2}}+|\cdot|^{-\frac{3}{2}}$ and the M{\oe}glin-Waldspurger algorithm (\cite[Theoreme II.13]{MW86}) groups the ``segments''  $|\cdot|^{\frac{3}{2}}, |\cdot|^{\frac{1}{2}}, |\cdot|^{-\frac{1}{2}}$ $|\cdot|^{-\frac{3}{2}}$ into one segment which corresponds to $\mathbbm{1}_{W_F}\otimes S_4.$ 

    In other words, Lemma \ref{lemma dual of phi_alpha} fails for this example. The reason is that a trivial exponent of $\phi^\alpha$ interacted (meaning it can form a segment) with a trivial exponent of $\phi^\vee$ in the M{\oe}glin-Waldspurger algorithm (\cite[Theoreme II.13]{MW86}). This is why we require $\beta\geq m^\beta_{\mathbbm{1}_{W_F}}(\phi^\vee)$ in Lemma \ref{lemma dual of phi_alpha}, so that there is no interaction between the trivial exponents of $\phi^\alpha$ and $\phi^\vee.$

    Note that despite the failure of Lemma \ref{lemma dual of phi_alpha}, the Adams conjecture (Conjecture \ref{conj Adams ABV GL naive}) still holds for this example. Indeed, it is simple to check that the ABV-packets are singletons and hence agree with their $L$-packets. The Adams conjecture then follows for this example from Theorem \ref{thm theta GL}.
\end{exmp}

With Lemma \ref{lemma dual of phi_alpha} in hand, we can now relate $\Lambda_{C_{\phi^\times}}^{reg}$ and $\Lambda_{C_{\phi_\alpha}}^{reg}$.

\begin{prop}\label{prop regular conorm} Let $\beta=\frac{\alpha-1}{2} \geq m^\beta_{\mathbbm{1}_{W_F}}(\phi^\vee)$ and \[((x_{\phi^\vee},y_{\phi^\vee}),(x_{\phi^\alpha},y_{\phi^\alpha}))\in \Lambda_{C_{\phi^\times}}^{reg}.\]
Then
    $\varepsilon'((x_{\phi^\vee},y_{\phi^\vee}),(x_{\phi^\alpha},y_{\phi^\alpha}))\in\Lambda_{C_{\phi_\alpha}}^{reg}.$
\end{prop}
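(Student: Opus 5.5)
Let $(x,y)=\varepsilon'((x_{\phi^\vee},y_{\phi^\vee}),(x_{\phi^\alpha},y_{\phi^\alpha}))$. The plan is to verify two things: that $(x,y)\in\Lambda_{C_{\phi_\alpha}}$, and that $(x,y)$ avoids every $\overline{\Lambda}_{C}$ with $C_{\phi_\alpha}\subsetneq\overline{C}$. The first is formal. Since $\varepsilon$ is induced from the inclusion $\widehat{G}^\times\hookrightarrow\widehat{G}_m(\C)$, it sends $C_{\phi^\vee}\times C_{\phi^\alpha}$ into $C_{\phi_\alpha}$, because $x_{\phi_\alpha}$ may be taken to be the block sum. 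In addition, $[x,y]=0$ holds blockwise, as in the proof of Lemma \ref{lemma 1}. So $(x,y)\in\Lambda_{C_{\phi_\alpha}}\cap(V_{\lambda^\times}\times V^*_{\lambda^\times})$.

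For regularity, I would use the characterization \cite[Lemma 6.4.2]{CFMMX22}, already cited in Example \ref{exmp GL fails 1}: the regular part is exactly the set of $(x,y)\in\Lambda_{C}$ with $y$ in the dual orbit $(C)^*$ (this is an open dense subset of $\Lambda_C$, and $\overline{\Lambda}_C=\overline{\Lambda}_{C^*}$). Applying it first on the small side, regularity of $(x_{\phi^\vee},y_{\phi^\vee})$ and $(x_{\phi^\alpha},y_{\phi^\alpha})$ gives $y_{\phi^\vee}\in (C_{\phi^\vee})^*$, the orbit of $\widehat{\phi^\vee}$, and $y_{\phi^\alpha}\in(C_{\phi^\alpha})^*$, the orbit of $\widehat{\phi^\alpha}=\mathbbm{1}_{W_F}\otimes S_\alpha$ by Lemma \ref{lemma dual of phi^alpha}. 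Then ${}^t\varepsilon(y_{\phi^\vee},y_{\phi^\alpha})$ lies in the $H_{\lambda_\alpha}$-orbit of $V^*_{\lambda_\alpha}$ attached to the direct sum $\widehat{\phi^\vee}+\widehat{\phi^\alpha}$; this is the same block-sum observation applied to the dual variety ${}^tV$. By Lemma \ref{lemma dual of phi_alpha}, which is where the hypothesis $\beta\ge m^\beta_{\mathbbm{1}_{W_F}}(\phi^\vee)$ enters, this orbit is $(C_{\phi_\alpha})^*$. Hence $y\in(C_{\phi_\alpha})^*$.

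It remains to conclude that $(x,y)\in\Lambda^{reg}_{C_{\phi_\alpha}}$. Suppose $(x,y)\in\overline{\Lambda}_{C}$ with $C_{\phi_\alpha}\subsetneq\overline C$. Write $\overline{\Lambda}_C=\overline{\Lambda}_{C^*}$; it is contained in $\overline{C}\times\overline{C^*}$. Since $y\in(C_{\phi_\alpha})^*$, we get $(C_{\phi_\alpha})^*\subseteq\overline{C^*}$. Duality of orbits reverses the closure order, because it corresponds to swapping the roles of $V_\lambda$ and $V^*_\lambda$ in the same set of $H$-components (Lemma \ref{lemma ABV 19.2b}(3)). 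So $\overline{C_{\phi_\alpha}}\supseteq C$, which forces $C=C_{\phi_\alpha}$, a contradiction.

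The step I expect to be the main obstacle is justifying that $\Lambda^{reg}_{C}$ equals $\{(x,y)\in\Lambda_C : y\in C^*\}$, and not merely that it is contained in $\Lambda_C\cap(C\times C^*)$. If only the containment is available, I would instead argue via Corollary \ref{cor conorm closure intersect}. Namely, if $(x,y)\in\overline{\Lambda}_C$, then intersecting with $V_{\lambda^\times}\times V^*_{\lambda^\times}$ places $(x,y)$ in some $\overline{\Lambda}_{C'}$ with $C'\subseteq C\cap V_{\lambda^\times}$. Regularity on the small side then forces $C'=C_{\phi^\times}$. Finally, a dimension or orbit comparison using $y\in(C_{\phi_\alpha})^*$ shows $C=C_{\phi_\alpha}$.
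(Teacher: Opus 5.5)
Your main argument has a genuine gap at its last step. The claim that $C\mapsto C^*$ reverses the closure order is false in general, and Lemma~\ref{lemma ABV 19.2b}(3) only gives a bijection of $H_\lambda$-components, with no order information.

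Here is a $\GL_4$ counterexample. Take $V_\lambda=\Hom(E_1,E_2)\times\Hom(E_2,E_3)$ with $\dim E_i=1,2,1$, and write $x=(u,v)$, $y=(p,w)$; the conormal equations are $pu=0$, $up=wv$, $vw=0$.
\begin{itemize}
\item Let $C=\{u\neq0,\ v=0\}$ and $C'=\{u\neq0,\ v\neq0,\ vu=0\}$, so $C\subsetneq\overline{C'}$.
\item A direct computation gives $C^*=\{p=0,\ w\neq0\}$ and $(C')^*=\{p\neq0,\ w\neq0,\ pw=0\}$. Hence also $C^*\subsetneq\overline{(C')^*}$, so the order is not reversed.
\item The points with $u=w=e_1$, $v=p=(0,t)$, $t\neq0$, lie in $\Lambda_{C'}$. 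They tend to $(e_1,0;0,e_1)\in\Lambda_C\cap(C\times C^*)$, which is therefore not regular.
\end{itemize}
Your argument uses nothing beyond $y\in(C_{\phi_\alpha})^*$, so it would prove the false statement $\Lambda^{reg}_C=\Lambda_C\cap(V_\lambda\times C^*)$. As you feared, \cite[Lemma 6.4.2]{CFMMX22} is only a containment. Your derivation of $y\in(C_{\phi_\alpha})^*$ itself is fine and matches the paper.

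Your fallback is the right route and is essentially the paper's proof, but it skips the step that makes it work. Corollary~\ref{cor conorm closure intersect} requires $C\cap V_{\lambda^\times}\neq\emptyset$, and an orbit $C>C_{\phi_\alpha}$ need not meet $V_{\lambda^\times}$ at all (compare the open orbit in Example~\ref{exmp GL fails 1}). This is where $y\in(C_{\phi_\alpha})^*$ is genuinely used, and where the hypothesis on $\beta$ enters a second time:
\begin{itemize}
\item From $(x,y)\in\overline{\Lambda}_C=\overline{\Lambda}_{C^*}$ you get $\widehat{\phi'}\geq_C\widehat{\phi_\alpha}=\widehat{\phi^\vee}+\mathbbm{1}_{W_F}\otimes S_\alpha$, where $\phi'$ is the parameter of $C$.
\item Since $m^\beta_{\mathbbm{1}_{W_F}}(\widehat{\phi_\alpha})=\beta$, the full-length rank condition forces $\widehat{\phi'}=\widehat{\phi''}+\mathbbm{1}_{W_F}\otimes S_\alpha$.
\item The M{\oe}glin--Waldspurger algorithm then gives $\phi'=\phi''+\phi^\alpha$. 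So $C\supseteq\varepsilon(C_{\phi''}\times C_{\phi^\alpha})$ meets $V_{\lambda^\times}$.
\end{itemize}
Only now does the corollary put the small-side point in some $\overline{\Lambda}_{C'}$ with $C_{\phi^\times}\leq C'\subseteq C\cap V_{\lambda^\times}$. Regularity then forces $C'=C_{\phi^\times}$. The conclusion is immediate and needs no dimension count: $\varepsilon(C_{\phi^\times})\subseteq C\cap C_{\phi_\alpha}$, so $C=C_{\phi_\alpha}$.
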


\begin{proof}
Let $\varepsilon'((x_{\phi^\vee},y_{\phi^\vee}),(x_{\phi^\alpha},y_{\phi^\alpha}))=(x,y)\in\overline{\Lambda}_C$ for some $C\geq C_{\phi_\alpha}.$ To show that $\varepsilon'((x_{\phi^\vee},y_{\phi^\vee}),(x_{\phi^\alpha},y_{\phi^\alpha}))\in\Lambda_{C_{\phi_\alpha}}^{reg},$ we must show that $C=C_{\phi_\alpha}.$ Let $\phi'$ be the $L$-parameter corresponding to $C.$

By \cite[Lemma 6.4.2]{CFMMX22}, we have $\Lambda_{C_{\phi^\times}}^{reg}\subseteq C_{\phi^\times} \times C_{\phi^\times}^*.$
From Lemma \ref{lemma dual of phi_alpha}, we obtain that $y\in C_{\phi_\alpha}^*.$ Since $(x,y)\in\overline{\Lambda}_C\subseteq \overline{C}\times \overline{C^*},$ it follows that $C^*\geq C_{\phi_\alpha}^*=C_{\widehat{\phi_\alpha}}.$ By Lemma \ref{lemma dual of phi^alpha} and Lemma \ref{lemma dual of phi_alpha}, we have $\widehat{\phi_\alpha}=\widehat{\phi^\vee}+\mathbbm{1}_{W_F}\otimes S_\alpha.$ Since $m^\beta_{\mathbbm{1}_{W_F}}(\widehat{\phi_\alpha})=\beta$ and $\widehat{\phi'}\geq_C\widehat{\phi_\alpha},$ it follows that $\widehat{\phi'}=\widehat{\phi''}+\mathbbm{1}_{W_F}\otimes S_\alpha$ for some $L$-parameter $\phi''.$ By the M{\oe}glin-Waldspurger algorithm (\cite[Theoreme II.13]{MW86}), we obtain $\phi'=\phi''+\phi^\alpha.$

Recall that $C\geq C_{\phi_\alpha}$ and $\phi_\alpha=\phi^\vee+\phi^\alpha.$ Since $\phi'\geq \phi_\alpha$ and $\phi'=\phi''+\phi^\alpha,$ it follows that $\phi''\geq_C \phi^\vee.$
By Corollary \ref{cor conorm closure intersect}, 
\[
\overline{\Lambda}_C\cap(V_{\lambda^\times}\times V_{\lambda^\times}^*)=\cup_{C'} \overline{\Lambda}_{C'},\]
where the union is over all orbits $C'$ of $V_{\lambda^\times}$ such that $C'\subseteq C\cap V_{\lambda^\times}.$ 
 Now, we have that $((x_{\phi^\vee},y_{\phi^\vee}),(x_{\phi^\alpha},y_{\phi^\alpha}))$ must lie in $\overline{\Lambda}_{C'}$ where $C'\geq C_{\phi^\vee}\times C_{\phi^\alpha}$ (note that some $C'$ in the union may be incomparable, but our element cannot lie in those conormal bundles). But, by regularity of $((x_{\phi^\vee},y_{\phi^\vee}),(x_{\phi^\alpha},y_{\phi^\alpha})),$ it follows that $C'=C_{\phi^\vee}\times C_{\phi^\alpha}$ and hence $C=C_{\phi_\alpha}.$
\end{proof}

Proposition \ref{prop regular conorm} gives the following fixed point formula. 

\begin{thm}\label{thm GL fixed point formula}
    Suppose that $\beta=\frac{\alpha-1}{2} \geq m^\beta_{\mathbbm{1}_{W_F}}(\phi^\vee)$. Then for any $[\mathcal{F}]\in K\Per_{\lambda}(G_m),$ we have
    \[
    \langle\eta_{\phi_\alpha},[\mathcal{F}]\rangle_{\lambda_\alpha}=\langle\eta_{\phi^\times},[\mathcal{F}|_{V_{\lambda^\times}}]\rangle_{\lambda^\times}.
    \]
\end{thm}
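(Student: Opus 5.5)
The plan is to deduce the formula from the microlocal description of $\Evs$ and a transversality statement at one regular conormal vector, in the spirit of \cite[Theorem 25.8]{ABV92} and \cite[Proposition 3.2]{CR26}. Since both sides are $\mathbb{Z}$-linear in $[\mathcal{F}]$, it suffices to take $\mathcal{F}$ to be a simple perverse sheaf $\mathcal{IC}(\mathbbm{1}_{C})$ on $V_{\lambda_\alpha}$.

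\textbf{Step 1: unwind both pairings.} Using \eqref{eqn pairing} and the definition of $\eta_C$, the left side is, up to the sign $(-1)^{d(C_{\phi_\alpha})}$, the Euler characteristic $\rank\,\Evs_{C_{\phi_\alpha}}(\mathcal{F})$. The right side is computed the same way on $V_{\lambda^\times}$, with $\varepsilon^*\mathcal{F}$ expanded in the basis of simple perverse sheaves. Because $\Evs$ is additive on $K\Per$, the right side equals $(-1)^{d(C_{\phi^\times})}\rank\,\Evs_{C_{\phi^\times}}(\varepsilon^*\mathcal{F})$, with $\Evs$ extended to the Grothendieck group.

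\textbf{Step 2: compute both sides as stalks at one point.} By \cite[\S7.9]{CFMMX22}, $\Evs_C(\mathcal{F})$ is computed by vanishing cycles $R\Phi_{y}(\mathcal{F})_x$ at any point $(x,y)\in\Lambda^{reg}_C$, shifted by the appropriate degree. Choose $(x^\times,y^\times)\in\Lambda^{reg}_{C_{\phi^\times}}$; this set is nonempty, as noted before Example \ref{exmp GL works 1}. Proposition \ref{prop regular conorm}, which applies since $\beta\ge m^\beta_{\mathbbm{1}_{W_F}}(\phi^\vee)$, gives $(x,y):=\varepsilon'(x^\times,y^\times)\in\Lambda^{reg}_{C_{\phi_\alpha}}$. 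Both sides are therefore vanishing cycles of the same function $f_y=\langle\cdot,y\rangle$ at the same point: on $V_{\lambda_\alpha}$ for $\mathcal{F}$, and on $V_{\lambda^\times}$ for $\varepsilon^*\mathcal{F}$. Note that ${}^t\varepsilon$ is dual to the inclusion, so $f_y\circ\varepsilon=f_{y^\times}$.

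\textbf{Step 3: compare the two vanishing cycles (main obstacle).} I would use the torus $\langle s\rangle$, of finite order with fixed locus $V_{\lambda^\times}=V_{\lambda_\alpha}^s$, as in \cite{ABV92}. The function $f_y$ is $s$-invariant because $y$ is $s$-fixed, and $\mathcal{F}$ is $s$-equivariant. Choose an $s$-stable normal slice $S$ to $C_{\phi_\alpha}$ at $x$. A Lefschetz/Euler-characteristic localization for finite-order automorphisms then gives
\[
\chi\bigl(R\Phi_{f_y}(\mathcal{F}|_S)_x\bigr)=\chi\bigl(R\Phi_{f_y|_{S^s}}(\mathcal{F}|_{S^s})_x\bigr),
\]
where one can pass from the trace of $s$ to the Euler characteristic because $s$ acts trivially on these stalks: $\mathcal{F}$ is the IC of a trivial local system and $s$ lies in the connected group $H_{\lambda_\alpha}$. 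The fixed slice $S^s$ is a normal slice to $C_{\phi^\times}$ in $V_{\lambda^\times}$, since $T_xC_{\phi_\alpha}^s=T_xC_{\phi^\times}$. It remains to check that $y^\times$ is nondegenerate for $C_{\phi^\times}$ in the sense of \cite{ABV92}, and this is exactly what regularity of $(x^\times,y^\times)$ provides.

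\textbf{Step 4: signs.} Finally, the shifts must match. One compares $d(C_{\phi_\alpha})-d(C_{\phi^\times})$ with the codimension shifts in the definition of $\Evs$. The parity here follows from Lemma \ref{lemma ABV 19.2b}(1), since $\dim\Lambda_C=\dim V$, together with the fact that the non-fixed part of $S$ comes in $s$-eigenspace pairs that do not affect the count. I expect Step 3 to be the delicate point, especially the justification that $s$ acts trivially and that the localization applies to vanishing cycles. Proposition \ref{prop regular conorm} is precisely what makes the chosen point simultaneously regular for both varieties.
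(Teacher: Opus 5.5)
Your architecture matches the paper's. Step 1 is Proposition \ref{prop perfect pair append}, Step 2 is Proposition \ref{prop regular conorm}, and Steps 3--4 are the localization argument that the paper imports from \cite[Lemma 3.1]{CR26} as Proposition \ref{prop ranks agree}. The gap is in Step 3, exactly where the present setting differs from the Arthur-type case of \cite{CR26}.

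You assert that $s$ acts trivially on the vanishing-cycle stalks because $\mathcal{F}$ is the IC sheaf of a trivial local system and $s$ lies in the connected group $H_{\lambda_\alpha}$. This inference is invalid. The endomorphism of the stalk of $\Evs_{C_{\phi_\alpha}}\mathcal{F}$ at $(x,y)$ induced by $s$ comes from the equivariant structure restricted to the stabilizer $Z_{H_{\lambda_\alpha}}(x,y)$. It therefore factors through $\pi_0(Z_{H_{\lambda_\alpha}}(x,y))$, i.e. through the microlocal fundamental group $A^{\mathrm{mic}}_{C_{\phi_\alpha}}$, not through $\pi_0(H_{\lambda_\alpha})$. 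A path from $1$ to $s$ in $H_{\lambda_\alpha}$ is useless unless it stays inside the stabilizer of $(x,y)$. Likewise, triviality of local systems on orbits (connectedness of the stabilizers $Z_{H_{\lambda_\alpha}}(x)$) gives no control over stabilizers of conormal vectors. The paper stresses that $A^{\mathrm{mic}}$ need not be trivial once $\phi$ is not of Arthur type (cf.\ \cite{CFK22}). Without this input, your localization only yields $\mathrm{trace}(a_s,\Evs_{C_{\phi_\alpha}}\mathcal{F})$ on the left-hand side, not the rank.

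The missing input is Lemma \ref{lemma microlocal fundamental group is trivial}: the image of $s$ in $A^{\mathrm{mic}}_{C_{\phi_\alpha}}$ is trivial. The paper proves it via Lemma \ref{lemma generic covector factors}. Take $y={}^t\varepsilon(y_{\phi^\vee},y_{\phi^\alpha})$ with $y_{\phi^\alpha}$ in the open orbit $C_{\widehat{\phi^\alpha}}$. The hypothesis $\beta\geq m^\beta_{\mathbbm{1}_{W_F}}(\phi^\vee)$ puts the eigenspace decomposition in standard form, and a direct computation then shows that every $x$ with $[x,y]=0$ has the form $\varepsilon(x',0)$. Consequently the connected central torus of $\GL_n(\mathbb{C})\times\GL_\alpha(\mathbb{C})$, which contains $s$, fixes $(x,y)$ for every such $x$, in particular for a generic one. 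Hence $s\in Z_{H_{\lambda_\alpha}}(x,y)^0$.

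At your single point $\varepsilon'(x^\times,y^\times)$ the analogous statement is simply that this torus fixes the image of $\varepsilon'$ pointwise. The paper's stronger factorization of all of $\Lambda_y$ is what allows passing to a generic point. This matters because $\Evs$ is a local system on $\Lambda^{gen}_{C}\subseteq\Lambda^{reg}_C$, while Proposition \ref{prop regular conorm} only gives regularity.

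Finally, your sign argument in Step 4 does not work as stated: $s$ has order $2$, so there are no eigenvalue pairs. The signs $(-1)^{d(C)}$ are instead absorbed by the dimension shifts in the normalization of $\Evs$, as in \cite[Lemma 3.1]{CR26}.
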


\begin{proof}
We defer the proof of the above theorem to Theorem \ref{thm fpf appendix}.
\end{proof}

We remark that the above theorem is a generalization of the fixed point formula established by Cunningham and Ray in \cite[Proposition 3.2]{CR26}. This generalization is nontrivial though and requires significant further technical discussion. We defer this discussion to Appendix \ref{appenedix fpf} in order to not distract from our goal of investigating the Adams conjecture for $\ABV$-packets.

With the fixed point formula in hand, we can now prove our main result, i.e., we verify Conjecture \ref{conj Adams ABV GL refined}(1) for $\GL_n(F)$.

\begin{thm}\label{thm GL main thm}
    Suppose $\pi\in\Pi_{\phi}^\ABV$ and $\beta=\frac{\alpha-1}{2} \geq m^\beta_{\mathbbm{1}_{W_F}}(\phi^\vee)$. Then $\theta_{-\alpha}(\pi)\in\Pi_{\phi_\alpha}^\ABV.$
\end{thm}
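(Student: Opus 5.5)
The plan is to combine the fixed point formula (Theorem \ref{thm GL fixed point formula}) with endoscopic lifting (Proposition \ref{prop lift}) and the contragredient lemma (Lemma \ref{lemma contragredient ABV}). By Lemma \ref{lemma in ABV}, we have $\theta_{-\alpha}(\pi)\in\Pi_{\phi_\alpha}^\ABV$ if and only if $\langle\eta_{\phi_\alpha},[\mathcal{P}(\theta_{-\alpha}(\pi))]\rangle_{\lambda_\alpha}\neq 0$. Under the hypothesis $\beta\geq m^\beta_{\mathbbm{1}_{W_F}}(\phi^\vee)$, Theorem \ref{thm GL fixed point formula} gives $\eta_{\phi_\alpha}=\Lift_{G^\times}^{G_m}(\eta_{\phi^\times})$, with $G^\times=G_n\times G_\alpha$. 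It therefore suffices to compute $\eta_{\phi^\times}$, lift it, and pair with the simple object attached to $\theta_{-\alpha}(\pi)$.

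First I compute $\eta_{\phi^\times}$. The Vogan variety and the perverse sheaves factor as products, so $\eta_{\phi^\times}=\eta_{\phi^\vee}\otimes\eta_{\phi^\alpha}$. The orbit $C_{\phi^\alpha}$ is the zero orbit, and its regular conormal fibre lies over the open dual orbit. Hence $\Evs_{C_{\phi^\alpha}}$ is nonzero only on the skyscraper $\mathcal{IC}(\mathbbm{1}_{C_{\phi^\alpha}})$, in rank one. (This is the Arthur packet of $S_1\otimes S_\alpha$, known by Conjecture \ref{conj Vogan} for $\GL_n(F)$.) So $\eta_{\phi^\alpha}=\pm[\chi_\alpha]$, where $\chi_\alpha=|\det|^{0}$ is the trivial character of $G_\alpha$, i.e. $\pi_{\phi^\alpha}$.

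Writing $\eta_{\phi^\vee}=\sum_{\sigma}c_\sigma[\sigma]$, Lemma \ref{lemma contragredient ABV} shows that $\pi^\vee$ appears with $c_{\pi^\vee}\neq 0$ whenever $\pi\in\Pi_\phi^\ABV$. More precisely, the isomorphism $V_\lambda\cong V_{\lambda^\vee}$ matches $\mathcal{P}(\pi)$ with $\mathcal{P}(\pi^\vee)$ and $C_\phi$ with $C_{\phi^\vee}$, so it preserves $\Evs$ ranks. Proposition \ref{prop lift} then gives
\[
\eta_{\phi_\alpha}=\pm\sum_\sigma c_\sigma[\sigma\times\chi_\alpha].
\]

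The main obstacle is to extract the coefficient of $[\theta_{-\alpha}(\pi)]$ from this sum without cancellation, since each $\sigma\times\chi_\alpha$ is reducible. By Theorem \ref{thm theta GL}, $\theta_{-\alpha}(\pi)$ is the Langlands quotient of $|\cdot|^{-\beta}\times\cdots\times|\cdot|^{\beta}\times\tau_1^\vee\times\cdots$, and it occurs in $\pi^\vee\times\chi_\alpha$ with multiplicity one. I would try to show that $\theta_{-\alpha}(\pi)$ occurs in $\sigma\times\chi_\alpha$ only for $\sigma=\pi^\vee$, so that the coefficient equals $\pm c_{\pi^\vee}\neq0$. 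The route is via $L$-parameters: every constituent of $\sigma\times\chi_\alpha$ has parameter $\leq_C\phi_\sigma+\phi^\alpha$. I would then use the no-interaction hypothesis $\beta\geq m^\beta_{\mathbbm{1}_{W_F}}(\phi^\vee)$, in the same way as in Lemma \ref{lemma dual of phi_alpha} and Proposition \ref{prop regular conorm}, to deduce from $\phi_{\theta_{-\alpha}(\pi)}=\phi_\pi^\vee+\phi^\alpha\leq_C\phi_\sigma+\phi^\alpha$ that $\phi_\pi^\vee\leq_C\phi_\sigma$.

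If that cancellation argument becomes messy, I would fall back on the adjoint form of the identity, $\langle\eta_{\phi_\alpha},[\mathcal{F}]\rangle=\langle\eta_{\phi^\times},[\mathcal{F}|_{V_{\lambda^\times}}]\rangle$ with $\mathcal{F}=\mathcal{P}(\theta_{-\alpha}(\pi))$. The aim there is to show that, modulo terms pairing to zero against $\eta_{\phi^\vee}\otimes\eta_{\phi^\alpha}$, the restriction is $\mathcal{P}(\pi^\vee)\boxtimes\mathcal{IC}(\mathbbm{1}_{C_{\phi^\alpha}})$ up to shift. This uses that the $\phi^\alpha$ factor pairs only with the zero-orbit skyscraper, which isolates the stalks along $V_{\lambda_{\phi^\vee}}\times\{0\}$. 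The result would then be $\pm\langle\eta_{\phi^\vee},[\mathcal{P}(\pi^\vee)]\rangle\neq0$, which is exactly the condition $\pi^\vee\in\Pi^\ABV_{\phi^\vee}$ given by Lemma \ref{lemma contragredient ABV}.
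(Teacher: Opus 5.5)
Everything up to the cancellation step is correct. It is essentially the adjoint, representation-theoretic form of the paper's argument: the fixed point formula, $\eta_{\phi^\times}=\eta_{\phi^\vee}\otimes\eta_{\phi^\alpha}$, $\eta_{\phi^\alpha}=\pm[\mathbbm{1}_{G_\alpha}]$, and $c_{\pi^\vee}\neq0$ by Lemma~\ref{lemma contragredient ABV}.

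\textbf{The gap.} The gap is exactly the step you flag. The coefficient of $[\theta_{-\alpha}(\pi)]$ in $\eta_{\phi_\alpha}$ is $\pm\sum_\sigma c_\sigma m_\sigma$, where $m_\sigma$ is the multiplicity of $\theta_{-\alpha}(\pi)$ in $\sigma\times\mathbbm{1}_{G_\alpha}$. Your closure-order route cannot show this sum is nonzero.
\begin{itemize}
\item Your inequality is reversed. $\sigma\times\mathbbm{1}_{G_\alpha}$ is a subquotient of $M(\sigma)\times M(\mathbbm{1}_{G_\alpha})$, which has the same semisimplification as the standard module of $\phi_\sigma+\phi^\alpha$. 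So its constituents have parameters $\geq_C\phi_\sigma+\phi^\alpha$.
\item More seriously, $\phi^\alpha$ sits at the zero orbit, so the rank conditions give $\phi_1+\phi^\alpha\geq_C\phi_2+\phi^\alpha$ if and only if $\phi_1\geq_C\phi_2$. This holds with or without the hypothesis. You can only ever conclude $\phi_\sigma\leq_C\phi_\pi^\vee$, never $\sigma=\pi^\vee$.
\item This bites in every nontrivial instance of the theorem, i.e.\ $\pi\neq\pi_\phi$. There $\Pi^{\ABV}_{\phi^\vee}$ contains both $\pi^\vee$ and $\pi_{\phi^\vee}$, with $\phi^\vee<_C\phi_\pi^\vee$. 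The coefficients $c_\sigma$ carry signs $(-1)^{d(\sigma)}$ that differ in general, so cancellation is not excluded.
\item Your fallback is the same claim in adjoint form. By the adjunction defining $\Lift$, the coefficient of $\mathcal{P}(\sigma)\boxtimes\IC(\mathbbm{1}_{C_{\phi^\alpha}})$ in $[\mathcal{P}(\theta_{-\alpha}(\pi))|_{V_{\lambda^\times}}]$ is $\pm m_\sigma$.
\item Nor can that coefficient be read off from stalks along $V_{\lambda_{\phi^\vee}}\times\{0\}$. Every simple equivariant perverse sheaf on $V_{\lambda_{\phi^\alpha}}$ is a shifted constant sheaf on a coordinate subspace, so each has nonzero stalk at $0$.
\end{itemize}

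\textbf{The missing input.} Under the hypothesis, $\sigma\times\mathbbm{1}_{G_\alpha}$ is irreducible for every $\sigma\in\Pi_{\lambda_{\phi^\vee}}(G_n)$, and $\sigma\mapsto\sigma\times\mathbbm{1}_{G_\alpha}$ is injective. Then $m_\sigma=\delta_{\sigma,\pi^\vee}$ and the coefficient is $\pm c_{\pi^\vee}\neq0$. One proof:
\begin{itemize}
\item Apply the Zelevinsky involution. It is a ring automorphism of $\bigoplus_k K(G_k)$ that permutes irreducibles up to sign. It sends $\mathbbm{1}_{G_\alpha}$ to the Steinberg representation $\delta$ of the segment $\Delta=[-\beta,\beta]$.
\item Let the dual of $\sigma$ be $L(\mathfrak{n})$. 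The hypothesis says every segment of $\mathfrak{n}$ is either supported away from $\{|\cdot|^x : x\in\beta+\mathbb{Z}\}$ or contained in $\Delta$. Either way it is unlinked with $\Delta$.
\item Hence $\delta$ commutes with every factor of the standard and co-standard modules of $\mathfrak{n}$. So $\delta\times L(\mathfrak{n})$ has a unique irreducible quotient and a unique irreducible submodule. Both are $L(\mathfrak{n}+\Delta)$, which occurs with multiplicity one.
\item Therefore $\delta\times L(\mathfrak{n})=L(\mathfrak{n}+\Delta)$ is irreducible, and $\mathfrak{n}\mapsto\mathfrak{n}+\Delta$ is injective.
\item Finally, apply the M{\oe}glin--Waldspurger computation of Lemma~\ref{lemma dual of phi_alpha} to $\phi_\pi^\vee$, which has the same exponents as $\phi^\vee$. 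It identifies $\pi^\vee\times\mathbbm{1}_{G_\alpha}$ with $\theta_{-\alpha}(\pi)$.
\end{itemize}
This is a second, essential use of the hypothesis, which your argument never invokes. Without it irreducibility fails, e.g.\ $(|\cdot|^{3/2}\times|\cdot|^{-3/2})\times\mathbbm{1}_{G_2}$ in Example~\ref{exmp GL fails 1} is reducible.

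\textbf{Comparison with the paper.} The paper does this bookkeeping in the standard bases (Lemma~\ref{lemma KL hypo translate}, Corollary~\ref{cor fixed point std}). It passes directly to the single pairing at the orbit $C_{\phi_\pi^\vee}\times C_{\phi^\alpha}$, which then factors. Your formulation has the merit of making the possible cancellation visible. Justifying the paper's passage to that single orbit tacitly needs an input of the same kind.
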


\begin{proof}
    By Vogan's perspective on the local Langlands correspondence, we have that $\pi$ corresponds to the perverse sheaf $\IC(\mathbbm{1}_{C_{\phi_\pi}}).$ Similarly, from Theorem \ref{thm theta GL}, we have that $\theta_{-\alpha}(\pi)$ corresponds to the perverse sheaf $\IC(\mathbbm{1}_{C_{(\phi_\pi)_\alpha}}).$ Let $\lambda_\alpha=\lambda_{\phi_\alpha}.$ By Lemma \ref{lemma in ABV}, it is sufficient to show that
    \[
    \langle \eta_{\phi_\alpha}, \IC(\mathbbm{1}_{C_{(\phi_\pi)_\alpha}})\rangle_{\lambda_\alpha}\neq 0.
    \]
    By Lemma \ref{lemma KL hypo translate}, it suffices to show that 
    \[
    \langle M(\eta_{\phi_\alpha}), \mathbbm{1}_{C_{(\phi_\pi)_\alpha}}^\natural\rangle_{\lambda_\alpha}\neq 0.
    \]
    Let $\phi^\times=\phi^\vee+\phi^\alpha$ and $\lambda^\times=\lambda_{\phi^\times}.$ 
    By the fixed point formula (Theorem \ref{thm GL fixed point formula}) and Corollary \ref{cor fixed point std}, it is enough to show that
    \[
    \langle M(\eta_{\phi^\times}), \mathbbm{1}_{C_{\phi_\pi^\vee\times\phi^\alpha}}^\natural\rangle_{\lambda^\times}\neq 0.
    \]
    
    Since $V_{\lambda^\times}=V_{\lambda_{\phi}}\times V_{\lambda_{\phi^\alpha}},$ it follows that $m_{\lambda^\times}=\mathrm{diag}(m_{\lambda_\phi}, m_{\lambda^\alpha}).$ Consequently, we have that 
    \[
    \langle M(\eta_{\phi^\times}), \mathbbm{1}_{C_{\phi_\pi^\vee\times\phi^\alpha}}^\natural\rangle_{\lambda^\times}=\langle M(\eta_{\phi^\vee}), \mathbbm{1}_{C_{\phi_\pi^\vee}}^\natural\rangle_{\lambda_{\phi^\vee}} \langle M(\eta_{\phi^\alpha}), \mathbbm{1}_{C_{\phi^\alpha}}^\natural\rangle_{\lambda_{\phi^\alpha}}.
    \]
    Now,  $\langle M(\eta_{\phi^\alpha}), \mathbbm{1}_{C_{\phi^\alpha}}^\natural\rangle_{\lambda_{\phi^\alpha}}=\langle \eta_{\phi^\alpha}, \IC(\mathbbm{1}_{C_{\phi^\alpha}})\rangle_{\lambda_{\phi^\alpha}}\neq 0$ by Lemma \ref{lemma KL hypo translate} and Proposition \ref{prop Lpacket in ABV}. On the other hand, by Lemma \ref{lemma contragredient ABV}, we have  \[\langle M(\eta_{\phi^\vee}), \mathbbm{1}_{C_{\phi_\pi^\vee}}^\natural\rangle_{\lambda_{\phi^\vee}}=\langle \eta_{\phi^\vee}, \IC(\mathbbm{1}_{C_{\phi_\pi^\vee}})\rangle_{\lambda_{\phi^\vee}}\neq0.\] Therefore, we obtain that 
        \[
    \langle M(\eta_{\phi^\times}), \mathbbm{1}_{C_{\phi_\pi^\vee\times\phi^\alpha}}^\natural\rangle_{\lambda^\times}\neq 0
    \]
    which proves the theorem.
\end{proof}

We remark on some consequences of Theorem \ref{thm GL main thm}. First, in \cite[\S0B]{CFK22}, it is claimed that there exists a nonsingleton ABV-packet of $\GL_n(F)$ for any $n\geq 16.$ For $\GL_{16}(F)$ this is proved in \cite[Corollary 2.7]{CFK22}, but for $n\geq 17,$ no proof is explicitly given. Their outline is to simply construct a Vogan variety which is isomorphic to the Vogan variety of the nonsingleton ABV-packet of $\GL_{16}(F)$. However, we are able to obtain more complicated examples using Theorem \ref{thm GL main thm}.

\begin{cor}\label{cor nonsingleton}
    There exists a nonsingleton ABV-packet of $\GL_n(F)$ for $n=16, 18, 20$ or any $n\geq 21.$
\end{cor}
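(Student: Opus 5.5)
The plan is to start from the nonsingleton ABV-packet of $\GL_{16}(F)$ constructed in \cite[Corollary 2.7]{CFK22} and push it up to larger $n$ with Theorem \ref{thm GL main thm}.

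Let $\phi_0\in\Phi(\GL_{16}(F))$ be the $L$-parameter from \cite{CFK22}, so that $\Pi_{\phi_0}^\ABV$ contains two distinct representations $\pi_1=\pi_{\phi_0}$ and $\pi_2$. For $\alpha\geq 0$, Theorem \ref{thm GL main thm}, applied to the $L$-parameter $\phi=\phi_0^\vee$, gives the following: whenever $\frac{\alpha-1}{2}\geq m^\beta_{\mathbbm{1}_{W_F}}(\phi_0)$, both $\theta_{-\alpha}(\pi_1^\vee)$ and $\theta_{-\alpha}(\pi_2^\vee)$ lie in $\Pi^\ABV_{(\phi_0^\vee)_\alpha}$, which is an ABV-packet of $\GL_{16+\alpha}(F)$. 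It may be cleaner to apply the theorem directly to $\phi_0$ and use that, by Lemma \ref{lemma contragredient ABV}, $\Pi^\ABV_{\phi_0^\vee}$ is also nonsingleton.

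These two lifts are distinct. By Corollary \ref{cor theta Lpar}, their $L$-parameters are $\phi_{\pi_i}\oplus\phi^\alpha$ for $i=1,2$, and these differ because $\phi_{\pi_1}\neq\phi_{\pi_2}$ and $\GL$ parameters are determined by their multisets of irreducible summands. So every such $\alpha$ yields a nonsingleton ABV-packet of $\GL_{16+\alpha}(F)$, with $\alpha=0$ recovering $n=16$ itself.

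The main step is to compute the trivial exponents of the explicit \cite{CFK22} parameter. The intended outcome is that $m^\beta_{\mathbbm{1}_{W_F}}$ is small for one parity class of $\beta$ and larger for the other. Concretely, $m^\beta\leq\frac12$ in the class $\beta\in\frac12+\Z$ would make every even $\alpha\geq2$ admissible, giving $n=18,20,\dots$. Likewise, $m^\beta\leq 2$ in the class $\beta\in\Z$ would make every odd $\alpha\geq5$ admissible, giving $n=21,23,\dots$. Together with the even values this covers $n=16,18,20$ and all $n\geq21$.

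Odd $\alpha<5$ (that is, $n=17,19$) would be excluded precisely because a trivial exponent of absolute value $2$ (or similar) is present. This is where the obstacle lies: the exact pattern of the statement has to come from reading off the exponents of the \cite{CFK22} parameter correctly. I would first write that parameter as a sum of $|\cdot|^{x}\otimes S_a$, list its exponents in each coset of $\frac12\Z/\Z$, and then check that the resulting thresholds on $\alpha$ match $n=18,20$ and $n\geq21$.
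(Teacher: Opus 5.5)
Your strategy is the paper's: take the two-element packet $\Pi^\ABV_{\phi_\KS}=\{\pi_1,\pi_2\}$ of \cite[Corollary 2.7]{CFK22}, lift both members with Theorem \ref{thm GL main thm}, and read off the admissible $\alpha$. Your distinctness check via Corollary \ref{cor theta Lpar} is correct; the paper leaves this point implicit. The detour through $\phi_0^\vee$ is harmless but unnecessary. Since $\exp_{\mathbbm{1}_{W_F}}(\phi^\vee)=-\exp_{\mathbbm{1}_{W_F}}(\phi)$ and $\beta+\Z$ is stable under negation, $m^\beta$ is the same for $\phi$ and $\phi^\vee$. Moreover $\phi_\KS^\vee=\phi_\KS$, so you can apply the theorem directly to $\phi_\KS$, $\pi_1$, $\pi_2$.

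The gap is that the only step producing the specific list $16,18,20,\geq 21$ is never carried out. You reverse-engineer the exponent pattern from the statement (``would make\dots'') and leave its verification as a plan. The missing input is a concrete fact about the \cite{CFK22} parameter: $\exp_{\mathbbm{1}_{W_F}}(\phi_\KS)=\{2,1,0,-1,-2\}$. Equivalently, its unramified infinitesimal parameter has Frobenius exponents exactly $2,1,0,-1,-2$.

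Once this is in hand, the case analysis is immediate. There are no half-integral exponents, so for even $\alpha$ the set $\exp^\beta_{\mathbbm{1}_{W_F}}(\phi_\KS)$ is empty. The hypothesis of Theorem \ref{thm GL main thm} then holds by the paper's convention for empty exponent sets, not because some exponent has absolute value at most $\frac12$. For odd $\alpha$ one has $m^\beta_{\mathbbm{1}_{W_F}}(\phi_\KS)=2$, so the hypothesis reads $\frac{\alpha-1}{2}\geq 2$, i.e.\ $\alpha\geq 5$. This gives $\alpha\in\{2,4\}\cup\Z_{\geq 5}$, hence $n=18,20$ and every $n\geq 21$, with $n=16$ coming from \cite{CFK22} itself.

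Also, nothing needs to be shown about $n=17,19$. The corollary makes no non-existence claim there; $\alpha=1,3$ simply fall outside the range where the theorem applies.
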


\begin{proof}
    Let $\phi_{\KS}$ be the $L$-parameter of $\GL_{16}(F)$ described in \cite[\S1B]{CFK22}. We have that $\phi_{\KS}^\vee=\phi_{\KS}$ and
    \[
    \exp_{\mathbbm{1}_{W_F}}(\phi_{\KS}^\vee)=\{2,1,0,-1,-2\}.
    \]
    Note that $\frac{x-1}{2}=2$ implies that $x=5.$ Thus we have that $\beta=\frac{\alpha-1}{2}\geq m_{\mathbbm{1}_{W_F}}^\beta(\phi_{\KS}^\vee)$ if and only if $\alpha=2,4$ or $\alpha\geq 5.$ By \cite[Corollary 2.7]{CFK22}, $\Pi_{\phi_{\KS}}^\ABV$ consists of two representations, say $\Pi_{\phi_{\KS}}^\ABV=\{\pi_1,\pi_2\}.$ By Theorem \ref{thm GL main thm}, for any $i=1,2$, we have $\theta_{-\alpha}(\pi_i)\in \Pi_{(\phi_{\KS})_\alpha}^\ABV$  for any $\alpha=2,4$ or $\alpha\geq 5.$ Therefore, there exists a nonsingleton ABV-packet of $\GL_n(F)$ for $n=16,  18, 20$ or any $n\geq 21.$ We note that the Vogan variety for $(\phi_\KS)_\alpha$ is not isomorphic to that of $\phi_\KS$.
\end{proof}

A second consequence is partial evidence for Conjecture \ref{conj Adams ABV GL refined}(2).

\begin{cor}
     Suppose $\pi\in\Pi_{\phi}^\ABV.$ Assume that $\beta=\frac{\alpha-1}{2} \geq m^\beta_{\mathbbm{1}_{W_F}}(\phi^\vee)$. Then the following hold.
     \begin{enumerate}
         \item If $\theta_{-\alpha}(\pi)\in\Pi_{\phi_\alpha}^\ABV,$ then $\theta_{-\alpha+2}(\pi)\in\Pi_{\phi_{\alpha+2}}^\ABV.$
         \item If $\beta+\frac{1}{2}=\frac{\alpha}{2} \geq m^{\beta+\frac{1}{2}}_{\mathbbm{1}_{W_F}}(\phi^\vee)$ and $\theta_{-\alpha}(\pi)\in\Pi_{\phi_\alpha}^\ABV,$ then $\theta_{-\alpha+1}(\pi)\in\Pi_{\phi_{\alpha+1}}^\ABV.$
     \end{enumerate}
\end{cor}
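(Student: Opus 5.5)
Both parts follow directly from Theorem \ref{thm GL main thm} applied at a larger value of $\alpha$. The hypothesis $\theta_{-\alpha}(\pi)\in\Pi_{\phi_\alpha}^\ABV$ is not actually needed beyond $\pi\in\Pi_\phi^\ABV$. So the plan is simply to check that the numerical condition of Theorem \ref{thm GL main thm} holds for $\alpha+2$, respectively $\alpha+1$. I read the conclusions as $\theta_{-(\alpha+2)}(\pi)\in\Pi_{\phi_{\alpha+2}}^\ABV$ and $\theta_{-(\alpha+1)}(\pi)\in\Pi_{\phi_{\alpha+1}}^\ABV$, in line with Conjecture \ref{conj Adams ABV GL refined}(2).

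For (1), set $\beta'=\frac{(\alpha+2)-1}{2}=\beta+1$. Since $\beta'+\Z=\beta+\Z$, the definition gives
\[
\exp^{\beta'}_{\mathbbm{1}_{W_F}}(\phi^\vee)=\exp_{\mathbbm{1}_{W_F}}(\phi^\vee)\cap(\beta+\Z)=\exp^{\beta}_{\mathbbm{1}_{W_F}}(\phi^\vee).
\]
Hence $m^{\beta'}_{\mathbbm{1}_{W_F}}(\phi^\vee)=m^{\beta}_{\mathbbm{1}_{W_F}}(\phi^\vee)\leq\beta<\beta'$. If this set of exponents is empty, the inequality holds by the convention in the remark following the definition. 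Theorem \ref{thm GL main thm} applied with $\alpha+2$ in place of $\alpha$ then gives $\theta_{-(\alpha+2)}(\pi)\in\Pi_{\phi_{\alpha+2}}^\ABV$.

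For (2), the parameter attached to $\alpha+1$ is $\frac{(\alpha+1)-1}{2}=\frac{\alpha}{2}=\beta+\frac12$. The extra hypothesis $\frac{\alpha}{2}\geq m^{\beta+\frac12}_{\mathbbm{1}_{W_F}}(\phi^\vee)$ is exactly the condition of Theorem \ref{thm GL main thm} for $\alpha+1$. Applying the theorem with $\alpha+1$ in place of $\alpha$ yields $\theta_{-(\alpha+1)}(\pi)\in\Pi_{\phi_{\alpha+1}}^\ABV$.

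There is no real obstacle. The only points to verify are the parity bookkeeping, namely that shifting $\beta$ by an integer leaves the set of trivial $\beta$-exponents unchanged, and the empty-set convention. All the substantive work has already been done in the fixed point formula (Theorem \ref{thm GL fixed point formula}), through Theorem \ref{thm GL main thm}.
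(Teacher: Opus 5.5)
Your proposal is correct and follows the paper's proof, which likewise says both parts are immediate from Theorem \ref{thm GL main thm} applied at $\alpha+2$ and $\alpha+1$; the extra hypothesis in (2) is there because the condition at $\alpha$ need not imply the condition at $\alpha+1$. Your check that $\exp^{\beta+1}_{\mathbbm{1}_{W_F}}(\phi^\vee)=\exp^{\beta}_{\mathbbm{1}_{W_F}}(\phi^\vee)$ and your reading of $\theta_{-\alpha+2}$ as $\theta_{-(\alpha+2)}$ simply supply details the paper leaves implicit.
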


\begin{proof}
Both parts are immediate consequences of Theorem \ref{thm GL main thm}. We remark that the requirement $\beta+\frac{1}{2}=\frac{\alpha}{2} \geq m^{\beta+\frac{1}{2}}_{\mathbbm{1}_{W_F}}(\phi^\vee)$ is needed in Part (2) as the condition $\beta=\frac{\alpha-1}{2} \geq m^\beta_{\mathbbm{1}_{W_F}}(\phi^\vee)$ does not necessarily imply that $\beta+\frac{1}{2}=\frac{\alpha}{2} \geq m^{\beta+\frac{1}{2}}_{\mathbbm{1}_{W_F}}(\phi^\vee)$.
\end{proof}

Of course, the above corollary is not saying anything substantial as Conjecture \ref{conj Adams ABV GL refined}(1) (which is Theorem \ref{thm GL main thm}) implies Conjecture \ref{conj Adams ABV GL refined}(2) for $\alpha\gg 0.$ However, the above corollary explicates the condition $\alpha\gg 0$ from Theorem \ref{thm GL main thm}.

\appendix

\section{Proof of the fixed point formula}\label{appenedix fpf}

The goal of this appendix is to prove the fixed point formula (Theorem \ref{thm GL fixed point formula}). For brevity, we let $G_n=\GL_n(F)$ throughout this appendix. The argument is a generalization of a proof of a fixed point formula for local Arthur parameters of $G_n$ given by Cunningham and Ray in \cite[Proposition 4.6]{CR26}. Their results are stated in terms of local Arthur parameters for which the geometry is significantly simpler. For example, both the generic and microlocal fundamental groups (recalled below) are trivial in their situation. In our situation, this is not guaranteed and is known to fail in our situation, e.g., \cite{CFK22}.

We recall some notation from \S\ref{sec perverse sheaves}.
Fix an infinitesimal parameter $\lambda$ of $G_n$ and let $\phi\in\Phi_\lambda(G_n).$ We let $C_\phi$ be the corresponding $H_\lambda$-orbit for $\phi$ in $V_\lambda$. We consider the pairing $\langle\cdot,\cdot\rangle:K\Pi_\lambda(G_n)\times K\Per_\lambda(G_n)\rightarrow\Z$ defined in Equation \ref{eqn pairing}.
We further consider 
\[
\eta_\phi:=\eta_{C_{\phi}}=(-1)^{d(C_\phi)}\sum_{\pi\in\Pi_\lambda(G_n)}(-1)^{d(\pi)}\rank(\Evs_{C_\phi}(\mathcal{P}(\pi))) [\pi] \in K\Pi_\lambda(G_n)
\]

We begin by generalizing \cite[Proposition 1.6]{CR26}. We remark that Cunningham and Ray's argument generalizes to any local $L$-parameter of $G_n.$ For the sake of completeness, we provide the proof. Recall that $D_{H_\lambda}(V_\lambda)$ denotes the $H_\lambda$-equivariant derived category of $\ell$-adic sheaves on $V_\lambda.$

\begin{prop}\label{prop perfect pair append}
    For any $\mathcal{F}\in D_{H_\lambda}(V_\lambda),$ we have
    \[
    \langle\eta_\phi,\mathcal{F}\rangle=(-1)^{d(C_\phi)}\rank(\Evs_{C_\phi}\mathcal{F}).
    \]
\end{prop}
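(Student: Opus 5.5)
The identity $\langle\eta_\phi,\mathcal{F}\rangle=(-1)^{d(C_\phi)}\rank(\Evs_{C_\phi}\mathcal{F})$ is additive in $\mathcal{F}$, so I would reduce to simple perverse sheaves and then check it directly.

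First, I would note that both sides factor through the Grothendieck group $K\Per_\lambda(G_n)$. The left side does by definition of the pairing, once we interpret $\mathcal{F}$ through its class. For a general object of $D_{H_\lambda}(V_\lambda)$, this class is $[\mathcal{F}]=\sum_i(-1)^i[{}^pH^i(\mathcal{F})]$, the alternating sum of its perverse cohomology sheaves.

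For the right side, I would use that $\Evs_{C_\phi}$ is exact in the appropriate sense (\cite[\S7.9]{CFMMX22}). It is built from vanishing cycles, restriction to the regular conormal $\Lambda^{reg}_{C_\phi}$ and shifts, all of which are $t$-exact or exact triangulated. So $\Evs_{C_\phi}$ sends distinguished triangles to distinguished triangles, and its values on perverse sheaves are concentrated in a single degree (they are local systems on $\Lambda^{reg}_{C_\phi}$, shifted). Hence $\rank\Evs_{C_\phi}$, interpreted as an Euler characteristic of generic stalk ranks, is additive on triangles. In particular $\rank\Evs_{C_\phi}\mathcal{F}=\sum_i(-1)^i\rank\Evs_{C_\phi}({}^pH^i\mathcal{F})$, with the sign convention chosen to agree with how rank is taken for perverse sheaves.

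Second, I would use that every perverse sheaf has a finite composition series. Exactness then reduces the claim to simple objects, which for $G_n$ are $\mathcal{IC}(\mathbbm{1}_C)=\mathcal{P}(\pi)$ for $\pi\in\Pi_\lambda(G_n)$. For these, plug in the definition of $\eta_\phi$ and the pairing \eqref{eqn pairing}:
\[
\langle\eta_\phi,[\mathcal{P}(\pi)]\rangle=(-1)^{d(C_\phi)}(-1)^{d(\pi)}\rank(\Evs_{C_\phi}\mathcal{P}(\pi))\,(-1)^{d(\pi)}=(-1)^{d(C_\phi)}\rank(\Evs_{C_\phi}\mathcal{P}(\pi)).
\]
Here only the $[\pi]$ term of $\eta_\phi$ survives, and the two signs $(-1)^{d(\pi)}$ cancel. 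This gives the formula for simple objects, and additivity finishes the proof.

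The main obstacle is the first step: making precise that $\rank\Evs_{C_\phi}$ is additive on the derived category, and fixing the degree/sign convention so that the rank of a perverse sheaf is the honest rank of the local system. I would follow \cite[Proposition 1.6]{CR26}, checking that nothing there uses that $\phi$ is of Arthur type. The one place this might matter is the possibly nontrivial microlocal fundamental group. However, rank is computed stalkwise on the connected, smooth $\Lambda^{reg}_{C_\phi}$, so a nontrivial monodromy does not affect it.
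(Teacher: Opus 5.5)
Your proposal is correct and follows the paper's proof. The paper also reduces to simple objects by noting, as in \cite[Proposition 1.6]{CR26}, that the Grothendieck groups of $D_{H_\lambda}(V_\lambda)$ and $\Per_{H_\lambda}(V_\lambda)$ coincide; it then evaluates the pairing on $\mathcal{P}(\pi')$, where only the $[\pi']$ term of $\eta_\phi$ survives and the two signs $(-1)^{d(\pi')}$ cancel. Your discussion of why $\rank\Evs_{C_\phi}$ is additive fills in a step the paper leaves implicit; note only that $\Evs_{C_\phi}$ lands in local systems on $\Lambda^{gen}_{C_\phi}$ rather than on all of $\Lambda^{reg}_{C_\phi}$, which does not affect the argument.
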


\begin{proof}
    As in the proof of \cite[Proposition 1.6]{CR26}, the Grothendieck groups $K\Per_\lambda(V_\lambda)$ and $KD_\lambda(V_\lambda)$ coincide and so it is enough to prove the proposition for simple objects in $\Per_\lambda(V_\lambda)$.

    Let $\mathcal{F}\in \Per_\lambda(V_\lambda)$ be simple. From Vogan's perspective on the local Langlands correspondence (\cite{Vog93}; see \S\ref{sec perverse sheaves}), we have that $\mathcal{F}=\mathcal{P}(\pi')$ for some $\pi'\in\Pi_\lambda(G_n).$ We obtain that
    \begin{align*}
    \langle\eta_\phi,\mathcal{F}\rangle&=\langle(-1)^{d(C_\phi)}\sum_{\pi\in\Pi_\lambda(G_n)}(-1)^{d(\pi)}\rank(\Evs_{C_\phi}(\mathcal{P}(\pi))) [\pi],[\mathcal{P}(\pi')]\rangle \\
    &=(-1)^{d({C_\phi})}\sum_{\pi\in\Pi_\lambda(G_n)}(-1)^{d(\pi)}\rank(\Evs_{C_\phi}(\mathcal{P}(\pi)))\langle [\pi],[\mathcal{P}(\pi')]\rangle \\
    &=(-1)^{d({C_\phi})}\rank(\Evs_{C_\phi}(\mathcal{F})).
    \end{align*}
    where the last equality follows from the definition of $\langle\cdot,\cdot\rangle.$
\end{proof}

Next we recall the definition of the (equivariant) microlocal fundamental group from \cite[Definition 1.33]{ABV92}. Fix an orbit $C_\phi$ of $V_\lambda.$ Fix $y\in \Lambda_{C_\phi}$ and consider $\Lambda_{C_\phi, y}=\{x\in C_\phi \ | \ [x,y]=0\}.$ Given any $x\in \Lambda_{C_\phi, y}$, we consider the centralizer $Z_{H_\lambda}(x,y)$ and set $A_{y,x}=Z_{H_\lambda}(x,y) / Z_{H_\lambda}(x,y)^0.$ By \cite[Lemma 24.3]{ABV92}, this family is locally constant over most of $\Lambda_{C_\phi, y}.$ The (equivariant) microlocal fundamental group is defined to be $A_{C_\phi}^{mic}:= Z_{H_\lambda}(x,y) / Z_{H_\lambda}(x,y)^0=\pi_0(Z_{H_\lambda}(x,y))$ for generic $x\in C_\phi.$

We also need to consider the generic conormal bundle $\Lambda_{C_\phi}^{gen}$ which is defined in \cite[\S7.9]{CFMMX22}. Rather than explicating its definition, it suffices to recall some properties of $\Lambda_{C_\phi}^{gen}$.  First, we have that $\emptyset\neq\Lambda_{C_\phi}^{gen}\subseteq \Lambda_{C_\phi}^{reg}$. Second, we have
\[
\Evs_{C_\phi}: \Per_{H_\lambda}(V_\lambda)\rightarrow\mathrm{Loc}_{H_\lambda}(\Lambda_{C_\phi}^{gen}),
\]
where $\mathrm{Loc}_{H_\lambda}(\Lambda_{C_\phi}^{gen})$ denotes the category of $H_\lambda$-equivariant local systems on $\Lambda_{C_\phi}^{gen}$.
The generic fundamental group is $A_{C_\phi}^{gen}:=\pi_1(\Lambda_{C_\phi}^{gen},(x,y))$, where $(x,y)\in \Lambda_{C_\phi}^{gen}$ is a choice of base point. We have
\[
\mathrm{Loc}_{H_\lambda}(\Lambda_{C_\phi}^{gen})\cong \mathrm{Rep}(A_{C_\phi}^{gen}).
\]
We warn the reader that the isomorphism for $\mathrm{Loc}_{H_\lambda}(\Lambda_{C_\phi}^{gen})$ is incorrectly stated in \cite[\S8.4]{CFMMX22}. Given $(x,y)\in \Lambda_{C_\phi}^{gen},$ let $\mathcal{O}_{H_\lambda}(x,y)$ denote its corresponding $H_\lambda$-orbit. We have
\[
\mathrm{Loc}_{H_\lambda}(\mathcal{O}_{H_\lambda}(x,y))\cong \mathrm{Rep}(A_{y,x}).
\]
Fix $(x,y)\in \Lambda_{C_\phi}^{gen}.$
Given $s\in Z_{H_\lambda}(x,y),$ we let $a_s\in A_{y,x}$ denotes its image. The restriction map $\mathrm{Loc}_{H_\lambda}(\Lambda_{C_\phi}^{gen})\rightarrow \mathrm{Loc}(\mathcal{O}_{H_\lambda}(x,y))$ induces a map $\mathrm{Rep}(A^{gen}_\phi)\rightarrow \mathrm{Rep}(A_{C_\phi}^{mic})$.

Given $s\in Z_{H_\lambda}(x,y)$ for $(x,y)\in \Lambda_{C_\phi}^{gen}$, we let $a_s\in A_{C_\phi}^{mic}$ denote its image. We consider the distribution
\begin{equation}\label{eqn eta dist general}
\eta_{\phi,s}:=(-1)^{d(C)}\sum_{\pi\in\Pi_\lambda(G_n)}(-1)^{d(\pi)}
\mathrm{trace}(a_s,\Evs_C(\mathcal{P}(\pi))) [\pi].
\end{equation}
Here, we have identified $\Evs_C(\mathcal{P}(\pi))$ with a representation of $A_{C_\phi}^{mic}$ via $\mathrm{Loc}_{H_\lambda}(\Lambda_{C_\phi}^{gen})\rightarrow \mathrm{Loc}(\mathcal{O}_{H_\lambda}(x,y))\cong \mathrm{Rep}(A_{C_\phi}^{mic})$, where these are the maps discussed above. We note that if $a_s$ is trivial, then $\eta_{\phi,s}=\eta_\phi.$ See \cite{CHLLRX} for further details (we note that the normalization of the $\Evs$ functor is trivial, i.e., the functor $\mathrm{NEvs}$ defined in \cite[\S7.10]{CFMMX22} agrees with $\Evs$).

We recall further notation from \S\ref{sec perverse sheaves}. 
For $i=1,\dots,r$, let $\phi_i\in\Phi(G_{n_i})$, and $n=n_1+\dots+n_r.$ We set $G^\times:=G_{n_1}\times\cdots\times G_{n_r}$, $\phi^\times=\phi_{1} \times \cdots\times\phi_r$, and $\lambda^\times=\lambda_1\times\cdots\times\lambda_r$ be the corresponding infinitesimal parameter, where $\lambda_i=\lambda_{\phi_i}$. The Vogan variety is $V_{\lambda^\times}=V_{\lambda_1}\times\cdots\times V_{\lambda_r}$ and we have $H_{\lambda^\times}=H_{\lambda_1}\times\cdots\times H_{\lambda_r}$.

We let $\phi=\phi_1+\dots+\phi_r\in\Phi(G_n)$, $\lambda=\lambda_\phi,$
and $s\in\widehat{G}_n(\C)$ be of finite order (and hence semi-simple) such that $Z_{\widehat{G}_n(\C)}(s)\cong\widehat{G}^\times$. The resulting inclusion $\widehat{G}^\times \hookrightarrow \widehat{G}_n(\C)$ induces inclusions $H_{\lambda^\times}\hookrightarrow H_{\lambda}$ and
\[
\varepsilon: V_{\lambda^\times}\hookrightarrow V_{\lambda}
\]
which is equivariant for the action by $H_{\lambda^\times}.$
We have that 
\[V_{\lambda^\times}=V_{\lambda}^s:=\{x\in V_\lambda \ | \ \Ad(s)x=x\}.
\]
Furthermore, we have an inclusion of the dual Vogan varieties
\[
{}^t\varepsilon: V_{\lambda^\times}^*\hookrightarrow V_{\lambda}^*
\]
and hence an inclusion 
\[
\varepsilon'=\varepsilon\times{}^t\varepsilon: V_{\lambda^\times}\times V_{\lambda^\times}^*\hookrightarrow V_{\lambda}\times  V_{\lambda}^*.
\]

Let $\varepsilon^*:\mathrm{D}_{H_{\lambda}}(V_{\lambda_\alpha})\rightarrow\mathrm{D}_{H_{\lambda^\times}}(V_{\lambda^\times})$ denote the equivariant restriction functor for the equivariant derived categories. As a shorthand, we write
\[
\mathcal{F}|_{V_{\lambda^\times}}:=\varepsilon^*\mathcal{F}.
\]
Recall from $\S\ref{sec Conormal bundles}$ that for an $H_\lambda$-orbit $C_\phi$ of $V_\lambda,$ we consider its conormal bundle $\Lambda_{C_\phi}^{reg}\subseteq V_\lambda\times V_\lambda^*.$ We have the following generalization of \cite[Lemma 3.1]{CR26}. We warn the reader that the phrase ``the image of $s$ in $A_\phi^\mic$ is trivial'' implicitly assumes that $s\in Z_{H_\lambda}(x,y)$ for some $(x,y)\in\Lambda_{C_\phi}^{gen}$.

\begin{prop}\label{prop ranks agree}
    We continue with the above notation and setting. Suppose further that there exists $(x_{\phi^\times},y_{\phi^\times})\in \Lambda_{C_{\phi^\times}}^{reg}$ such that $\varepsilon'(x_{\phi^\times},y_{\phi^\times})\in \Lambda_{C_\phi}^{reg}$ and that the image of $s$ in $A_\phi^\mic$ is trivial. Then, we have
    \[
    (-1)^{d(C_\phi)}\rank(\Evs_{C_\phi}\mathcal{F})=(-1)^{d(C_{\phi^\times})}\rank(\Evs_{C_\phi^\times}\mathcal{F}|_{V_{\lambda^\times}}),
    \]
    for any $\mathcal{F}\in D_{H_\lambda}(V_\lambda).$
\end{prop}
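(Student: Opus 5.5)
The plan is to adapt the proof of \cite[Lemma 3.1]{CR26} (and the stalk-of-vanishing-cycles argument behind \cite[Theorem 25.8]{ABV92}) to our setting. The idea is to compute both sides as ranks of stalks of vanishing cycles at a single point. By the definition of $\Evs$ in \cite[\S7.9]{CFMMX22}, $\Evs_{C_\phi}\mathcal{F}$ is a local system on $\Lambda^{gen}_{C_\phi}$, up to shift. Its stalk at $(x,y)$ is the shifted vanishing cycles of $\mathcal{F}|_{S}$, taken along the function $f_y=\langle\cdot,y\rangle$ at $x$, where $S$ is a normal slice to $C_\phi$ at $x$.

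\textbf{Step 1 (choosing the base point).} Since $\Lambda^{gen}\subseteq\Lambda^{reg}$ is open dense in each $H$-component, we move the given point $(x_{\phi^\times},y_{\phi^\times})$ slightly within $\Lambda^{reg}_{C_{\phi^\times}}$. This is possible because the regularity condition from the hypothesis is open. After moving, we may assume $(x_{\phi^\times},y_{\phi^\times})\in\Lambda^{gen}_{C_{\phi^\times}}$ and $\varepsilon'(x_{\phi^\times},y_{\phi^\times})\in\Lambda^{gen}_{C_\phi}$. Since $\Evs$ is a local system on $\Lambda^{gen}$, its rank is constant there, so computing both ranks at this common point suffices.

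\textbf{Step 2 (localization to $s$-fixed points).} Write $(x,y)=\varepsilon'(x_{\phi^\times},y_{\phi^\times})$. Then $s$ fixes $(x,y)$, so $s\in Z_{H_\lambda}(x,y)$, and $s$ acts on the stalk $(\Evs_{C_\phi}\mathcal{F})_{(x,y)}$. We apply a Lefschetz-type localization for the finite-order automorphism $s$ acting on the vanishing-cycles complex, as in \cite[Theorem 25.8]{ABV92} and \cite[Proposition 3.2]{CR26}. This gives
\[
\trace(a_s,(\Evs_{C_\phi}\mathcal{F})_{(x,y)})=\pm\rank\bigl((\Evs_{C_{\phi^\times}}\mathcal{F}|_{V_{\lambda^\times}})_{(x_{\phi^\times},y_{\phi^\times})}\bigr).
\]
Concretely, we choose the normal slice $S$ to be $s$-stable. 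Then $S^s$ is a normal slice to $C_{\phi^\times}$ inside $V_{\lambda^\times}=V_\lambda^s$, and $f_y|_{S^s}=f_{y_{\phi^\times}}$. The trace of $s$ on the vanishing cycles of $\mathcal{F}|_S$ therefore equals the Euler characteristic of the vanishing cycles of $\mathcal{F}|_{S^s}$. Regularity on both sides ensures that both vanishing-cycle complexes are concentrated in a single degree. This is what lets the Euler characteristic be read as a rank, with sign $(-1)^{d(C_\phi)-d(C_{\phi^\times})}$ coming from the codimension shifts.

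\textbf{Step 3 (removing the trace).} By hypothesis the image $a_s$ of $s$ in $A^{mic}_{C_\phi}$ is trivial. Under the restriction $\mathrm{Rep}(A^{gen})\to\mathrm{Rep}(A^{mic})$, the action of $s$ on the stalk factors through $a_s$, so it acts trivially. Hence the trace equals the rank, and the stated identity follows.

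I expect Step 2 to be the main obstacle. Unlike \cite{CR26}, the generic and microlocal fundamental groups here may be nontrivial (as in \cite{CFK22}), so several points need care. First, the stalk must be nonzero only in one degree for the localization formula to apply. Second, the equivariant identification of the slices must be compatible with the normalization of $\Evs$. Third, the sign bookkeeping $d(C_\phi)$ versus $d(C_{\phi^\times})$ must come out correctly. For the first two points I would rely on the transversality of $S$ and the fact that $(x,y)$ lies in $\Lambda^{reg}_{C_\phi}$, which holds by hypothesis. For the signs I would compare dimensions via Lemma~\ref{lemma ABV 19.2b}(1).
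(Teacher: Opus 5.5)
Your strategy matches the paper's. Its proof simply says to adapt \cite[Lemma 3.1]{CR26}: compare microlocal stalks at a point that is regular on both sides by localizing to the $s$-fixed locus, and use the hypothesis on $s$ where Cunningham--Ray used triviality of $A^{\mic}$.

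\textbf{Step 1 is a genuine gap as justified.} The set of $p\in\Lambda^{reg}_{C_{\phi^\times}}$ with $\varepsilon'(p)\in\Lambda^{reg}_{C_\phi}$ is indeed open. However, $\varepsilon'(\Lambda_{C_{\phi^\times}})$ is a closed subvariety of $V_\lambda^s\times(V_\lambda^*)^s$ of dimension $\dim V_{\lambda^\times}$. This is in general strictly less than $\dim\Lambda_{C_\phi}=\dim V_\lambda$ (Lemma~\ref{lemma ABV 19.2b}(1)). Density of $\Lambda^{gen}_{C_\phi}$ in $\Lambda_{C_\phi}$ says nothing about whether it meets such a subvariety, and $s$-fixed points are special (their stabilizers contain $s$). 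So you have not produced a point that is generic on both sides, and your ``compute both ranks at this common point'' is unsupported.

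The repair is to drop Step 1. The normal Morse data of an $H_\lambda$-equivariant complex (constructible for the orbit stratification) at nondegenerate covectors forms a local system on all of $\Lambda^{reg}_{C_\phi}$. This set is connected, being a nonempty open subset of the vector bundle $\Lambda_{C_\phi}$ over the orbit $C_\phi$ of the connected group $H_\lambda$. Hence the rank can be computed at $\varepsilon'(x_{\phi^\times},y_{\phi^\times})$ itself, and likewise on the $V_{\lambda^\times}$ side. This is why the hypothesis, and Proposition~\ref{prop regular conorm}, only involve regularity.

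Step 3 must then be applied at this specific point rather than at an unspecified generic one. This is harmless. The element $s$ lies in the connected torus $\prod_i Z(\GL_{n_i}(\C))\subseteq H_\lambda$, which fixes $V_\lambda^s\times(V_\lambda^*)^s$ pointwise. Hence $s\in Z_{H_\lambda}(x,y)^0$ for every $s$-fixed $(x,y)$.

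\textbf{Two smaller corrections to Step 2.} First, the claim that regularity puts both vanishing-cycle complexes in a single degree is false here. $\mathcal{F}$ is an arbitrary object of $D_{H_\lambda}(V_\lambda)$, and even for perverse $\mathcal{F}$ the restriction $\mathcal{F}|_{V_{\lambda^\times}}$ is not perverse. The statement has to be read with $\rank$ meaning the Euler characteristic, as in the Grothendieck-group proof of Proposition~\ref{prop perfect pair append}. With that reading the Lefschetz identity gives the equality directly, and no concentration in one degree is needed.

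Second, localization produces $\trace(s,\cdot)$ on the vanishing cycles of $\mathcal{F}|_{S^s}$. This equals their Euler characteristic only because $s$ acts trivially on $\mathcal{F}|_{V_\lambda^s}$, which you did not note. It follows from the same torus observation as above.
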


\begin{proof}
    The proof is a straightforward adaptation of the proof of \cite[Lemma 3.1]{CR26}. However, Cunningham and Ray worked in the setting of local Arthur parameters which implied that the microlocal fundamental group is trivial. In our situation, we assume that the image of $s$ in the microlocal fundamental group is trivial.
\end{proof}

We remark that a more general result could be proven than stated above. Namely, one may want to show that 
\[
    (-1)^{d(C_\phi)}\trace(a_s,\Evs_{C_\phi}\mathcal{F})=(-1)^{d(C_{\phi^\times})}\trace(a_s',\Evs_{C_\phi^\times}\mathcal{F}|_{V_{\lambda^\times}}),
    \]
where $a_s$ and $a_s'$ denotes the images of $s$ in $A_{C_\phi}^{mic}$ and $A_{C_{\phi^\times}}^{mic},$ respectively. However, this would requires further assumptions on the compatibility of generic part of the conormal bundle. This issue is avoided in the above proposition as we only considered the case that $a_s=a_s'=1.$ Our next goal is to verify that, in our setting, we do indeed have that $a_s=a_s'=1.$

We proceed with a technical lemma which describes certain conormal elements.

\begin{lemma}\label{lemma generic covector factors}
    Suppose that $\beta=\frac{\alpha-1}{2} \geq m^\beta_{\mathbbm{1}_{W_F}}(\phi^\vee)$. Let $y_{\lambda^\vee}\in  V_{\lambda^\vee}^*$, $y_{\phi^\alpha}\in C_{\widehat{\phi^\alpha}}\subseteq V_{\lambda^\alpha}^*,$ and consider $y={}^t\varepsilon(y_{\lambda^\vee},y_{\phi^\alpha}).$ Then $x\in\Lambda_{y}$ if and only if $x=\varepsilon(x',0)$ for some $x'\in V_{\lambda^\vee}.$
\end{lemma}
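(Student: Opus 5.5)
The plan is a direct linear-algebra computation in the eigenspace model \eqref{eqn eigenspace decomp}. I read $\Lambda_y=\{x\in V_{\lambda_\alpha} \ | \ [x,y]=0\}$. First I reduce to the relevant piece of $V_{\lambda_\alpha}$. As in the discussion before Lemma \ref{lemma contragredient ABV}, $V_{\lambda_\alpha}$ splits as a product over the classes of unramified characters modulo $|\cdot|^{\Z}$ (and the ramified/non-trivial $\rho_i$ parts). The summand $\phi^\alpha$ lives only in the factor attached to $\mathbbm{1}_{W_F}|\cdot|^{\beta+\Z}$. On every other factor, $V_{\lambda_\alpha}$ coincides with the corresponding factor of $V_{\lambda^\vee}$, so there is nothing to prove. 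On the remaining factor, write the $q^t$-eigenspaces of $\lambda_\alpha(\Fr)$ as $E_t=E^\vee_t\oplus L_t$ for $t\in\beta+\Z$. Here $E^\vee_t$ comes from $\phi^\vee$, and $L_t$ is the line from $\phi^\alpha$, nonzero exactly for $|t|\le\beta$.

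The hypothesis $\beta\geq m^\beta_{\mathbbm{1}_{W_F}}(\phi^\vee)$ says precisely that $E^\vee_t=0$ for $|t|>\beta$. Hence $E_t=0$ outside $[-\beta,\beta]$, and the whole factor is a chain $E_\beta\to E_{\beta-1}\to\cdots\to E_{-\beta}$. Write $x=(x_t:E_t\to E_{t-1})$ and $y=(y_t:E_{t-1}\to E_t)$. Since $y={}^t\varepsilon(y_{\lambda^\vee},y_{\phi^\alpha})$, each $y_t$ is block diagonal, $y_t=y^\vee_t\oplus\ell_t$. Because $y_{\phi^\alpha}$ lies in the open orbit $C_{\widehat{\phi^\alpha}}$ (Lemma \ref{lemma dual of phi^alpha}), every $\ell_t:L_{t-1}\to L_t$ is an isomorphism for $-\beta<t\le\beta$. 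Decompose $x_t$ into four blocks:
\[
a_t:E^\vee_t\to E^\vee_{t-1},\quad b_t:E^\vee_t\to L_{t-1},\quad c_t:L_t\to E^\vee_{t-1},\quad d_t:L_t\to L_{t-1}.
\]
The condition $[x,y]=0$ is equivalent to the family of equations $x_{t+1}y_{t+1}=y_tx_t$ on each $E_t$, with the conventions $x_{\beta+1}=y_{\beta+1}=0$ and $x_{-\beta}=y_{-\beta}=0$ coming from the vanishing of $E_{\pm(\beta+1)}$.

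Next I run two inductions on these equations. Downward from the top: at $t=\beta$, the equation reads $y_\beta x_\beta=0$. Projecting to $L_\beta$ gives $\ell_\beta\circ(b_\beta,d_\beta)=0$, so $b_\beta=d_\beta=0$ because $\ell_\beta$ is injective. Assuming $b_{t+1}=d_{t+1}=0$, the $L_t$-component of $x_{t+1}y_{t+1}$ vanishes, so $\ell_t\circ(b_t,d_t)=0$ and again $b_t=d_t=0$. This continues down to $t=-\beta+1$. Upward from the bottom: on $E_{-\beta}$ the equation is $x_{-\beta+1}y_{-\beta+1}=0$. Restricting to $L_{-\beta}$ and using that $\ell_{-\beta+1}$ maps $L_{-\beta}$ onto $L_{-\beta+1}$ gives $c_{-\beta+1}=0$ (and $d_{-\beta+1}=0$). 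Inductively, once $c_t=d_t=0$, the equation restricted to $L_t$ gives $x_{t+1}\ell_{t+1}=y_t x_t|_{L_t}=0$, hence $c_{t+1}=0$. Thus all $b_t,c_t,d_t$ vanish, so $x=\varepsilon(x',0)$ with $x'=(a_t)\in V_{\lambda^\vee}$.

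For the converse I use the block-diagonal splitting already observed in the proof of Lemma \ref{lemma 1}. If $x=\varepsilon(x',0)$, then $[x,y]=\varepsilon'$-image of $([x',y_{\lambda^\vee}],[0,y_{\phi^\alpha}])$. So $x\in\Lambda_y$ exactly when $[x',y_{\lambda^\vee}]=0$, which I take to be the intended reading of the converse, namely $x'\in\Lambda_{y_{\lambda^\vee}}$. The main obstacle is the bookkeeping in the two inductions. One must check that the hypothesis on exponents is exactly what kills the boundary terms at $t=\pm(\beta+1)$. Without it, Example \ref{exmp GL fails 1} shows that $E^\vee$ continues past the ends of the $L$-chain and the cross blocks $b_t,c_t$ can survive.
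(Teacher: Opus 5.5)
Your proposal is correct and follows essentially the same route as the paper. Both arguments reduce to the chain of eigenspaces in the coset $\beta+\Z$, where the hypothesis forces the chain to have length exactly $\alpha$. They then split each $x_t$ into blocks relative to $E^\vee_t\oplus L_t$ and kill every block touching the $\phi^\alpha$-lines by one induction from each end of the chain, using that every component of $y_{\phi^\alpha}$ is nonzero.

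Your bookkeeping is slightly cleaner than the paper's. The paper writes out only the case where all blocks are nonempty, while your argument handles empty blocks uniformly.

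Your reading of the converse, requiring $[x',y_{\lambda^\vee}]=0$, is the correct one. As literally stated, the reverse direction of the lemma is false, since $\varepsilon(x',0)$ commutes with $y$ only under that condition. This does not affect the paper, because only the forward direction is used later, in Lemma \ref{lemma microlocal fundamental group is trivial}.
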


\begin{proof}
We recall some setup from \S\ref{sec perverse sheaves}.
By \cite[Theorem 5.1.1]{CFMMX22}, we may assume that $\lambda_\alpha$ is unramified, i.e., trivial on $I_F$, and $\chi(\lambda_\alpha(\Fr))\in\mathbb{R}_{>0}$ for any character $\chi:\widehat{T}\rightarrow\GL_1(\BC)$, where $\widehat{T}$ is any torus in $\GL_n(\BC)$ containing $\lambda_\alpha(\Fr).$ We write
\[
\lambda_\alpha=m_1|\cdot|^{e_1}+m_2|\cdot|^{e_2}+\cdots+m_r|\cdot|^{e_r}
\]
where $m_i\in\mathbb{Z}_{\geq 1}$ denotes the multiplicity and $e_i\in\mathbb{R}$ with $e_i>e_{i+1}$ for $i=1,\dots,r-1.$ Furthermore, we may assume that $r=\alpha$ and $e_1=\frac{\alpha-1}{2},e_2=\frac{\alpha-3}{2},\dots,e_\alpha=\frac{1-\alpha}{2}$ since $\beta=\frac{\alpha-1}{2} \geq m^\beta_{\mathbbm{1}_{W_F}}(\phi^\vee)$. Indeed, in general $V_{\lambda_\alpha}$ decomposes as a direct product of Vogan varieties based on the exponents modulo $\mathbb{Z}$ and only those exponents lying in the coset $\frac{\alpha-1}{2}+\mathbb{Z}$ will play a nontrivial role in the following arguments. 

Now, for $i=1,\dots,\alpha,$ let $E_i$ denote the $q_F^{e_i}$-eigenspace of $\lambda_\alpha(\Fr)$. We have $m_i=\dim(E_i)$ and
\[
V_{\lambda_\alpha}\cong\Hom(E_1,E_2)\times\Hom(E_2,E_3)\times\cdots\times\Hom(E_{\alpha-1},E_\alpha).    
\]
Given $x\in V_{\lambda_\alpha},$ using the above isomorphism, we write $x=(x_1,\dots,x_{\alpha-1})$ where
\[
(x_1,x_2,\dots,x_{\alpha-1})\in \Hom(E_1,E_2)\times\Hom(E_2,E_3)\times\cdots\times\Hom(E_{\alpha-1},E_\alpha).
\]

We identify the dual Vogan variety $V_{\lambda_\alpha}^*$ with ${}^t V_{\lambda_\alpha}$ (recalling that $V_{\lambda_\alpha}$ lies in the Lie algebra of $\GL_n(\BC)$, i.e., the spaces of $n\times n$ matrices, the transpose is the usual one). We obtain an isomorphism
\[
V_{\lambda_\alpha^\vee}^*\cong\Hom(E_2,E_1)\times\Hom(E_3,E_2)\times\cdots\times\Hom(E_{\alpha},E_{\alpha-1}).
\]
Given $y\in V_{\lambda_\alpha}^*,$ using the above isomorphism, we write $y=(y_1,\dots,y_{\alpha-1})$ where
\[
(y_1,y_2,\dots,y_{\alpha-1})\in \Hom(E_2,E_1)\times\Hom(E_3,E_2)\times\cdots\times\Hom(E_{\alpha},E_{\alpha-1}).
\]

We have similar isomorphisms for $V_{\lambda^\vee}$ and $V_{\lambda_{\phi^\alpha}}$ which we make explicit below. 
For $i=1,\dots,\alpha,$ we let $E_{\lambda^\vee,i}$ be the corresponding $q_F^{e_i}$-eigenspace, possibly zero, of $\lambda^\vee(\Fr).$ We have
\[
V_{\lambda^\vee}\cong \Hom(E_{\lambda^\vee,1},E_{\lambda^\vee,2})\times\cdots\times\Hom(E_{\lambda^\vee,\alpha-1},E_{\lambda^\vee,\alpha}).
\]
Given $x\in V_{\lambda^\vee}$, we identify $x=(x_1,\dots,x_{\alpha-1})$. Note that if  $q_F^{e_i}$ or $q_F^{e_{i+1}}$ is not an eigenvalue $\lambda^\vee(\Fr),$ then $x_i=0.$
The isomorphism for the dual variety is obtained similarly.

For $i=1,\dots,\alpha,$ we have that $q_F^{e_i}$ is always an eigenvalue of $\lambda^{\alpha}(\Fr)$ and the corresponding eigenspace $E_{\lambda^\alpha,i}$ is 1-dimensional. We have 
\[
V_{\lambda^\alpha}\cong \Hom(E_{\lambda^\alpha,1},E_{\lambda^\alpha,2})\times\cdots\times\Hom(E_{\lambda^\alpha,\alpha-1},E_{\lambda^\alpha,\alpha}).
\]
Given $x\in V_{\alpha^\vee}$, we identify $x=(x_1,\dots,x_{\alpha-1})$. The isomorphism for the dual variety is obtained similarly.
 
Conjugating if necessary, we may choose $s$ such that the inclusion 
$\varepsilon: V_{\lambda^\times}\hookrightarrow V_{\lambda_\alpha}$ is given as follows.
Let $x_{\lambda^\vee}=(x_{\lambda^\vee,1},\dots,x_{\lambda^\vee,\alpha-1})\in V_{\lambda^\vee}$. Also, let $x_{\lambda^\alpha}\in V_{\lambda^\alpha}$ and write $x_{\lambda^\alpha}=(x_{\lambda^\alpha,1},\dots,x_{\lambda^\alpha,\alpha-1})$.
For $i=1,\dots,\alpha-1,$ we define $\varepsilon_i(x_{\lambda^\vee,i},x_{\lambda^\alpha,i})$ as follows 
\[
\varepsilon_i(x_{\lambda^\vee,i},x_{\lambda^\alpha,i}):=\begin{pmatrix}
    x_{\lambda^\vee,i} & 0_{\dim(E_{\lambda^\vee,i})\times 1} \\
    0_{1\times \dim(E_{\lambda^\vee,i+1})} & x_{\lambda^\alpha,i}
\end{pmatrix}.
\]
Note that if $\dim(E_{\lambda^\vee,i})=0,$ then we omit the corresponding rows. Similarly, if $\dim(E_{\lambda^\vee,i+1})=0$, then we omit the corresponding columns. The inclusion is then given by
\[
\varepsilon(x_{\lambda^\vee},x_{\lambda^\alpha})=(\varepsilon_1(x_{\lambda^\vee,1},x_{\lambda^\alpha,1}),\dots,\varepsilon_{\alpha-1}(x_{\lambda^\vee,\alpha-1},x_{\lambda^\alpha,\alpha-1})).
\]
Note that this inclusion corresponds to taking \[s=\mathrm{diag}(I_{\dim{E_{\lambda^\vee,1}}},-1,I_{\dim{E_{\lambda^\vee,2}}},-1,\dots,I_{\dim{E_{\lambda^\vee,\alpha}}},-1),\] where $I_k$ denotes the $k\times k$ identity matrix.

The inclusion of the dual Vogan varieties is given similarly. Let $y_{\lambda^\vee}\in V_{\lambda^\vee}^*$ and write \[y_{\lambda^\vee}=(y_{\lambda^\vee,1},\dots,y_{\lambda^\vee,\alpha-1})\in\Hom(E_{\lambda^\vee,2},E_{\lambda^\vee,1})\times\cdots\times\Hom(E_{\lambda^\vee,\alpha},E_{\lambda^\vee,\alpha-1}).\]  Also, let $y_{\lambda^\alpha}\in V_{\lambda^\alpha}^*$ and write \[y_{\lambda^\alpha}=(y_{\lambda^\alpha,1},\dots,y_{\lambda^\alpha,\alpha-1})\in \Hom(E_{\lambda^\alpha,2},E_{\lambda^\alpha,1})\times\cdots\times\Hom(E_{\lambda^\alpha,\alpha},E_{\lambda^\alpha,\alpha-1}).\]
We have
\[
{}^t\varepsilon(y_{\lambda^\vee},y_{\lambda^\alpha})=({}^t\varepsilon_1(y_{\lambda^\vee,1},y_{\lambda^\alpha,1}),\dots,{}^t\varepsilon_{\alpha-1}(y_{\lambda^\vee,\alpha-1},y_{\lambda^\alpha,\alpha-1})),
\]
where 
\[
{}^t\varepsilon_i(y_{\lambda^\vee,i},y_{\lambda^\vee,i}):=\begin{pmatrix}
    y_{\lambda^\vee,i} & 0_{\dim(E_{\lambda^\alpha,i})\times 1} \\
    0_{1\times \dim(E_{\lambda^\vee,i+1})} & y_{\lambda^\alpha,i}
\end{pmatrix}.
\]

Since $C_{\phi^\alpha}$ is the $0$-orbit in $V_{\lambda_{\phi^\alpha}},$  from Lemma \ref{lemma dual of phi^alpha}, we have $y\in C_{\widehat{\phi^\alpha}}\subseteq V_{\lambda^\alpha}^*$ if and only if  $y_{\phi^\alpha}=(y_{\phi^\alpha,1},\dots,y_{\phi^\alpha,\alpha-1})$, where $y_{\phi^\alpha,\alpha-i}\neq0$ for any $i=1,\dots,\alpha-1.$ For simplicity, we take $y_{\phi^\alpha,\alpha-i}=1$ for any $i=1,\dots,\alpha-1$ and so $y_{\phi^\alpha}=(1,\dots,1)$, although, this is not necessary for the rest of the argument. 

Now we fix some $x\in V_{\lambda_\alpha}$ and write $x=(x_1,\dots, x_{\alpha-1})$ as above. For each $i=1,\dots,\alpha-1$, write 
\[
x_i=\begin{pmatrix}
    x_{i,1} & x_{i,2} \\
    x_{i,3} & x_{i,4}
\end{pmatrix},
\]
where $x_{i,4}$ is a $1\times 1$ matrix (which determines the dimensions of the rest of the matrices). Let $y_{\lambda^\vee}\in V_{\lambda^\vee}$. To prove the lemma, we must show that
\[
[x,\varepsilon^*{}(y_{\lambda^\vee},y_{\phi^\alpha})]=0
\] if and only if $x=\varepsilon(x_{\lambda^\vee},0)$ for some $x_{\lambda^\vee}\in V_{\lambda^\vee}.$

Indeed, the reverse direction follows from direct computation. The forwards direction also follows from direct computation, but with a bit of tedious bookkeeping, largely in cases based on whether $x_i=x_{i,4}$ or not. We will give the details under the assumption that $x_i\neq x_{i,4}$ for any $i$ as the other cases follow from similar arguments. Thus, we assume $x_i\neq x_{i,4}$ for any $i$ and $[x,\varepsilon^*{}(y_{\lambda^\vee},y_{\phi^\alpha})]=0.$ We must show that $x_{i,4}=0$, $x_{i,2}=0$, and $x_{i,3}=0$ for each $i=1,\dots,\alpha-1$. Write $y_{\lambda^\vee}=(y_1,\dots,y_{\alpha-1})$ using the above convention.

From the assumptions, for $i=2,\dots,\alpha-1$, we obtain
\begin{align*}
    \begin{pmatrix}
        x_{1,1}y_1 & x_{1,2} \\
        x_{1,3}y_1 & x_{1,4} 
    \end{pmatrix} &= 0, \\
    \begin{pmatrix}
        x_{i,1}y_i & x_{i,2} \\
        x_{i,3}y_i & x_{i,4} 
    \end{pmatrix} &= \begin{pmatrix}
        y_{i-1} x_{i-1,1} & y_{i-1}x_{i-1,2} \\
        x_{i-1,3} & x_{i-1,4} 
    \end{pmatrix}, \\
    0 &= \begin{pmatrix}
        y_{\alpha-1} x_{\alpha-1,1} & y_{\alpha-1}x_{\alpha,2} \\
        x_{\alpha-1,3} & x_{\alpha-1,4} 
    \end{pmatrix}. 
\end{align*}

From the first equation above, we have that $x_{1,4}=0$, from which the second equation implies that $x_{i,4}=0$ for $i=2,\dots,\alpha-1.$ Similarly, the first equation implies that $x_{1,2}=0,$ from which the second equation implies that $x_{i,2}=0$ for $i=2,\dots,\alpha-1.$ Finally, the last equation implies that $x_{\alpha-1,3}=0.$ The  middle equation then implies that $x_{i,3}=0$ for $i=1,\dots,\alpha-2.$ Thus $x_i=\begin{pmatrix}
    x_{i,1} & 0 \\
    0 & 0
\end{pmatrix}=\varepsilon(x_{\lambda^\vee},0),$
where $x_{\lambda^\vee}=(x_{i,1},\dots,x_{i,\alpha-1})\in V_{\lambda^\vee}.$ This completes the proof of the lemma.
\end{proof}

We verify that the image of $s$ is trivial in the microlocal fundamental group below.

\begin{lemma}\label{lemma microlocal fundamental group is trivial}
    Suppose that $\beta=\frac{\alpha-1}{2} \geq m^\beta_{\mathbbm{1}_{W_F}}(\phi^\vee)$. Then the image of $s$ in $A_{C_{\phi_\alpha}}^{\mic}$ is trivial.
\end{lemma}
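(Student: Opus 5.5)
The plan is to find an explicit point $(x,y)\in\Lambda_{C_{\phi_\alpha}}^{gen}$ with $s\in Z_{H_{\lambda_\alpha}}(x,y)$, and then show that $s$ lies in the identity component $Z_{H_{\lambda_\alpha}}(x,y)^0$. This suffices because $A_{C_{\phi_\alpha}}^{\mic}=\pi_0(Z_{H_{\lambda_\alpha}}(x,y))$ for generic $(x,y)$. As in the proof of Lemma \ref{lemma generic covector factors}, I would reduce to the block of exponents in $\frac{\alpha-1}{2}+\Z$ and use the decomposition
\[
V_{\lambda_\alpha}\cong\Hom(E_1,E_2)\times\cdots\times\Hom(E_{\alpha-1},E_\alpha),
\]
with $H_{\lambda_\alpha}=\prod_i\GL(E_i)$. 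I would take $s$ to be the explicit diagonal element from that proof: $s$ acts by $1$ on each $E_{\lambda^\vee,i}$ and by $-1$ on each line $E_{\lambda^\alpha,i}$.

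\textbf{Choice of base point.} Take $(x,y)=\varepsilon'((x_{\phi^\vee},y_{\phi^\vee}),(0,y_{\phi^\alpha}))$, where $(x_{\phi^\vee},y_{\phi^\vee})$ is generic in $\Lambda_{C_{\phi^\vee}}$ and $y_{\phi^\alpha}=(1,\dots,1)\in C_{\widehat{\phi^\alpha}}$. By Proposition \ref{prop regular conorm}, this point lies in $\Lambda^{reg}_{C_{\phi_\alpha}}$. Since $s$ acts by scalars on each summand of $\varepsilon'$, we have $s\in Z_{H_{\lambda_\alpha}}(x,y)$. I expect the main obstacle to be showing that this point lies in the \emph{generic} part $\Lambda^{gen}_{C_{\phi_\alpha}}$, not just the regular part. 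I would try to deduce this from the fact that $Z_{H_{\lambda_\alpha}}(x,y)$ is locally constant over a dense open subset (\cite[Lemma 24.3]{ABV92}). Genericity in the $\phi^\vee$-factor is free by the choice of $(x_{\phi^\vee},y_{\phi^\vee})$. The $\phi^\alpha$-factor is a single $H_{\lambda^\alpha}$-orbit, and Lemma \ref{lemma generic covector factors} says that nearby covectors in this family force $x$ back into $\varepsilon(V_{\lambda^\vee}\times 0)$, so no new degeneracy appears.

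\textbf{Computing the centralizer.} Write $h=(h_1,\dots,h_\alpha)$ with $h_i\in\GL(E_i)$, in block form relative to $E_i=E_{\lambda^\vee,i}\oplus E_{\lambda^\alpha,i}$. The condition $h_{i}y_i=y_ih_{i+1}$ for the covector components ${}^t\varepsilon_i(y_{\phi^\vee,i},1)$ constrains the off-diagonal blocks, and the $x$-equations do as well. The key point is that the one-dimensional lines $E_{\lambda^\alpha,i}$ are chained together by the nonzero entries $1$ of $y_{\phi^\alpha}$. This forces the $(\lambda^\alpha,\lambda^\alpha)$-entries of all $h_i$ to equal a common scalar $c\in\C^\times$.

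\textbf{Connecting $s$ to the identity.} I would then exhibit a torus inside $Z_{H_{\lambda_\alpha}}(x,y)$, namely $t\mapsto(1_{E_{\lambda^\vee,i}}\oplus t)_i$ for $t\in\C^\times$. It centralizes $(x,y)$ because $x$ has no component in the $\lambda^\alpha$-directions, and $y$ is block diagonal with a scalar-equivariant $\lambda^\alpha$-part. Taking $t=-1$ gives $s$. Since $\C^\times$ is connected, $s\in Z_{H_{\lambda_\alpha}}(x,y)^0$, so its image $a_s$ in $A^{\mic}_{C_{\phi_\alpha}}$ is trivial. The same argument inside $H_{\lambda^\times}$ shows $a_s'=1$, as required for Proposition \ref{prop ranks agree}.
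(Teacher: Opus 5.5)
Your mechanism for killing $s$ is the same as the paper's. In both arguments $s$ lies in a connected torus inside the stabilizer: you use the one-parameter subgroup $t\mapsto(1_{E_{\lambda^\vee,i}}\oplus t)_i$, and the paper uses $Z(\GL_n(\C))\times Z(\GL_\alpha(\C))$. The block computation of the centralizer in your second step is not needed. The gap is in the choice of base point. You fix one point $\varepsilon'((x_{\phi^\vee},y_{\phi^\vee}),(0,y_{\phi^\alpha}))$, so you must show it lies in $\Lambda^{gen}_{C_{\phi_\alpha}}$, and your sketch does not do this. Genericity of $(x_{\phi^\vee},y_{\phi^\vee})$ for the $H_{\lambda^\vee}$-action on $\Lambda_{C_{\phi^\vee}}$ says nothing a priori about genericity for the $H_{\lambda_\alpha}$-action on $\Lambda_{C_{\phi_\alpha}}$. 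The image of $\varepsilon'$ is a proper closed subvariety, so it could in principle miss the dense open generic locus, and Proposition \ref{prop regular conorm} only gives regularity. Also, Lemma \ref{lemma generic covector factors} is not a statement about ``nearby covectors''. It fixes one factored covector and describes all vectors conormal to it.

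The missing idea is to apply that lemma to the whole fiber rather than to one point. Any generic point lies in $\Lambda^{reg}_{C_{\phi_\alpha}}\subseteq C_{\phi_\alpha}\times C^*_{\phi_\alpha}$ by \cite[Lemma 6.4.2]{CFMMX22}. By Lemma \ref{lemma dual of phi_alpha}, the orbit $C^*_{\phi_\alpha}$ contains ${}^t\varepsilon(C_{\widehat{\phi^\vee}}\times C_{\widehat{\phi^\alpha}})$. So after conjugating by $H_{\lambda_\alpha}$, you may assume a generic point has covector $y={}^t\varepsilon(y_{\phi^\vee},y_{\phi^\alpha})$ in factored form. Lemma \ref{lemma generic covector factors} then says that \emph{every} $x$ with $[x,y]=0$ has the form $\varepsilon(x'',0)$. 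Hence your torus (or the paper's) centralizes $(x,y)$ for every $x$ in the fiber over $y$, in particular for a generic one, and $s$ lies in $Z_{H_{\lambda_\alpha}}(x,y)^0$. This is exactly the paper's proof, and it makes the question of whether your particular point is generic irrelevant.
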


\begin{proof}
    Since $H_{\lambda_\alpha}$ acts on $\Lambda_{C_{\phi_\alpha}},$ there exists $(x',y')\in \Lambda_{C_{\phi_\alpha}}$ such that $y'\in C_{\phi_\alpha}^*$. Indeed, we have $\emptyset\neq\Lambda_{C_{\phi_\alpha}}^{reg}\subseteq \Lambda_{C_{\phi_\alpha}}$ and $\Lambda_{C_{\phi_\alpha}}^{reg}\subseteq C_{\phi_\alpha}\times C_{\phi_\alpha}^*$ by \cite[Lemma 6.4.2]{CFMMX22}. Thus we may take $(x',y')\in \Lambda_{C_{\phi_\alpha}}^{reg}.$  
    By Lemma \ref{lemma dual of phi_alpha}, we have $y'\in C_{\phi_\alpha}^*=C_{\widehat{\phi^\vee}+\widehat{\phi^\alpha}}\subseteq V_{\lambda_\alpha}^*.$ Let ${}^t\varepsilon:V_{\lambda^\vee}^*\times V_{\lambda^\alpha}^*\hookrightarrow V_{\lambda_\alpha}^*$ denote the inclusion. Then ${}^t\varepsilon(C_{\widehat{\phi^\vee}}, C_{\widehat{\phi^\alpha}})\subseteq C_{\widehat{\phi^\vee}+\widehat{\phi^\alpha}}$. Again, since $H_{\lambda_\alpha}$ acts on $\Lambda_{C_{\phi_\alpha}},$ there exists $y={}^t\varepsilon(y_{\phi^\vee},y_{\phi^\alpha})\in \Lambda_{C_{\phi_\alpha}}$, where $y_{\phi^\vee}\in C_{\widehat{\phi^\vee}}\subseteq V_{\lambda^\vee}^*$ and $y_{\phi^\alpha}\in C_{\widehat{\phi^\alpha}}\subseteq V_{\lambda^\alpha}^*.$

    By Lemma \ref{lemma generic covector factors}, we have that $x\in\Lambda_y$ if and only if $x=\varepsilon(x'',0)$ for some $x''\in V_{\lambda^\vee}.$ It follows that $Z(\GL_n(\BC))\times Z(\GL_\alpha(\BC))\subseteq H_{y,x}$ for any $x\in\Lambda_y.$ Since $Z(\GL_n(\BC))\times Z(\GL_\alpha(\BC))$ is connected and both the identity and $s$ lie in  $Z(\GL_n(\BC))\times Z(\GL_\alpha(\BC))$, it follows that $s\in  H_{y,x}^0.$
    
    Now let $\nu\in \Lambda_y$ such that $A_{C_{\phi_\alpha}}^{\mic}=H_{y,\nu} / H_{y,\nu}^0.$ We have that $\nu=\varepsilon(\nu',0)$ for some $\nu'\in V_{\lambda^\vee}.$ From the above observations, we have that the image of $s$ in $A_{C_{\phi_\alpha}}^{\mic}$ is trivial. This completes the proof of the lemma.
\end{proof}

Finally we prove our fixed point formula (Theorem \ref{thm GL fixed point formula}).

\begin{thm}\label{thm fpf appendix}
     Again, we suppose that $\beta=\frac{\alpha-1}{2} \geq m^\beta_{\mathbbm{1}_{W_F}}(\phi^\vee)$. 
     Then, we have
    \[
    \langle\eta_{\phi_\alpha},[\mathcal{F}]\rangle_{\lambda_\alpha}=\langle\eta_{\phi^\times},[\mathcal{F}|_{V_{\lambda^\times}}]\rangle_{\lambda^\times}.
    \]
    for any $\mathcal{F}\in D_{H_\lambda}(V_\lambda).$
\end{thm}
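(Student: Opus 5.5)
The plan is to reduce the formula to a statement about ranks of vanishing cycles and then check the two hypotheses of Proposition \ref{prop ranks agree}. First apply Proposition \ref{prop perfect pair append} on both sides.

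On the left, with $\lambda_\alpha$ and the orbit $C_{\phi_\alpha}$, it gives
\[
\langle\eta_{\phi_\alpha},[\mathcal{F}]\rangle_{\lambda_\alpha}=(-1)^{d(C_{\phi_\alpha})}\rank(\Evs_{C_{\phi_\alpha}}\mathcal{F}).
\]
On the right, with $\lambda^\times$, the orbit $C_{\phi^\times}$ and the object $\mathcal{F}|_{V_{\lambda^\times}}=\varepsilon^*\mathcal{F}\in D_{H_{\lambda^\times}}(V_{\lambda^\times})$, it gives
\[
\langle\eta_{\phi^\times},[\mathcal{F}|_{V_{\lambda^\times}}]\rangle_{\lambda^\times}=(-1)^{d(C_{\phi^\times})}\rank(\Evs_{C_{\phi^\times}}\mathcal{F}|_{V_{\lambda^\times}}).
\]
Using Proposition \ref{prop perfect pair append} for $G^\times$ is legitimate. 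Since $V_{\lambda^\times}$, $H_{\lambda^\times}$, the pairing and $\eta_{\phi^\times}$ all factor as products, the proof of that proposition goes through verbatim for $G^\times=G_n\times G_\alpha$.

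The theorem is therefore equivalent to equality of these two signed ranks. This is exactly the conclusion of Proposition \ref{prop ranks agree}, applied with $\phi_1=\phi^\vee$, $\phi_2=\phi^\alpha$, $\phi=\phi_\alpha$, and $s$ the finite-order element chosen in the proof of Lemma \ref{lemma generic covector factors}.

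That proposition has two hypotheses.

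\emph{Existence of a regular point that stays regular.} We need $(x_{\phi^\times},y_{\phi^\times})\in\Lambda^{reg}_{C_{\phi^\times}}$ with $\varepsilon'(x_{\phi^\times},y_{\phi^\times})\in\Lambda^{reg}_{C_{\phi_\alpha}}$. The set $\Lambda^{reg}_{C_{\phi^\times}}$ is nonempty: it contains the product of the regular parts for $\phi^\vee$ and $\phi^\alpha$, the latter being described explicitly in \S\ref{sec Conormal bundles}. Proposition \ref{prop regular conorm}, which uses the hypothesis $\beta\geq m^\beta_{\mathbbm{1}_{W_F}}(\phi^\vee)$ through Lemma \ref{lemma dual of phi_alpha}, then shows that every such point maps into $\Lambda^{reg}_{C_{\phi_\alpha}}$.

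\emph{Triviality of $s$ in the microlocal group.} We need the image of $s$ in $A^{\mic}_{C_{\phi_\alpha}}$ to be trivial, which is Lemma \ref{lemma microlocal fundamental group is trivial}.

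The main obstacle is the caveat recorded before Proposition \ref{prop ranks agree}. There, ``the image of $s$ is trivial'' implicitly requires $s\in Z_{H_{\lambda_\alpha}}(x,y)$ for some $(x,y)\in\Lambda^{gen}_{C_{\phi_\alpha}}$, and the point used in the microlocal argument should be compatible with the point $\varepsilon'(x_{\phi^\times},y_{\phi^\times})$ used in the regularity hypothesis.

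To handle this, choose the base point as follows.
\begin{enumerate}
\item Take $y={}^t\varepsilon(y_{\phi^\vee},y_{\phi^\alpha})$ with $y_{\phi^\vee}\in C_{\widehat{\phi^\vee}}$ and $y_{\phi^\alpha}\in C_{\widehat{\phi^\alpha}}$. By Lemma \ref{lemma dual of phi_alpha}, $y$ lies in the dual orbit $C^*_{\phi_\alpha}$.
\item By Lemma \ref{lemma generic covector factors}, every $x\in\Lambda_y$ has the form $\varepsilon(x',0)$. Hence every such pair $(x,y)$ lies in the image of $\varepsilon'$ and is fixed by $s$.
\item Because $\Lambda^{gen}_{C_{\phi_\alpha}}$ is open dense in the relevant $H_{\lambda_\alpha}$-stable piece, $H_{\lambda_\alpha}$-translating lets us choose the generic base point $(x,y)$ of this form. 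That point is simultaneously in $\Lambda^{reg}_{C_{\phi_\alpha}}$ and of the form $\varepsilon'$ of a regular point for $\phi^\times$, by the argument of Proposition \ref{prop regular conorm}.
\end{enumerate}
With both hypotheses verified at a common point, Proposition \ref{prop ranks agree} gives equality of the signed ranks, and the theorem follows. Linearity in $[\mathcal{F}]$ lets us state the result for all $\mathcal{F}\in D_{H_\lambda}(V_\lambda)$, since the Grothendieck groups of $D$ and $\Per$ coincide.
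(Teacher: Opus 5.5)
Your proposal is correct and follows essentially the same route as the paper: Proposition \ref{prop perfect pair append} on both sides reduces the identity to Proposition \ref{prop ranks agree}, and Proposition \ref{prop regular conorm} and Lemma \ref{lemma microlocal fundamental group is trivial} supply its hypotheses. Your extra step of choosing a common $s$-fixed generic base point via Lemma \ref{lemma generic covector factors} is left implicit in the paper. One small correction there: the regularity of that point's $\varepsilon'$-preimage for $\phi^\times$ does not come from Proposition \ref{prop regular conorm}, which proves the opposite implication; it comes from the easy converse, namely that $\varepsilon'$ maps closures of conormal bundles of strictly larger orbits of $V_{\lambda^\times}$ into closures of conormal bundles of strictly larger orbits of $V_{\lambda_\alpha}$.
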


\begin{proof}
    The proof is the same as that of \cite[Proposition 4.6]{CR26} but using the above generalizations.  From Proposition \ref{prop regular conorm} and Lemma \ref{lemma microlocal fundamental group is trivial}, we have that Proposition \ref{prop ranks agree} applies in our setting. Combining Propositions  \ref{prop perfect pair append} and \ref{prop ranks agree}, we obtain that
    \begin{align*}
    \langle\eta_{\phi_\alpha},[\mathcal{F}]\rangle_{\lambda_\alpha}&=(-1)^{d(C_\phi)}\rank(\Evs_{C_\phi}\mathcal{F}) \\
    &= (-1)^{d(C_{\phi^\times})}\rank(\Evs_{C_\phi^\times}\mathcal{F}|_{V_{\lambda^\times}})  \\
    &=\langle\eta_{\phi^\times},[\mathcal{F}|_{V_{\lambda^\times}}]\rangle_{\lambda^\times}.\qedhere
    \end{align*}
\end{proof}

\bibliographystyle{amsplain}
\bibliography{refs}

\providecommand{\bysame}{\leavevmode\hbox to3em{\hrulefill}\thinspace}
\providecommand{\MR}{\relax\ifhmode\unskip\space\fi MR }
\providecommand{\MRhref}[2]{%
  \href{http://www.ams.org/mathscinet-getitem?mr=#1}{#2}
}
\providecommand{\href}[2]{#2}
\begin{thebibliography}{10}

\bibitem{Ach21}
Pramod~N. Achar, \emph{Perverse sheaves and applications to representation theory}, Mathematical Surveys and Monographs, vol. 258, American Mathematical Society, Providence, RI, 2021.

\bibitem{Ada89}
Jeffrey Adams, \emph{{$L$}-functoriality for dual pairs}, Ast\'{e}risque (1989), no.~171-172, 85--129, Orbites unipotentes et repr\'{e}sentations, II.

\bibitem{ABV92}
Jeffrey Adams, Dan Barbasch, and David~A. Vogan, Jr., \emph{The {L}anglands classification and irreducible characters for real reductive groups}, Progress in Mathematics, vol. 104, Birkh\"{a}user Boston, Inc., Boston, MA, 1992.

\bibitem{Art89}
James Arthur, \emph{Unipotent automorphic representations: conjectures}, no. 171-172, 1989, Orbites unipotentes et repr\'esentations, II, pp.~13--71.

\bibitem{Art13}
\bysame, \emph{The endoscopic classification of representations}, American Mathematical Society Colloquium Publications, vol.~61, American Mathematical Society, Providence, RI, 2013, Orthogonal and symplectic groups.

\bibitem{AG17a}
Hiraku Atobe and Wee~Teck Gan, \emph{Local theta correspondence of tempered representations and {L}anglands parameters}, Invent. Math. \textbf{210} (2017), no.~2, 341--415.

\bibitem{AGIKMS24}
Hiraku Atobe, Wee~Teck Gan, Atsushi Ichino, Tasho Kaletha, Alberto Mínguez, and Sug~Woo Shin, \emph{Local intertwining relations and co-tempered $a$-packets of classical groups}, 2024.

\bibitem{AM25}
Hiraku Atobe and Alberto M{\'i}nguez, \emph{Unitary dual of $p$-adic split $\mathrm{SO}_{2n+1}$ and $\mathrm{Sp}_{2n}$: {T}he good parity case (and slightly beyond)}, 2025, arXiv:2505.09991.

\bibitem{BH21}
Petar Baki\'{c} and Marcela Hanzer, \emph{Theta correspondence for {$p$}-adic dual pairs of type {I}}, J. Reine Angew. Math. \textbf{776} (2021), 63--117.

\bibitem{BH22}
\bysame, \emph{Theta correspondence and {A}rthur packets: on the {A}dams conjecture}, 2022, arXiv 2211.08596.

\bibitem{BZSV24}
David Ben-Zvi, Yiannis Sakellaridis, and Akshay Venkatesh, \emph{Relative {L}anglands duality}, 2024, arXiv:2409.04677.

\bibitem{Bor79}
Armand Borel, \emph{{Automorphic L-functions}}, Proc. Sympos. Pure Math., {Automorphic forms, representations, and L-functions Part 2}, vol.~32, Amer. Math. Soc., 1979, pp.~27--61.

\bibitem{CZ21a}
Rui Chen and Jialiang Zou, \emph{Local {L}anglands correspondence for even orthogonal groups via theta lifts}, Selecta Math. (N.S.) \textbf{27} (2021), no.~5, Paper No. 88, 71.

\bibitem{CZ21b}
\bysame, \emph{Local {L}anglands correspondence for unitary groups via theta lifts}, Represent. Theory \textbf{25} (2021), 861--896.

\bibitem{CG10}
Neil Chriss and Victor Ginzburg, \emph{Representation theory and complex geometry}, Modern Birkh\"auser Classics, Birkh\"auser Boston, Ltd., Boston, MA, 2010, Reprint of the 1997 edition.

\bibitem{CFK22}
Clifton Cunningham, Andrew Fiori, and Nicole Kitt, \emph{Appearance of the {K}ashiwara-{S}aito singularity in the representation theory of {$p$}-adic {$\rm GL(16)$}}, Pacific J. Math. \textbf{321} (2022), no.~2, 239--282.

\bibitem{CFMMX22}
Clifton Cunningham, Andrew Fiori, Ahmed Moussaoui, James Mracek, and Bin Xu, \emph{Arthur packets for {$p$}-adic groups by way of microlocal vanishing cycles of perverse sheaves, with examples}, Mem. Amer. Math. Soc. \textbf{276} (2022), no.~1353, ix+216.

\bibitem{CHLLRX}
Clifton Cunningham, Alexander Hazeltine, Baiying Liu, Chi-Heng Lo, Mishty Ray, and Bin Xu, \emph{On stable distributions and {V}ogan’s conjecture – endoscopic functoriality and beyond}, preprint.

\bibitem{CR24}
Clifton Cunningham and Mishty Ray, \emph{Proof of {V}ogan's conjecture on {A}rthur packets: irreducible parameters of {$p$}-adic general linear groups}, Manuscripta Math. \textbf{173} (2024), no.~3-4, 1073--1097.

\bibitem{CR26}
\bysame, \emph{Proof of {V}ogan's conjecture on {A}rthur packets for $\mathrm{GL}_n$ over $p$-adic fields}, manuscripta math. \textbf{177} (2026), no.~2.

\bibitem{GGP12}
Wee~Teck Gan, Benedict~H. Gross, and Dipendra Prasad, \emph{Symplectic local root numbers, central critical {$L$} values, and restriction problems in the representation theory of classical groups}, no. 346, 2012, Sur les conjectures de Gross et Prasad. I, pp.~1--109.

\bibitem{GGP20}
\bysame, \emph{Branching laws for classical groups: the non-tempered case}, Compos. Math. \textbf{156} (2020), no.~11, 2298--2367.

\bibitem{GI14}
Wee~Teck Gan and Atsushi Ichino, \emph{Formal degrees and local theta correspondence}, Invent. Math. \textbf{195} (2014), no.~3, 509--672.

\bibitem{GS12}
Wee~Teck Gan and Gordan Savin, \emph{Representations of metaplectic groups {I}: epsilon dichotomy and local {L}anglands correspondence}, Compos. Math. \textbf{148} (2012), no.~6, 1655--1694.

\bibitem{GS17}
Wee~Teck Gan and Binyong Sun, \emph{The {H}owe duality conjecture: quaternionic case}, Representation theory, number theory, and invariant theory, Progr. Math., vol. 323, Birkh\"{a}user/Springer, Cham, 2017, pp.~175--192.

\bibitem{GT16}
Wee~Teck Gan and Shuichiro Takeda, \emph{A proof of the {H}owe duality conjecture}, J. Amer. Math. Soc. \textbf{29} (2016), no.~2, 473--493.

\bibitem{GW25}
Wee~Teck Gan and Bryan Peng~Jun Wang, \emph{Generalised {W}hittaker models as instances of relative {L}anglands duality}, Adv. Math. \textbf{463} (2025), Paper No. 110129, 57.

\bibitem{HT01}
Michael Harris and Richard Taylor, \emph{{The geometry and cohomology of some simple Shimura varieties}}, Annals of Math. Studies, vol. 151, Princeton University Press, 2001.

\bibitem{Haz24}
Alexander Hazeltine, \emph{{The Adams conjecture and intersections of local Arthur packets}}, arXiv:2403.17867.

\bibitem{HJLLZ24}
Alexander Hazeltine, Dihua Jiang, Baiying Liu, Chi-Heng Lo, and Qing Zhang, \emph{{{A}rthur representations and unitary dual for classical groups}}, 2024, Preprint.

\bibitem{HLL24}
Alexander Hazeltine, Baiying Liu, and Chi-Heng Lo, \emph{{On the intersection of local Arthur packets under the theta correspondence}}, Preprint.

\bibitem{HLL22}
\bysame, \emph{On the intersection of local {A}rthur packets for classical groups and applications}, 2024, arXiv 2201.10539v3.

\bibitem{HLLZ25}
Alexander Hazeltine, Baiying Liu, Chi-Heng Lo, and Qing Zhang, \emph{The closure ordering conjecture on local {A}rthur packets of classical groups}, J. Reine Angew. Math. \textbf{823} (2025), 1--60.

\bibitem{HL}
Alexander Hazeltine and Chi-Heng Lo, \emph{Algorithms on the {P}yasetskii involution on local {L}anglands parameters of classical groups}, preprint.

\bibitem{Hen00}
Guy Henniart, \emph{{Une preuve simple des conjectures de Langlands pour $\mathrm{GL}(n)$ sur un corps p-adique}}, Invent. Math. \textbf{139} (2000), no.~2, 439--455.

\bibitem{How79}
Roger Howe, \emph{{$\theta $}-series and invariant theory}, Automorphic forms, representations and {$L$}-functions ({P}roc. {S}ympos. {P}ure {M}ath., {O}regon {S}tate {U}niv., {C}orvallis, {O}re., 1977), {P}art 1, Proc. Sympos. Pure Math., vol. XXXIII, Amer. Math. Soc., Providence, RI, 1979, pp.~275--285.

\bibitem{Ish23}
Hiroshi Ishimoto, \emph{The endoscopic classification of representations of non-quasi-split odd special orthogonal groups}, Int. Math. Res. Not. IMRN (2024), no.~14, 10939--11012.

\bibitem{Kal13}
Tasho Kaletha, \emph{Genericity and contragredience in the local {L}anglands correspondence}, Algebra Number Theory \textbf{7} (2013), no.~10, 2447--2474.

\bibitem{KMSW14}
Tasho Kaletha, Alberto Minguez, Sug~Woo Shin, and Paul-James White, \emph{Endoscopic classification of representations: Inner forms of unitary groups}, 2014, Preprint.

\bibitem{Lan75}
Robert Langlands, \emph{{Letter from {R}. {L}anglands to {R}. {H}owe}}, 1975, Unpublished.

\bibitem{Li89}
Jian-Shu Li, \emph{Singular unitary representations of classical groups}, Invent. Math. \textbf{97} (1989), no.~2, 237--255.

\bibitem{Li24}
Wen-Wei Li, \emph{{Arthur packets for metaplectic groups}}, 2024, arXiv:2410.13606.

\bibitem{Lo24}
Chi-Heng Lo, \emph{Vogan's conjecture on local {A}rthur packets of {$p$}-adic {${\rm GL}_n$} and a combinatorial lemma}, Pacific J. Math. \textbf{333} (2024), no.~2, 331--356.

\bibitem{Lus95}
George Lusztig, \emph{Cuspidal local systems and graded {H}ecke algebras. {II}}, Representations of groups ({B}anff, {AB}, 1994), CMS Conf. Proc., vol.~16, Amer. Math. Soc., Providence, RI, 1995, With errata for Part I [Inst.\ Hautes \'Etudes Sci.\ Publ.\ Math.\ No.\ 67 (1988), 145--202; MR0972345 (90e:22029)], pp.~217--275.

\bibitem{Min08}
Alberto M\'{\i}nguez, \emph{Correspondance de {H}owe explicite: paires duales de type {II}}, Ann. Sci. \'{E}c. Norm. Sup\'{e}r. (4) \textbf{41} (2008), no.~5, 717--741.

\bibitem{MW86}
C.~M{\oe}glin and J.-L. Waldspurger, \emph{Sur l'involution de {Z}elevinski}, J. Reine Angew. Math. \textbf{372} (1986), 136--177.

\bibitem{Moe11c}
Colette M{\oe}glin, \emph{Conjecture d'{A}dams pour la correspondance de {H}owe et filtration de {K}udla}, Arithmetic geometry and automorphic forms, Adv. Lect. Math. (ALM), vol.~19, Int. Press, Somerville, MA, 2011, pp.~445--503.

\bibitem{MR18}
Colette Moeglin and David Renard, \emph{Sur les paquets d'{A}rthur des groupes classiques et unitaires non quasi-d\'eploy\'es}, Relative aspects in representation theory, {L}anglands functoriality and automorphic forms, Lecture Notes in Math., vol. 2221, Springer, Cham, 2018, pp.~341--361.

\bibitem{Mok15}
Chung~Pang Mok, \emph{Endoscopic classification of representations of quasi-split unitary groups}, Mem. Amer. Math. Soc. \textbf{235} (2015), no.~1108, vi+248.

\bibitem{Rid23}
Connor Riddlesden, \emph{{C}ombinatorial {A}pproach to {ABV}-packets for $\mathbf{GL}_n$}, 2023, arXiv:2304.09598.

\bibitem{Sak17}
Yiannis Sakellaridis, \emph{Plancherel decomposition of {H}owe duality and {E}uler factorization of automorphic functionals}, Representation theory, number theory, and invariant theory, Progr. Math., vol. 323, Birkh\"auser/Springer, Cham, 2017, pp.~545--585.

\bibitem{Sch13}
Peter Scholze, \emph{The local langlands correspondence for $\mathrm{GL}_n$ over p-adic fields}, Invent. Math. \textbf{192} (2013), no.~3, 663--715.

\bibitem{Sol25}
Maarten Solleveld, \emph{Graded {H}ecke algebras, constructible sheaves and the {$p$}-adic {K}azhdan-{L}usztig conjecture}, J. Algebra \textbf{667} (2025), 865--910.

\bibitem{SZ15}
Binyong Sun and Chen-Bo Zhu, \emph{Conservation relations for local theta correspondence}, J. Amer. Math. Soc. \textbf{28} (2015), no.~4, 939--983.

\bibitem{Vog93}
David~A. Vogan, Jr., \emph{The local {L}anglands conjecture}, Representation theory of groups and algebras, Contemp. Math., vol. 145, Amer. Math. Soc., Providence, RI, 1993, pp.~305--379.

\bibitem{Wal90}
J.-L. Waldspurger, \emph{D\'{e}monstration d'une conjecture de dualit\'{e} de {H}owe dans le cas {$p$}-adique, {$p\neq 2$}}, Festschrift in honor of {I}. {I}. {P}iatetski-{S}hapiro on the occasion of his sixtieth birthday, {P}art {I} ({R}amat {A}viv, 1989), Israel Math. Conf. Proc., vol.~2, Weizmann, Jerusalem, 1990, pp.~267--324.

\bibitem{Zel80}
A.~V. Zelevinsky, \emph{Induced representations of reductive {$\mathfrak{p}$}-adic groups. {II}. {O}n irreducible representations of {${\rm GL}(n)$}}, Ann. Sci. \'Ecole Norm. Sup. (4) \textbf{13} (1980), no.~2, 165--210.

\end{thebibliography}

\end{document}